\theoremstyle{plain}%
\newtheorem{theorem}{Theorem}[section]%
\newtheorem{proposition}[theorem]{Proposition}%
\newtheorem{lemma}[theorem]{Lemma}%
\newtheorem{corollary}[theorem]{Corollary}%
\theoremstyle{definition}%
\newtheorem{definition}[theorem]{Definition}%
\theoremstyle{remark}%
\newtheorem{remark}[theorem]{Remark}%
\newtheorem{example}[theorem]{Example}%
\numberwithin{equation}{section}%
\newcommand{\BB}{\mathbb{B}}%
\newcommand{\CC}{\mathbb{C}}%
\newcommand{\DD}{\mathbb{D}}%
\newcommand{\NN}{\mathbb{N}}%
\newcommand{\QQ}{\mathbb{Q}}%
\newcommand{\RR}{\mathbb{R}}%
\newcommand{\ZZ}{\mathbb{Z}}%
\newcommand{\calA}{\mathcal{A}}%
\newcommand{\calD}{\mathcal{D}}%
\newcommand{\calH}{\mathcal{H}}%
\newcommand{\calI}{\mathcal{I}}%
\newcommand{\calJ}{\mathcal{J}}%
\newcommand{\calO}{\mathcal{O}}%
\newcommand{\frakm}{\mathfrak{m}}%
\newcommand{\Id}{{\textup{Id}}}%
\newcommand{\one}{{\textup{\bf{1}}}}%
\newcommand{\supp}{{\textup{supp}\,}}%
\newcommand{\loc}{{\textup{loc}}}%
\newcommand{\psh}{{\textup{psh}}}%
\newcommand{\Nak}{{\textup{Nak}}}%
\newcommand{\reg}{{\textup{reg}}}%
\newcommand{\sing}{{\textup{sing}}}%
\newcommand{\inner}[1]{\langle#1\rangle}%
\newcommand{\ddbar}{\sqrt{-1}\partial\bar{\partial}}%
\newcommand{\dwdbar}[1]{\sqrt{-1}\partial#1\wedge\bar{\partial}#1}%
\newcommand{\DkS}{{D^k\backslash S}}%
\newcommand{\DkSn}{{D^k\backslash S_{k,\nu}}}%
\newcommand{\Bergman}{A^2(\Omega,K_\Omega\otimes E)}%
\newcommand{\BergmanN}{A^2(\widetilde{\Omega},K_{\widetilde{\Omega}}\otimes\widetilde{E})}%
\newcommand{\Vol}{\textup{Vol}}%
\newcommand{\re}{\operatorname{Re}}%
\newcommand{\pd}{\partial}%
\newcommand{\eps}{\varepsilon}%
\renewcommand{\geq}{\geqslant}%
\begin{document}

\title{Optimal $L^2$ Extensions of Openness Type}

\author{Wang Xu}
\address{School of Mathematical Sciences, Peking University, Beijing 100871, China}
\email{xuwang@amss.ac.cn}

\author{Xiangyu Zhou}
\address{Institute of Mathematics, AMSS, and Hua Loo-Keng Key Laboratory of Mathematics, Chinese Academy of Sciences, Beijing 100190, China}
\email{xyzhou@math.ac.cn}

\thanks{The second author is partially supported by National Key R\&D Program of China (No. 2021YFA1003100) and NSFC grants (No. 11688101 and No. 12288201).}

\begin{abstract}
	We study the following optimal $L^2$ extension problem of openness type: given a complex manifold $M$, a closed subvariety $S\subset M$ and a holomorphic vector bundle $E\rightarrow M$, for any $L^2$ holomorphic section $f$ defined on some open neighborhood $U$ of $S$, find an $L^2$ holomorphic section $F$ on $M$ such that $F|_S = f|_S$, and the $L^2$ norm of $F$ on $M$ is optimally controlled by the $L^2$ norm of $f$ on $U$.
	
	Answering the above problem, we prove an optimal $L^2$ extension theorem of openness type on weakly pseudoconvex K\"ahler manifolds, which generalizes a couple of known results on such a problem. Moreover, we prove a product property for certain minimal $L^2$ extensions and give an alternative proof to a version of the above $L^2$ extension theorem. We also present some applications to the usual optimal $L^2$ extension problem and the equality part of Suita's conjecture.
\end{abstract}

\subjclass[2020]{32D15, 32A36, 32U05, 14C30, 32W05}

\footnote{Due to length considerations, this article was divided into two parts during the submission. The first part (Sections 1--6) was \href{https://doi.org/10.1007/s00208-023-02774-9}{published online} by \textit{Mathematische Annalen} in January 2024, and the second part (Section 7) was accepted by \textit{Annales l'Institut Fourier} in April 2024.}

\maketitle

{\small\tableofcontents}

\section{Introduction}

Constructing holomorphic objects is a fundamental problem in complex analysis of several variables. In this article, we focus on the extension of holomorphic sections of vector bundles from subvarieties. The well-known Oka-Cartan theory guarantees the existence of holomorphic extensions on Stein manifolds.
In practice, we are interested in holomorphic extensions satisfying additional conditions, the one that attracts the most attention is the following $L^2$ extension problem (see \cite{DemaillyBook}):
\begin{quotation}
	\itshape Let $M$ be a complex manifold, $S\subset M$ be a closed submanifold and $E\to M$ be a Hermitian holomorphic vector bundle; suppose the triple $(M,S,E)$ satisfies some reasonable conditions; given a holomorphic section $f\in\Gamma(S,E|_S)$ satisfying suitable $L^2$ conditions, find a holomorphic extension $F\in\Gamma(M,E)$ of $f$, together with a good or optimal $L^2$ estimate for $F$ in terms of $f$.
\end{quotation}

After a series of important works due to H\"ormander, Bombieri, Skoda, Ohsawa-Takegoshi, Berndtsson, Demailly, Manivel, McNeal-Varolin, Ohasawa, Siu, Guan-Zhou-Zhu et al, one can get a satisfactory answer to the above problem and reach the optimal $L^2$ extension theorem (see \cite{Blocki2013, GuanZhou2012, GuanZhou2015}).

The Ohsawa-Takegoshi $L^2$ extension theorem \cite{OhsawaTakesoshi1987} and its various generalizations provide solutions to the existence part of the above problem.
These $L^2$ extension theorems have many important applications, e.g. boundary estimate of Bergman kernels, approximation of psh functions (Demailly \cite{Demailly1992}), invariance of plurigenera (Siu \cite{SiuPlurigenera}), strong openness property of multiplier ideal sheaves (Guan-Zhou \cite{GuanZhouSOC}).

The second part of the above problem is to find $L^2$ extensions with optimal estimates. Motivated by \cite{ZhuGuanZhou2012}, Blocki \cite{Blocki2013} proved an optimal $L^2$ extension theorem on pseudoconvex domains (in the setting of Ohsawa-Takegoshi \cite{OhsawaTakesoshi1987}), Guan-Zhou \cite{GuanZhou2012} proved an optimal $L^2$ extension theorem with negligible weight on Stein manifolds (in the setting of Ohsawa \cite{Ohsawa3}).
As a consequence, they solved the inequality part of Suita's conjecture \cite{Suita1972}. In \cite{GuanZhou2015}, a general optimal $L^2$ extension theorem and its geometric meaning were established. Consequently, the full version of Suita's conjecture (the inequality part and the equality part) was solved.
The geometric meaning of the optimal $L^2$ extension theorem plays a key role in Hacon-Popa-Schnell's recent work \cite{HPS} on the Iitaka conjecture.
Further developments on optimal $L^2$ extensions include \cite[etc]{BerndtssonLempert2016, Cao2017, ZhouZhu2018}. For more historical remarks and applications of (optimal) $L^2$ extension theorems, the reader is referred to \cite{OhsawaL2Book, OhsawaSurvey, Zhou:Abel}.\\

In this article, we study a closely related problem:
\begin{quotation}
	\itshape Let $M$ be a complex manifold, $S\subset M$ be a closed analytic subset, $E\to M$ be a Hermitian holomorphic vector bundle and $U$ be an open neighborhood of $S$; suppose $(M,S,E,U)$ satisfies some reasonable conditions; given an $L^2$ holomorphic section $f\in\Gamma(U,E)$, find an $L^2$ holomorphic section $F\in\Gamma(M,E)$ such that $F|_S = f|_S$ and the $L^2$ norm of $F$ on $M$ is uniformly or optimally controlled by the $L^2$ norm of $f$ on $U$.
\end{quotation}

To distinguish with the usual $L^2$ extension problem, we shall call it $L^2$ extension problem of \textbf{openness type}.
Such a problem is naturally raised in the literature, e.g. Jennane \cite{Jennane1978} and Demailly \cite{Demailly1982Ext,Demailly1982}.

For the convenience of readers, let us recall a couple of known results on this problem. We have the following $L^2$ extension theorem of openness type from Demailly's textbook (with some simplifications):

\begin{theorem}[{see \cite[Chapter VIII.7]{DemaillyCADG}}] \label{Thm:DemExt}
	Let $(\Omega,\omega)$ be a weakly pseudoconvex K\"ahler manifold and $(E,h)$ be a Hermitian holomorphic line bundle over $\Omega$ whose curvature is semi-positive. Let $w=(w_1,\ldots,w_p)$ be a tuple of holomorphic functions on $\Omega$ $(1\leqslant p\leqslant \dim\Omega)$. Let
	\begin{equation*}
		S = \{x\in\Omega:w(x)=0\} \quad\text{and}\quad U = \{x\in\Omega:|w(x)|<1\}.
	\end{equation*}
	Assume that $dw_1\wedge\cdots\wedge dw_p\neq0$ generically on $S$. For any holomorphic section $f\in\Gamma(U,K_\Omega\otimes E)$ satisfying $\int_U|f|_{\omega,h}^2dV_\omega<+\infty$ and any $\eps>0$, there exists a holomorphic section $F\in\Gamma(\Omega,K_\Omega\otimes E)$ such that $F|_S=f|_S$ and
	\begin{equation}
		\int_\Omega \frac{|F|_{\omega,h}^2}{(1+|w|^2)^{p+\eps}} dV_\omega
		\leqslant \left(1+\frac{(p+1)^2}{\eps}\right) \int_U |f|_{\omega,h}^2 dV_\omega.
	\end{equation}
\end{theorem}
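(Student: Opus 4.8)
The plan is to run the Ohsawa--Takegoshi method in the form used in Demailly's book: cut $f$ off to a global smooth section agreeing with $f$ in a shrinking neighborhood of $S$, correct it by the $L^2$-minimal solution of a twisted $\bar\partial$-equation whose weight is singular enough along $S$ to force the correction to vanish there, and keep track of constants so that, after the cut-off region collapses onto $S$, they reduce to $1+(p+1)^2/\eps$. Before doing any estimate I would reduce to a tractable model: exhaust $\Omega$ by relatively compact weakly pseudoconvex open sets $\Omega_\nu\Subset\Omega$ coming from its plurisubharmonic exhaustion, and replace $|w|^2$ by $|w|^2+\eta^2$ to regularize $\log|w|^2$ near $S$; I would prove uniform estimates on each $(\Omega_\nu,\eta)$ and then recover the theorem by a weak-limit/diagonal argument. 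The K\"ahler hypothesis on $\omega$ is exactly what makes the Bochner--Kodaira--Nakano identity (hence the H\"ormander type $\bar\partial$-estimate) available in this non-compact setting.

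Fix $\Omega_\nu$ and $\eta>0$, put $\Psi=\log(|w|^2+\eta^2)$, and for small $\delta>0$ set $\chi_\delta=\chi(\Psi-\log\delta)$ with a fixed cut-off $\chi$ satisfying $\chi\equiv1$ on $(-\infty,0]$, $\chi\equiv0$ on $[1,\infty)$, $0\leq-\chi'\leq2$. Then $v_\delta:=\chi_\delta f$ is a global smooth section of $K_\Omega\otimes E$, equal to $f$ on $\{|w|^2+\eta^2\leq\delta\}$ and supported in $\{|w|^2+\eta^2\leq e\delta\}\subset U$ for $\delta$ small, so $\bar\partial v_\delta=\chi'_\delta\,\bar\partial\Psi\wedge f$ is supported in the thin shell $\Sigma_\delta=\{\delta\leq|w|^2+\eta^2\leq e\delta\}$. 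I would then solve $\bar\partial u_\delta=\bar\partial v_\delta$ on $\Omega_\nu$ with a weighted estimate obtained from a twisted (Donnelly--Fefferman / Ohsawa--Takegoshi--Manivel) type inequality for the total weight $e^{-\varphi}$, where $\varphi=\varphi_h+(p+\eps)\log(1+|w|^2)+\gamma(\Psi)$, $\varphi_h$ is the local weight of $h$, and $\gamma$ (together with an auxiliary twist $\tau=\tau(\Psi)$) is a convex function to be chosen. Positivity of the curvature operator in that inequality is furnished by the semi-positivity of the curvature of $(E,h)$, the plurisubharmonicity of $\log(1+|w|^2)=\log|(1,w_1,\ldots,w_p)|^2$, and the convexity of $\gamma$; the role of $\gamma$ is to make the weight sufficiently singular along $S$ that any $L^2$ solution vanishes on $S$ (equivalently, since $\bar\partial u_\delta=\bar\partial v_\delta=0$ near $S$, $u_\delta$ is holomorphic there and the weight pins it to $0$ on $S$). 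Setting $F_\delta:=v_\delta-u_\delta$ then gives a holomorphic section of $K_\Omega\otimes E$ on $\Omega_\nu$ with $F_\delta|_S=f|_S$.

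The heart of the argument is the right-hand side of the $\bar\partial$-estimate, which must be bounded by $\bigl(1+\tfrac{(p+1)^2}{\eps}\bigr)\int_U|f|_{\omega,h}^2\,dV_\omega$ up to errors tending to $0$ as $\delta,\eta\to0$ and $\Omega_\nu\uparrow\Omega$. One exploits that $\bar\partial v_\delta=\chi'_\delta\,\bar\partial\Psi\wedge f$ has the special form ``a fixed $(0,1)$-covector $\bar\partial\Psi$ wedged with $f$'': then the curvature operator $\bigl[\ddbar\bigl((p+\eps)\log(1+|w|^2)+\gamma(\Psi)\bigr),\Lambda\bigr]$ can be inverted on $\bar\partial v_\delta$ with a bound governed purely by the eigenvalues of $\ddbar\log(1+|w|^2)$, i.e. by the pullback of the Fubini--Study metric of $\mathbb{P}^p$ under $x\mapsto[1:w_1(x):\cdots:w_p(x)]$. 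Since $\Sigma_\delta$ collapses onto $S$, where this weight is $\approx1$ and the hypothesis $dw_1\wedge\cdots\wedge dw_p\neq0$ (generically on $S$) gives the right local model, the bound degenerates exactly to an $L^2$-norm of $f$ over $U$; the factor $(p+1)^2/\eps$ then comes out of an optimal choice of $\tau$ and $\gamma$ (an ODE-type optimization in the spirit of B\l ocki and Guan--Zhou, or Manivel's coarser choice), with $p+1$ reflecting the curvature of $\log(1+|w|^2)$ and $1/\eps$ the slack in the exponent, while the extra summand $1$ records the contribution of $v_\delta$ itself near $S$.

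Finally I would let $\delta\to0$, $\eta\to0$ and $\Omega_\nu\uparrow\Omega$: the uniform bound on $\int_{\Omega_\nu}|F_\delta|_{\omega,h}^2(1+|w|^2)^{-(p+\eps)}\,dV_\omega$ yields, via a normal-families argument, a holomorphic $F\in\Gamma(\Omega,K_\Omega\otimes E)$ satisfying the stated estimate, and $F|_S=f|_S$ survives the limit because equality is imposed on the open set $\{|w|^2\leq\delta\}$ for every $\delta$. The step I expect to be the main obstacle is precisely the sharp bookkeeping of the previous paragraph: choosing $\gamma$ and $\tau$ so that the curvature operator stays positive while the right-hand side converges to exactly $\bigl(1+\tfrac{(p+1)^2}{\eps}\bigr)\int_U|f|_{\omega,h}^2\,dV_\omega$, and handling the degeneracy locus $\{dw_1\wedge\cdots\wedge dw_p=0\}$ on $S$ — a proper analytic subset, hence negligible both for the volume estimates on $\Sigma_\delta$ and for the meaning of $F|_S=f|_S$.
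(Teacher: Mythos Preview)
The paper does not give its own proof of this statement: Theorem~\ref{Thm:DemExt} is quoted from Demailly's textbook as a known background result, and no argument for it appears in the paper. Your sketch is a faithful outline of the classical proof from that reference (twisted Bochner--Kodaira--Nakano inequality, cut-off near $S$, $\bar\partial$-correction with a singular weight forcing vanishing on $S$, exhaustion and limit), so there is nothing to fault in it as a reconstruction of Demailly's argument.

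What the paper does prove is the strictly sharper Theorem~\ref{MainThm:DemExt}, which contains Theorem~\ref{Thm:DemExt} as the special case $m=0$, $\varphi\equiv 0$ but with the optimal constant in place of $1+(p+1)^2/\eps$. The route is genuinely different from yours: rather than optimizing the twist functions $\tau,\gamma$ inside a single $\bar\partial$-estimate, the paper first establishes a general optimal $L^2$ extension theorem of openness type (Theorem~\ref{Thm:NewOptOpenExt}) via a \emph{concavity property of minimal $L^2$ integrals} on the sublevel sets $\{\psi<-t\}$, and then specializes to $\psi=\log|w|^{2(m+p)}$ and $c(t)=e^t(1+e^{t/(m+p)})^{-(m+p+\eps)}$. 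Your approach recovers Demailly's explicit constant directly and is self-contained; the paper's approach trades the delicate ODE-optimization of $\tau,\gamma$ for an indirect two-step argument (coarse existence plus concavity) that yields the sharp constant and handles higher-order jets $m\geq 0$ for free.
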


Recall that, a complex manifold $\Omega$ is said to be \textit{weakly pseudoconvex} if there exists a smooth psh exhaustion function on $\Omega$. Theorem \ref{Thm:DemExt} contains the H\"ormander-Bombieri-Skoda theorem, which is an early result on $L^2$ extension problems:
let $\Omega\subset\CC^n$ be a pseudoconvex domain and $\varphi\in\psh(\Omega)$, if $e^{-\varphi}$ is integrable on $\BB^n(z_0;r)\subset\Omega$, then there exists a holomorphic function $f\in\calO(\Omega)$ so that $f(z_0)=1$ and $\int_\Omega |f(z)|^2e^{-\varphi} (r^2+|z|^2)^{-n-\eps} d\lambda < +\infty$.

The following theorem of Blocki \cite{Blocki2014} is a special \textit{optimal} $L^2$ extension theorem of openness type, in which the analytic subset is a single point.

\begin{theorem} \label{Thm:BlockiExt}
	Let $\Omega$ be a bounded pseudoconvex domain in $\CC^n$. Given $z_0\in\Omega$ and $a\in\RR_+$, let $U = \{z\in\Omega: G_\Omega(z,z_0)<-a\}$. Then for any $g\in A^2(U)$, one has
	\begin{equation} \label{Eq:Blocki1}
		\frac{|g(z_0)|^2}{B_\Omega(z_0)} \leqslant e^{2na} \int_U |g|^2 d\lambda.
	\end{equation}
	Equivalently, for any $f\in A^2(U)$, there exists an $F\in A^2(\Omega)$ such that
	\begin{equation}
		F(z_0)=f(z_0) \quad\text{and}\quad \int_\Omega |F|^2 d\lambda \leqslant e^{2na} \int_U |f|^2 d\lambda.
	\end{equation}
\end{theorem}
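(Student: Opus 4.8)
The plan is to reduce the two displayed inequalities to a single $L^2$-extension assertion and then to recognise that assertion as an optimal $L^2$ extension of openness type in which the sublevel sets of the pluricomplex Green function of $\Omega$ with pole $z_0$ play the role of the neighbourhood $U$. First, the two formulations are equivalent: since $\Omega$ is bounded, $A^2(\Omega)$ contains the constants, so $B_\Omega(z_0)=K_\Omega(z_0,z_0)>0$ and $1/B_\Omega(z_0)=\min\{\,\|h\|_{L^2(\Omega)}^2:h\in A^2(\Omega),\ h(z_0)=1\,\}$, attained at $h=K_\Omega(\cdot,z_0)/B_\Omega(z_0)$. Hence, granting the extension statement, any $g\in A^2(U)$ extends to $F\in A^2(\Omega)$ with $F(z_0)=g(z_0)$ and $\int_\Omega|F|^2\,d\lambda\le e^{2na}\int_U|g|^2\,d\lambda$, and $|g(z_0)|^2=|F(z_0)|^2\le B_\Omega(z_0)\int_\Omega|F|^2\,d\lambda$ gives \eqref{Eq:Blocki1}; conversely, \eqref{Eq:Blocki1} applied to $g=f$ shows $F:=\overline{f(z_0)}\,K_\Omega(\cdot,z_0)/B_\Omega(z_0)$ is an admissible extension of $f$. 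So it suffices to prove: for every $f\in A^2(U)$ there is $F\in A^2(\Omega)$ with $F(z_0)=f(z_0)$ and $\int_\Omega|F|^2\,d\lambda\le e^{2na}\int_U|f|^2\,d\lambda$; equivalently, taking the infimum over $f$ with $f(z_0)=1$, that $B_U(z_0)\le e^{2na}\,B_\Omega(z_0)$.

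For this I would put $\psi:=G_\Omega(\cdot,z_0)$, a negative plurisubharmonic function on $\Omega$ with $\psi(z)=\log|z-z_0|+O(1)$ near $z_0$, whose sublevel sets $\Omega_t:=\{\psi<-t\}$ are pseudoconvex neighbourhoods of $z_0$ with $\Omega_0=\Omega$ and $\Omega_a=U$; this $\psi$ is precisely the plurisubharmonic weight adapted to the pair $(\{z_0\},U)$, having unbounded locus the single point $z_0$ at which it carries a logarithmic pole of the critical order, and level $-a$ cutting out $U$. The required extension then follows from the \emph{optimal} $L^2$ extension theorem of openness type — here, from the general optimal openness-type extension theorem of this paper applied with $M=\Omega$, $E$ the trivial line bundle with flat metric, $S=\{z_0\}$ and weight $\psi$; in the classical setting this is Blocki's route through the optimal Ohsawa--Takegoshi theorem \cite{Blocki2013,GuanZhou2012,GuanZhou2015} in its one-point, higher-codimension form. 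The mechanism is the twisted $\bar\partial$ (ODE) method: take a cutoff $\chi=\chi(\psi)$ equal to $1$ on $\{\psi<-a-\delta\}$ and supported in $U=\{\psi<-a\}$, set $v:=\chi f$ (extended by $0$), so that $\bar\partial v$ is carried by the shell $\{-a-\delta\le\psi<-a\}$ where $|v|=|f|$; solve $\bar\partial u=\bar\partial v$ with an $L^2$-estimate against a weight $e^{-2n\eta(\psi)}$, $\eta$ convex increasing with $\eta(s)\sim s$ as $s\to-\infty$, so that this weight behaves like the critical non-integrable density $|z-z_0|^{-2n}$ at $z_0$ and therefore forces $u(z_0)=0$; then $F:=v-u$ is holomorphic with $F(z_0)=f(z_0)$, and optimising $\eta$ (solving the associated differential equation, whose behaviour at $\psi=-\infty$ is pinned by the logarithmic pole and which is matched across the level $\psi=-a$) together with $\delta\downarrow0$ produces exactly the factor $e^{2na}$. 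That this factor is sharp — a genuine ``Lelong number $2n$'' contribution — is visible in the model case $\Omega=\BB^n(z_0;R)$, where $G_\Omega(z,z_0)=\log(|z-z_0|/R)$, $U=\BB^n(z_0;Re^{-a})$ and $B_{\BB^n(z_0;\rho)}(z_0)=n!/(\pi^n\rho^{2n})$, so that $B_U(z_0)=e^{2na}\,B_\Omega(z_0)$ on the nose.

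The hard part is entirely the \emph{optimal} value of the constant: a bound $B_U(z_0)\le C_n\,e^{2na}B_\Omega(z_0)$ with a purely dimensional $C_n$ follows already from the classical Ohsawa--Takegoshi theorem, or even from comparing $U$ with concentric balls via monotonicity of Bergman kernels, whereas $C_n=1$ requires the exact solution of the auxiliary ODE and a delicate limit — the same phenomenon that distinguishes the full Suita conjecture from its qualitative form. A secondary technical nuisance is that neither $G_\Omega(\cdot,z_0)$ nor $\partial U$ need be smooth; I would handle this by approximating $\psi$ from above by smooth plurisubharmonic functions, or by exhausting $\Omega$ from inside by smooth strongly pseudoconvex domains, carrying out the estimate there and passing to the limit using the monotone convergence of the Green functions and the semicontinuity of Bergman kernels under such approximations.
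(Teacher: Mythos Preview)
Your reduction to the optimal openness-type extension, and the equivalence between the two displayed formulations, are correct (one trivial fix: in the converse direction take $F=f(z_0)\,K_\Omega(\cdot,z_0)/B_\Omega(z_0)$, without the conjugate). This is exactly the route the paper takes: the statement is the case $m=0$, $\varphi\equiv 0$ of Corollary~\ref{MainCor:Opt}, which is Theorem~\ref{MainThm:Opt} specialised to the pluricomplex Green function. One packaging correction, though: to apply Theorem~\ref{MainThm:Opt} you must take $\psi=2n\,G_\Omega(\cdot,z_0)$, not $G_\Omega(\cdot,z_0)$ itself. Then $U=\{\psi<-2na\}=\Omega_{2na}$ and the constant comes out as $e^{2na}$; more importantly, $\calI(\psi)_{z_0}=\frakm_{z_0}$, so the conclusion $F|_{\Omega_{2na}}-f\in\Gamma(\Omega_{2na},\calI(\psi))$ forces $F(z_0)=f(z_0)$. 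With your unscaled $\psi$ one has $e^{-\psi}\sim|z-z_0|^{-1}$, which is locally integrable for $n\ge 1$, hence $\calI(\psi)_{z_0}=\calO_{z_0}$ and no pointwise condition survives. You clearly have the factor $2n$ in mind (it appears in your weight $e^{-2n\eta(\psi)}$ and in your ``Lelong number $2n$'' remark); it simply needs to be absorbed into $\psi$ to match the statement of the theorem.

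Where your sketch and the paper genuinely differ is in how the \emph{optimal} constant is reached. You propose to optimise the auxiliary ODE inside the twisted $\bar\partial$ scheme directly. The paper does not obtain $e^a$ that way: the twisted $\bar\partial$ argument in Theorem~\ref{Thm:CoarseExtThm} only gives $e^a+1$, and the sharp constant $e^a$ is then squeezed out by the concavity of minimal $L^2$ integrals (Theorem~\ref{Thm:Concave}), namely that $r\mapsto I(-\log r)$ is concave on $(0,1]$ and hence $I(0)\le e^a I(a)$. The paper also records a second, independent proof (Section~5.3, Theorem~\ref{Thm:OptExtWeakForm}) via a tensor-power trick built on the product property of minimal $L^2$ extensions (Theorem~\ref{MainThm:Prod}); that is in fact Blocki's original argument in~\cite{Blocki2014}, and your proposal does not use it.
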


Here, $A^2$ denote the Bergman spaces, $B_\Omega$ denotes the Bergman kernel and $G_\Omega$ is the pluricomplex Green function. Let $g\equiv1$, then \eqref{Eq:Blocki1} implies a sharp lower bound for Bergman kernels.
If $n=1$, letting $a\to+\infty$, Blocki \cite{Blocki2014} obtained an alternative proof to the inequality part of Suita's conjecture.
We remark that Blocki-Pflug \cite{BlockiPflug1998} and Herbort \cite{Herbort1999} used a coarse version of Theorem \ref{Thm:BlockiExt} to prove that every bounded hyperconvex domain in $\CC^n$ is Bergman complete. \\

After \cite{ZhuGuanZhou2012}, \cite{Blocki2013} and \cite{GuanZhou2012}, there are many studies on optimal $L^2$ extension theorem and its applications. However, little is known about general optimal $L^2$ extension problems of openness type. The first main result of the present article is the following optimal $L^2$ extension theorem of openness type which answers the problem.

\begin{theorem} \label{MainThm:Opt}
	Let $(\Omega,\omega)$ be a weakly pseudoconvex K\"ahler manifold and $(E,h)$ be a Hermitian holomorphic vector bundle over $\Omega$. Let $\psi<0$ and $\varphi$ be quasi-psh functions on $\Omega$. Suppose there are continuous real $(1,1)$-forms $\gamma\geqslant0$ and $\rho$ on $\Omega$ such that
	\begin{equation*}
		\ddbar\psi\geqslant\gamma, \quad \ddbar\varphi\geqslant\rho \quad\text{and}\quad
		\sqrt{-1}\Theta(E,h)+(\gamma+\rho)\otimes\Id_E \geqslant_\Nak 0.
	\end{equation*}
	Given $a\in\RR_+$, let $\Omega_a:=\{z\in\Omega:\psi(z)<-a\}$. Then for any holomorphic section $f\in\Gamma(\Omega_a,K_\Omega\otimes E)$ satisfying $\int_{\Omega_a} |f|_{\omega,h}^2e^{-\varphi} dV_\omega < +\infty$, there exists a holomorphic section $F\in\Gamma(\Omega,K_\Omega\otimes E)$ such that
	\begin{equation} \label{Eq:EqualwrtCalI-intro}
		F|_{\Omega_a}-f\in\Gamma(\Omega_a, \calO(K_\Omega\otimes E)\otimes\calI(\varphi+\psi))
	\end{equation}
	and
	\begin{equation} \label{Eq:OptEstOpen-intro}
		\int_\Omega |F|_{\omega,h}^2e^{-\varphi} dV_\omega
		\leqslant e^a \int_{\Omega_a} |f|_{\omega,h}^2e^{-\varphi} dV_\omega.
	\end{equation}
\end{theorem}

Here, $\calI(\varphi+\psi)\subset\calO_\Omega$ is the multiplier ideal sheaf associated to $\varphi+\psi$. If there exists a closed analytic subset $S\subset\Omega_a$ such that $\calI(\varphi+\psi)_x\subset\frakm_x$ for any $x\in S$, where $\frakm_x$ denotes the unique maximal ideal of $\calO_x$, then the condition \eqref{Eq:EqualwrtCalI-intro} implies $F|_S = f|_S$. The use of $\calI(\varphi+\psi)$ allows us to deal with more general situations.

If $\Omega=\BB^n$ is the unit ball in $\CC^n$, $(E,h)$ is a trivial line bundle, $\psi=2n\log|z|$ and $\varphi\equiv0$, then \eqref{Eq:EqualwrtCalI-intro} is equivalent to $F(0)=f(0)$, and the constant $e^a$ in \eqref{Eq:OptEstOpen-intro} cannot be replaced by any smaller one.
Therefore, the uniform estimate in Theorem \ref{MainThm:Opt} is optimal. A more general example will be given in Example \ref{Ex:GenOptConst}.

In Theorem \ref{MainThm:Opt}, let $\Omega\Subset\CC^n$ and $\psi=2(n+m) G_\Omega(\cdot,z_0)$, then we obtain the following generalization to Blocki's theorem.

\begin{corollary} \label{MainCor:Opt}
	Let $\Omega\subset\CC^n$ be a bounded pseudoconvex domain and $\varphi$ be a psh function on $\Omega$. Given $z_0\in\Omega$, $m\in\NN$ and $a\in\RR_+$, let $U=\{G_\Omega(\cdot,z_0)<-a\}$. For any $f\in A^2(U;e^{-\varphi})$, there exists a holomorphic function $F\in A^2(\Omega;e^{-\varphi})$ such that $F$ coincides with $f$ up to order $m$ at $z_0$ and
	\begin{equation}
		\int_\Omega |F|^2e^{-\varphi} d\lambda \leqslant e^{2(n+m)a} \int_U |f|^2e^{-\varphi} d\lambda.
	\end{equation}
\end{corollary}

Here, ``$F$ coincides with $f$ up to order $m$ at $z_0$'' means all partial derivatives up to order $m$ of $F$ and $f$ are equal at $z_0$ (see Definition \ref{Def:mEqual}).\\

Let us briefly explain the proof of Theorem \ref{MainThm:Opt}.
At first, we modify the techniques developed by Guan-Zhou \cite{GuanZhou2015} and Zhou-Zhu \cite{ZhouZhu2018} to prove a coarse version of Theorem \ref{MainThm:Opt}, in which the uniform estimate is asymptotically optimal as $a\to+\infty$.
Then, by using a concavity of certain minimal $L^2$ integrals, which is essentially due to Guan \cite{Guan2019}, we obtain the optimal estimate.

Using similar arguments, we also prove an optimal $L^2$ extension theorem of openness type that has extra multiplying terms (see Theorem \ref{Thm:NewOptOpenExt}). In particular, we have the following generalization of Theorem \ref{Thm:DemExt}, whose uniform estimate is optimal.

\begin{theorem} \label{MainThm:DemExt}
	Let $(\Omega,\omega)$ be a weakly pseudoconvex K\"ahler manifold, $(E,h)$ be a Hermitian holomorphic vector bundle over $\Omega$ and $\varphi$ be a quasi-psh function on $\Omega$. Suppose there is a continuous real $(1,1)$-form $\rho$ on $\Omega$ so that
	\begin{equation*}
		\ddbar\varphi\geqslant\rho \quad\text{and}\quad \sqrt{-1}\Theta(E,h)+\rho\otimes\Id_E \geqslant_\Nak 0.
	\end{equation*}
	Let $w=(w_1,\ldots,w_p)$ be a tube of holomorphic functions on $\Omega$ $(1\leqslant p\leqslant \dim\Omega)$, let
	\begin{equation*}
		S = \{x\in\Omega:w(x)=0\} \quad\text{and}\quad U = \{x\in\Omega:|w(x)|<1\}.
	\end{equation*}
	Assume that $dw_1\wedge\cdots\wedge dw_p\neq0$ generically on $S$. Given $m\in\NN$ and $\eps\in(0,m+p)$, for any holomorphic section $f\in\Gamma(U,K_\Omega\otimes E)$ satisfying $\int_U|f|_{\omega,h}^2e^{-\varphi}dV_\omega<+\infty$, there exists a holomorphic section $F\in\Gamma(\Omega,K_\Omega\otimes E)$ such that $F$ coincides with $f$ up to order $m$ along $S$ and
	\begin{equation*}
		\int_\Omega \frac{|F|_{\omega,h}^2e^{-\varphi}}{(1+|w|^2)^{m+p+\eps}} dV_\omega \leqslant
		\frac{\int_0^1\tau^{m+p-1}(1-\tau)^{\eps-1}d\tau}{\int_0^{1/2}\tau^{m+p-1}(1-\tau)^{\eps-1}d\tau}
		\int_U \frac{|f|_{\omega,h}^2e^{-\varphi}}{(1+|w|^2)^{m+p+\eps}} dV_\omega.
	\end{equation*}
\end{theorem}

In the proof of Theorem \ref{MainThm:Opt}, we need a concavity for certain minimal $L^2$ integrals. As a by-product of the proof, we obtain a necessary condition for the above concavity degenerating to linearity (see Remark \ref{Rmk:ConcaveLinear}).
We can use such a necessary condition to prove the equality part of Suita's conjecture, this approach is partially different from Guan-Zhou \cite{GuanZhou2015} and Dong \cite{DongArxiv} (see Section \ref{Sec:Suita}).\\

Blocki \cite{Blocki2014} obtained the optimal estimate in Theorem \ref{Thm:BlockiExt} by using a tensor power trick that relies on the product property of Bergman kernels. Notice that, for open set $\Omega\Subset\CC^n$ and $z_0\in\Omega$, one has
\begin{equation*}
	\tfrac{1}{B_\Omega(z_0)} = \inf\left\{ \|f\|_{A^2(\Omega)}^2: f\in A^2(\Omega),f(z_0)=1 \right\},
\end{equation*}
i.e. the Bergman kernel is the reciprocal of the norm of certain minimal $L^2$ extension. As another main result, we prove a product property for general minimal $L^2$ extensions, which generalizes the product property of Bergman kernels.

Let $\Omega_1$ and $\Omega_2$ be complex manifolds, $E_1\rightarrow\Omega_1$ and $E_2\rightarrow\Omega_2$ be holomorphic vector bundles, $S_1\subset\Omega_1$ and $S_2\subset\Omega_2$ be arbitrary closed subsets.
Let $\Omega:=\Omega_1\times\Omega_2$ be the product manifold, and let $p_i:\Omega\rightarrow\Omega_i$ be the natural projections, then $E:=p_1^*E_1\otimes p_2^*E_2$ is a holomorphic vector bundle over $\Omega$.
For $i=1$ and $2$, we choose a continuous volume form $dV_i$ on $\Omega_i$, a continuous Hermitian metric $h_i$ on $E_i$, and a measurable function $\psi_i$ on $\Omega_i$ which is locally bounded from above.
We define $dV:= p_1^*dV_1\times p_2^*dV_2$, $h:=p_1^*h_1\otimes p_2^*h_2$ and $\psi:=p_1^*\psi_1+p_2^*\psi_2$. We shall consider the following Bergman spaces:
\begin{equation*}
	A^2(\Omega_i,E_i) := A^2(\Omega_i,E_i;h_i,e^{-\psi_i}dV_i), \quad A^2(\Omega,E) := A^2(\Omega,E;h,e^{-\psi}dV).
\end{equation*}

\begin{theorem} \label{MainThm:Prod}
	Let $f^{(1)}\in A^2(\Omega_1,E_1)$, $f^{(2)}\in A^2(\Omega_2,E_2)$ and $m_1,m_2\in\NN$ be given. For $i=1$ and $2$, let $F^{(i)}$ be the unique element with minimal norm in $A^2(\Omega_i,E_i)$ that coincides with $f^{(i)}$ up order $m_i$ on $S_i$. Let $F$ be the unique element with minimal norm in $A^2(\Omega,E)$ that coincides with $f:=f^{(1)}\otimes f^{(2)}$ up to order $m_1+m_2$ on $S:=S_1\times S_2$. Then
	\begin{equation*}
		\|F\|_{A^2(\Omega,E)} \geqslant \|F^{(1)}\|_{A^2(\Omega_1,E_1)} \|F^{(2)}\|_{A^2(\Omega_2,E_2)}.
	\end{equation*}
	Moreover, if $f^{(i)}$ vanishes up to order $m_i-1$ on $S_i$, then
	\begin{equation*}
		F=F^{(1)}\otimes F^{(2)} \quad\text{and}\quad
		\|F\|_{A^2(\Omega,E)} = \|F^{(1)}\|_{A^2(\Omega_1,E_1)} \|F^{(2)}\|_{A^2(\Omega_2,E_2)}.
	\end{equation*}
\end{theorem}

If $S_i\subset\Omega_i$ are singleton sets and $m_1=m_2=0$, then the last statement corresponds to the product property for Bergman kernels. Imitating the proof of \cite{Blocki2014} and using Theorem \ref{MainThm:Prod}, we can prove a version of Theorem \ref{MainThm:Opt} (see Theorem \ref{Thm:OptExtWeakForm}).\\

The $L^2$ extension problem of openness type is closely related to the usual one. Roughly speaking, the limiting cases of $L^2$ extension theorems of openness type are $L^2$ extension theorems of Ohsawa-Takegoshi type.
In Theorem \ref{MainThm:Opt}, if $\psi$ has suitable singularities along the pole $S:=\{\psi=-\infty\}$, then the right hand side of \eqref{Eq:OptEstOpen-intro} will converge to an integral on $S$ as $a\to+\infty$.
In this way, we obtain an optimal $L^2$ extension theorem in the setting of Guan-Zhou \cite{GuanZhou2015} (see Theorem \ref{Thm:NewMesExt}). The same approach is applicable to certain jet $L^2$ extensions (see Theorem \ref{Thm:OptJetExt}).

Nevertheless, optimal $L^2$ extension theorems of openness type imply something more. As an application of Theorem \ref{MainThm:Opt}, we prove the following result.

\begin{theorem} \label{MainThm:Sharper}
	Let $\Omega$ be a weakly pseudoconvex K\"ahler manifold of dimension $n$, let $(E,h)$ be a Hermitian holomorphic vector bundle over $\Omega$ whose curvature is Nakano semi-positive. Suppose there exists a psh function $\psi:\Omega\rightarrow[-\infty,0)$ with a logarithmic pole at $w\in\Omega$. Moreover, we assume that:
	
	(1) $(E,h)$ is Griffiths positive at $w$; 
	(2) there exists a coordinate chart $(U,z)$ such that $z(w)=0$ and $\psi(z)=c+\log|z|+o(|z|^2)$ as $z\to0$.
	
	\noindent Then there exists a constant $\tau\in(0,1)$ depends on $h$ and $\psi$, for any $\xi\in E_w$, we can find a holomorphic section $F\in\Gamma(\Omega,K_\Omega\otimes E)$ so that $F(w)=dz_1\wedge\cdots\wedge dz_n\otimes\xi$ and
	\begin{equation}
		\int_\Omega(\sqrt{-1})^{n^2}F\wedge_h\overline{F} \leqslant (1-\tau)\frac{(2\pi)^n}{n!}e^{-2nc}|\xi|_h^2.
	\end{equation}
\end{theorem}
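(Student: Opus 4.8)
The plan is to deduce Theorem \ref{MainThm:Sharper} from Theorem \ref{MainThm:Opt} by choosing the weight data so that the constant $e^a$ on the right-hand side of \eqref{Eq:OptEstOpen-intro}, combined with the volume of the sublevel set $\Omega_a$, beats the naive bound coming from the mean value inequality. Concretely, I would take $\varphi\equiv 0$ (so $\rho=0$, which is legitimate since $(E,h)$ is Nakano semi-positive), and take the $\psi$ in Theorem \ref{MainThm:Sharper} — suitably rescaled, say $\psi_{\mathrm{new}} = 2n(\psi-c)$ or a variant with the right normalization — as the $\psi$ in Theorem \ref{MainThm:Opt}, with $\gamma$ any continuous $(1,1)$-form dominated by $\ddbar\psi_{\mathrm{new}}$; near the pole $w$ one can even use $\gamma=0$ locally and patch, since $\psi$ is psh. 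For $\xi\in E_w$ one wants a section $f$ on $\Omega_a$ that realizes the prescribed $1$-jet $dz_1\wedge\cdots\wedge dz_n\otimes\xi$ at $w$; the obvious choice is the constant section $f = dz_1\wedge\cdots\wedge dz_n\otimes\xi$ read through the local trivialization, and then Theorem \ref{MainThm:Opt} produces $F$ with $F(w)=f(w)$ (the multiplier ideal $\calI(\psi_{\mathrm{new}})$ at $w$ is contained in $\frakm_w$ because of the logarithmic pole) and $\int_\Omega|F|^2\le e^a\int_{\Omega_a}|f|^2$.

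The next step is to estimate $\int_{\Omega_a}|f|_{\omega,h}^2\,dV_\omega$ from above as $a\to+\infty$. By hypothesis (2), near $w$ the sublevel set $\{\psi<-a\}$ is, up to exponentially small corrections, the ball $\{|z|<e^{-(a+c')}\}$ for the appropriate constant $c'$ tied to the normalization of $\psi_{\mathrm{new}}$; using hypothesis (1), that $(E,h)$ is Griffiths positive at $w$, together with continuity of $h$ and of $\omega$, one gets
\[
\int_{\Omega_a}|f|_{\omega,h}^2\,dV_\omega = \bigl(|\xi|_h^2 + o(1)\bigr)\cdot\mathrm{Vol}\bigl(\{|z|<e^{-(a+c')}\}\bigr)
\]
with the Euclidean volume $\mathrm{Vol} = \frac{\pi^n}{n!}e^{-2n(a+c')}$ of the ball of that radius. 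Plugging into \eqref{Eq:OptEstOpen-intro} and tracking how $a$ enters both the factor $e^a$ and the exponent $e^{-2na}$ in the volume, the $a$-dependence is $e^{a}\cdot e^{-2na}=e^{-(2n-1)a}$ — which still tends to $0$. This is where the Griffiths positivity is crucial: it lets one improve the leading constant $|\xi|_h^2$ to $(1-\tau)|\xi|_h^2$ for some fixed $\tau\in(0,1)$, because Griffiths positivity forces $|f|_h^2$ (with $f$ the constant section in the trivialization) to be \emph{strictly} smaller than $|\xi|_h^2$ off the center, so the integral over the ball is strictly less than $|\xi|_h^2$ times the volume; quantitatively one picks $a$ once, large enough, and absorbs the error terms, setting $\tau$ to be the resulting definite gap. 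The target constant $\frac{(2\pi)^n}{n!}e^{-2nc}$ should emerge after bookkeeping the factor $2^n$ hidden in the standard volume form $dV_\omega$ versus $d\lambda$ and the precise value of $c'$ in terms of $c$.

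The main obstacle I anticipate is making the comparison between the sublevel sets $\{\psi<-a\}$ and Euclidean balls quantitatively clean enough, and correctly matching normalization constants. The expansion $\psi(z)=c+\log|z|+o(|z|^2)$ controls $\psi$ only to an error $o(|z|^2)$, which is far finer than $\log|z|$ near $0$, so $\{\psi<-a\}$ really is close to a round ball of radius $\sim e^{-(a-c)}$ (modulo the rescaling by $2n$); but one must check that the $o(|z|^2)$ error, integrated against $|f|_h^2\,dV_\omega$ over the shrinking ball, contributes only lower-order terms relative to the main $e^{-2na}$ scale, and that the Griffiths-positivity gain is not swamped. A secondary point is justifying $\calI(\psi_{\mathrm{new}})_w\subset\frakm_w$, i.e. that $e^{-\psi_{\mathrm{new}}}$ is non-integrable at $w$ to the order needed so that $F(w)=f(w)$ is forced by \eqref{Eq:EqualwrtCalI-intro}; with $\psi_{\mathrm{new}}\sim 2n\log|z|$ one has $e^{-\psi_{\mathrm{new}}}\sim|z|^{-2n}$, whose product with any unit is non-integrable in $\CC^n$, so $\calI(\psi_{\mathrm{new}})_w\subset\frakm_w$ holds, and that is exactly the normalization that produces the exponent $2n$ — and hence the gain — in the first place. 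Once these estimates are in place, choosing $a$ large and fixing $\tau$ accordingly completes the proof.
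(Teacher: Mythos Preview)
Your overall strategy matches the paper's proof (Theorem~\ref{Thm:Sharper}): apply the optimal $L^2$ extension of openness type with $\varphi\equiv0$, a suitably rescaled $\psi$, and the constant section $f=dz\otimes\xi$ on the sublevel set, then exploit Griffiths positivity to gain a factor $(1-\tau)$. But there are two real errors in your bookkeeping that would derail the computation.

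First, the normalization. With $\psi_{\mathrm{new}}=2n\psi$ (which, as you correctly note at the end, is forced by the requirement $\calI(\psi_{\mathrm{new}})_w\subset\frakm_w$), the sublevel set $\{\psi_{\mathrm{new}}<-a\}$ is approximately the ball of radius $e^{-a/(2n)-c}$, \emph{not} $e^{-(a+c')}$. Its volume is $\sim\frac{\pi^n}{n!}e^{-a}e^{-2nc}$, so the product with the extension constant $e^a$ is $\frac{\pi^n}{n!}e^{-2nc}$, a fixed finite number---not $e^{-(2n-1)a}\to0$. Your computation proves too much: if the bound really tended to $0$ you would get $F\equiv0$, contradicting $F(w)=dz\otimes\xi$. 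The paper (equivalently) applies Theorem~\ref{Thm:OptExtWeakForm} directly to $\psi$ with the constant $e^{2na}$ and sublevel set $\{\psi<-a\}\approx\BB^n(0;e^{-a-c})$; either way the $a$-dependence cancels exactly.

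Second, and more subtly, you cannot take $a$ large. The Griffiths gain is only of order $r^2$: in a normal frame with $h_{\alpha\bar\beta}(w)=\delta_{\alpha\beta}$ and $dh_{\alpha\bar\beta}(w)=0$, the paper observes that $-\eps|\xi|_h^2|z|^2-|f|_h^2$ is psh on a small ball (this is exactly Griffiths positivity), and the mean value inequality yields
\[
\int_{\BB^n(0;r)}|f|_h^2\,d\lambda \leq \frac{\pi^n}{n!}r^{2n}\bigl(1-\eps\tfrac{n}{n+1}r^2\bigr)|\xi|_h^2.
\]
So the fractional gain $1-\eps\tfrac{n}{n+1}r^2$ tends to $1$ as $r\to0$, i.e.\ as $a\to\infty$. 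Meanwhile the error from $\psi(z)=c+\log|z|+o(|z|^2)$ is also $O(r^2)$: for any $\kappa>0$ there is $r_1>0$ with $|\psi-c-\log|z||<\kappa|z|^2$ on $\BB^n(0;r_1)$, which inflates the effective radius by a factor $e^{\kappa r_1^2}$ and hence the bound by $e^{2n\kappa r_1^2}$. The paper's key move is to fix $\kappa=\frac{\eps}{2(n+1)}$ \emph{first}, then choose $r_1$ accordingly and set $a=-\log r_1-c+\kappa r_1^2$; the net factor becomes $e^{2n\kappa r_1^2}(1-2n\kappa r_1^2)<1$, which is the $(1-\tau)$. So $a$ is fixed at a specific finite value balancing gain against error, not sent to infinity.
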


According to the optimal $L^2$ extension theorem (see \cite{GuanZhou2015,ZhouZhu2018}), without assuming the conditions (1) and (2) in Theorem \ref{MainThm:Sharper}, we can find a holomorphic section $F'\in\Gamma(\Omega,K_\Omega\otimes E)$ such that $F'(w)=dz_1\wedge\cdots\wedge dz_n\otimes\xi$ and
\begin{equation} \label{Eq:OptEst0}
	\int_\Omega (\sqrt{-1})^{n^2} F'\wedge_h\overline{F'} \leqslant \frac{(2\pi)^n}{n!}e^{-2nc}|\xi|_h^2,
\end{equation}
where $c:=\varliminf_{z\rightarrow0}(\psi(z)-\log|z|)$. Hence, Theorem \ref{MainThm:Sharper} gives a couple of additional conditions so that the estimate in \eqref{Eq:OptEst0} can be improved.
Notice that, Hosono's \cite{Hosono2019} result is a special case of the above theorem (see Remark \ref{Rmk:Hosono}).

If $\Omega$ is an open Riemann surface admitting Green function and $\psi=G_\Omega(\cdot,w)$, then the condition (2) is automatically satisfied.
We should point out that, if $(E,h)$ is Nakano positive at some point $x\in\Omega$, then without assuming (1) and (2), we also have sharper estimates (see Remark \ref{Rmk:NakSharper}).
However, ``Nakano positivity'' is stronger than ``Griffiths positivity'', and we have an explicit estimate of $\tau$ in Theorem \ref{MainThm:Sharper}.\\

The rest parts of the article are organized as follows.
In Section \ref{Chap:Pre}, we recall some notations and preparatory results.
In Section \ref{Chap:Coarse}, we prove an asymptotically optimal version of Theorem \ref{MainThm:Opt}.
In Section \ref{Chap:Prod}, we study the Bergman spaces on product manifolds and prove Theorem \ref{MainThm:Prod}.
In Section \ref{Chap:Optimal}, we prove Theorem \ref{MainThm:Opt} by using a concavity for minimal $L^2$ integrals; we also prove a variant of Theorem \ref{MainThm:Opt} by using a tensor power trick; as applications, we prove Theorem \ref{MainCor:Opt} and \ref{MainThm:Sharper}.
In Section \ref{Chap:OTtype}, we discuss the relations between $L^2$ extension problems of openness type and that of Ohsawa-Takegoshi type, we prove Theorem \ref{MainThm:DemExt} and some optimal (jet) $L^2$ extension theorems. 
Some material of this paper comes from the first author's PhD thesis \cite{XuPHD}.

\section{Preliminaries} \label{Chap:Pre}

In this section, we recall some notations and preparatory results.

Let $\Omega\subset\CC^n$ be an open set, let $\varphi$ be a measurable function on $\Omega$ which is locally bounded from above. We assume that $\{z\in\Omega:\varphi(z)=-\infty\}$ is a set of zero measure. Then the weighted \textbf{Bergman space} of $\Omega$ is the Hilbert space defined by
\begin{equation*}
	A^2(\Omega;e^{-\varphi}) := \left\{f\in\calO(\Omega): \|f\|^2=\int_\Omega |f|^2e^{-\varphi} d\lambda <+\infty \right\},
\end{equation*}
and the \textbf{Bergman kernel} $B_\Omega(z,w;e^{-\varphi})$ is the reproducing kernel of $A^2(\Omega;e^{-\varphi})$.
The diagonal Bergman kernel satisfies the following extremal property:
\begin{equation*}
	B_\Omega(w;e^{-\varphi}) := B_\Omega(w,w;e^{-\varphi}) = \sup\{|f(w)|^2:f\in A^2(\Omega;e^{-\varphi}),\|f\|\leqslant1\}.
\end{equation*}
If $\varphi\equiv0$, we shall simplify these notations as $A^2(\Omega)$, $B_\Omega(z,w)$ and $B_\Omega(w)$.

We also study the space of $L^2$ holomorphic sections. Let $M$ be a complex manifold and $dV$ be a continuous volume form on $M$. Let $E$ be a holomorphic vector bundle over $M$ and $h$ be a continuous Hermitian metric on $E$.
Let $\psi$ be a measurable function on $M$ which is locally bounded from above and $\{\psi=-\infty\}$ is a set of zero measure. Then the Bergman space of $E\rightarrow M$ is the Hilbert space defined by
\begin{equation*}
	A^2(M,E;h,e^{-\psi}dV) := \left\{f\in \Gamma(M,E): \|f\|^2 = \int_M |f|_h^2 e^{-\psi}dV <+\infty \right\}.
\end{equation*}
If there are no confusion, we simply denote it by $A^2(M,E)$.

The following generalized Montel's theorem will be used repeatedly.

\begin{theorem}[Montel's Theorem]
	Let $E$ be a holomorphic vector bundle over a complex manifold $M$. Let $h$ be a continuous Hermitian metric on $E$ and $dV$ be a continuous volume form on $M$.
	Let $\{U_j\}_{j=1}^\infty$ be a sequence of open subsets of $M$ and $f_j\in\Gamma(U_j,E)$ be holomorphic sections.
	Assume that for any relatively compact open set $D\Subset M$, there exists $j_D\in\NN_+$ and $C_D\in\RR_+$ so that $D\subset U_j$ and $\int_D |f_j|_h^2 dV \leqslant C_D$ for any $j\geqslant j_D$.
	Then there exists a subsequence of $\{f_j\}_{j=1}^\infty$ that converges uniformly on any compact subsets of $M$ to some $f\in\Gamma(M,E)$.
\end{theorem}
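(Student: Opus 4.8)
The plan is to reduce the statement to the classical Montel theorem for holomorphic functions on domains in $\CC^n$ via local trivializations of $E$, and then to patch the resulting local subsequences together by a diagonal argument. Since $M$ is second countable, fix a countable cover of $M$ by coordinate charts $(V_k,z_k)$, $k\in\NN$, such that $E|_{V_k}$ is holomorphically trivial, $z_k$ maps $V_k$ biholomorphically onto a bounded open subset of $\CC^n$, and there are relatively compact open sets $W_k'\Subset W_k\Subset V_k$ with $\{W_k'\}_k$ still covering $M$. Fixing a holomorphic frame over each $V_k$, a holomorphic section $s\in\Gamma(V_k,E)$ is identified with an $r$-tuple $(s^1,\dots,s^r)$ of holomorphic functions, $r=\mathrm{rk}\,E$.

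First I would establish the local estimates. Fix $k$. On a neighborhood of the compact set $\overline{W_k}\subset V_k$, the continuous data $h$ and $dV$ are comparable to the standard Hermitian product and the Lebesgue measure $d\lambda$ in the coordinates $z_k$; hence there is $c_k>0$ with $|s|_h^2\geq c_k\sum_\alpha|s^\alpha|^2$ and $dV\geq c_k\,d\lambda$ there. Applying the hypothesis with $D$ a relatively compact neighborhood of $\overline{W_k}$ gives $j_k\in\NN$ and $C_k\in\RR_+$ such that $D\subset U_j$ and $\int_D\sum_\alpha|f_j^\alpha|^2\,d\lambda\leq C_k/c_k$ for all $j\geq j_k$. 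Since each $|f_j^\alpha|^2$ is plurisubharmonic, the sub-mean value inequality over small balls converts this $L^2$ bound into a uniform sup bound $\sup_{\overline{W_k}}\sum_\alpha|f_j^\alpha|^2\leq C_k'$ for $j\geq j_k$; Cauchy's estimates then bound all first-order derivatives of the $f_j^\alpha$ uniformly on a neighborhood of each point of $W_k$, so the family $\{f_j\}_{j\geq j_k}$ is locally bounded and locally equicontinuous on $W_k$.

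Then I would run a diagonal extraction. By Arzel\`a--Ascoli, from $\{f_j\}_{j\geq j_1}$ extract a subsequence converging uniformly on compact subsets of $W_1$; refine it to one converging uniformly on compact subsets of $W_2$; and so on; the Cantor diagonal subsequence $\{f_{j_\ell}\}_\ell$ converges uniformly on compact subsets of every $W_k$. Since any compact $K\subset M$ is covered by finitely many $W_{k_i}'$ and $K=\bigcup_i\bigl(K\cap\overline{W_{k_i}'}\bigr)$ with each piece a compact subset of $W_{k_i}$, this yields uniform convergence on every compact subset of $M$. The limit defines a holomorphic section $f\in\Gamma(M,E)$: in each chart the components of $f_{j_\ell}$ converge locally uniformly, hence to holomorphic functions, and these local limits agree on overlaps because for $\ell$ large the $f_{j_\ell}$ are genuine sections over the relevant open set and so transform by the transition functions of $E$.

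The only genuinely delicate point is bookkeeping: the $f_j$ live on different open sets $U_j$, so for each chart one must discard an initial segment of the sequence before the estimates and the extraction apply. This causes no harm, since both conclusions — uniform convergence on compact subsets of $M$, and being a section over $M$ — depend only on the tail of the sequence, and the finitely-many omissions made per chart are absorbed when we pass to the diagonal subsequence and to its tail over a given compact set. The remaining ingredients — the sub-mean value inequality, Cauchy's estimates, Arzel\`a--Ascoli, and the holomorphicity of locally uniform limits — are all classical.
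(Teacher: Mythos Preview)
The paper states this generalized Montel theorem as a preparatory result without proof, treating it as known. Your argument is correct and follows the standard route: pass to local trivializations, use continuity of $h$ and $dV$ to compare with the Euclidean data, convert the local $L^2$ bounds into sup bounds via the sub-mean-value inequality (this is essentially the content of Lemma~\ref{Lemma:BasicEstCn} in the paper), obtain equicontinuity from Cauchy estimates, and run a diagonal extraction over a countable chart cover. Your handling of the bookkeeping issue---that $f_j$ is only defined on $U_j$, so one must discard an initial segment before each local extraction---is also correct.
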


Let $\Omega$ be a domain in $\CC^n$, then the \textbf{pluricomplex Green function} of $\Omega$ with a pole at $w\in\Omega$ is defined as
\begin{equation*}
	G_\Omega(z,w) := \sup\{u(z): u\in\psh^-(\Omega), u(z)\leqslant\log|z-w|+O(1) \text{ as } z\rightarrow w \}.
\end{equation*}
If $G_\Omega(\cdot,w)\not\equiv-\infty$ (e.g. $\Omega$ is bounded), then $G_\Omega(\cdot,w)$ is a negative psh function on $\Omega$ having a logarithmic pole at $w$.

An upper semi-continuous function $\varphi:M\rightarrow[-\infty,+\infty)$ on a complex manifold $M$ is said to be \textbf{quasi-psh} if $\varphi$ is locally the sum of a psh function and a smooth function.
If $\varphi$ is quasi-psh, then $\varphi\in L^1_\loc(M)$ and there exists a continuous real $(1,1)$-form $\gamma$ on $M$ such that $\ddbar\varphi\geqslant\gamma$ in the sense of currents.

Let $\varphi$ be a quasi-psh function on $M$, we denote by $\calI(\varphi)\subset\calO_M$ the \textbf{multiplier ideal sheaf} associated to $\varphi$, i.e.
\begin{equation*}
	\calI(\varphi)_x := \left\{ f_x\in\calO_x: \int_U|f_x|^2e^{-\varphi}dV < +\infty \text{ for some nbhd }U\text{ of }x \right\}.
\end{equation*}
Notice that, if $\varphi(x)>-\infty$, then $\calI(\varphi)_x=\calO_x$. Nadel \cite{Nadel1990} showed that $\calI(\varphi)$ is a coherent analytic sheaf, and Guan-Zhou \cite{GuanZhouSOC} proved that $\calI(\varphi)$ satisfies the strong openness property, i.e. $\calI(\varphi) = \cup_{\eps>0}\calI((1+\eps)\varphi)$.

\begin{lemma}[see \cite{GrauertRemmertCoherent}] \label{Lemma:Coherent}
	Assume that $\{f_j\}_{j=1}^\infty$ is a sequence of holomorphic functions in $\Gamma(M,\calI(\varphi))$ that converges uniformly on any compact subsets of $M$ to some $f\in\calO(M)$, then $f\in\Gamma(M,\calI(\varphi))$.
\end{lemma}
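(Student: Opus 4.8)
The plan is to reduce the statement to a purely local, in fact purely algebraic, assertion and then to settle it with Krull's intersection theorem. Since $\Gamma(M,\calI(\varphi))=\{g\in\calO(M):g_x\in\calI(\varphi)_x\text{ for all }x\in M\}$, it suffices to fix an arbitrary point $x\in M$ and prove that $f_x\in\calI(\varphi)_x$. The key observation is that $\calI(\varphi)_x$ is an ideal of the ring $\calO_{M,x}$ (closure under addition follows from $|a+b|^2\leq 2|a|^2+2|b|^2$, and closure under multiplication from the local boundedness of holomorphic functions), and that $\calO_{M,x}$ is a Noetherian local ring. So no use of coherence of $\calI(\varphi)$ or of functional analysis is really needed: only this algebraic structure together with the convergence of Taylor coefficients under uniform convergence on compact sets.

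Fix $x$, write $R:=\calO_{M,x}$, let $\frakm$ denote its maximal ideal, and put $\mathfrak{a}:=\calI(\varphi)_x$. First I would invoke Krull's intersection theorem for the Noetherian local ring $R/\mathfrak{a}$, which gives
\begin{equation*}
	\bigcap_{m\geq 1}\bigl(\mathfrak{a}+\frakm^{m}\bigr)=\mathfrak{a};
\end{equation*}
thus it is enough to show $f_x\in\mathfrak{a}+\frakm^{m+1}$ for every $m\in\NN$. Fix such an $m$ and pass to the quotient vector space $Q:=R/(\mathfrak{a}+\frakm^{m+1})$, which is finite dimensional because $R/\frakm^{m+1}$ is. Since $(f_j)_x\in\mathfrak{a}$ by hypothesis, in $Q$ the class of $f_x$ equals the class of $(f-f_j)_x$, and modulo $\frakm^{m+1}$ the latter is represented by the Taylor polynomial $P_j$ of $f-f_j$ of degree $\leq m$ at $x$ (computed in any coordinate chart centred at $x$). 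Hence $[f_x]=[P_j]$ in $Q$ for every $j$. By the Cauchy estimates, uniform convergence $f_j\to f$ on compact sets forces every Taylor coefficient of $f_j$ at $x$ to converge to the corresponding one of $f$, so the finitely many coefficients of $P_j$ tend to $0$ and therefore $[P_j]\to 0$ in the finite-dimensional space $Q$. As $[f_x]=[P_j]$ does not depend on $j$, we conclude $[f_x]=0$, i.e. $f_x\in\mathfrak{a}+\frakm^{m+1}$; letting $m$ vary and applying the displayed identity completes the argument.

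I do not expect a serious obstacle: the only step that is not completely formal is the appeal to Krull's intersection theorem, which is precisely what upgrades ``$f$ agrees with the $f_j$ to every finite order modulo $\calI(\varphi)$'' into ``$f_x\in\calI(\varphi)_x$''. For completeness I would also indicate the more classical route matching the citation: by Nadel's theorem $\calI(\varphi)$ is coherent, the claim localizes to the closedness of $\Gamma(W,\calI(\varphi))$ inside $\calO(W)$ for a small polydisc $W$, and after choosing local generators one combines Cartan's Theorems~A and~B with the closedness of the image of a morphism of coherent sheaves on a Stein domain (see \cite{GrauertRemmertCoherent}); that argument packages essentially the same content into the standard functional-analytic input.
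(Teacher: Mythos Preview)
Your argument via Krull's intersection theorem is correct. The paper does not prove this lemma at all; it merely cites Grauert--Remmert's \emph{Coherent analytic sheaves}, so there is no in-text proof to match.

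That said, it is worth noting the contrast between the two routes you describe. The cited approach runs through Nadel's coherence theorem and then appeals to the general fact that the sections of a coherent subsheaf form a closed subspace in the Fr\'echet topology (the closed-image theorem for morphisms of coherent sheaves over Stein sets). Your Krull-based argument bypasses coherence entirely: it works for \emph{any} ideal sheaf whose stalks are ideals of $\calO_{M,x}$, since the only inputs are Noetherianity of $\calO_{M,x}$ and the elementary observation that uniform convergence on compacta implies convergence of each Taylor coefficient. This is more elementary and arguably more transparent for the specific statement at hand; the coherence route, on the other hand, immediately generalizes to sections of coherent subsheaves of arbitrary locally free sheaves and to vector-bundle-valued settings, which is why it is the form recorded in the reference.
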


Usually, we need to approximate a quasi-psh function by smooth functions.

\begin{theorem} \label{Thm:SteinAppro}
	Let $\varphi$ be a quasi-psh function on a Stein manifold $M$ and $\gamma$ be a continuous real $(1,1)$-form on $M$ such that $\ddbar\varphi\geqslant\gamma$. Then there exists a decreasing sequence $\{\varphi_k\}_{k=1}^\infty$ of smooth functions such that $\lim_{k\to+\infty}\varphi_k=\varphi$ and $\ddbar\varphi_k\geqslant\gamma$ for any $k$.
\end{theorem}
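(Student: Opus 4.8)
The plan is to reduce the problem to the classical regularization of quasi-psh functions, which by itself only yields a curvature bound of the form $\ddbar\varphi_\nu\geq\gamma-\eps_\nu\omega$ with a small loss $\eps_\nu\omega$, and then to remove that loss by adding a vanishingly small multiple of a smooth strictly \psh\ exhaustion of $M$.

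First, fix a continuous Hermitian metric $\omega$ on $M$. I would invoke the standard regularization theorem for quasi-psh functions on weakly pseudoconvex manifolds (see \cite{DemaillyCADG}): there exists a decreasing sequence $\{\varphi_\nu\}_{\nu=1}^\infty$ of smooth functions on $M$ with $\varphi_\nu\geq\varphi$, $\lim_\nu\varphi_\nu=\varphi$, and $\ddbar\varphi_\nu\geq\gamma-\eps_\nu\omega$ for some numbers $\eps_\nu>0$; after passing to a subsequence (which stays decreasing with the same limit) we may assume $\eps_\nu\downarrow0$. The curvature estimate here is the elementary part: in a coordinate chart, mollifying the positive current $\ddbar\varphi-\gamma$ against a smoothing kernel keeps it $\geq0$, while the mollifications of the continuous form $\gamma$ converge to $\gamma$ locally uniformly, so the loss is $O(\eps_\nu)\omega$; the globally defined, genuinely decreasing sequence is then produced by Richberg's regularized-maximum gluing carried out over a smooth \psh\ exhaustion of $M$.

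Next, I would use the Stein hypothesis to build a good exhaustion. Take any smooth strictly \psh\ exhaustion $\rho_0$ of $M$ and set $\rho:=\chi\circ\rho_0+C$ for a convex increasing function $\chi$ and a constant $C$ chosen so that $\rho\geq0$ and $\ddbar\rho\geq\omega$ on all of $M$. This is possible because on each sublevel set $\{\rho_0<k\}$ one has $\ddbar\rho_0\geq\delta_k\omega$ for some $\delta_k>0$, and $\ddbar(\chi\circ\rho_0)\geq\chi'(\rho_0)\,\ddbar\rho_0$, so it suffices to let $\chi'$ grow fast enough as its argument increases.

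Finally, put $\varphi_\nu':=\varphi_\nu+\eps_\nu\rho$. Each $\varphi_\nu'$ is smooth, and $\ddbar\varphi_\nu'\geq(\gamma-\eps_\nu\omega)+\eps_\nu\omega=\gamma$. The sequence is decreasing, since $\varphi_\nu'-\varphi_{\nu+1}'=(\varphi_\nu-\varphi_{\nu+1})+(\eps_\nu-\eps_{\nu+1})\rho\geq0$ using $\varphi_\nu\geq\varphi_{\nu+1}$, $\eps_\nu\geq\eps_{\nu+1}\geq0$ and $\rho\geq0$; and at each point $\eps_\nu\rho\to0$, hence $\varphi_\nu'\to\varphi$ pointwise and monotonically. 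This is the sequence claimed. The only substantial ingredient is the regularization-with-loss of the second paragraph — in particular, producing a genuinely decreasing smooth approximation on the non-compact manifold $M$ with uniformly controlled curvature loss — which is classical; note that the Stein hypothesis is used twice: once there (via the \psh\ exhaustion that makes Richberg's gluing work) and once in the third step (for $\rho$).
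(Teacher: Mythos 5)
Your last two steps are fine and would certainly be part of any proof: since $M$ is Stein one can take a smooth exhaustion $\rho\geq 0$ with $\ddbar\rho\geq\omega$, and adding $\eps_\nu\rho$ to approximants satisfying $\ddbar\varphi_\nu\geq\gamma-\eps_\nu\omega$ (with $\eps_\nu\downarrow 0$) restores the exact bound while preserving monotonicity and the pointwise limit. The problem is your first step, which is where essentially all of the content of the theorem lies.

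The "standard regularization theorem" you invoke — a globally defined, genuinely decreasing sequence of \emph{smooth} functions on the whole non-compact manifold with a \emph{constant} loss $\eps_\nu\omega$, $\eps_\nu\to0$, valid on weakly pseudoconvex manifolds — is not what \cite{DemaillyCADG} provides, and as you state it it is false. Concretely, let $M$ be the total space of a negatively curved line bundle $(L,h)$ over a compact Riemann surface; $M$ is weakly pseudoconvex (the fibrewise norm squared is a psh exhaustion). Take $\varphi=\log|s|^2_{p^*h}$, where $s$ is the tautological section of $p^*L$ vanishing on the zero section $C$, and $\gamma=-p^*\sqrt{-1}\Theta(L,h)$, so that $\ddbar\varphi\geq\gamma$ and $\int_C\gamma>0$. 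Any smooth $\varphi_\nu$ with $\ddbar\varphi_\nu\geq\gamma-\eps_\nu\omega$ on $M$ would give $0=\int_C\ddbar(\varphi_\nu|_C)\geq\int_C\gamma-\eps_\nu\int_C\omega>0$ for $\eps_\nu$ small, a contradiction. This is exactly why the equisingular approximation (Theorem \ref{Thm:EquiSingAppro}, from \cite{DPS2001}) is stated only on relatively compact subsets and produces approximants that are merely quasi-psh with analytic singularities, and why the smooth statement (Theorem \ref{Thm:SteinAppro}) is confined to Stein manifolds. So the Stein hypothesis must already enter in your first step, not only through the exhaustion used to absorb the loss; the honest route is of Fornaess--Narasimhan type: a proper embedding $M\hookrightarrow\CC^N$, a Docquier--Grauert holomorphic retraction $\pi$ of a neighborhood onto $M$, convolution of $\varphi\circ\pi$ in $\CC^N$ (after first reducing to $\gamma\geq0$ by subtracting a smooth function built from the strictly psh exhaustion), and then a genuinely delicate patching over an exhaustion to produce a single decreasing sequence on all of $M$. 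Your appeal to "Richberg's regularized-maximum gluing" does not supply this: Richberg's method concerns continuous strictly psh functions, whereas here the local approximants decrease to a function with $-\infty$-poles, so uniform closeness fails near the poles and the regularized maximum does not automatically select the intended function on the overlaps; making such a gluing work is precisely the nontrivial point, and no argument is given. (A minor additional slip: "after passing to a subsequence we may assume $\eps_\nu\downarrow0$" presupposes that the quoted statement already yields $\eps_\nu\to0$, and on a non-compact manifold the convolution loss is governed by the local modulus of continuity of $\gamma$, so even a constant loss per $\nu$ requires justification.)
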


\begin{proof}
	Since $M$ is Stein, we may assume that $M$ is a closed submanifold of $\CC^n$. Following the proof of Sadullaev \cite{Sadullaev}, there exists a quasi-psh function $\tilde{\varphi}$ and a continuous real $(1,1)$-forms $\tilde{\gamma}$ on $\CC^n$ so that $\tilde{\varphi}|_M=\varphi$, $\tilde{\gamma}|_M=\gamma$ and $\ddbar\tilde{\varphi}\geqslant\tilde{\gamma}$. Clearly, it is sufficient to approximate $\tilde{\varphi}$ with similar requirements.
	
	Replacing $\tilde{\varphi}$ by $\tilde{\varphi}+\eta(|z|^2)$, where $\eta\in C^\infty(\RR)$ is a rapidly increasing convex function, we may assume that $\tilde{\gamma}\geqslant0$, and then $\tilde{\varphi}\in\psh(\CC^n)$.
	Let $\omega:=\ddbar|z|^2$ and $B_k:=\BB^n(0;k)$. We choose another rapidly increasing convex function $\chi\in C^\infty(\RR)$ such that $\chi>0$, $\chi'>1$ and
	\begin{equation*}
		\ddbar\chi(|z|^2)\geqslant k\tilde{\gamma} \quad\text{on}\quad \overline{B_{k+1}}\setminus B_k.
	\end{equation*}
	Let $\{\rho_\eps\}_\eps$ be a family of smoothing kernel, then $\tilde{\varphi}*\rho_\eps \in C^\infty(\CC^n)\cap\psh(\CC^n)$ and $\tilde{\varphi}*\rho_\eps\searrow\tilde{\varphi}$ as $\eps\searrow0$. Clearly, $\ddbar(\tilde{\varphi}*\rho_\eps) \geqslant \tilde{\gamma}*\rho_\eps$. By the continuity of $\tilde{\gamma}$, we can choose $0<\eps_k\ll1$ so that
	\begin{equation*}
		\ddbar\big( \tilde{\varphi}*\rho_{\eps_k} + k^{-1}\chi(|z|^2) \big) \geqslant \tilde{\gamma}*\rho_{\eps_k} + k^{-1}\omega \geqslant \tilde{\gamma} \quad\text{on}\quad \overline{B_k}.
	\end{equation*}
	Since $\tilde{\varphi}*\rho_{\eps_k}$ is psh and $\tilde{\gamma}\geqslant0$, we know
	\begin{equation*}
		\ddbar\big( \tilde{\varphi}*\rho_{\eps_k} + k^{-1}\chi(|z|^2) \big) \geqslant \tilde{\gamma} \quad\text{on}\quad \CC^n\setminus B_k.
	\end{equation*}
	We may require that $\eps_k\searrow0$ as $k\nearrow+\infty$.
	Then $\tilde{\varphi}_k:=\tilde{\varphi}*\rho_{\eps_k} + k^{-1}\chi(|z|^2)$ is a decreasing sequence of smooth functions on $\CC^n$ such that $\lim_{k\to+\infty}\tilde{\varphi}_k=\tilde{\varphi}$ and $\ddbar\tilde{\varphi}_k \geqslant \tilde{\gamma}$.
\end{proof}

\begin{theorem}[{see \cite[Theorem 2.3]{DPS2001}}] \label{Thm:EquiSingAppro}
	Let $\varphi$ be a quasi-psh function on a Hermitian manifold $(M,\omega)$. Let $\gamma$ be a continuous real $(1,1)$-form on $M$ so that $\ddbar\varphi\geqslant\gamma$ and $\eta$ be a continuous function on $M$ so that $\eta>\varphi$.
	Then for any relatively compact open set $D\Subset M$, there exists a decreasing sequence $\{\varphi_k\}_{k=1}^\infty$ of quasi-psh functions on $D$ satisfying the following properties: \par
	(1) $\varphi_k<\eta$ on $D$ for $k\gg1$ and $\lim_{k\rightarrow+\infty}\varphi_k=\varphi$; \par
	(2) $\varphi_k\in C^\infty(D\backslash S_k)$ and $S_k:=\varphi_k^{-1}(-\infty)$ is a closed analytic subset of $D$; \par
	(3) $\ddbar\varphi_k\geqslant \gamma-\delta_k\omega$, in which $\delta_k$ are positive constants so that $\delta_k\to0$; \par
	(4) (equisingular) for every $t>0$, $\mathcal{I}(t\varphi)|_D=\mathcal{I}(t\varphi_k)$ for $k\gg1$.
\end{theorem}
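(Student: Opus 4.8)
The plan is to run Demailly's Bergman-kernel regularization, carried out locally on coordinate balls and then patched together, with the Ohsawa--Takegoshi extension theorem controlling the loss in the approximation and the strong openness property (of Guan--Zhou, recalled above) supplying the equisingularity~(4). First I would localize: since all four conclusions are local on $D$ and are preserved by the regularized maximum, it suffices to treat the model case in which $M$ is a Euclidean ball $B\subset\CC^n$ and $\varphi$ is \emph{plurisubharmonic}. To reach it, cover $\overline D$ by finitely many coordinate balls; on each of them write the quasi-psh function as $\varphi=u+\chi$ with $u\in\psh$ and $\chi\in C^\infty$, and absorb the continuous form $\gamma$ by the usual device: near a point $x_0$ pick coordinates in which $\gamma(x_0)=\ddbar q_0$ for a real quadratic $q_0$, so that on a small ball $B(x_0,r)$ one has $\ddbar(\varphi-q_0)\geq\gamma-\gamma(x_0)\geq-\eps(r)\omega$ with $\eps(r)\to0$; then $\varphi-q_0+C\eps(r)|z|^2$ is psh there. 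Doing the construction below for this psh function and undoing the shifts produces local approximants whose $\ddbar$ dominates $\gamma-\delta\omega$ with $\delta=\delta(r)\to0$, and coupling $r=r_\nu\to0$ with the index yields~(3) with $\delta_\nu\to0$. I suppress this bookkeeping and simply take $\varphi$ psh on $B$.

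For $\nu\in\NN$ put $\calH_\nu:=\{f\in\calO(B):\int_B|f|^2e^{-2\nu\varphi}\,d\lambda<+\infty\}$, choose a complete orthonormal basis $(\sigma_{\nu,k})_k$, and define $\varphi_\nu:=\tfrac1{2\nu}\log\sum_k|\sigma_{\nu,k}|^2$. Since $\sum_k|\sigma_{\nu,k}(z)|^2$ is the on-diagonal Bergman kernel, it is real-analytic and equals $\sup\{|f(z)|^2:\|f\|_{\calH_\nu}\leq1\}$; hence $\varphi_\nu$ is psh, is real-analytic off the analytic set $\varphi_\nu^{-1}(-\infty)=\{\sum_k|\sigma_{\nu,k}|^2=0\}$ (the zero set of the coherent ideal generated by $\calH_\nu$), and $\ddbar\varphi_\nu\geq0$, which gives~(2) and the curvature bound in~(3). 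For~(1): the sub-mean value inequality applied on balls $B(z,r_\nu)$ with $r_\nu\to0$ and $\log(1/r_\nu)/\nu\to0$ gives $\varphi_\nu(z)\leq\sup_{B(z,r_\nu)}\varphi+o(1)$, hence $\limsup_\nu\varphi_\nu\leq\varphi$ and, by a short compactness argument, $\varphi_\nu<\eta$ on $D$ for $\nu\gg1$; conversely, extending the value $1$ from the point $z$ against the weight $e^{-2\nu\varphi}$ via Ohsawa--Takegoshi (for $\varphi(z)>-\infty$; the other case is trivial) gives $F\in\calH_\nu$ with $F(z)=1$ and $\|F\|^2_{\calH_\nu}\leq C_n e^{-2\nu\varphi(z)}$, so that
\[
	\varphi_\nu\ \geq\ \varphi-\frac{\log C_n}{2\nu}\quad\text{on }B .
\]
Together these force $\varphi_\nu\to\varphi$ pointwise, hence in $L^1_\loc$ (Montel's theorem lets one take holomorphic limits where needed); monotonicity in $\nu$ is then arranged by a standard modification (regularized maxima of finitely many members of a subsequence, with small constant shifts), which affects none of~(1)--(4).

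The heart of the matter is~(4). One inclusion is free: from $\varphi_\nu\geq\varphi-C_n/\nu$ one gets $e^{-t\varphi_\nu}\leq e^{tC_n/\nu}e^{-t\varphi}$, so $\calI(t\varphi)\subseteq\calI(t\varphi_\nu)$ for every $\nu$. For the reverse inclusion $\calI(t\varphi_\nu)\subseteq\calI(t\varphi)$ with $\nu\gg1$ I would first use the strong openness property, together with coherence of multiplier ideals and compactness of $\overline D$, to fix $\eps_0>0$ with $\calI(t\varphi)=\calI((1+\eps_0)t\varphi)$; it then suffices to show $\calI(t\varphi_\nu)\subseteq\calI((1+\eps_0)t\varphi)$. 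Here the crude estimates above are useless --- the sub-mean value bound only controls $\varphi_\nu$ by a supremum of $\varphi$ over a shrinking ball, which washes out the singularity --- so one must compare $\varphi_\nu$ with $\varphi$ \emph{through} the multiplier ideal: every $\sigma_{\nu,k}$ has germ in $\calI(2\nu\varphi)$, which is coherent (Nadel), and combining this finiteness with the Ohsawa--Takegoshi estimate one shows $(1+\eps_0)t\,\varphi\leq t\,\varphi_\nu+O(1)$ locally once $\nu$ exceeds a threshold depending only on $\eps_0$ and $t$; this closes the argument.

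Finally one patches the local approximants over $\overline D$ by a partition of unity and the regularized maximum, using Lemma~\ref{Lemma:Coherent} and the fact that~(4) is a local statement to check that each of~(1)--(4) survives. The step I expect to be the main obstacle is exactly the reverse inclusion in~(4): the Ohsawa--Takegoshi and sub-mean value bounds used everywhere else only pin $\varphi_\nu$ down up to a supremum over a shrinking ball, so genuinely new input --- the strong openness property plus a comparison of the Bergman weight $\tfrac1{2\nu}\log\sum_k|\sigma_{\nu,k}|^2$ with $\varphi$ that actually \emph{sees} $\calI(2\nu\varphi)$ --- is needed to make the multiplier ideals stabilize. Everything else is the standard regularization machinery.
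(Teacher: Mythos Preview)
The paper itself offers no proof beyond citing \cite{DPS2001} with a one-line remark that the minor differences in hypothesis (non-compact $M$, the relatively compact $D$, the continuous upper bound $\eta$) are harmless. Your sketch attempts to supply the DPS argument; for (1)--(3) the outline via Bergman-kernel regularization $\varphi_\nu=\tfrac1{2\nu}\log\sum_k|\sigma_{\nu,k}|^2$, the Ohsawa--Takegoshi lower bound, the sub-mean-value upper bound, and patching by regularized maxima is standard and essentially correct.

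For (4) there is a genuine gap: the pure Bergman approximation is \emph{not} equisingular. Take $\varphi=2\log|z|$ on $\BB^2\subset\CC^2$; then $\calI(2\nu\varphi)=\frakm_0^{2\nu-1}$, so $\varphi_\nu\sim\tfrac{2\nu-1}{\nu}\log|z|$ near $0$, and with $t=2$ one finds $\calI(2\varphi)=\frakm_0$ while $\calI(2\varphi_\nu)=\calO_0$ for \emph{every} $\nu$. Your strong-openness repair cannot rescue this: the inequality you write, $(1+\eps_0)t\varphi\leq t\varphi_\nu+O(1)$, does follow from $\varphi_\nu\geq\varphi-C/\nu$ together with a local upper bound on $\varphi$, but it gives only $\calI((1+\eps_0)t\varphi)\subseteq\calI(t\varphi_\nu)$ --- the easy direction you already have. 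The inequality actually needed for the hard inclusion, $t\varphi_\nu\leq(1+\eps_0)t\varphi+O(1)$, is false in the example above (and in general, since Bergman regularization strictly weakens the singularity by a Lelong-number defect of order $n/\nu$). Knowing that each $\sigma_{\nu,k}\in\calI(2\nu\varphi)$ constrains $\calI(2\nu\varphi_\nu)$, not $\calI(t\varphi_\nu)$ for a fixed $t$ independent of $\nu$. The DPS proof of (4) requires a genuine modification of the approximant compensating for this defect; it does not reduce to a pointwise comparison plus strong openness.
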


The statement of the theorem is slightly different from \cite{DPS2001}, but the proof is the same: all constructions are applicable to non-compact manifolds, and we still have uniform estimates on any relatively compact domain; for any $x\in M$, there exists an integer $k_x\gg1$ such that $\varphi_{k_x}(x)<\eta(x)$, and then $\varphi_{k_x} < \eta$ on some nbhd $U_x\ni x$; since $\overline{D}$ is compact, $\varphi_k<\eta$ on $D$ for $k\gg1$.\\

Let $M$ be a complex manifold and let $(E,h)$ be a Hermitian holomorphic vector bundle over $M$. Let $(U,z=(z_1,\ldots,z_n))$ be a coordinate chart of $M$ so that $E|_U$ is trivial, let $e=(e_1,\ldots,e_r)$ be a holomorphic frame of $E|_U$. We shall call such triple $(U,z,e)$ a \textbf{local trivialization} of $E\rightarrow M$.
Locally, the Chern curvature of $(E,h)$ can be written as
\begin{equation*}
	\Theta(E,h) = \sum\nolimits_{i,j,\alpha,\gamma} R_{i\bar{j}\alpha}^\gamma dz_i\wedge d\bar{z}_j \otimes e_\alpha^* \otimes e_\gamma.
\end{equation*}
We define $R_{i\bar{j}\alpha\bar{\beta}} := \sum_\gamma R_{i\bar{j}\alpha}^\gamma h_{\gamma\bar{\beta}}$, where $h_{\gamma\bar{\beta}} := \inner{e_\gamma,e_\beta}_h$.

\begin{definition}
	Positivity of Hermitian holomorphic vector bundles.
	
	(1) $\Theta(E,h)$ is said to be \textbf{Griffiths positive} (resp. semi-positive) at $x\in M$ if
	\begin{equation*}
		\sum\nolimits_{i,j,\alpha,\beta} R_{i\bar{j}\alpha\bar{\beta}}(x) a_i\overline{a_j}b_\alpha\overline{b_\beta} > 0 \text{ (resp. $\geqslant0$)}, \quad a\in\CC^n\setminus\{0\}, b\in\CC^r\setminus\{0\}.
	\end{equation*}
	
	(2) $\Theta(E,h)$ is said to be \textbf{Nakano positive} (resp. semi-positive) at $x\in M$ if
	\begin{equation*}
		\sum\nolimits_{i,j,\alpha,\beta} R_{i\bar{j}\alpha\bar{\beta}}(x) u_{i\alpha} \overline{u_{j\beta}} > 0 \text{ (resp. $\geqslant0$)}, \quad u\in\CC^{nr}\setminus\{0\}.
	\end{equation*}
	
	For simplicity, we write $>_\Nak$ (resp. $\geqslant_\Nak$) for Nakano (semi-)positivity. These notions of positivity naturally extend to other tensor having similar symmetry.
\end{definition}

Let $(M,\omega)$ be a Hermitian manifold and $(E,h)$ be a Hermitian holomorphic vector bundle over $M$.  Let $|\cdot|_{\omega,h}$ and $\inner{\cdot,\cdot}_{\omega,h}$ denote the norm and the inner product on $\wedge^{p,q}T^*M\otimes E$ that induced by $\omega$ and $h$. Let $L_{p,q}^2(M,E;\loc)$ be the space of all $E$-valued $(p,q)$-forms on $M$ whose coefficients are $L_\loc^2$ functions. Given a continuous volume form $dV$ on $M$, we define
\begin{equation*}
	L_{p,q}^2(M,E;\omega,h,dV) := \left\{f\in L_{p,q}^2(M,E;\loc): \int_M|f|_{\omega,h}^2dV <+\infty \right\}.
\end{equation*}
For simplicity, $L_{p,q}^2(M,E;\omega,h) := L_{p,q}^2(M,E;\omega,h,dV_\omega)$, where $dV_\omega:={\omega^n}/{n!}$ is the volume form induced by $\omega$.

\begin{lemma} \label{Lemma:WeakLim}
	Let $\{f^\nu\}_{\nu=1}^\infty$ be a sequence in $L_{p,q}^2(M,E;\omega,h,dV)$ that converges weakly to some $f$, and $\{\eta^\nu\}_{\nu=1}^\infty$ be a sequence of measurable functions on $M$ that converges pointwisely to some function $\eta$. Assume that there is a constant $C$ so that $|\eta^\nu|\leqslant C$ for all $\nu$. Then $\{\eta^\nu f^\nu\}_\nu$ converges weakly to $\eta f$ in $L_{p,q}^2(M,E;\omega,h,dV)$.
\end{lemma}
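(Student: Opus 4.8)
The plan is to reduce the asserted weak convergence to testing against compactly supported smooth forms and then to estimate directly. First I would record two preliminary observations. Since $\{f^\nu\}_\nu$ converges weakly, the Banach--Steinhaus theorem yields a uniform bound $M_0:=\sup_\nu\|f^\nu\|<+\infty$. Since $|\eta^\nu|\le C$ for all $\nu$ and $\eta^\nu\to\eta$ pointwise, we get $|\eta|\le C$; hence $\eta f\in L_{p,q}^2(M,E;\omega,h,dV)$ and $\|\eta^\nu f^\nu\|\le CM_0$ for every $\nu$, so $\{\eta^\nu f^\nu\}_\nu$ is bounded in the Hilbert space. Consequently, to prove $\eta^\nu f^\nu\rightharpoonup\eta f$ it suffices to check that $\langle\eta^\nu f^\nu,g\rangle\to\langle\eta f,g\rangle$ for every $g$ in the dense subspace $\calD_{p,q}(M,E)$.

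Next I would fix $g\in\calD_{p,q}(M,E)$ and, using that $\eta^\nu$ is scalar, write $\langle\eta^\nu f^\nu,g\rangle=\langle f^\nu,\overline{\eta^\nu}\,g\rangle$, so that
\begin{equation*}
	\langle f^\nu,\overline{\eta^\nu}\,g\rangle-\langle f,\overline\eta\,g\rangle
	=\langle f^\nu,(\overline{\eta^\nu}-\overline\eta)g\rangle+\langle f^\nu-f,\overline\eta\,g\rangle .
\end{equation*}
The second term tends to $0$ because $\overline\eta\,g$ is a fixed element of $L_{p,q}^2(M,E;\omega,h,dV)$ (it is bounded and compactly supported) and $f^\nu\rightharpoonup f$. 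For the first term, Cauchy--Schwarz gives
\begin{equation*}
	\big|\langle f^\nu,(\overline{\eta^\nu}-\overline\eta)g\rangle\big|
	\le M_0\Big(\int_M|\eta^\nu-\eta|^2\,|g|_{\omega,h}^2\,dV\Big)^{1/2},
\end{equation*}
and the integral on the right tends to $0$ by dominated convergence: its integrand is dominated by $4C^2|g|_{\omega,h}^2$, which lies in $L^1(M,dV)$ since $g$ has compact support, and it converges pointwise to $0$. Combining the two estimates yields $\langle\eta^\nu f^\nu,g\rangle\to\langle\eta f,g\rangle$, and with the density remark this completes the proof.

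I do not expect any genuine obstacle; the argument is routine functional analysis. The only points that must not be overlooked are that the test forms should be taken with compact support (so that the dominating function $4C^2|g|_{\omega,h}^2$ is integrable) and that the uniform bound $M_0$, needed to control the first term uniformly in $\nu$, is furnished automatically by the principle of uniform boundedness for a weakly convergent sequence.
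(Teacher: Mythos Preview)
Your proof is correct and follows essentially the same approach as the paper: the same splitting $\langle\eta^\nu f^\nu,g\rangle-\langle\eta f,g\rangle=\langle f^\nu,(\overline{\eta^\nu}-\overline{\eta})g\rangle+\langle f^\nu-f,\overline{\eta}g\rangle$, Cauchy--Schwarz plus dominated convergence on the first term, weak convergence on the second, and uniform boundedness of $\|f^\nu\|$. The only difference is that your reduction to $g\in\calD_{p,q}(M,E)$ via density is unnecessary: for \emph{any} $g\in L^2_{p,q}(M,E;\omega,h,dV)$ the dominating function $4C^2|g|_{\omega,h}^2$ already lies in $L^1(M,dV)$, so the paper applies dominated convergence directly without restricting to compactly supported test forms.
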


\begin{proof}
	Let $g\in L_{p,q}^2(M,E;\omega,h,dV)$ be given, then
	\begin{align*}
		|(\eta^\nu f^\nu,g) - (\eta f,g)| & \leqslant |(\eta^\nu f^\nu - \eta f^\nu,g)| + |(\eta f^\nu - \eta f,g)| \\
		& \leqslant \|f^\nu\| \|\overline{\eta^\nu}g-\overline{\eta}g\| + |(f^\nu-f,\overline{\eta}g)|.
	\end{align*}
	By Lebesgue's dominated convergence theorem, $\|\overline{\eta^\nu}g-\overline{\eta}g\| \rightarrow 0$. As $f^\nu\rightharpoonup f$, we know that $\|f^\nu\|$ is bounded and $|(f^\nu-f,\overline{\eta}g)| \rightarrow 0$. Let $\nu\to+\infty$, we have
	\begin{equation*}
		\lim_{\nu\rightarrow+\infty} |(\eta^\nu f^\nu,g) - (\eta f,g)| = 0.
	\end{equation*}
	Therefore, $\{\eta^\nu f^\nu\}_\nu$ converges weakly to $\eta f$ in $L_{p,q}^2(M,E;\omega,h,dV)$.
\end{proof}

Sometimes, we will encounter different metrics and different volume forms. Let $\tilde{\omega}$ be another Hermitian metric on $M$, $\tilde{h}$ be another Hermitian metric on $E$ and $\widetilde{dV}$ be another continuous volume form on $M$. For simplicity,
\begin{equation*}
	\calH := L_{p,q}^2(M,E;\omega,h,dV) \quad\text{and}\quad
	\tilde{\calH} := L_{p,q}^2(M,E;\tilde{\omega},\tilde{h},\widetilde{dV}).
\end{equation*}

\begin{lemma} \label{Lemma:DoubleWeakLim}
	Let $\{f^\nu\}_{\nu=1}^\infty$ be a sequence of differential forms in $\calH\cap\tilde{\calH}$. Assume that $\{f^\nu\}_\nu$ converges weakly to some $f$ in $\calH$ and $\|f^\nu\|_{\tilde{\calH}}$ is uniformly bounded. Then $f\in\calH\cap\tilde{\calH}$ and $\{f^\nu\}_\nu$ also converges weakly to $f$ in $\tilde{\calH}$.
\end{lemma}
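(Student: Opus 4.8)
The plan is to localize the problem to relatively compact open subsets of $M$, where the two Hilbert-space norms are comparable, and then to reassemble the global conclusion from weak lower semicontinuity of the norm together with the density of compactly supported forms. The point is that weak convergence is a global statement, but the comparability of the data $(\omega,h,dV)$ with $(\widetilde\omega,\widetilde h,\widetilde{dV})$ is only local, so the argument must be staged rather than attempting to compare the two global norms directly.

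First I would fix a relatively compact open set $D\Subset M$. Since $\omega,\widetilde\omega,h,\widetilde h,dV,\widetilde{dV}$ are continuous, they are bounded above and below by positive constants on the compact set $\overline D$, so the norms of $\calH$ and $\widetilde\calH$, restricted to forms on $D$, are mutually equivalent. The restriction map $\calH\to L_{p,q}^2(D,E;\omega,h,dV)$ and the identity map $L_{p,q}^2(D,E;\omega,h,dV)\to L_{p,q}^2(D,E;\widetilde\omega,\widetilde h,\widetilde{dV})$ are then bounded linear operators, hence they carry weakly convergent sequences to weakly convergent sequences. Applying them to the hypothesis $f^\nu\rightharpoonup f$ in $\calH$ yields $f^\nu|_D\rightharpoonup f|_D$ in $L_{p,q}^2(D,E;\widetilde\omega,\widetilde h,\widetilde{dV})$.

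Next I would deduce $f\in\widetilde\calH$. Writing $C^2:=\sup_\nu\|f^\nu\|_{\widetilde\calH}^2<+\infty$ (finite by hypothesis), weak lower semicontinuity of the norm gives
\[
	\int_D|f|_{\widetilde\omega,\widetilde h}^2\,\widetilde{dV}\leq\liminf_{\nu\to\infty}\int_D|f^\nu|_{\widetilde\omega,\widetilde h}^2\,\widetilde{dV}\leq\sup_\nu\|f^\nu\|_{\widetilde\calH}^2=C^2.
\]
Letting $D$ run through an increasing exhaustion of $M$ by relatively compact open sets and invoking the monotone convergence theorem gives $\|f\|_{\widetilde\calH}\leq C<+\infty$, so $f\in\widetilde\calH$, and hence $f\in\calH\cap\widetilde\calH$, which is the first assertion.

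Finally I would upgrade the local weak convergence to weak convergence in $\widetilde\calH$. For any $\phi\in\calD_{p,q}(M,E)$, choosing $D\Subset M$ with $\supp\phi\subset D$, the second paragraph gives $(f^\nu,\phi)_{\widetilde\calH}\to(f,\phi)_{\widetilde\calH}$. Since $\calD_{p,q}(M,E)$ is dense in $\widetilde\calH$, the sequence $\{f^\nu\}$ is bounded in $\widetilde\calH$, and $f\in\widetilde\calH$, a standard density-and-boundedness argument (a three-$\eps$ estimate) then upgrades this to $(f^\nu,\phi)_{\widetilde\calH}\to(f,\phi)_{\widetilde\calH}$ for every $\phi\in\widetilde\calH$, i.e.\ $f^\nu\rightharpoonup f$ in $\widetilde\calH$. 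I do not expect a genuine obstacle in any of these steps; the only thing to be careful about is the staging just described — transporting weak convergence through bounded linear maps onto each $D$ and only afterwards passing to the global statement via lower semicontinuity and density.
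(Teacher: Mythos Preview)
Your proof is correct and takes a somewhat different route from the paper. The paper first invokes Alaoglu's theorem to extract a subsequence $\{f^{\nu_k}\}$ converging weakly to some $g$ in $\widetilde\calH$, then works in local trivializations to show that the coefficient functions of both $f$ and $g$ are distributional limits of the same sequence, forcing $g=f$ and hence $f\in\widetilde\calH$; only afterwards does it establish convergence of the full sequence against compactly supported test forms via a partition of unity, and then runs the same density/three-$\eps$ argument as you do. Your approach bypasses the subsequence extraction and coordinate computation entirely by observing that on each $D\Subset M$ the two $L^2$ structures are equivalent, so restriction-then-identity is a bounded linear map and carries weak convergence to weak convergence directly; membership $f\in\widetilde\calH$ then falls out of weak lower semicontinuity and exhaustion. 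Your argument is slightly more streamlined and conceptual; the paper's is more hands-on but makes the local identification explicit. Both are fine, and the final density step is identical.
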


\begin{proof}
	By Alaoglu's theorem, there exists a subsequence $\{f^{\nu_k}\}_k$ of $\{f^\nu\}_\nu$ that converges weakly to some $g$ in $\tilde{\calH}$. Since $f^{\nu_k}\rightharpoonup f$ in $\calH$ and $f^{\nu_k}\rightharpoonup g$ in $\tilde{\calH}$, it is easy to show that $f=g\in L_{p,q}^2(M,E;\loc)$.
	For any compactly supported continuous form $u\in\mathcal{C}_{p,q}(M,E)$, it is possible to find a $w\in\mathcal{C}_{p,q}(M,E)$ so that $(\xi,u)_{\tilde{\calH}} = (\xi,w)_{\calH}$ for all $\xi\in\calH\cap\tilde{\calH}$.
	Since $f^\nu\rightharpoonup f$ in $\calH$, we conclude that
	\begin{equation*}
		\lim_{\nu\rightarrow+\infty} (f^\nu, u)_{\tilde{\calH}} = (f, u)_{\tilde{\calH}}, \quad
		\forall u\in\mathcal{C}_{p,q}(M,E).
	\end{equation*}
	Via approximation, the above equation holds for all $u\in\tilde{\calH}$. Hence, $f^\nu\rightharpoonup f$ in $\tilde{\calH}$.
\end{proof}

$L^2$ estimates for the $\bar{\partial}$-operator provide powerful tools for solving many important problems in several complex variables. In this article, we need the following $L^2$ existence theorem for $\bar{\partial}$-equations.

\begin{theorem}[{see \cite[Remark 12.5]{DemaillyBook}}] \label{Thm:L2ExistenceErrTerm}
	Let $M$ be a complete K\"ahler manifold of dimension $n$ and let $(E,h)$ be a Hermitian holomorphic vector bundle over $M$. Let $\omega$ be an arbitrary K\"ahler metric on $M$, let $\eta>0$ and $\lambda>0$ be bounded smooth functions on $M$. Define
	\begin{equation*}
		\Theta := \eta\sqrt{-1}\Theta(E,h) -\ddbar\eta \otimes\Id_E -\lambda^{-1}\dwdbar{\eta} \otimes\Id_E.
	\end{equation*}
	Assume there is a constant $\delta\geqslant0$ so that $\Theta+\delta\omega\otimes\Id_E$ is Nakano semi-positive. Given a differential form $g\in L_{n,q}^2(M,E;\omega,h)$ $(1\leqslant q\leqslant n)$ such that $\bar{\pd}g=0$, if
	\begin{equation*}
		\int_M\inner{[\Theta+\delta\omega\otimes\Id_E,\Lambda_\omega]^{-1}g,g}_{\omega,h}dV_\omega<+\infty,
	\end{equation*}
	then there exists an approximate solution $f\in L_{n,q-1}^2(M,E;\omega,h)$ and a correcting term $w\in L_{n,q}^2(M,E;\omega,h)$ so that $\bar{\pd}f+\sqrt{\delta q}\,w=g$ in the sense of distribution and
	\begin{align*}
		&\, \int_M \frac{1}{\eta+\lambda}|f|_{\omega,h}^2dV_\omega + \int_M|w|_{\omega,h}^2dV_\omega \\
		\leqslant &\, \int_M\inner{[\Theta+\delta\omega\otimes\Id_E,\Lambda_\omega]^{-1}g,g}_{\omega,h}dV_\omega.
	\end{align*}
\end{theorem}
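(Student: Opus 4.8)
The plan is to run the twisted $L^2$ method of Ohsawa--Takegoshi, Siu and Demailly, with one twist: since Nakano semi-positivity of the curvature operator $\Theta$ of the statement is only available after adding the auxiliary term $\delta\omega\otimes\Id_E$, a correcting term $w$ is unavoidable, so instead of solving $\bar\partial f=g$ one solves $T(f,w):=\bar\partial f+\sqrt{\delta q}\,w=g$ and applies the Hörmander--Hahn--Banach solvability criterion. I expect the only genuinely delicate ingredient to be the twisted Bochner--Kodaira--Nakano inequality with the exact constants; the handling of the correcting term is essentially bookkeeping.

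First I would reduce to the case that $\omega$ is complete. Since $M$ carries some complete K\"ahler metric $\widehat\omega$, put $\omega_\eps:=\omega+\eps\widehat\omega$, which is complete, K\"ahler and $\geq\omega$. Solving the problem with $\omega_\eps$ in place of $\omega$ for each $\eps>0$ recovers the theorem: Demailly's monotonicity lemma for $(n,q)$-forms gives $\inner{[\Theta+\delta\omega_\eps\otimes\Id_E,\Lambda_{\omega_\eps}]^{-1}g,g}_{\omega_\eps,h}\,dV_{\omega_\eps}\leq\inner{[\Theta+\delta\omega\otimes\Id_E,\Lambda_\omega]^{-1}g,g}_{\omega,h}\,dV_\omega$ pointwise, so the right-hand side is uniform in $\eps$; the $\omega_\eps$ ($0<\eps\leq1$) are uniformly comparable to $\omega$ on every $D\Subset M$, so the solutions $(f_\eps,w_\eps)$ are locally $L^2$-bounded; a weak limit over an exhaustion of $M$ (in the spirit of Lemma~\ref{Lemma:DoubleWeakLim}) solves $\bar\partial f+\sqrt{\delta q}\,w=g$, and the estimate survives by weak lower semicontinuity of the $L^2$ norm, first on each $D$ and then letting the comparison constants tend to $1$. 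So assume $\omega$ complete.

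Set $H_0:=L_{n,q-1}^2(M,E;\omega,h,\tfrac1{\eta+\lambda}dV_\omega)$, $H_1:=L_{n,q}^2(M,E;\omega,h)$, and let $T:H_0\oplus H_1\to H_1$, $T(f,w):=\bar\partial f+\sqrt{\delta q}\,w$, together with $S:=\bar\partial:H_1\to L_{n,q+1}^2(M,E;\omega,h)$; both are closed and densely defined, and a direct computation gives (on the relevant domains) $T^*u=\bigl((\eta+\lambda)\bar\partial^*u,\ \sqrt{\delta q}\,u\bigr)$, so $\|T^*u\|^2=\int_M(\eta+\lambda)|\bar\partial^*u|_{\omega,h}^2\,dV_\omega+\delta q\,\|u\|^2$. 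The core is the a priori estimate on $\ker S\cap\mathrm{Dom}\,\bar\partial^*$: by completeness such $u$ is a graph-norm limit of forms in $\calD_{n,q}(M,E)$, and for compactly supported smooth $u$ the twisted Bochner--Kodaira--Nakano inequality --- after the Cauchy--Schwarz estimate on the cross term, which is exactly what produces $-\lambda^{-1}\dwdbar{\eta}\otimes\Id_E$ and upgrades $\eta|\bar\partial^*u|^2$ to $(\eta+\lambda)|\bar\partial^*u|^2$ --- reads
\begin{equation*}
	\int_M\inner{[\Theta,\Lambda_\omega]u,u}_{\omega,h}\,dV_\omega\ \leq\ \int_M(\eta+\lambda)|\bar\partial^*u|_{\omega,h}^2\,dV_\omega+\int_M\eta\,|\bar\partial u|_{\omega,h}^2\,dV_\omega,
\end{equation*}
with $\Theta=\eta\sqrt{-1}\Theta(E,h)-\ddbar\eta\otimes\Id_E-\lambda^{-1}\dwdbar{\eta}\otimes\Id_E$ exactly the form in the statement. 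Using the K\"ahler identity $[\,\omega\wedge(\cdot),\Lambda_\omega]=q\cdot\Id$ on $(n,q)$-forms to add $\delta q\,\|u\|^2$ to the left and then using $\bar\partial u=0$, we obtain $\int_M\inner{\mathcal{B}u,u}_{\omega,h}\,dV_\omega\leq\|T^*u\|^2$ on $\ker S\cap\mathrm{Dom}\,\bar\partial^*$, where $\mathcal{B}:=[\Theta+\delta\omega\otimes\Id_E,\Lambda_\omega]\geq0$ by hypothesis.

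Put $C:=\int_M\inner{\mathcal{B}^{-1}g,g}_{\omega,h}\,dV_\omega<+\infty$; finiteness forces $g$ to lie a.e.\ in the pointwise image of $\mathcal{B}$, so $|\inner{g,u}_{\omega,h}|\leq\inner{\mathcal{B}^{-1}g,g}_{\omega,h}^{1/2}\inner{\mathcal{B}u,u}_{\omega,h}^{1/2}$ pointwise, and combining this with the integral Cauchy--Schwarz and the a priori estimate gives $|(g,u)_{H_1}|^2\leq C\,\|T^*u\|^2$ for $u\in\ker S\cap\mathrm{Dom}\,\bar\partial^*$. For general $u\in\mathrm{Dom}\,T^*=\mathrm{Dom}\,\bar\partial^*$ write $u=u_1+u_2$ with $u_1\in\ker S$ and $u_2\in(\ker S)^\perp=\overline{\mathrm{Im}\,S^*}$; then $\bar\partial^*u_2=0$ (because $(\bar\partial^*)^2=0$ and $\bar\partial^*$ is closed), so $\bar\partial^*u_1=\bar\partial^*u$ and $\|u_1\|\leq\|u\|$, while $(g,u_2)_{H_1}=0$ since $\bar\partial g=0$; hence $|(g,u)_{H_1}|^2=|(g,u_1)_{H_1}|^2\leq C\,\|T^*u_1\|^2\leq C\,\|T^*u\|^2$ for every $u\in\mathrm{Dom}\,T^*$. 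Therefore $T^*u\mapsto(u,g)_{H_1}$ is a well-defined bounded functional on $\mathrm{Im}\,T^*$ of norm $\leq\sqrt{C}$; extending it by Hahn--Banach and representing it by Riesz produces $(f,w)\in H_0\oplus H_1$ with $T(f,w)=g$, i.e.\ $\bar\partial f+\sqrt{\delta q}\,w=g$ in the sense of distributions, and $\int_M\tfrac1{\eta+\lambda}|f|_{\omega,h}^2\,dV_\omega+\int_M|w|_{\omega,h}^2\,dV_\omega=\|(f,w)\|^2\leq C$, which is the assertion. As anticipated, the main obstacle is the twisted Bochner--Kodaira--Nakano estimate itself (classical, due to Ohsawa--Takegoshi, Siu and Demailly); granting it, the observation that $[\,\omega\wedge(\cdot),\Lambda_\omega]=q\cdot\Id$ converts the extra positivity $\delta\omega\otimes\Id_E$ into exactly the term $\delta q\,\|u\|^2$, i.e.\ the squared norm of the $w$-component of $T^*u$, is the only new point, and the rest is the standard duality argument.
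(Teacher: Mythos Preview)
The paper does not prove this theorem; it is stated as a preparatory result with the citation ``see \cite{DemaillyBook}'' and used as a black box. Your proposal reproduces exactly the standard proof one finds in Demailly's monograph: reduction to a complete metric via $\omega_\eps=\omega+\eps\widehat\omega$ and the monotonicity lemma for $(n,q)$-forms, the twisted Bochner--Kodaira--Nakano inequality yielding the a priori estimate with the factor $(\eta+\lambda)$, the identity $[\omega\wedge(\cdot),\Lambda_\omega]=q\cdot\Id$ on $(n,q)$-forms to absorb the correcting term $\delta\omega\otimes\Id_E$ into $\delta q\,\|u\|^2$, and the Hahn--Banach/Riesz duality argument applied to the operator $T(f,w)=\bar\partial f+\sqrt{\delta q}\,w$. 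This is the expected argument and is correct.

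One small point: in the decomposition $u=u_1+u_2$ with $u_2\in(\ker S)^\perp$, the assertion $\bar\partial^*u_2=0$ requires knowing $u_2\in\mathrm{Dom}\,\bar\partial^*$; the clean way is to note that $(\ker\bar\partial)^\perp\subset\ker\bar\partial^*$ holds as an inclusion of closed subspaces (since $\overline{\mathrm{Im}\,\bar\partial^*}\subset\ker\bar\partial^*$), so $u_2\in\ker\bar\partial^*\subset\mathrm{Dom}\,\bar\partial^*$ automatically, whence $u_1=u-u_2\in\mathrm{Dom}\,\bar\partial^*$ and $\bar\partial^*u_1=\bar\partial^*u$. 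This is standard and does not affect the validity of your argument.
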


Here, $\Lambda_\omega$ is the adjoint operator of the Lefschetz operator $\xi\mapsto\xi\wedge\omega$ and $[\cdot,\cdot]$ is the graded commutator. In Theorem \ref{Thm:L2ExistenceErrTerm}, $[\Theta+\delta\omega\otimes\Id_E,\Lambda_\omega]$ is only a semi-positive Hermitian operator on $(\wedge^{n,q}T^*M\otimes E)_x$, and it may not be invertible. Hence,
\begin{equation*}
	\inner{[\Theta+\delta\omega\otimes\Id_E, \Lambda_\omega]^{-1}g,g}_{\omega,h}
\end{equation*}
is defined as the minimal constant $C\in[0,+\infty]$ such that
\begin{equation*}
	|\inner{g,u}_{\omega,h}|^2 \leqslant C\inner{[\Theta+\delta\omega\otimes\Id_E,\Lambda_\omega]u,u}_{\omega,h},
	\quad \forall u\in(\wedge^{n,q}T^*M\otimes E)_x.
\end{equation*}

The following results are useful for solving $L^2$ extension problems.

\begin{theorem}[{see \cite[Th\'eor\`eme 1.3 and 1.5]{Demailly1982}}] \label{Thm:CompleteKahler}
	(1) Every weakly pseudoconvex K\"ahler manifold admits a complete K\"ahler metric.
	(2) Let $M$ be a K\"ahler manifold and let $S$ be a closed analytic subset of $M$. If $D\Subset M$ admits a complete K\"ahler metric, then $D\backslash S$ also supports a complete K\"ahler metric.
\end{theorem}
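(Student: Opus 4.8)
The strategy in both parts is the same: construct an explicit K\"ahler form on the manifold in question together with a smooth proper function whose differential is bounded with respect to that form, and then invoke the standard fact that a Riemannian manifold carrying a proper function with bounded differential is (Cauchy) complete.

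\emph{Part (1).} Let $\psi_0$ be a smooth psh exhaustion of $\Omega$; after replacing it by a regularized maximum with a constant we may assume $\psi_0\geq1$. Choose a smooth convex increasing $\chi\colon\RR\to\RR$ with $\chi''\geq1$ on $[1,+\infty)$ (so that also $\chi'\geq0$ there), and set $\hat\omega:=\omega+\ddbar(\chi\circ\psi_0)$. Since $\chi\circ\psi_0$ is psh, $\hat\omega\geq\omega$ is a K\"ahler metric. Put $\sigma:=\int_1^{\psi_0}\sqrt{\chi''(t)}\,dt$; then $\dwdbar{\sigma}=\chi''(\psi_0)\,\dwdbar{\psi_0}\leq\ddbar(\chi\circ\psi_0)\leq\hat\omega$, so $|\partial\sigma|_{\hat\omega}\leq1$, hence $|d\sigma|_{\hat\omega}\leq\sqrt2$; and $\sigma$ is proper because $\sigma\geq\psi_0-1$ for $\psi_0$ large. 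Thus $\hat\omega$ is complete.

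\emph{Part (2).} Fix a complete K\"ahler metric $\omega_0$ on $D$ and a K\"ahler metric $\omega_M$ on $M$. Using that $\calI_S$ is coherent, one first constructs a quasi-psh function $\psi$ on $D$ with $\psi<-1$, with $\psi^{-1}(-\infty)=S\cap D$, with analytic singularities along $S$ (locally $\psi=\log(|g_1|^2+\cdots)+O(1)$ for local generators $g_j$ of $\calI_S$), and with $\ddbar\psi\geq-C_0\,\omega_M$ for some constant $C_0\geq0$: this is the standard construction $\psi=\log\!\big(\sigma_0^2+\sum_j\sigma_j^2(|g_{j,1}|^2+\cdots)\big)-A$ with suitable smooth cut-offs $\sigma_\bullet$ and $A\gg0$, the key point being that choosing the $g_{j,\bullet}$ to generate the \emph{same} ideal sheaf $\calI_S$ on every chart makes the local potentials comparable on overlaps and keeps the $\bar\partial$-error in $\ddbar\log(\cdots)$ bounded. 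Now set $\Phi:=-\log(-\psi)$ on $D\setminus S$. Then $\Phi<0$, $\Phi\to-\infty$ along $S$, $\dwdbar{\Phi}=\dfrac{\dwdbar{\psi}}{\psi^2}$, and, using $-\psi>1$,
\begin{equation*}
	\ddbar\Phi=\dwdbar{\Phi}+\frac{\ddbar\psi}{-\psi}\geq\dwdbar{\Phi}-C_0\,\omega_M.
\end{equation*}
Put $\hat\omega:=(C_0+1)(\omega_0+\omega_M)+\ddbar\Phi$ on $D\setminus S$. By the inequality above $\hat\omega\geq(C_0+1)\omega_0+\omega_M+\dwdbar{\Phi}$, so $\hat\omega$ is a K\"ahler metric; moreover $\hat\omega\geq(C_0+1)\omega_0$ and $\hat\omega\geq\dwdbar{\Phi}$, the latter giving $|\partial\Phi|_{\hat\omega}\leq1$. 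Finally, fixing a smooth proper $\rho\colon D\to[0,+\infty)$ with $|d\rho|_{\omega_0}\leq1$ (obtained by regularizing the distance to a point, since $\omega_0$ is complete), the function $\tau:=\rho-\Phi$ on $D\setminus S$ is proper — a sequence leaving every compact subset of $D\setminus S$ either leaves every compact subset of $D$, so that $\rho\to+\infty$ while $-\Phi\geq0$, or accumulates on $S$ inside a compact subset of $D$, so that $-\Phi\to+\infty$ while $\rho$ stays bounded — and $|d\tau|_{\hat\omega}\leq|d\rho|_{\hat\omega}+|d\Phi|_{\hat\omega}\leq1+\sqrt2$. Hence $\hat\omega$ is a complete K\"ahler metric on $D\setminus S$, which proves (2).

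The routine ingredients are the curvature and pointwise-norm computations. The one genuinely delicate point is the construction of the global potential $\psi$ in part (2): one must globalize the local model $\log|g_\bullet|^2$ over all of $D$ while keeping its $-\infty$-set exactly $S\cap D$, keeping it quasi-psh, and keeping it $<-1$ so that the displayed inequality for $\ddbar\Phi$ holds on all of $D\setminus S$ — this is where the compatibility of the local defining functions (they are generators of one coherent ideal, hence comparable on overlaps, and their $\bar\partial$-errors are absorbed) is essential. One could instead sidestep all patching by an embedded resolution $\pi\colon\widetilde M\to M$ of $S$: then $S$ pulls back to the support of a divisor $E$, the function $\Phi:=-\log(-\log|s_E|^2_h)$ (for a smooth metric $h$ on $\mathcal O_{\widetilde M}(E)$, rescaled so that $|s_E|^2_h<e^{-1}$ over $\pi^{-1}(\overline D)$) is globally defined on $\widetilde M\setminus E$ with all the needed properties, and one transports the resulting complete K\"ahler metric on $\pi^{-1}(D)\setminus E$ back through the biholomorphism $\pi^{-1}(D)\setminus E\cong D\setminus S$. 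Beyond this, the thing to keep straight is the bookkeeping by which $\ddbar\Phi$ simultaneously supplies the positivity needed for $\hat\omega>0$ near $S$ and the control $|\partial\Phi|_{\hat\omega}\leq1$: both are consequences of the single inequality $\ddbar\Phi\geq\dwdbar{\Phi}-C_0\,\omega_M$.
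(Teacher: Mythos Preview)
The paper does not give its own proof of this statement; it is quoted as a preparatory result with the citation ``see \cite{Demailly1982}''. Your argument is essentially Demailly's original one and is correct: in both parts you build a K\"ahler form that dominates $\dwdbar{\sigma}$ (resp.\ $\dwdbar{\Phi}$) for a proper function $\sigma$ (resp.\ $\rho-\Phi$), and then use the standard completeness criterion. Two minor points worth tightening: the smoothing of the distance function in part~(2) typically gives $|d\rho|_{\omega_0}\leq 1+\eps$ rather than exactly $\leq1$, which is harmless; and in the patching formula $\psi=\log\big(\sigma_0^2+\sum_j\sigma_j^2(|g_{j,1}|^2+\cdots)\big)-A$ you should say explicitly that $\sigma_0$ is supported away from $S$ and positive outside a neighbourhood of $S\cap\overline{D}$, while the $\sigma_j$ form a covering of that neighbourhood --- otherwise the claim $\psi^{-1}(-\infty)=S\cap D$ is not immediate. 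The resolution-of-singularities alternative you mention is valid but heavier than needed; Demailly's 1982 paper proceeds with the direct patching, using only that $\overline{D}$ is compact.
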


\begin{lemma}[{see \cite[Lemme 6.9]{Demailly1982}}] \label{Lemma:dbarOverS}
	Let $D\subset\CC^n$ be an open set and $S$ be a closed analytic subset of $D$. Assume that $f$ is a $(p,q-1)$-form on $D$ with $L_\loc^2$ coefficients and $g$ is a $(p,q)$-form on $D$ with $L_\loc^1$ coefficients. If $\bar{\pd}f=g$ on $D\backslash S$ in the sense of distribution, then $\bar{\pd}f=g$ on $D$ in the sense of distribution.
\end{lemma}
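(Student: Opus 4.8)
The plan is to make the statement local and then absorb the contribution of $S$ through a cutoff-and-limit argument, in the standard way one shows analytic sets to be negligible for $\bar{\pd}$ acting on $L^2_\loc$ forms. Since the assertion is local, it suffices to produce, around each point $x_0\in S$, a neighbourhood on which $\bar{\pd}f=g$ in the sense of distribution (near points of $D\setminus S$ there is nothing to prove). Shrinking $D$ about $x_0$, I may assume $S\subset Z:=\{h=0\}$ for some $h\in\calO(D)$ with $h\not\equiv0$ — the ideal of the nowhere dense analytic set $S$ at $x_0$ being nonzero, one picks a nonzero generator $h$. It then remains to verify the defining identity
\begin{equation*}
	\int_D f\wedge\bar{\pd}\varphi = \pm\int_D g\wedge\varphi \qquad(\text{with the sign dictated by the bidegrees})
\end{equation*}
for every smooth compactly supported $(n-p,n-q)$-form $\varphi$ on $D$; by hypothesis this already holds whenever $\supp\varphi\Subset D\setminus Z$, which is even weaker than what is given since $D\setminus Z\subset D\setminus S$.

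The key step is to construct, for each $\eps>0$, a Lipschitz cutoff $\rho_\eps$ on $D$ with $0\leq\rho_\eps\leq1$, $\rho_\eps\equiv0$ near $Z$, $\rho_\eps\to1$ pointwise on $D\setminus Z$ (hence $\lambda$-almost everywhere), and $\|\bar{\pd}\rho_\eps\|_{L^2(K)}\to0$ as $\eps\to0$ for every compact $K\subset D$. I would set $\rho_\eps=\chi_\eps(-\log|h|)$, where $\chi_\eps:\RR\to[0,1]$ is a piecewise-linear profile equal to $1$ on $(-\infty,a_\eps]$ and to $0$ on $[b_\eps,+\infty)$, with $a_\eps\to+\infty$ and $b_\eps-a_\eps\to+\infty$. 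Then $|\bar{\pd}\rho_\eps|=(b_\eps-a_\eps)^{-1}|\partial h|/(2|h|)$ on the shell $\{a_\eps\leq-\log|h|\leq b_\eps\}$, and the estimate
\begin{equation*}
	\int_{\{a\leq-\log|h|\leq b\}\cap K}\frac{|\partial h|^2}{|h|^2}\,d\lambda \leq C_K\,(b-a)\qquad(a<b,\ a\ \text{large})
\end{equation*}
gives $\|\bar{\pd}\rho_\eps\|_{L^2(K)}^2\leq C_K/(4(b_\eps-a_\eps))\to0$. This shell estimate — which reflects that $\log|h|$ is plurisubharmonic and pluriharmonic off $Z$ — can be obtained from the coarea formula by slicing in the fibres of $h$ and using the one-variable identity $\int_{e^{-b}<|\zeta|<e^{-a}}|\zeta|^{-2}\,d\lambda(\zeta)=2\pi(b-a)$, the critical values of $h$ and the singular locus of $Z$ being handled in the same fashion; alternatively one quotes that $Z$, being of locally finite $(2n-2)$-dimensional Hausdorff measure, has zero $W^{1,2}$-capacity and therefore admits such cutoffs. (Smooth cutoffs are obtained by mollification, affecting the estimates only by harmless constants; the distributional identity below is insensitive to this, since a Lipschitz test form is $W^{1,2}$-approximable by smooth ones.)

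Granting the cutoffs, I would fix $\varphi$ with compact support $K\subset D$, decompose $\bar{\pd}\varphi=\bar{\pd}(\rho_\eps\varphi)-\bar{\pd}\rho_\eps\wedge\varphi+(1-\rho_\eps)\bar{\pd}\varphi$, and pass to the limit $\eps\to0$ term by term. As $\rho_\eps$ vanishes near $Z\supset S$, the form $\rho_\eps\varphi$ is a test form with compact support in $D\setminus S$, so the hypothesis yields $\int_D f\wedge\bar{\pd}(\rho_\eps\varphi)=\pm\int_D g\wedge\rho_\eps\varphi$, and $\int_D g\wedge\rho_\eps\varphi\to\int_D g\wedge\varphi$ by dominated convergence ($g\in L^1_\loc$, $\varphi$ bounded with compact support, $\rho_\eps\to1$ a.e.). Likewise $\int_D f\wedge(1-\rho_\eps)\bar{\pd}\varphi\to0$, since $f\in L^1_\loc$ and $1-\rho_\eps\to0$ a.e. Finally, by Cauchy--Schwarz,
\begin{equation*}
	\left|\int_D f\wedge\bar{\pd}\rho_\eps\wedge\varphi\right| \leq C_\varphi\,\|f\|_{L^2(K)}\,\|\bar{\pd}\rho_\eps\|_{L^2(K)} \longrightarrow 0,
\end{equation*}
which is precisely where the hypothesis $f\in L^2_\loc$ is used. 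Adding the three limits gives $\int_D f\wedge\bar{\pd}\varphi=\pm\int_D g\wedge\varphi$ for all $\varphi$, i.e. $\bar{\pd}f=g$ on $D$ in the sense of distribution.

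The main obstacle is the construction of $\rho_\eps$ with $\|\bar{\pd}\rho_\eps\|_{L^2}\to0$ — equivalently, the fact that analytic subsets are $W^{1,2}$-capacity negligible — and in particular the shell estimate near the singular locus of $Z$ and near points where $h$ vanishes to high order; once that is in place, everything else is routine dominated-convergence bookkeeping. It is worth noting that both the analyticity of $S$ (real codimension $\geq2$, a smooth real hypersurface having positive capacity) and the $L^2_\loc$ (not merely $L^1_\loc$) hypothesis on $f$ are genuinely used in this step, and that the statement itself fails without them.
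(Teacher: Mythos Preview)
The paper does not give its own proof of this lemma; it simply quotes it from Demailly's textbook \cite{DemaillyCADG}. Your argument is correct and is essentially the standard proof found there: localize, embed $S$ in a hypersurface $\{h=0\}$, use cutoffs of the form $\chi_\eps(-\log|h|)$, and exploit the shell estimate $\int_{\{a\leq-\log|h|\leq b\}\cap K}|\partial h|^2/|h|^2\,d\lambda\leq C_K(b-a)$ to make $\|\bar\partial\rho_\eps\|_{L^2(K)}\to0$, which is precisely where $f\in L^2_\loc$ is needed.
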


\begin{lemma} \label{Lemma:AnIneq}
	Let $(E,h)$ be a Hermitian vector bundle over a Hermitian manifold $(M,\omega)$. Given $\xi\in (T^*M)_x$, $f\in(K_M\otimes E)_x$ and $g\in(\wedge^{n,1}T^*M\otimes E)_x$, we have
	\begin{equation*}
		|\inner{g,\overline{\xi}\wedge f}_{\omega,h}|^2 \leqslant |f|_{\omega,h}^2 \inner{[\sqrt{-1}\xi\wedge\overline{\xi},\Lambda_\omega]g,g}_{\omega,h}.
	\end{equation*}
\end{lemma}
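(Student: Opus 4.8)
The plan is to prove this by a pointwise linear-algebra computation, reducing to an elementary Cauchy–Schwarz inequality after choosing coordinates that diagonalize the relevant forms at the point $x$. Since the inequality is at a single point, I may fix $x$ and work in $(K_M\otimes E)_x$ and $(\wedge^{n,1}T^*M\otimes E)_x$. First I would choose a local trivialization $(U,z,e)$ and a unitary frame so that, at $x$, $\omega = \sqrt{-1}\sum_{j} dz_j\wedge d\overline{z}_j$ and the Hermitian metric $h$ on $E$ is the identity; after a further unitary change of the $z$-coordinates I may assume the $(1,0)$-form $\xi$ is a multiple of a single coordinate differential, say $\xi = c\,dz_1$ for some $c\in\CC$ (if $\xi = 0$ both sides vanish and there is nothing to prove). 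Writing $f = dz_1\wedge\cdots\wedge dz_n\otimes f_0$ with $f_0\in E_x$, and $g = \sum_{j=1}^n dz_1\wedge\cdots\wedge dz_n\wedge d\overline{z}_j\otimes g_j$ (using that every $(n,1)$-form is of this shape since the holomorphic part is forced to be $dz_1\wedge\cdots\wedge dz_n$ up to scalar), everything becomes explicit in terms of $f_0$ and the $g_j\in E_x$.

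Next I would compute the two sides in these coordinates. On the left, $\overline{\xi}\wedge f = \overline{c}\,d\overline{z}_1\wedge dz_1\wedge\cdots\wedge dz_n\otimes f_0 = \pm\overline{c}\,dz_1\wedge\cdots\wedge dz_n\wedge d\overline{z}_1\otimes f_0$, so that $\inner{g,\overline{\xi}\wedge f}_{\omega,h} = \pm c\,\inner{g_1,f_0}_h$ up to a fixed normalization constant coming from $|dz_1\wedge\cdots\wedge dz_n\wedge d\overline{z}_1|_\omega^2$; hence the left-hand side equals (that constant times) $|c|^2\,|\inner{g_1,f_0}_h|^2$. For the right-hand side I need $[\sqrt{-1}\xi\wedge\overline{\xi},\Lambda_\omega]$ acting on $g$: since $\sqrt{-1}\xi\wedge\overline{\xi} = |c|^2\,\sqrt{-1}\,dz_1\wedge d\overline{z}_1$, the standard commutation identities for the Lefschetz operator (Bochner–Kodaira–Nakano, cf. the primitive decomposition) give that $[\sqrt{-1}\,dz_1\wedge d\overline{z}_1,\Lambda_\omega]$ acts on an $(n,1)$-form $dz_1\wedge\cdots\wedge dz_n\wedge d\overline{z}_j$ by the scalar $1$ if $j=1$ and $0$ otherwise — more precisely it is the orthogonal projection onto the component with $d\overline{z}_1$. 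Thus $\inner{[\sqrt{-1}\xi\wedge\overline{\xi},\Lambda_\omega]g,g}_{\omega,h} = |c|^2\,|g_1|_h^2$ times the same normalization constant, and $|f|_{\omega,h}^2 = |f_0|_h^2$ times it as well (with the volume normalization consistent). The claimed inequality then reduces to $|c|^2\,|\inner{g_1,f_0}_h|^2 \leq |f_0|_h^2\cdot|c|^2\,|g_1|_h^2$, which is exactly Cauchy–Schwarz in $E_x$.

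The step I expect to be the main obstacle — or at least the one requiring genuine care rather than bookkeeping — is the identification of $[\sqrt{-1}\xi\wedge\overline{\xi},\Lambda_\omega]$ as the projection onto the ``$d\overline{z}_1$-component'' when $\xi = c\,dz_1$; this uses the Kähler commutation relations and the fact that for $(n,q)$-forms the Lefschetz operator $L_\omega$ annihilates the top holomorphic degree, so the commutator simplifies drastically. Once that is in hand, one must also check that the various metric normalization constants ($|dz_1\wedge\cdots\wedge dz_n|_\omega^2$ and $|dz_1\wedge\cdots\wedge dz_n\wedge d\overline{z}_1|_\omega^2$ computed with $\omega = \sqrt{-1}\sum dz_j\wedge d\overline{z}_j$) match up so that the constant cancels cleanly from both sides; this is routine but must be done consistently. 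Finally I should remark that the reduction to $\xi = c\,dz_1$ is harmless because both sides of the inequality transform in the same way under a unitary change of coordinates fixing $\omega$, and that the case $g$ not $\bar\partial$-closed or not of pure type plays no role since the statement is purely algebraic at $x$.
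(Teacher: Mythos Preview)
The paper does not supply a proof of this lemma; it is stated at the end of the Preliminaries section as a standard fact (it is the well-known pointwise inequality underlying the twisted Bochner--Kodaira estimate, and appears for instance in Demailly's book and in the Guan--Zhou and Zhou--Zhu papers cited in the article). Your proposed argument is the correct and standard one: diagonalize $\omega$ and $h$ at $x$, rotate so that $\xi=c\,dz_1$, and reduce to Cauchy--Schwarz in $E_x$ once the commutator $[\sqrt{-1}\,dz_1\wedge d\bar z_1,\Lambda_\omega]$ is identified on $(n,1)$-forms.

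Two small remarks. First, you invoke the ``K\"ahler commutation relations,'' but the lemma is stated for a Hermitian (not necessarily K\"ahler) manifold; this is harmless because $[\sqrt{-1}\xi\wedge\bar\xi,\Lambda_\omega]$ is a zeroth-order operator, so only the algebraic structure of $\omega$ at the single point $x$ matters, and any Hermitian form is pointwise standard. Second, your description of the commutator as the projection onto the $d\bar z_1$-component is correct and is the special case of the general identity that for a diagonal real $(1,1)$-form $\gamma=\sqrt{-1}\sum_j\gamma_j\,dz_j\wedge d\bar z_j$ and an $(n,1)$-form $u=\sum_j dz\wedge d\bar z_j\otimes u_j$ one has $\langle[\gamma,\Lambda_\omega]u,u\rangle=\sum_j\gamma_j|u_j|_h^2$ (up to the common normalization $|dz\wedge d\bar z_j|_\omega^2$, which indeed cancels from both sides of the claimed inequality). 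With these points noted, your sketch constitutes a complete proof.
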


\begin{proof}
	Direct computations in terms of orthonormal bases of $(T^*M)_x$ and $E_x$.
\end{proof}

\section{An Almost Optimal \texorpdfstring{$L^2$}{L2} Extension Theorem of Openness Type} \label{Chap:Coarse}

In this section, we prove an $L^2$ extension theorem of openness type on weakly pseudoconvex K\"ahler manifolds, and the $L^2$ estimate obtained here is asymptotically optimal. The main ideas of the proof come from Guan-Zhou \cite{GuanZhou2015} and Zhou-Zhu \cite{ZhouZhu2018}, but we choose different auxiliary functions for our purpose.

\begin{theorem} \label{Thm:CoarseExtThm}
	Let $(\Omega,\omega)$ be a weakly pseudoconvex K\"ahler manifold and $(E,h)$ be a Hermitian holomorphic vector bundle over $\Omega$. Let $\psi<0$ and $\varphi$ be quasi-psh functions on $\Omega$. Suppose there are continuous real $(1,1)$-forms $\gamma\geqslant0$ and $\rho$ on $\Omega$ such that
	\begin{equation*}
		\ddbar\psi\geqslant\gamma, \quad \ddbar\varphi\geqslant\rho \quad\text{and}\quad \sqrt{-1}\Theta(E,h)+(\gamma+\rho)\otimes\Id_E \geqslant_\Nak 0.
	\end{equation*}
	Given $a\in\RR_+$, let $\Omega_a:=\{z\in\Omega:\psi(z)<-a\}$. Then for any holomorphic section $f\in\Gamma(\Omega_a,K_\Omega\otimes E)$ satisfying $\int_{\Omega_a} |f|_{\omega,h}^2e^{-\varphi} dV_\omega < +\infty$, there exists a holomorphic section $F\in\Gamma(\Omega,K_\Omega\otimes E)$ such that
	\begin{equation*}
		F|_{\Omega_a}-f\in\Gamma(\Omega_a, \calO(K_\Omega\otimes E)\otimes\calI(\varphi+\psi))
	\end{equation*}
	and
	\begin{equation*}
		\int_\Omega |F|_{\omega,h}^2e^{-\varphi} dV_\omega \leqslant (e^a+1) \int_{\Omega_a} |f|_{\omega,h}^2e^{-\varphi} dV_\omega.
	\end{equation*}
\end{theorem}

\begin{proof}
	{\itshape\underline{Step 1.}} At first, we introduce some auxiliary functions. We define
	\begin{equation*}
		v(t) := \begin{cases} e^{t+a} & (t<-a) \\ t+a+1 & (-a\leqslant t<0) \end{cases}.
	\end{equation*}
	We need to approximate $v(t)$ by smooth functions. For each $\eps\in(0,\frac{1}{4})$, we choose a smooth function $\theta_\eps(t)\geqslant0$ on $(-\infty,0)$ so that:
	
	$\bullet$ $\supp\theta_\eps\subset[-a-\log\eps^{-1}-\frac{1}{2},-a]$, $\theta_\eps(t)\equiv e^{t+a}$ on $[-a-\log\eps^{-1},-a-\eps]$;\par
	$\bullet$ $\int_{-\infty}^{-a-\log\eps^{-1}} \theta_\eps(t)dt = \int_{-\infty}^{-a-\log\eps^{-1}} e^{t+a}dt$ and $\int_{-\infty}^{-a}\theta_\eps(t)dt = \int_{-\infty}^{-a} e^{t+a}dt$.
	
	\noindent Then we define a smooth convex increasing function $v_\eps(t)$ on $(-\infty,0)$ by
	\begin{equation*}
		v_\eps(t) := \int_{-a}^t \left(\int_{-\infty}^{\tau_1} \theta_\eps(\tau_2) d\tau_2\right) d\tau_1 + \left(1-\frac{\eps}{4}\right).
	\end{equation*}
	It is easy to verify that:
	
	$\bullet$ $v_\eps(t)$ is a positive constant for $t\leqslant -a-\log\eps^{-1}-1$;\par
	$\bullet$ $v_\eps(t)=t+a+1-\frac{\eps}{4}=v(t)-\frac{\eps}{4}$ for $t\geqslant-a$;\par
	$\bullet$ $v_\eps'(t)=v_\eps''(t)=e^{t+a}=v'(t)=v''(t)$ on $[-a-\log\eps^{-1},-a-\eps]$;\par
	$\bullet$ $v_\eps(t)$ converges uniformly to $v(t)$ as $\eps\rightarrow 0$.
	
	\noindent Let $\chi(t)$ be a smooth function on $\RR$ such that $0\leqslant\chi\leqslant 1$, $\chi(t)\equiv 1$ for $t\leqslant\delta$ and $\chi(t)\equiv 0$ for $t\geqslant1-\delta$, where $\delta\in(0,\tfrac{1}{2})$ is a constant. Notice that,
	\begin{equation*}
		\lim_{\eps\rightarrow0} v_\eps(-a-\log\eps^{-1}) = 0 \quad\text{and}\quad \lim_{\eps\rightarrow0} v_\eps(-a-\eps) = 1.
	\end{equation*}
	In the following, we always assume that $\eps>0$ is small enough so that
	\begin{equation*}
		v_\eps(-a-\log\eps^{-1})<\delta<1-\delta<v_\eps(-a-\eps).
	\end{equation*}
	Since $v_\eps(t)$ is increasing, then $\chi(v_\eps(t))\neq 0$ implies $t<-a-\eps$, and $\chi'(v_\eps(t))\neq 0$ implies $-a-\log\eps^{-1}<t<-a-\eps$.\\
	
	We choose a positive smooth increasing function $c(t)$ on $(0,a+1)$ such that $\int_1^{a+1}c(\tau)d\tau<+\infty$, and we define
	\begin{gather*}
		u(t) := a - \log\left( \int_t^{a+1}c(\tau)d\tau \right), \quad
		s(t) := \frac{\int_t^{a+1} \int_{\tau_1}^{a+1}c(\tau_2)d\tau_2 d\tau_1} {\int_t^{a+1}c(\tau)d\tau}, \\
		g(t) := \frac{\left(\int_t^{a+1}c(\tau)d\tau\right)^2 - c(t)\left(\int_t^{a+1}\int_{\tau_1}^{a+1}c(\tau_2)d\tau_2d\tau_1\right)} {c(t)\left(\int_t^{a+1}c(\tau)d\tau\right)}.
	\end{gather*}
	Then $u,s,g$ are smooth functions on $(0,a+1)$ satisfying the following properties:
	\begin{gather*}
		s>0, \quad g>0, \quad u'>0, \quad s'<0, \\ su'-s'\equiv 1, \quad su''-s''-g^{-1}s's'\equiv 0, \quad \frac{e^{a-u(t)}}{s(t)+g(t)}\equiv c(t).
	\end{gather*}
	Indeed, we obtain the expressions of $u,s,g$ by solving the ODEs determined by the last three equations (see Guan-Zhou \cite{GuanZhou2015}). Since $0<\inf v_\eps\leqslant\sup v_\eps<a+1$, it is clear that $u(v_\eps(t))$, $s(v_\eps(t))$ and $g(v_\eps(t))$ are bounded smooth functions on $(-\infty,0)$. In the following, we further assume that
	\begin{equation} \label{Eq:AssumeC}
		c_{\min}:=\lim_{t\rightarrow0^+}c(t)>0 \quad\text{and}\quad
		\begin{cases} c(t)>t & (0<t<1) \\ c(t)\geqslant e^{t-1} & (1\leqslant t<a+1) \end{cases}.
	\end{equation}
	
	{\itshape\underline{Step 2.}} Since $\Omega$ is weakly pseudoconvex, there exists a sequence $\{D^k\}_{k=1}^\infty$ of domains in $\Omega$ such that $D^k\Subset D^{k+1}\Subset \Omega$, $\cup_kD^k=\Omega$, and each $D^k$ is also weakly pseudoconvex. By Theorem \ref{Thm:CompleteKahler}, all $D^k$ are complete K\"ahler manifolds.
	
	{\itshape Suppose $\dim\Omega=n$. Let $k\in\NN_+$ and $0<\eps\ll1$ be fixed until Step 4.}
	
	According to Theorem \ref{Thm:EquiSingAppro}, there exist two decreasing sequences $\{\psi_{k,\nu}\}_{\nu=1}^\infty$ and $\{\varphi_{k,\nu}\}_{\nu=1}^\infty$ of quasi-psh functions on $D^{k+1}\Subset\Omega$ satisfying the following conditions:
	
	$\bullet$ $\psi\leqslant\psi_{k,\nu}<0$ and $\psi_{k,\nu}\searrow\psi$ as $\nu\nearrow+\infty$;
	
	$\bullet$ $\varphi\leqslant\varphi_{k,\nu} < \sup_{D^{k+2}}\varphi + 1$ and $\varphi_{k,\nu}\searrow\varphi$ as $\nu\nearrow+\infty$;
	
	$\bullet$ $\psi_{k,\nu}\in C^\infty(D^{k+1}\backslash\psi_{k,\nu}^{-1}(-\infty))$ and $\varphi_{k,\nu}\in C^\infty(D^{k+1}\backslash\varphi_{k,\nu}^{-1}(-\infty))$;
	
	$\bullet$ $\psi_{k,\nu}^{-1}(-\infty)$ and $\varphi_{k,\nu}^{-1}(-\infty)$ are closed analytic subsets of $D^{k+1}$;
	
	$\bullet$ $\ddbar\psi_{k,\nu} \geqslant \gamma-\sigma_{k,\nu}\omega$ and $\ddbar\varphi_{k,\nu} \geqslant \rho-\sigma_{k,\nu}\omega$, where $\{\sigma_{k,\nu}\}_{\nu=1}^\infty$ is a sequence of positive numbers so that $\lim_{\nu\rightarrow+\infty}\sigma_{k,\nu}=0$.
	
	Because $S_{k,\nu} := \psi_{k,\nu}^{-1}(-\infty) \cup \varphi_{k,\nu}^{-1}(-\infty)$ is a closed analytic subset of $D^{k+1}$, it follows from Theorem \ref{Thm:CompleteKahler} that $\DkSn$ admits a complete K\"ahler metric.
	
	Since $v_\eps(\psi_{k,\nu})$ is constant on $\{\psi_{k,\nu}<-a-\log\eps^{-1}-1\}$, we know $v_\eps(\psi_{k,\nu})\in C^\infty(D^k)$. As $0<\inf v_\eps \leqslant v_\eps(\psi_{k,\nu}) \leqslant \sup v_\eps<a+1$, it is clear that
	\begin{equation*}
		\phi_{\eps,k,\nu} := u(v_\eps(\psi_{k,\nu})), \quad \eta_{\eps,k,\nu} := s(v_\eps(\psi_{k,\nu})), \quad \lambda_{\eps,k,\nu} := g(v_\eps(\psi_{k,\nu}))
	\end{equation*}
	are bounded smooth functions on $D^k$. We shall consider the following smooth Hermitian metric on $E|_{\DkSn}$:
	\begin{equation*}
		h_{\eps,k,\nu} := he^{-\varphi_{k,\nu}-\psi_{k,\nu}-\phi_{\eps,k,\nu}}.
	\end{equation*}
	For convenience, we use $|\cdot|_{\eps,k,\nu}$ and $\inner{\cdot,\cdot}_{\eps,k,\nu}$ to denote the fibre metric and the inner product on $\wedge^{p,q}T^*\Omega\otimes E$ that induced by $\omega$ and $h_{\eps,k,\nu}$.\\
	
	In view of Theorem \ref{Thm:L2ExistenceErrTerm}, we define
	\begin{align*}
		\Theta_{\eps,k,\nu} := \eta_{\eps,k,\nu}\sqrt{-1}\Theta(E,h_{\eps,k,\nu}) - \ddbar\eta_{\eps,k,\nu} \\
		- \lambda_{\eps,k,\nu}^{-1}\dwdbar{\eta_{\eps,k,\nu}}.
	\end{align*}
	By direct computations,
	\begin{align*}
		\Theta_{\eps,k,\nu} = \eta_{\eps,k,\nu}\sqrt{-1}\Theta(E,he^{-\varphi_{k,\nu}-\psi_{k,\nu}}) + (su'-s')|_{v_\eps(\psi_{k,\nu})} \ddbar (v_\eps(\psi_{k,\nu})) \\
		+ (su''-s''-g^{-1}s's')|_{v_\eps(\psi_{k,\nu})} \dwdbar{(v_\eps(\psi_{k,\nu}))}.
	\end{align*}
	Since $su'-s' \equiv 1$ and $su''-s''-g^{-1}s's' \equiv 0$, it follows that
	\begin{align*}
		\Theta_{\eps,k,\nu} = &\, \eta_{\eps,k,\nu}\sqrt{-1}\Theta(E,he^{-\varphi_{k,\nu}-\psi_{k,\nu}}) + \ddbar (v_\eps(\psi_{k,\nu})) \\
		= &\, s(v_\eps(\psi_{k,\nu}))(\sqrt{-1}\Theta(E,h) + \ddbar\varphi_{k,\nu} + \ddbar\psi_{k,\nu}) \\
		& + v_\eps'(\psi_{k,\nu})\ddbar{\psi_{k,\nu}} + v_\eps''(\psi_{k,\nu})\dwdbar{\psi_{k,\nu}}.
	\end{align*}
	Since $\ddbar\varphi_{k,\nu} \geqslant \rho-\sigma_{k,\nu}\omega$ and $\ddbar\psi_{k,\nu} \geqslant \gamma-\sigma_{k,\nu}\omega \geqslant -\sigma_{k,\nu}\omega$, then
	\begin{align*}
		\Theta_{\eps,k,\nu} \geqslant_\Nak -2s(v_\eps(\psi_{k,\nu}))\sigma_{k,\nu}\omega - v_\eps'(\psi_{k,\nu})\sigma_{k,\nu}\omega \\ + v_\eps''(\psi_{k,\nu})\dwdbar{\psi_{k,\nu}}.
	\end{align*}
	Since $0 \leqslant 2s(v_\eps(t))+v_\eps'(t) \leqslant 2s(\inf v_\eps) + 1 =: C_\eps <+\infty$, we conclude that
	\begin{equation} \label{Eq:LowBdCurv}
		\Theta_{\eps,k,\nu} + \sigma_{k,\nu} C_\eps \omega \geqslant_\Nak v_\eps''(\psi_{k,\nu})\dwdbar{\psi_{k,\nu}} \geqslant_\Nak 0.
	\end{equation}
	Clearly, $\sigma_{k,\nu} C_\eps\to0$ as $\nu\to+\infty$. \\
	
	If $\chi(v_\eps(\psi_{k,\nu}))\neq0$, then $\psi\leqslant\psi_{k,\nu}<-a-\eps$. Since $f\in\Gamma(\Omega_a,K_\Omega\otimes E)$, it follows that $\chi(v_\eps(\psi_{k,\nu}))f$ is a smooth $(n,0)$-form on $D^k$. We define
	\begin{align*}
		\xi_{\eps,k,\nu} := &\, \bar{\pd}(\chi(v_\eps(\psi_{k,\nu}))f) \\
		= &\, \chi'(v_\eps(\psi_{k,\nu}))v'_\eps(\psi_{k,\nu})\cdot\bar{\pd}\psi_{k,\nu}\wedge f.
	\end{align*}
	Recall that, if $\chi'(v_\eps(t))\neq 0$, then $v'_\eps(t)=v''_\eps(t)=e^{t+a}$. For any $x\in\DkSn$ and $\alpha\in(\wedge^{n,1}T^*\Omega\otimes E)_x$, it follows from Lemma \ref{Lemma:AnIneq} and \eqref{Eq:LowBdCurv} that
	\begin{align*}
		&\,|\inner{\alpha,\xi_{\eps,k,\nu}}_{\eps,k,\nu}|^2 \\
		= &\, \chi'(v_\eps(\psi_{k,\nu}))^2 v'_\eps(\psi_{k,\nu})^2 \cdot |\inner{\alpha, \bar{\pd}\psi_{k,\nu}\wedge f}_{\eps,k,\nu}|^2 \\
		\leqslant &\, \chi'(v_\eps(\psi_{k,\nu}))^2 v'_\eps(\psi_{k,\nu})^2 \cdot |f|_{\eps,k,\nu}^2 \inner{[\dwdbar{\psi_{k,\nu}},\Lambda_\omega]\alpha,\alpha}_{\eps,k,\nu} \\
		= &\, \chi'(v_\eps(\psi_{k,\nu}))^2 |f|_{\eps,k,\nu}^2 e^{\psi_{k,\nu}+a} \cdot \inner{[v_\eps''(\psi_{k,\nu})\dwdbar{\psi_{k,\nu}}, \Lambda_\omega] \alpha, \alpha}_{\eps,k,\nu} \\
		\leqslant &\, \chi'(v_\eps(\psi_{k,\nu}))^2 |f|_{\eps,k,\nu}^2 e^{\psi_{k,\nu}+a} \cdot \inner{[\Theta_{\eps,k,\nu}+\sigma_{k,\nu} C_\eps\omega,\Lambda_\omega]\alpha,\alpha}_{\eps,k,\nu}.
	\end{align*}
	Let $B_{\eps,k,\nu} := [\Theta_{\eps,k,\nu}+\sigma_{k,\nu} C_\eps\omega,\Lambda_\omega]$, then the above inequality means that
	\begin{align*}
		\inner{B_{\eps,k,\nu}^{-1}\xi_{\eps,k,\nu},\xi_{\eps,k,\nu}}_{\eps,k,\nu}
		& \leqslant \chi'(v_\eps(\psi_{k,\nu}))^2 |f|_{\eps,k,\nu}^2 e^{\psi_{k,\nu}+a} \\
		& = \chi'(v_\eps(\psi_{k,\nu}))^2 e^{a-u(v_\eps(\psi_{k,\nu}))} |f|_{\omega,h}^2e^{-\varphi_{k,\nu}}.
	\end{align*}
	
	We shall examine the integrability of $\inner{B_{\eps,k,\nu}^{-1}\xi_{\eps,k,\nu},\xi_{\eps,k,\nu}}_{\eps,k,\nu}$:
	\begin{align*}
		&\, \int_{\DkSn} \inner{B_{\eps,k,\nu}^{-1}\xi_{\eps,k,\nu},\xi_{\eps,k,\nu}}_{\eps,k,\nu} dV_\omega \\
		\leqslant &\, \int_{\DkSn} \chi'(v_\eps(\psi_{k,\nu}))^2 e^{a-u(v_\eps(\psi_{k,\nu}))} |f|_{\omega,h}^2e^{-\varphi_{k,\nu}} dV_\omega \\
		\leqslant &\, \max_{\delta\leqslant t\leqslant 1-\delta}\chi'(t)^2e^{a-u(t)} \int_{\DkSn} \one_{\{-a-\log\eps^{-1}<\psi_{k,\nu}<-a-\eps\}} |f|_{\omega,h}^2e^{-\varphi_{k,\nu}} dV_\omega \\
		\leqslant &\, \max_{\delta\leqslant t\leqslant 1-\delta}\chi'(t)^2e^{a-u(t)} \int_{\Omega_a} |f|_{\omega,h}^2e^{-\varphi} dV_\omega =: Q < +\infty.
	\end{align*}
	Moreover, by Lebesgue's dominated convergence theorem,
	\begin{equation}\begin{aligned} \label{Eq:LimRHS}
			&\, \lim_{\nu\rightarrow+\infty} \int_{\DkSn} \chi'(v_\eps(\psi_{k,\nu}))^2 e^{a-u(v_\eps(\psi_{k,\nu}))} |f|_{\omega,h}^2 e^{-\varphi_{k,\nu}} dV_\omega \\
			= &\, \int_{D^k} \chi'(v_\eps(\psi))^2 e^{a-u(v_\eps(\psi))} |f|_{\omega,h}^2 e^{-\varphi} dV_\omega \leqslant Q.
	\end{aligned}\end{equation}
	
	Recall that, $\DkSn$ is a complete K\"ahler manifold, $\eta_{\eps,k,\nu}>0$ and $\lambda_{\eps,k,\nu}>0$ are bounded smooth functions on $\DkSn$. According to Theorem \ref{Thm:L2ExistenceErrTerm} and \eqref{Eq:LowBdCurv}, there exist differential forms
	\begin{equation*}
		\gamma_{\eps,k,\nu}\in L_{n,0}^2(\DkSn,E;\omega,h_{\eps,k,\nu})
	\end{equation*}
	and
	\begin{equation*}
		w_{\eps,k,\nu}\in L_{n,1}^2(\DkSn,E;\omega,h_{\eps,k,\nu})
	\end{equation*}
	such that
	\begin{equation} \label{Eq:SolErrTerm}
		\bar{\pd}\gamma_{\eps,k,\nu} + \sqrt{\sigma_{k,\nu} C_\eps}\,w_{\eps,k,\nu} = \xi_{\eps,k,\nu} = \bar{\pd}\left(\chi(v_\eps(\psi_{k,\nu}))f\right)
	\end{equation}
	on $\DkSn$ in the sense of distribution and
	\begin{align*}
		&\, \int_{\DkSn}\frac{|\gamma_{\eps,k,\nu}|_{\eps,k,\nu}^2}{\eta_{\eps,k,\nu}+\lambda_{\eps,k,\nu}}dV_\omega
		+ \int_{\DkSn}|w_{\eps,k,\nu}|_{\eps,k,\nu}^2dV_\omega \\
		\leqslant &\, \int_{\DkSn} \inner{B_{\eps,k,\nu}^{-1}\xi_{\eps,k,\nu},\xi_{\eps,k,\nu}}_{\eps,k,\nu} dV_\omega \leqslant Q.
	\end{align*}
	By the definitions of $\phi_{\eps,k,\nu}, \eta_{\eps,k,\nu}, \lambda_{\eps,k,\nu}$ and $h_{\eps,k,\nu}$, the above inequality implies
	\begin{equation}\begin{aligned} \label{Eq:EstSol}
			&\, \int_{\DkSn} |\gamma_{\eps,k,\nu}|_{\omega,h}^2 e^{-\varphi_{k,\nu}-\psi_{k,\nu}-a} c(v_\eps(\psi_{k,\nu})) dV_\omega \\
			&\, + \int_{\DkSn} |w_{\eps,k,\nu}|_{\omega,h}^2 e^{-\varphi_{k,\nu}-\psi_{k,\nu}-u(v_\eps(\psi_{k,\nu}))} dV_\omega \\
			\leqslant &\, \int_{\DkSn} \chi'(v_\eps(\psi_{k,\nu}))^2 e^{a-u(v_\eps(\psi_{k,\nu}))} |f|_{\omega,h}^2e^{-\varphi_{k,\nu}} dV_\omega \leqslant Q.
	\end{aligned}\end{equation}
	
	{\itshape\underline{Step 3.}} Recall that, $c(t)\geqslant c_{\min}>0$ and $u(v_\eps(\psi_{k,\nu})) \leqslant u(\sup v_\eps) < +\infty$. Since $\psi_{k,\nu}<0$ and $\varphi_{k,\nu}<\sup_{D^{k+2}}\varphi+1$ on $D^k$, it follows from \eqref{Eq:EstSol} that
	\begin{gather*}
		\int_{\DkSn}|\gamma_{\eps,k,\nu}|_{\omega,h}^2dV_\omega \leqslant \frac{1}{c_{\min}} \exp(\sup_{D^{k+2}}\varphi+a+1) Q < +\infty, \\
		\int_{\DkSn}|w_{\eps,k,\nu}|_{\omega,h}^2dV_\omega \leqslant \exp(\sup_{D^{k+2}}\varphi+u(\sup v_\eps)+1) Q < +\infty.
	\end{gather*}
	Because $S_{k,\nu}$ is a set of zero measure, $\gamma_{\eps,k,\nu}$ (resp. $w_{\eps,k,\nu}$) can be regarded as an element of $L_{n,0}^2(D^k,E;\omega,h)$ (resp. $L_{n,1}^2(D^k,E;\omega,h)$). On the other hand,
	\begin{equation*}
		\int_{D^k} |\chi(v_\eps(\psi_{k,\nu}))f|_{\omega,h}^2 dV_\omega \leqslant \exp(\sup_{D^k}\varphi) \int_{D^k\cap\Omega_a} |f|_{\omega,h}^2e^{-\varphi} dV_\omega < +\infty,
	\end{equation*}
	i.e. $\chi(v_\eps(\psi_{k,\nu}))f\in L_{n,0}^2(D^k,E;\omega,h)$. By Lemma \ref{Lemma:dbarOverS} and \eqref{Eq:SolErrTerm},
	\begin{equation} \label{Eq:SolAcrossS}
		\bar{\pd}\left(\chi(v_\eps(\psi_{k,\nu}))f - \gamma_{\eps,k,\nu}\right) = \sqrt{\sigma_{k,\nu} C_\eps}\,w_{\eps,k,\nu}
	\end{equation}
	on $D^k$ in the sense of distribution.
	
	Since $\{\gamma_{\eps,k,\nu}\}_\nu$ is a bounded sequence in $L_{n,0}^2(D^k,E;\omega,h)$, there exists a subsequence of $\{\gamma_{\eps,k,\nu}\}_\nu$ that converges weakly to some element $\gamma_{\eps,k}\in L_{n,0}^2(D^k,E;\omega,h)$. Without loss of generality, we may assume that the subsequence is $\{\gamma_{\eps,k,\nu}\}_\nu$ itself. Since $\{w_{\eps,k,\nu}\}_\nu$ is a bounded sequence in $L_{n,1}^2(D^k,E;\omega,h)$ and $\lim_{\nu\to+\infty}\sigma_{k,\nu}=0$, it is clear that $\sqrt{\sigma_{k,\nu} C_\eps}\,w_{\eps,k,\nu} \rightarrow0$ in $L_{n,1}^2(D^k,E;\omega,h)$. Notice that, $\chi(v_\eps(\psi_{k,\nu}))f$ converges pointwisely to $\chi(v_\eps(\psi))f$ and
	\begin{equation*}
		\int_{D^k} |\chi(v_\eps(\psi_{k,\nu}))f-\chi(v_\eps(\psi))f|_{\omega,h}^2 dV_\omega \leqslant \int_{D^k\cap\Omega_a} |f|_{\omega,h}^2 dV_\omega < +\infty,
	\end{equation*}
	then it follows from Lebesgue's dominated convergence theorem that $\chi(v_\eps(\psi_{k,\nu}))f$ $\to\chi(v_\eps(\psi))f$ in $L_{n,0}^2(D^k,E;\omega,h)$. In summary, as $\nu\to+\infty$,
	\begin{equation*}
		\chi(v_\eps(\psi_{k,\nu}))f \to \chi(v_\eps(\psi))f, \quad \gamma_{\eps,k,\nu} \rightharpoonup \gamma_{\eps,k}
		\quad\text{and}\quad \sqrt{\sigma_{k,\nu} C_\eps}\,w_{\eps,k,\nu} \to 0.
	\end{equation*}
	Since $\bar{\pd}$ is a closed operator, letting $\nu\rightarrow+\infty$ in \eqref{Eq:SolAcrossS}, we conclude that
	\begin{equation*}
		\bar{\pd}(\chi(v_\eps(\psi))f - \gamma_{\eps,k}) = 0
	\end{equation*}
	on $D^k$ in the sense of distribution. Therefore, after a modification on sets of zero measure, we obtain a holomorphic section
	\begin{equation*}
		F_{\eps,k}:=\chi(v_\eps(\psi))f-\gamma_{\eps,k} \in \Gamma(D^k,K_\Omega\otimes E).
	\end{equation*}
	
	We need to estimate the $L^2$ norm of $\gamma_{\eps,k}$.
	
	Let $S:=\psi^{-1}(-\infty)\cup\varphi^{-1}(-\infty)$, then $S$ is a set of zero measure and $S_{k,\nu}\subset S$ for any $\nu\in\NN_+$. Clearly, $c(v_\eps(\psi_{k,\nu}))$ converges pointwisely to $c(v_\eps(\psi))$ as $\nu\rightarrow+\infty$ and $0 < c(v_\eps(\psi_{k,\nu})) \leqslant c(\sup v_\eps) < +\infty$.
	Since $\gamma_{\eps,k,\nu}\rightharpoonup\gamma_{\eps,k}$ in $L_{n,0}^2(\DkS,E;\omega,h)$, it follows from Lemma \ref{Lemma:WeakLim} that
	\begin{equation*}
		\sqrt{c(v_\eps(\psi_{k,\nu}))} \gamma_{\eps,k,\nu} \rightharpoonup \sqrt{c(v_\eps(\psi))} \gamma_{\eps,k} \quad \text{in}\quad L_{n,0}^2(\DkS,E;\omega,h).
	\end{equation*}
	Let $\nu_0\in\NN_+$ be fixed for the moment. By \eqref{Eq:EstSol}, for any $\nu\geqslant\nu_0$,
	\begin{align*}
		&\, \int_{\DkS} |\sqrt{c(v_\eps(\psi_{k,\nu}))} \gamma_{\eps,k,\nu}|_{\omega,h}^2 e^{-\varphi_{k,\nu_0}-\psi_{k,\nu_0}-a} dV_\omega \\
		\leqslant &\, \int_{\DkSn} \chi'(v_\eps(\psi_{k,\nu}))^2 e^{a-u(v_\eps(\psi_{k,\nu}))} |f|_{\omega,h}^2 e^{-\varphi_{k,\nu}} dV_\omega \leqslant Q.
	\end{align*}
	According to Lemma \ref{Lemma:DoubleWeakLim},
	\begin{align*}
		\sqrt{c(v_\eps(\psi_{k,\nu}))} \gamma_{\eps,k,\nu} \rightharpoonup \sqrt{c(v_\eps(\psi))} \gamma_{\eps,k}
	\end{align*}
	in $L_{n,0}^2(\DkS,E;\omega,h,e^{-\varphi_{k,\nu_0}-\psi_{k,\nu_0}-a}dV_\omega)$. Then it follows from \eqref{Eq:LimRHS} that
	\begin{align*}
		&\, \int_{\DkS} |\sqrt{c(v_\eps(\psi))}\gamma_{\eps,k}|_{\omega,h}^2 e^{-\varphi_{k,\nu_0}-\psi_{k,\nu_0}-a} dV_\omega \\
		\leqslant &\, \varliminf_{\nu\rightarrow+\infty} \int_{\DkS} |\sqrt{c(v_\eps(\psi_{k,\nu}))}\gamma_{\eps,k,\nu}|_{\omega,h}^2 e^{-\varphi_{k,\nu_0}-\psi_{k,\nu_0}-a} dV_\omega \\
		\leqslant &\, \int_{D^k} \chi'(v_\eps(\psi))^2e^{a-u(v_\eps(\psi))} |f|_{\omega,h}^2e^{-\varphi} dV_\omega \leqslant Q.
	\end{align*}
	Since $S$ is a set of zero measure, letting $\nu_0\nearrow+\infty$, we have
	\begin{equation}\begin{aligned} \label{Eq:EstGamma}
			&\, \int_{D^k}|\gamma_{\eps,k}|_{\omega,h}^2 e^{-\varphi-\psi-a}c(v_\eps(\psi))dV_\omega \\
			\leqslant &\, \int_{D^k} \chi'(v_\eps(\psi))^2e^{a-u(v_\eps(\psi))} |f|_{\omega,h}^2e^{-\varphi} dV_\omega \leqslant Q.
	\end{aligned}\end{equation}
	
	Since $\gamma_{\eps,k} = F_{\eps,k}-\chi(v_\eps(\psi))f$, it follows that
	\begin{equation*}
		\int_{D^k} |F_{\eps,k}-\chi(v_\eps(\psi))f|_{\omega,h}^2 e^{-\varphi-\psi}dV_\omega \leqslant \frac{1}{c_{\min}}e^aQ < +\infty.
	\end{equation*}
	On the other hand, since $\chi(v_\eps(t))=1$ for $t\leqslant-a-\log\eps^{-1}$, we know
	\begin{align*}
		&\, \int_{D^k\cap\Omega_a} |f-\chi(v_\eps(\psi))f|_{\omega,h}^2 e^{-\varphi-\psi}dV_\omega \\
		\leqslant &\, \int_{D^k\cap\Omega_a} \one_{\{\psi>-a-\log\eps^{-1}\}} |f|_{\omega,h}^2 e^{-\varphi-\psi}dV_\omega \\
		\leqslant &\, \eps^{-1}e^a \int_{D^k\cap\Omega_a} |f|_{\omega,h}^2 e^{-\varphi}dV_\omega < +\infty.
	\end{align*}
	Since $F_{\eps,k}-f = (F_{\eps,k}-\chi(v_\eps(\psi))f) - (f-\chi(v_\eps(\psi))f)$, we conclude that
	\begin{equation} \label{Eq:EqualwrtCalI}
		F_{\eps,k}|_{D^k\cap\Omega_a} - f|_{D^k\cap\Omega_a} \in \Gamma(D^k\cap\Omega_a,\calO(K_\Omega\otimes E)\otimes\calI(\varphi+\psi)).
	\end{equation}
	
	{\itshape\underline{Step 4.}} In this step, we let $\eps\to0$ and then $k\to+\infty$. By \eqref{Eq:EstGamma},
	\begin{equation*}
		\int_{D^k} |F_{\eps,k}-\chi(v_\eps(\psi))f|_{\omega,h}^2 dV_\omega \leqslant \frac{1}{c_{\min}}\exp(\sup_{D^k}\varphi+a)Q < +\infty.
	\end{equation*}
	On the other hand,
	\begin{equation*}
		\int_{D^k}|\chi(v_\eps(\psi))f|_{\omega,h}^2dV_\omega \leqslant \exp(\sup_{D^k}\varphi) \int_{D^k\cap\Omega_a} |f|_{\omega,h}^2 e^{-\varphi}dV_\omega < +\infty.
	\end{equation*}
	Therefore, for fixed $k\in\NN_+$, $\{F_{\eps,k}\}_\eps$ is a bounded family in $A^2(D^k,K_\Omega\otimes E)$. By Montel's theorem, there exists a sequence $\{\eps_j\}_{j=1}^\infty$ such that $\eps_j\searrow0$ and $\{F_{\eps_j,k}\}_j$ converges uniformly on any compact subsets of $D^k$ to some holomorphic section $F_k\in\Gamma(D^k,K_\Omega\otimes E)$. According to Lemma \ref{Lemma:Coherent} and \eqref{Eq:EqualwrtCalI},
	\begin{equation*}
		F_k|_{D^k\cap\Omega_a} - f|_{D^k\cap\Omega_a} \in \Gamma(D^k\cap\Omega_a,\calO(K_\Omega\otimes E)\otimes\calI(\varphi+\psi)).
	\end{equation*}
	
	Notice that, $\chi'(v_{\eps}(\psi))^2e^{a-u(v_{\eps}(\psi))} |f|_{\omega,h}^2e^{-\varphi}$ is dominated by
	\begin{equation*}
		\max_{\delta\leqslant t\leqslant 1-\delta}\chi'(t)^2e^{a-u(t)} \cdot \one_{\{\psi<-a\}} |f|_{\omega,h}^2e^{-\varphi} \in L^1(D^k;dV_\omega).
	\end{equation*}
	Using Fatou's lemma, the inequality \eqref{Eq:EstGamma} and Lebesgue's dominated convergence theorem, we have
	\begin{align*}
		&\, \int_{D^k} |F_k-\chi(v(\psi))f|_{\omega,h}^2 e^{-\varphi-\psi-a}c(v(\psi)) dV_\omega \\
		= &\, \int_{D^k} \lim_{j\rightarrow+\infty} |F_{\eps_j,k}-\chi(v_{\eps_j}(\psi))f|_{\omega,h}^2 e^{-\varphi-\psi-a}c(v_{\eps_j}(\psi)) dV_\omega \\
		\leqslant &\, \lim_{j\rightarrow+\infty} \int_{D^k}\chi'(v_{\eps_j}(\psi))^2e^{a-u(v_{\eps_j}(\psi))} |f|_{\omega,h}^2e^{-\varphi} dV_\omega \\
		= &\, \int_{D^k\cap\Omega_a} \chi'(v(\psi))^2e^{a-u(v(\psi))} |f|_{\omega,h}^2e^{-\varphi} dV_\omega.
	\end{align*}
	
	Clearly, $(x+y)^2\leqslant \alpha x^2+\frac{\alpha}{\alpha-1}y^2$ for any $x>0,y>0,\alpha>1$. According to our assumptions \eqref{Eq:AssumeC}:
	
	$\bullet$ if $\psi<-a$, then $v(\psi)=e^{\psi+a}<1$, and then $e^{-\psi-a}c(v(\psi))>1$; \par
	$\bullet$ if $-a\leqslant\psi<0$, then $v(\psi)=\psi+a+1\geqslant1$, and then $e^{-\psi-a}c(v(\psi))\geqslant 1$.
	
	\noindent Since $\supp\chi(v(\psi))\subset\Omega_a$, we have
	\begin{gather*}
		\int_{D^k} |F_k|_{\omega,h}^2e^{-\varphi} dV_\omega
		\leqslant \int_{D^k} e^{-\psi-a}c(v(\psi)) |F_k-\chi(v(\psi))f|_{\omega,h}^2e^{-\varphi} dV_\omega \\
		+ \int_{D^k\cap\Omega_a} \frac{e^{-\psi-a}c(v(\psi))}{e^{-\psi-a}c(v(\psi))-1} |\chi(v(\psi))f|_{\omega,h}^2e^{-\varphi} dV_\omega \\
		\leqslant \int_{D^k\cap\Omega_a} \left(\chi'(v(\psi))^2e^{a-u(v(\psi))} + \frac{c(v(\psi))}{c(v(\psi))-e^{\psi+a}}\chi(v(\psi))^2\right) |f|_{\omega,h}^2e^{-\varphi} dV_\omega.
	\end{gather*}
	Recall that, $e^{a-u(t)}=\int_t^{a+1}c(\tau)d\tau$ and $v(\psi)=e^{\psi+a}<1$ for $\psi<-a$. If we define
	\begin{equation*}
		C(\chi,c):= \sup_{0<t<1} \left(\chi'(t)^2\int_t^{a+1}c(\tau)d\tau+\frac{c(t)}{c(t)-t}\chi(t)^2\right) < +\infty,
	\end{equation*}
	then the above inequality implies
	\begin{equation*}
		\int_{D^k} |F_k|_{\omega,h}^2e^{-\varphi} dV_\omega \leqslant C(\chi,c) \int_{D^k\cap\Omega_a} |f|_{\omega,h}^2e^{-\varphi} dV_\omega.
	\end{equation*}
	
	By Montel's theorem, there is a subsequence of $\{F_k\}_k$ that converges uniformly on any compact subsets of $\Omega$ to some holomorphic section $F\in\Gamma(\Omega,K_\Omega\otimes E)$. It is easy to show that
	\begin{equation} \label{Eq:Cond1}
		F|_{\Omega_a} - f \in \Gamma(\Omega_a,\calO(K_\Omega\otimes E)\otimes\calI(\varphi+\psi))
	\end{equation}
	and
	\begin{equation} \label{Eq:Cond2}
		\int_\Omega |F|_{\omega,h}^2e^{-\varphi} dV_\omega \leqslant C(\chi,c) \int_{\Omega_a} |f|_{\omega,h}^2e^{-\varphi} dV_\omega.
	\end{equation}
	
	In summary, if $\chi$ and $c$ are smooth functions satisfying the requirements listed in Step 1, then there exists a holomorphic section $F\in\Gamma(\Omega,K_\Omega\otimes E)$ satisfying \eqref{Eq:Cond1} and \eqref{Eq:Cond2}. Via approximation, we can draw the same conclusion for some other $\chi$ and $c$.
	
	{\itshape\underline{Step 5.}} In the following, let $\chi\in C^1((0,1))$ be a decreasing function such that
	\begin{equation*}
		\lim_{t\to0^+}\chi(t) = 1, \quad \lim_{t\to1^-}\chi(t) = 0 \quad\text{and}\quad A:=\sup_{t\in(0,1)}|\chi'(t)|<+\infty,
	\end{equation*}
	let
	\begin{equation} \label{Eq:defc}
		c(t)=\begin{cases}1 & (0<t<1) \\ e^{t-1} & (1\leqslant t< a+1) \end{cases}.
	\end{equation}
	
	We shall construct smooth approximations of $\chi$ and $c$. Let $0<\eps\ll 1$ be given.
	
	We take a smooth function $\rho_\eps\in C^\infty((0,1))$ so that $0\leqslant\rho_\eps(t) \leqslant \frac{1}{1-\eps}|\chi'(t)| + \eps$, $\int_0^1\rho_\eps(t)dt=1$ and $\supp\rho_\eps\subset[\delta_\eps,1-\delta_\eps]$ for some $\delta_\eps>0$. Let $\chi_\eps(t)=\int_t^1\rho_\eps(\tau)d\tau$, then $\chi_\eps(t)\equiv1$ for $t\leqslant\delta_\eps$ and $\chi_\eps(t)\equiv0$ for $t\geqslant1-\delta_\eps$. Moreover, $\chi_\eps(t) \leqslant \frac{1}{1-\eps}\chi(t) + \eps(1-t)$. Let $c_\eps(t)$ be a smooth increasing function on $(0,a+1)$ such that $c(t)\leqslant c_\eps(t)\leqslant c(t)+\eps$ and $c_\eps(t)\equiv c(t)$ on $(0,1-\eps)\cup(1+\eps,a+1)$.
	
	Clearly, $\chi_\eps(t)$ and $c_\eps(t)$ are smooth functions satisfying the requirements listed in Step 1. Moreover,
	\begin{equation*}
		\varlimsup_{\eps\rightarrow0} C(\chi_\eps,c_\eps) \leqslant C(\chi,c) <+\infty.
	\end{equation*}
	For each $0<\eps\ll1$, there exists a holomorphic section $F_\eps\in\Gamma(\Omega,K_\Omega\otimes E)$ such that $F_\eps|_{\Omega_a} - f \in \Gamma(\Omega_a,\calO(K_\Omega\otimes E)\otimes\calI(\varphi+\psi))$ and
	\begin{equation*}
		\int_\Omega |F_\eps|_{\omega,h}^2e^{-\varphi} dV_\omega \leqslant C(\chi_\eps,c_\eps) \int_{\Omega_a} |f|_{\omega,h}^2e^{-\varphi} dV_\omega.
	\end{equation*}
	Applying Montel's theorem, we find a holomorphic section $F\in\Gamma(\Omega,K_\Omega\otimes E)$ with
	\begin{equation*}
		F|_{\Omega_a} - f \in \Gamma(\Omega_a,\calO(K_\Omega\otimes E)\otimes\calI(\varphi+\psi))
	\end{equation*}
	and
	\begin{equation*}
		\int_\Omega |F|_{\omega,h}^2e^{-\varphi} dV_\omega \leqslant C(\chi,c) \int_{\Omega_a} |f|_{\omega,h}^2e^{-\varphi} dV_\omega.
	\end{equation*}
	In particular, we take $\chi(t)\equiv1-t$, then
	\begin{align*}
		C(\chi,c) & = \sup_{0<t<1}(1\cdot(e^a-t)+\tfrac{1}{1-t}\cdot(1-t)^2) \\
		& = \sup_{0<t<1}(e^a+1-2t) = e^a+1.
	\end{align*}
	This completes the proof!
\end{proof}

The example after Theorem \ref{MainThm:Opt} shows that  the uniform constant in Theorem \ref{Thm:CoarseExtThm} should be $\geqslant e^a$. Therefore, the constant $e^a+1$ obtained here is \textit{asymptotically optimal} as $a\to+\infty$. In Section \ref{Chap:Optimal}, we will obtain the optimal estimate by indirect methods. Nevertheless, the estimate of Theorem \ref{Thm:CoarseExtThm} is sufficient to yields certain optimal (jet) $L^2$ extension theorem of Ohsawa-Takegoshi type (see Section \ref{Chap:OTtype}).

\begin{remark}
	If we choose $\chi(t)$ and $c(t)$ more carefully in Step 5, then we can obtain a uniform constant smaller than $e^a+1$. Let $\kappa=\kappa(a)>0$ be a constant so that $\dfrac{\kappa}{e^{\kappa}-1}=\sqrt{1-e^{-a}}$, let $\chi(t)=\dfrac{e^{\kappa}-e^{\kappa t}}{e^{\kappa}-1}$, and let $c(t)$ be the same as \eqref{Eq:defc}.  By careful computations, for $a\geqslant1$, we can show that
	\begin{equation*}
		C(\chi,c) \leqslant e^a + \left(\frac{\kappa e^\kappa}{e^\kappa-1}\right)^2 - 1 < e^a+\frac{25}{16}e^{-a}.
	\end{equation*}
	In particular, the gap between $C(\chi,c)$ and $e^a$ decays to $0$ exponentially.
\end{remark}

\begin{remark}
	In the proof of Theorem \ref{Thm:CoarseExtThm}, the assumption that
	\begin{center}
		\itshape ``$(\Omega,\omega)$ is a weakly pseudoconvex K\"ahler manifold''
	\end{center}
	is only used at the beginning of Step 2, where we construct a sequence $\{D^k\}_{k=1}^\infty$ of domains in $\Omega$ such that $\Omega=\cup_kD^k$, $D^k\Subset D^{k+1}\Subset \Omega$, and each $D^k$ is also weakly pseudoconvex. Since $D^{k+1}$ is K\"ahler, $S_{k,\nu}\subset D^{k+1}$ is a closed analytic subset and $D^k$ has a complete K\"ahler metric, we know that $\DkSn$ is a complete K\"ahler manifold, and then we can solve $\bar{\pd}$-equations with $L^2$-estimates on $\DkSn$. Moreover, for any $\xi\in K_\Omega\otimes E$, it is clear that $|\xi|_{\omega,h}^2dV_\omega = (\sqrt{-1})^{n^2}\xi\wedge_h\bar{\xi}$ is independent of the choice of $\omega$. Therefore, we may replace the assumption by
	\begin{quotation}
		\itshape $\Omega$ is a complex manifold and there exists a sequence $\{D^k\}_{k=1}^\infty$ of domains in $\Omega$ so that $\Omega=\cup_kD^k$, $D^k\Subset D^{k+1}\Subset \Omega$, and each $D^k$ has a complete K\"ahler metric. \hfill \upshape ($\star$)
	\end{quotation}
	So does Theorem \ref{MainThm:Opt}, \ref{Thm:OptExtWeakForm}, \ref{Thm:BasicL2Existence} and \ref{Thm:NewOptOpenExt} in subsequent sections.
	
	If $\Omega$ is a complex manifold satisfying ($\star$) and $S\subset\Omega$ is a closed submanifold, then $S$ also satisfies this condition. If $\Omega_1,\Omega_2$ are complex manifolds satisfying ($\star$), then the product manifold $\Omega_1\times\Omega_2$ also satisfies this condition.
\end{remark}

\begin{example}
	It will be interesting to find examples other than weakly pseudoconvex K\"ahler manifolds that satisfy ($\star$).
	Forn{\ae}ss \cite{Fornaess1976} constructed a counterexample showing that the increasing union of Stein manifolds may not be Stein. More precisely, there exists a complex manifold $M$ and open sets $M_1\Subset M_2\Subset\cdots\Subset M$ so that $M=\cup_jM_j$, each $M_j$ is biholomorphic to some open ball in $\CC^3$, and $M$ is not holomorphically convex. With some modifications, we can further require that $M$ is not weakly pseudoconvex. Obviously, $M$ satisfies the condition ($\star$).
\end{example}

\section{A Product Property for Minimal \texorpdfstring{$L^2$}{L2} Extensions} \label{Chap:Prod}

Let $D_1\subset\CC^n$ and $D_2\subset\CC^m$ be open sets, let $D:=D_1\times D_2$ be their Cartesian product. For $i=1$ and $2$, let $\psi_i$ be a measurable function on $D_i$ which is locally bounded from above, and we assume that $\{\psi_i=-\infty\}$ is a set of zero measure. We define $\psi(z_1,z_2):=\psi_1(z_1)+\psi_2(z_2)$ on $D$. The following product properties for Bergman spaces and Bergman kernels are well-known:
\begin{gather}
	A^2(D;e^{-\psi}) = A^2(D_1;e^{-\psi_1}) \hat{\otimes} A^2(D_2;e^{-\psi_2}), \\
	\label{Eq:ProdPropKer} B_D((z_1,z_2),(w_1,w_2);e^{-\psi}) = B_{D_1}(z_1,w_1;e^{-\psi_1}) B_{D_2}(z_2,w_2;e^{-\psi_2}).
\end{gather}
Here, $\hat{\otimes}$ means the Hilbert tensor product: let $H$ and $H'$ be two Hilbert spaces, let $\{\phi_\mu\}_\mu$ (resp. $\{\varphi_\nu\}_\nu$) be a complete orthonormal basis of $H$ (resp. $H'$), then $H\hat{\otimes}H'$ is a Hilbert space having $\{\phi_\mu\otimes\varphi_\nu\}_{\mu,\nu}$ as a complete orthonormal basis.

Let $w=(w_1,w_2)\in D_1\times D_2$. By the extremal property of Bergman kernels,
\begin{equation*}
	f_i = \frac{B_{D_i}(\cdot,w_i;e^{-\psi_i})} {B_{D_i}(w_i;e^{-\psi_i})} \quad
	(\text{resp. } f = \frac{B_D(\cdot,w;e^{-\psi})} {B_D(w;e^{-\psi})})
\end{equation*}
is the unique element with minimal $L^2$ norm in $A^2(D_i;e^{-\psi_i})$ (resp. $A^2(D;e^{-\psi})$) such that $f_i(w_i)=1$ (resp. $f(w)=1$). In other words, $f_i\in A^2(D_i;e^{-\psi_i})$ (resp. $f\in A^2(D;e^{-\psi})$) is the \textbf{minimal $L^2$ extension} of $1\in\CC$ from $w_i$ to $D_i$ (resp. from $w$ to $D$). According to \eqref{Eq:ProdPropKer},
\begin{gather*}
	f(z_1,z_2) = f_1(z_1)f_2(z_2), \\
	\|f\|_{A^2(D;e^{-\psi})} = \|f_1\|_{A^2(D_1;e^{-\psi_1})} \|f_2\|_{A^2(D_2;e^{-\psi_2})}.
\end{gather*}
Hence, the product property for Bergman kernels is equivalent to a product property for certain minimal $L^2$ extensions.

In this section, we will generalize these product properties to the setting of complex manifolds, holomorphic vector bundles and general minimal $L^2$ extensions.\\

Throughout this section, $\Omega_1$ (resp. $\Omega_2$) is a complex manifold of dimension $n$ (resp. $m$), and $E_1\to\Omega_1$ (resp. $E_2\to\Omega_2$) is a holomorphic vector bundle of rank $r$ (resp. $s$). Let $\Omega:=\Omega_1\times\Omega_2$ and let $p_i:\Omega\rightarrow\Omega_i$ be the natural projection, then $E:=p_1^*E_1\otimes p_2^*E_2$ is a holomorphic vector bundle on $\Omega$.

For $i=1$ and $2$, we choose a continuous volume form $dV_i$ on $\Omega_i$, a continuous Hermitian metric $h_i$ on $E_i$ and a measurable function $\psi_i$ on $\Omega_i$. We assume that $\psi_i$ is locally bounded from above and $\{\psi_i=-\infty\}$ is a set of zero measure. Then we define $dV:= p_1^*dV_1\times p_2^*dV_2$, $h:=p_1^*h_1\otimes p_2^*h_2$ and $\psi:=p_1^*\psi_1+p_2^*\psi_2$. We shall investigate the relation between the following Bergman spaces:
\begin{equation*}
	A^2(\Omega_i,E_i) := A^2(\Omega_i,E_i;h_i,e^{-\psi_i}dV_i), \quad
	A^2(\Omega,E) := A^2(\Omega,E;h,e^{-\psi}dV).
\end{equation*}

Let $(U, z=(z_1,\ldots,z_n), e=(e_1,\ldots,e_r))$ be a local trivialization of $E_1\rightarrow\Omega_1$. For any multi-order $\alpha=(\alpha_1,\ldots,\alpha_n)\in\NN^n$, we can define a differential operator $\pd_z^\alpha:C^\infty(U,E_1)\to C^\infty(U,E_1)$ by
\begin{equation*}
	\sum\nolimits_{i=1}^r f_i\cdot e_i \quad\mapsto\quad \sum\nolimits_{i=1}^r
	\frac{\pd^{|\alpha|}f_i}{\pd z_1^{\alpha_1}\cdots\pd z_n^{\alpha_n}} \cdot e_i.
\end{equation*}
Notice that, this definition \textit{depends} on the chosen trivialization! Let $(V,w,\tilde{e})$ be a local trivialization of $E_2\rightarrow\Omega_2$, then we can define $\pd_w^\beta:C^\infty(V,E_2)\to C^\infty(V,E_2)$ and $\pd_z^\alpha,\pd_w^\beta:C^\infty(U\times V,E)\to C^\infty(U\times V,E)$ in similar ways. Notice that, for any $a\in(E_1)_x$, $b\in(E_2)_y$ and $\xi\in E_{(x,y)}$, where $x\in\Omega_1$ and $y\in\Omega_2$, the pairings $\inner{\xi,a}_{h_1} \in (E_2)_y$ and $\inner{\xi,b}_{h_2} \in (E_1)_x$ have obvious interpretations.

By elementary calculations in terms of some local trivializations, it is not hard to prove to following lemmas\footnote{Following the referees' suggestions, we have removed the lengthy proofs of Lemma 4.1 and 4.2 in the final version. Interested readers can refer to the previous version (v2) for the detailed proofs.}.

\begin{lemma} \label{Lemma:BergmanSlice}
	Let $(U,z,e)$ be a local trivialization of $E_1\rightarrow\Omega_1$. Given a holomorphic section $f\in A^2(U\times\Omega_2,E)$ and a multi-order $\alpha\in\NN^n$, we write $\pd_z^\alpha f=\sum_i e_i\otimes f_i^\alpha$. Then $f_i^\alpha(x,\cdot)\in A^2(\Omega_2,E_2)$ for any $x\in U$ and $1\leqslant i\leqslant r$.
\end{lemma}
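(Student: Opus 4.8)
The plan is to show that for fixed $x \in U$ and $1 \leq \iota \leq r$, the holomorphic section $f_\iota^\alpha(x,\cdot) \in \Gamma(\Omega_2, E_2)$ satisfies the $L^2$ bound $\int_{\Omega_2} |f_\iota^\alpha(x,\cdot)|_{h_2}^2 e^{-\psi_2} dV_2 < +\infty$. The key idea is to integrate the assumed finiteness $\int_{U \times \Omega_2} |f|_h^2 e^{-\psi} dV < +\infty$ in a Fubini-type argument: since $\psi = p_1^*\psi_1 + p_2^*\psi_2$ and $dV = p_1^*dV_1 \times p_2^*dV_2$, and $\psi_1$ is locally bounded from above while $dV_1$ is a fixed continuous volume form, for almost every $x' \in U$ the slice $f(x',\cdot)$ lies in $A^2(\Omega_2, E_2)$ with a norm controlled by the full integral. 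The trouble is that we want the conclusion for \emph{every} $x \in U$, not merely almost every $x$, and moreover for the derivatives $\pd_z^\alpha f$ rather than just $f$ itself. So I would first reduce to a slicewise estimate on relatively compact pieces and then use interior elliptic-type estimates (Lemma \ref{Lemma:BasicEstCn}) to upgrade ``almost every'' to ``every''.

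Concretely, first I would fix $x \in U$ and an exhaustion $\{\Omega_2^{(j)}\}$ of $\Omega_2$ by relatively compact open subsets. It suffices to bound $\int_{\Omega_2^{(j)}} |f_\iota^\alpha(x,\cdot)|_{h_2}^2 e^{-\psi_2} dV_2$ uniformly in $j$. On a fixed relatively compact product set $U' \times \Omega_2^{(j)}$ with $U' \ni x$, $U' \Subset U$, the weights $e^{-\psi_1}$ on $U'$ and $e^{-\psi_2}$ on $\Omega_2^{(j)}$ are comparable to continuous (hence locally bounded below away from $\{\psi_i=-\infty\}$, but that set has measure zero so does not affect the $L^2$ integral) positive functions times bounded factors; thus $\int_{U' \times \Omega_2^{(j)}} |f|^2 d\lambda < +\infty$ in any local trivialization, where $|f|^2$ denotes the Euclidean norm of the coefficient functions $f_{ik}$. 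Second, for each fixed $y$ in a coordinate chart $V' \Subset \Omega_2$ with $V' \subset \Omega_2^{(j)}$, I would apply Lemma \ref{Lemma:BasicEstCn} in the $z$-variable to the holomorphic function $z' \mapsto f_{ik}(z',y)$ on $U'$, giving
\begin{equation*}
	|\pd_z^\alpha f_{ik}(x,y)|^2 \leq C_\alpha \int_{U'} |f_{ik}(z',y)|^2 d\lambda(z'),
\end{equation*}
where $C_\alpha$ depends only on $\alpha$, $U'$ and $x$, not on $y$. Third, integrating this in $y$ over $V'$ and summing over $i,k$:
\begin{equation*}
	\int_{V'} \sum_{i,k} |\pd_z^\alpha f_{ik}(x,y)|^2 d\lambda(y) \leq C_\alpha \int_{U' \times V'} |f|^2 d\lambda < +\infty.
\end{equation*}

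Fourth, I would translate this Euclidean coefficient bound back into the intrinsic $L^2$ norm. Since $h_2$ and $dV_2$ are continuous and $\psi_2$ is locally bounded above, on the relatively compact chart $V'$ we have $|\xi|_{h_2}^2 e^{-\psi_2} dV_2 \leq C_{V'} \cdot (\text{Euclidean coefficient norm of } \xi)^2 d\lambda$ for sections $\xi$ of $E_2$ --- wait, this needs $e^{-\psi_2}$ to be \emph{bounded above} on $V'$, which holds precisely because $\psi_2$ is locally bounded from below? No: $e^{-\psi_2}$ is bounded above iff $\psi_2$ is bounded below, which is not assumed. I should instead keep $e^{-\psi_2}$ on both sides: the correct comparison is that $|\xi|_{h_2}^2 dV_2 \asymp (\text{coeff norm})^2 d\lambda$ on $V'$ with constants independent of the weight, and then multiply through by $e^{-\psi_2}$. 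So the third step should carry the weight $e^{-\psi_2}$ throughout: apply Lemma \ref{Lemma:BasicEstCn} pointwise in $y$ (the weight is a fixed number for fixed $y$), integrate $\int_{V'} (\cdots) e^{-\psi_2(y)} d\lambda(y)$, and bound the right side by $C_\alpha \int_{U' \times V'} |f|^2 e^{-p_2^*\psi_2} d\lambda$; the latter is finite because it equals, up to the continuous comparison of $h$ with the coefficient norm and of $dV$ with Lebesgue measure on the relatively compact set $U' \times V'$, a piece of $\int_{U\times\Omega_2}|f|_h^2 e^{-\psi} dV$ after further using that $e^{-p_1^*\psi_1}$ is bounded below on $U'$ (true: $\psi_1$ bounded above gives $e^{-\psi_1}$ bounded below). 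Summing over a countable cover of $\Omega_2$ by such charts $V'$ and passing to the supremum over the exhaustion gives $\int_{\Omega_2}|f_\iota^\alpha(x,\cdot)|_{h_2}^2 e^{-\psi_2} dV_2 < +\infty$, as desired. The main obstacle is purely bookkeeping: correctly tracking which weights are bounded above versus below (only $\psi_i$ bounded \emph{above}, so only $e^{-\psi_i}$ bounded \emph{below}) so that all comparison constants land on the right side, and making sure the constant $C_\alpha$ from the Cauchy estimates is uniform in the $\Omega_2$-variable. There is no deep difficulty --- it is Fubini plus interior estimates for holomorphic functions --- but the argument must be arranged so the weight $e^{-\psi_2}$ is never discarded and $e^{-\psi_1}$ is only ever bounded below.
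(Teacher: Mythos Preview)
Your core idea is the same as the paper's --- apply the Cauchy estimate (Lemma~\ref{Lemma:BasicEstCn}) in the $z$-variable pointwise in $y$, then integrate over $\Omega_2$ with the weight $e^{-\psi_2}$ and invoke Fubini. But there is a genuine gap in your globalization step. When you pass from the Euclidean coefficient norm of $f_\iota^\alpha(x,\cdot)$ to the intrinsic $h_2$-norm on a chart $V'$, and from the Euclidean coefficient norm of $f$ back to $|f|_h^2$ on $U'\times V'$, the comparison constants depend on the chart $V'$ (through $\sup_{V'}\lambda_{\max}(h^{(2)})$, $\inf_{V'}\lambda_{\min}(h^{(2)})$, and the density of $dV_2$ against Lebesgue). ``Summing over a countable cover of $\Omega_2$ by such charts $V'$'' then gives $\sum_{V'} C_{V'}\int_{U'\times V'}|f|_h^2 e^{-\psi}\,dV$, which need not be finite: the constants $C_{V'}$ are not uniformly bounded without further argument, and even if they were, summing over an overlapping cover over-counts the right-hand side. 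You could repair this by choosing the charts small enough that the condition numbers are uniformly $\leq 2$ and then using a partition of unity rather than a raw sum, but you have not said so.

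The paper sidesteps the issue entirely by never converting to coefficient norms on the $E_2$ side. For each fixed $w\in\Omega_2$ it factors $h^{(2)}_{kl}(w)=\sum_p H_{kp}\overline{H_{lp}}$ (equivalently, takes an orthonormal basis of $(E_2)_w$), so that $|f_\iota^\alpha(x,w)|_{h_2}^2 = \sum_p|\sum_k \pd_z^\alpha f_{\iota k}(x,w) H_{kp}|^2$. For each $p$ the inner sum is the derivative of a scalar holomorphic function of $z$, and after applying the Cauchy estimate and summing in $p$ one recovers $|f(z,w)|_h^2$ on the right. This yields a \emph{pointwise} inequality \eqref{Eq:EstH2ByH} whose constants depend only on $x$, $W\Subset U$, $h_1$, and $dV_1$ --- not on any $E_2$-chart --- so it holds for every $w\in\Omega_2$ and integrates directly by Fubini with no patching.
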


\begin{lemma} \label{Lemma:BergmanInt}
	Let $f\in A^2(\Omega,E)$ and $\phi\in A^2(\Omega_1,E_1)$ be given. For any $y\in\Omega_2$, the $(E_2)_y$-valued integral
	\begin{equation*}
		F(y) := \int_{\Omega_1} \inner{f(z,y),\phi(z)}_{h_1} e^{-\psi_1(z)}dV_1(z)
	\end{equation*}
	is convergent. Moreover, $F(\cdot)\in A^2(\Omega_2,E_2)$. Given a local trivialization $(U,w,\tilde{e})$ of $E_2\rightarrow\Omega_2$, for any multi-order $\beta\in\NN^m$, we have
	\begin{equation*}
		\pd_w^\beta F(y) = \int_{\Omega_1} \inner{\pd_w^\beta f(z,y),\phi(z)}_{h_1} e^{-\psi_1(z)}dV_1(z).
	\end{equation*}
\end{lemma}

Having Lemma \ref{Lemma:BergmanSlice} and \ref{Lemma:BergmanInt}, it is routine to prove the following product property.

\begin{proposition}
	$A^2(\Omega,E) = A^2(\Omega_1,E_1) \hat{\otimes} A^2(\Omega_2,E_2)$.
\end{proposition}

\begin{definition} \label{Def:mEqual}
	Let $U$ be an open set in $\CC^n$ and let $f,g\in\calO(U)$. Given $x\in U$ and $k\in\NN$, if $\pd^\alpha f(x)=\pd^\alpha g(x)$ for all multi-order $\alpha\in\NN^n$ with $|\alpha|\leqslant k$, then we say ``$f$ coincides with $g$ up to order $k$ at $x$''. If $f$ coincides with the zero function up to order $k$ at $x$, then we say ``$f$ vanishes up to order $k$ at $x$''.
	
	Obviously, $f$ vanishes up to order $k$ at $x$ if and only if $[f]_x\in\frakm_x^{k+1}$, in which $\frakm_x$ is the unique maximal ideal of $\calO_x$. These concepts can be extended to holomorphic sections of vector bundles in an obvious way.
\end{definition}

In the following, let $S_1\subset\Omega_1$ and $S_2\subset\Omega_2$ be arbitrary closed subsets. For $i=1$ (or $2$) and $k\in\NN$, we define
\begin{equation*}
	J_k(S_i) := \{ f\in A^2(\Omega_i,E_i): f\text{ vanishes up to order }k\text{ on }S_i \},
\end{equation*}
then $J_k(S_i)$ is a closed subspace of $A^2(\Omega_i,E_i)$. For convenience, we let $J_{-1}(S_i) := A^2(\Omega_i,E_i)$. Then there is a sequence of nesting closed subspaces:
\begin{equation*}
	A^2(\Omega_i,E_i)=J_{-1}(S_i)\supset J_0(S_i)\supset J_1(S_i)\supset J_2(S_i)\supset\cdots
\end{equation*}
For each $k\in\NN$, let $H_k(S_i):=J_{k-1}(S_i)\ominus J_k(S_i)$ be the orthogonal complement of $J_k(S_i)$ in $J_{k-1}(S_i)$. Consequently, for each $m\in\NN$, there is an orthogonal decomposition for $A^2(\Omega_i,E_i)$:
\begin{equation} \label{Eq:OrtDecom0}
	A^2(\Omega_i,E_i) = H_0(S_i)\oplus H_1(S_i)\oplus\cdots\oplus H_m(S_i)\oplus J_m(S_i).
\end{equation}

Since $S:=S_1\times S_2$ is a closed subset of $\Omega$, the subspaces $J_k(S)$ and $H_k(S)$ of $A^2(\Omega,E)$ can be defined in similar ways. In particular, for each $m\in\NN$, there is an orthogonal decomposition:
\begin{equation} \label{Eq:OrtDecom1}
	A^2(\Omega,E) = H_0(S)\oplus H_1(S)\oplus\cdots\oplus H_m(S)\oplus J_m(S).
\end{equation}
On the other hand, since $A^2(\Omega,E) = A^2(\Omega_1,E_1)\hat{\otimes}A^2(\Omega_2,E_2)$, we have
\begin{equation} \label{Eq:OrtDecom2}
	A^2(\Omega,E) = (\bigoplus_{0\leqslant p,q\leqslant m} H_p(S_1)\hat{\otimes}H_q(S_2)) \oplus R_m,
\end{equation}
in which $R_m$ denotes the direct sum of all remaining terms:
\begin{equation*}
	(\bigoplus_{0\leqslant p\leqslant m}H_p(S_1)\hat{\otimes}J_m(S_2)) \oplus (\bigoplus_{0\leqslant q\leqslant m}J_m(S_1)\hat{\otimes}H_q(S_2)) \oplus(J_m(S_1)\hat{\otimes}J_m(S_2)).
\end{equation*}

\begin{proposition} \label{Prop:BergmanDecom}
	The orthogonal decomposition \eqref{Eq:OrtDecom1} and \eqref{Eq:OrtDecom2} of $A^2(\Omega,E)$ are compatible, i.e.
	\begin{align*}
		H_k(S) & = \bigoplus_{p+q=k} H_p(S_1)\hat{\otimes}H_q(S_2) \quad (0\leqslant k\leqslant m), \\
		J_m(S) & = (\bigoplus_{\substack{0\leqslant p,q\leqslant m\\p+q\geqslant m+1}} H_p(S_1)\hat{\otimes}H_q(S_2)) \oplus R_m.
	\end{align*}
\end{proposition}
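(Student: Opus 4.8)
The plan is to reduce everything to Lemmas \ref{Lemma:BergmanIn} and \ref{Lemma:BergmanPerp} together with an elementary fact about two orthogonal decompositions of a Hilbert space. First I would record, for all $p,q\geq 0$, the key inclusion
\[
	H_p(S_1)\hat{\otimes}H_q(S_2) \subseteq H_{p+q}(S),
\]
where we use the convention $J_{-1}=A^2$ (under which Lemma \ref{Lemma:BergmanIn} remains valid with $p$ or $q$ replaced by $-1$, since $\frakm^0=\calO$). To prove it, take an elementary tensor $\phi\otimes\varphi$ with $\phi\in H_p(S_1)=J_{p-1}(S_1)\ominus J_p(S_1)$ and $\varphi\in H_q(S_2)=J_{q-1}(S_2)\ominus J_q(S_2)$. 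On the one hand $\phi\in J_{p-1}(S_1)$ and $\varphi\in J_{q-1}(S_2)$, so Lemma \ref{Lemma:BergmanIn} gives $\phi\otimes\varphi\in J_{p+q-1}(S)$; on the other hand $\phi\perp J_p(S_1)$ and $\varphi\perp J_q(S_2)$, so Lemma \ref{Lemma:BergmanPerp} gives $\phi\otimes\varphi\perp J_{p+q}(S)$. Hence $\phi\otimes\varphi\in H_{p+q}(S)$, and since $H_{p+q}(S)$ is closed while $H_p(S_1)\hat{\otimes}H_q(S_2)$ is the closed span of such elementary tensors, the inclusion follows.

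Next I would show that the ``remaining'' summands all land in $J_m(S)$. If $\phi\in H_p(S_1)\subseteq J_{p-1}(S_1)$ and $\varphi\in J_m(S_2)$, then Lemma \ref{Lemma:BergmanIn} gives $\phi\otimes\varphi\in J_{p+m}(S)\subseteq J_m(S)$; symmetrically $J_m(S_1)\hat{\otimes}H_q(S_2)\subseteq J_m(S)$; and $J_m(S_1)\hat{\otimes}J_m(S_2)\subseteq J_{2m+1}(S)\subseteq J_m(S)$. Thus $R_m\subseteq J_m(S)$. Moreover, for $0\leq p,q\leq m$ with $p+q\geq m+1$, the first step gives $H_p(S_1)\hat{\otimes}H_q(S_2)\subseteq H_{p+q}(S)\subseteq J_{p+q-1}(S)\subseteq J_m(S)$ since $p+q-1\geq m$.

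Finally I would conclude by bookkeeping. Regrouping the orthogonal decomposition \eqref{Eq:OrtDecom2} according to the value of $p+q$, set $A_k:=\bigoplus_{p+q=k}H_p(S_1)\hat{\otimes}H_q(S_2)$ for $0\leq k\leq m$ and $A_{m+1}:=\bigl(\bigoplus_{0\leq p,q\leq m,\ p+q\geq m+1}H_p(S_1)\hat{\otimes}H_q(S_2)\bigr)\oplus R_m$; these are mutually orthogonal, because the tensor summands are pairwise orthogonal (the $H_p(S_1)$ are pairwise orthogonal in $A^2(\Omega_1,E_1)$ and the $H_q(S_2)$ in $A^2(\Omega_2,E_2)$, and likewise for the $R_m$-pieces), and $A^2(\Omega,E)=\bigoplus_{k=0}^{m+1}A_k$. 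Put $B_k:=H_k(S)$ for $0\leq k\leq m$ and $B_{m+1}:=J_m(S)$, so that $A^2(\Omega,E)=\bigoplus_{k=0}^{m+1}B_k$ is \eqref{Eq:OrtDecom1}; the previous two steps give $A_k\subseteq B_k$ for each $k$. Now if a Hilbert space has two orthogonal decompositions $\mathcal{H}=\bigoplus_kA_k=\bigoplus_kB_k$ with $A_k\subseteq B_k$ for all $k$, then $A_k=B_k$ for all $k$: given $x\in B_k$, write $x=\sum_j a_j$ with $a_j\in A_j\subseteq B_j$; since $B_j\perp B_k$ for $j\neq k$, both $x$ and $a_k$ lie in $B_k$ while $x-a_k=\sum_{j\neq k}a_j\in B_k^{\perp}$, forcing $x=a_k\in A_k$. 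This yields the two asserted identities. I do not expect a genuine obstacle here: the substance is entirely in Lemmas \ref{Lemma:BergmanIn} and \ref{Lemma:BergmanPerp}, and the only points needing a little care are the edge cases $p=0$ or $q=0$, absorbed by the convention $J_{-1}=A^2$, and the verification that the tensor summands are pairwise orthogonal.
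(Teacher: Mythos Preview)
Your proof is correct and follows essentially the same approach as the paper: both arguments reduce to Lemmas \ref{Lemma:BergmanIn} and \ref{Lemma:BergmanPerp} to show that each tensor block $H_p(S_1)\hat{\otimes}H_q(S_2)$ sits inside $J_{p+q-1}(S)$ and is orthogonal to $J_{p+q}(S)$, and that $R_m\subseteq J_m(S)$. The only cosmetic difference is in the final bookkeeping---the paper computes $J_k(S)$ directly for each $-1\leq k\leq m$ and then takes successive orthogonal complements, whereas you package the conclusion via the elementary Hilbert-space lemma about two orthogonal decompositions with termwise inclusions; these are equivalent.
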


\begin{proof}
	Clearly, if $f\in J_p(S_1)$ and $g\in J_q(S_2)$, then $f\otimes g\in J_{p+q+1}(S)$. Therefore, $H_p(S_1)\hat{\otimes}H_q(S_2)\subset J_{p+q-1}(S)$ and $R_m\subset J_m(S)$.
	Using Lemma \ref{Lemma:BergmanSlice} and \ref{Lemma:BergmanInt}, it is easy to show that, if $f\perp J_p(S_1)$ and $g\perp J_q(S_2)$, then $f\otimes g\perp J_{p+q}(S)$. Therefore, $H_p(S_1)\hat{\otimes}H_q(S_2)\perp J_{p+q}(S)$. Then the conclusion is obvious.
\end{proof}

We would like to prove a product property for minimal $L^2$ extensions. In practice, $S_i$ is a closed submanifold of $\Omega_i$ and we are interested in the minimal $L^2$ extension of $f^{(i)}\in\Gamma(S_i,E_i)$, i.e. the unique element $F^{(i)}$ with minimal norm in $A^2(\Omega_i,E_i)$ such that $F^{(i)}|_{S_i} = f^{(i)}$. Since we need to assume the existence of $L^2$ extensions in advance, we simply assume that $f^{(i)}\in A^2(\Omega_i,E_i)$. In general, $S_i$ is an arbitrary closed subset of $\Omega_i$, and we consider the high-order minimal $L^2$ extensions of $f^{(i)}\in A^2(\Omega_i,E_i)$, i.e. the unique element $F^{(i)}$ with minimal norm in $A^2(\Omega_i,E_i)$ that coincides with $f^{(i)}$ up to order $m_i$ on $S_i$, where $m_i$ is a non-negative integer.

\begin{theorem} \label{Thm:ProdProp}
	Let $f^{(1)}\in A^2(\Omega_1,E_1)$, $f^{(2)}\in A^2(\Omega_2,E_2)$ and $m_1,m_2\in\NN$ be given. For $i=1$ and $2$, let $F^{(i)}$ be the unique element with minimal norm in $A^2(\Omega_i,E_i)$ that coincides with $f^{(i)}$ up order $m_i$ on $S_i$. Let $F$ be the unique element with minimal norm in $A^2(\Omega,E)$ that coincides with $f:=f^{(1)}\otimes f^{(2)}$ up to order $m_1+m_2$ on $S$. Then
	\begin{equation*}
		\|F\|_{A^2(\Omega,E)} \geqslant \|F^{(1)}\|_{A^2(\Omega_1,E_1)} \|F^{(2)}\|_{A^2(\Omega_2,E_2)}.
	\end{equation*}
	Moreover, if $f^{(i)}$ vanishes up to order $m_i-1$ on $S_i$, then
	\begin{equation*}
		F=F^{(1)}\otimes F^{(2)} \quad\text{and}\quad
		\|F\|_{A^2(\Omega,E)} = \|F^{(1)}\|_{A^2(\Omega_1,E_1)} \|F^{(2)}\|_{A^2(\Omega_2,E_2)}.
	\end{equation*}
\end{theorem}

\begin{proof}
	Let $m=m_1+m_2$. With respect to the orthogonal decomposition \eqref{Eq:OrtDecom0}, the holomorphic section $f^{(i)}\in A^2(\Omega_i,E_i)$ can be written as
	\begin{equation*}
		f^{(i)} = f_0^{(i)}+f_1^{(i)}+\cdots+f_{m_1+m_2}^{(i)}+g^{(i)},
	\end{equation*}
	in which $f_p^{(i)}\in H_p(S_i)$ for all $p$ and $g^{(i)}\in J_{m_1+m_2}(S_i)$. Clearly,
	\begin{equation*}
		f = f^{(1)}\otimes f^{(2)} = \sum_{k=0}^{m_1+m_2}\sum_{p+q=k}f_p^{(1)}\otimes f_q^{(2)} + g,
	\end{equation*}
	where $g$ collects all the remaining terms.
	
	By Proposition \ref{Prop:BergmanDecom}, $\sum_{p+q=k}f_p^{(1)}\otimes f_q^{(2)}\in H_k(S)$ for any $0\leqslant k\leqslant m_1+m_2$ and $g\in J_{m_1+m_2}(S)$. By the definitions of $J_\bullet(S_i)$ and $J_\bullet(S)$, it is clear that
	\begin{equation*}
		F^{(i)} = \sum_{p=0}^{m_i} f_p^{(i)} \quad\text{and}\quad F = \sum_{k=0}^{m_1+m_2} \sum_{p+q=k} f_p^{(1)}\otimes f_q^{(2)}.
	\end{equation*}
	Since $\{f_p^{(i)}\}_{p=0}^{m_1+m_2}$ are orthogonal families,
	\begin{multline*}
		\|F\|_{A^2(\Omega,E)}^2 = \sum_{k=0}^{m_1+m_2}\sum_{p+q=k}\|f_p^{(1)}\otimes f_q^{(2)}\|^2 = \sum_{k=0}^{m_1+m_2}\sum_{p+q=k}\|f_p^{(1)}\|^2\|f_q^{(2)}\|^2 \\
		\geqslant \sum_{0\leqslant p\leqslant m_1}\|f_p^{(1)}\|^2\sum_{0\leqslant q\leqslant m_2}\|f_q^{(2)}\|^2 = \|F^{(1)}\|_{A^2(\Omega_1,E_1)}^2 \|F^{(2)}\|_{A^2(\Omega_2,E_2)}^2.
	\end{multline*}
	
	Moreover, if $f^{(i)}$ vanishes up to order $m_i-1$ on $S_i$, then $f_p^{(i)}=0$ for all $p<m_i$. (If $m_i=0$, there are no restriction on $f_\bullet^{(i)}$.) In this case,
	\begin{equation*}
		F^{(i)}=f_{m_i}^{(i)} \quad\text{and}\quad F=f_{m_1}^{(1)}\otimes f_{m_2}^{(2)}=F^{(1)}\otimes F^{(2)}. \qedhere
	\end{equation*}
\end{proof}

\begin{remark}
	If $S_i=\{w_i\}$ are singleton sets and $m_1=m_2=0$, then the last statement of Theorem \ref{Thm:ProdProp} corresponds to the product property for Bergman kernels.
\end{remark}

\section{Optimal \texorpdfstring{$L^2$}{L2} Extension Theorems of Openness Type} \label{Chap:Optimal}

In this section, we prove an optimal $L^2$ extension theorem of openness type on weakly pseudoconvex K\"ahler manifolds, which is the optimal version of Theorem \ref{Thm:CoarseExtThm}. Apart from the existence theorem, the main point of the proof is a concavity for certain minimal $L^2$ integrals  (see \cite{Guan2019}). Moreover, using the product property of minimal $L^2$ extensions, we prove a version of the theorem in alternative way. Finally, we give an application of the optimal estimate.

\subsection{Approach 1: A Concavity for Minimal \texorpdfstring{$L^2$}{L2} Integrals} \label{Sec:LogConcave} \hfill

Let $(\Omega,\omega)$ be a weakly pseudoconvex K\"ahler manifold and $(E,h)$ be a Hermitian holomorphic vector bundle over $\Omega$. Let $\psi<0$ and $\varphi$ be two quasi-psh functions on $\Omega$. Suppose there are continuous real $(1,1)$-forms $\gamma\geqslant0$ and $\rho$ on $\Omega$ such that $\ddbar\psi\geqslant\gamma$, $\ddbar\varphi\geqslant\rho$ and $\sqrt{-1}\Theta(E,h)+(\gamma+\rho)\otimes\Id_E$ is Nakano semi-positive. For each $t\geqslant0$, we define $\Omega_t := \{z\in\Omega:\psi(z)<-t\}$ and
\begin{equation*}
	\calA_t := A^2(\Omega_t,K_\Omega\otimes E; (\det\omega)^{-1}\otimes h,e^{-\varphi}dV_\omega).
\end{equation*}

Let $F\in\calA_0$ be given. For each $t\geqslant0$, let $F_t$ be the unique element with minimal norm in $\calA_t$ such that
\begin{equation*}
	F_t-F|_{\Omega_t} \in \Gamma(\Omega_t,\calO(K_\Omega\otimes E)\otimes\calI(\varphi+\psi)).
\end{equation*}
For convenience, we define $I(t):=\|F_t\|_{\calA_t}^2$. Then $I(t)\geqslant0$ is a decreasing function on $[0,+\infty)$ and $I(t)\leqslant\|F|_{\Omega_t}\|_{\calA_t}^2$.

\begin{proposition}
	(1) $I(t)$ is right continuous and $\lim_{t\rightarrow+\infty} I(t) = 0$.
	
	(2) If $I(s)=0$ for some $s\geqslant0$, then $I(t)\equiv0$ on $[0,+\infty)$.
\end{proposition}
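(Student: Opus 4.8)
The plan for part (1) is this. Monotonicity of $I$ and the bound $I(t)\le\int_{\Omega_t}|F|_{\omega,h}^2e^{-\varphi}dV_\omega$ are already recorded. Since $\bigcap_{t\ge0}\Omega_t=\{\psi=-\infty\}$ has measure zero ($\psi$ is quasi-psh, hence $L^1_\loc$), dominated convergence with dominating function $\one_{\Omega_0}|F|_{\omega,h}^2e^{-\varphi}$ gives $\int_{\Omega_t}|F|_{\omega,h}^2e^{-\varphi}dV_\omega\to0$, hence $I(t)\to0$. For right continuity at $t_0\ge0$, monotonicity already gives $\lim_{t\to t_0^+}I(t)=\sup_{t>t_0}I(t)\le I(t_0)$, so I only need the reverse inequality. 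Fix $t_j\downarrow t_0$, so $\Omega_{t_j}\uparrow\Omega_{t_0}$ and $I(t_j)\le I(t_0)<+\infty$; by Montel's theorem, after passing to a subsequence, $\{F_{t_j}\}$ converges locally uniformly on $\Omega_{t_0}$ to some $\widetilde F\in\Gamma(\Omega_{t_0},K_\Omega\otimes E)$. Applying Fatou's lemma to $\one_{\Omega_{t_j}}|F_{t_j}|_{\omega,h}^2$, which tends pointwise to $|\widetilde F|_{\omega,h}^2$ on $\Omega_{t_0}$, gives $\|\widetilde F\|_{\calA_{t_0}}^2\le\lim_j I(t_j)$; applying Lemma \ref{Lemma:Coherent} to $F_{t_j}-F$ gives $\widetilde F-F|_{\Omega_{t_0}}\in\Gamma(\Omega_{t_0},\calO(K_\Omega\otimes E)\otimes\calI(\varphi+\psi))$. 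Thus $\widetilde F$ is admissible in the extremal problem defining $F_{t_0}$, so $I(t_0)\le\|\widetilde F\|_{\calA_{t_0}}^2\le\lim_j I(t_j)$, as required.

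For part (2) the plan is to establish the doubling-type estimate
\[ I(t)\le(e^{s-t}+1)\,I(s)\qquad(0\le t<s), \]
which immediately yields the statement: if $I(s)=0$ then $I(t)=0$ for every $t<s$, and for $t\ge s$ monotonicity already gives $I(t)\le I(s)=0$. To prove the estimate I would apply the coarse $L^2$ extension theorem, Theorem \ref{Thm:CoarseExtThm} (in the form valid under hypothesis ($\bigstar$), cf.\ the Remark after it), on the sublevel set $\Omega_t$ with $\psi$ replaced by $\psi+t<0$ — which still satisfies $\ddbar(\psi+t)\ge\gamma$ — and with $a:=s-t>0$, observing that $\{z\in\Omega_t:\psi(z)+t<-a\}=\Omega_s$. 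This extends $F_s$ from $\Omega_s$ to a holomorphic section $G\in\Gamma(\Omega_t,K_\Omega\otimes E)$ with $G|_{\Omega_s}-F_s\in\Gamma(\Omega_s,\calO(K_\Omega\otimes E)\otimes\calI(\varphi+\psi+t))$ and $\int_{\Omega_t}|G|_{\omega,h}^2e^{-\varphi}dV_\omega\le(e^{s-t}+1)\|F_s\|_{\calA_s}^2=(e^{s-t}+1)I(s)$. As $\calI(\varphi+\psi+t)=\calI(\varphi+\psi)$ and $F_s-F|_{\Omega_s}$ lies in the same subsheaf over $\Omega_s$, the difference $G-F|_{\Omega_t}$ lies in $\calO(K_\Omega\otimes E)\otimes\calI(\varphi+\psi)$ over $\Omega_s$; once this is upgraded to membership over all of $\Omega_t$, the section $G$ competes in the problem defining $F_t$ and the estimate follows.

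The step I expect to be the main obstacle is exactly this upgrade — showing $G-F|_{\Omega_t}\in\Gamma(\Omega_t,\calO(K_\Omega\otimes E)\otimes\calI(\varphi+\psi))$ rather than only over $\Omega_s$. On the region $\Omega_t\setminus\Omega_s$ itself the weight is harmless, since there $-s\le\psi<-t$, hence $e^{-\psi}\le e^{s}$, and the local $L^2(e^{-\varphi})$-integrability of $G-F$ (both lie in $\calA_t$) upgrades at once to local $L^2(e^{-\varphi-\psi})$-integrability; the genuinely delicate points are those of $\partial\Omega_s\cap\Omega_t$, where $\psi$ may be unbounded below along sequences running into $\Omega_s$ and where $\varphi$ may carry poles, so that the local finiteness of $\int|G-F|_{\omega,h}^2e^{-\varphi-\psi}$ must be checked carefully. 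The handle I would use is that the non-integrable locus $V(\calI(\varphi+\psi))\subset\{\varphi=-\infty\}\cup\{\psi=-\infty\}$ meets $\Omega_t$ only inside $\Omega_s$ (its $\{\psi=-\infty\}$ part being safely interior to $\Omega_s$) together with a subset of $\{\varphi=-\infty\}$, on which membership is either already known or reduces to the $L^2(e^{-\varphi})$ information. A secondary, more routine point is to check that the sublevel sets $\Omega_t=\{\psi<-t\}$ inherit the structural hypothesis needed to invoke Theorem \ref{Thm:CoarseExtThm}; this follows from the Remark after that theorem together with a standard regularization of the psh exhaustion.
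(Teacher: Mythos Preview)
Your argument for part (1) is correct and is essentially the paper's proof.

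For part (2) your route is unnecessarily indirect, and the detour carries a real cost. The paper does not prove a doubling inequality $I(t)\le(e^{s-t}+1)I(s)$; it argues directly: $I(s)=0$ forces $F_s\equiv0$, hence $F|_{\Omega_s}\in\Gamma(\Omega_s,\calO(K_\Omega\otimes E)\otimes\calI(\varphi+\psi))$, and the only remaining point is to show this membership propagates to all of $\Omega$. That is done by the claim $\calI(\varphi+\psi)_x=\calI(\varphi)_x$ for every $x\notin\Omega_s$ (i.e.\ $\psi(x)>-\infty$), proved via strong openness plus H\"older: choose $p>1$ with $\int_V|f_x|^2e^{-p\varphi}<\infty$, take the conjugate $q$, use $\calI(q\psi)_x=\calO_x$, and conclude $\int_V|f_x|^2e^{-\varphi-\psi}<\infty$. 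Since $F\in\calA_0$ gives $F_x\in\calI(\varphi)_x$ for all $x$, one gets $F\in\Gamma(\Omega,\calO(K_\Omega\otimes E)\otimes\calI(\varphi+\psi))$ and hence $I(t)\equiv0$. Note that this is exactly the ``handle'' you describe; once you observe that $F_s\equiv0$ makes the extension step vacuous (take $G=0$), your own argument collapses to the paper's.

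The place where your plan is genuinely shakier than you indicate is the ``secondary, more routine point'': applying Theorem \ref{Thm:CoarseExtThm} on the sublevel set $\Omega_t$. Even though here $\ddbar\psi\ge\gamma\ge0$ forces $\psi$ to be psh, the sublevel $\{\psi<-t\}$ of a possibly discontinuous psh function on a weakly pseudoconvex K\"ahler manifold need not itself be weakly pseudoconvex, and it is not clear it satisfies ($\bigstar$). The paper flags precisely this obstruction later (in the proof of Theorem \ref{Thm:Concave}) and circumvents it with the tailored Theorem \ref{Thm:L2ExistenceSub} rather than a ``standard regularization''. So your general inequality would require that extra machinery, whereas the direct argument for (2) needs none of it.
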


\begin{proof}
	Given $\tau\in[0,+\infty)$, let $\{t_j\}_j$ be a sequence decreasing to $\tau$, then $\Omega_\tau=\cup_j \Omega_{t_j}$ and $\|F_{t_j}\|_{\calA_{t_j}} \leqslant \|F_\tau\|_{\calA_\tau}$. Using Montel's theorem, we may assume that $F_{t_j}$ converges uniformly on any compact subsets of $\Omega_\tau$ to some $F'\in\Gamma(\Omega_\tau,K_\Omega\otimes E)$. Clearly, $F'-F|_{\Omega_\tau} \in \Gamma(\Omega_\tau,\calO(K_\Omega\otimes E)\otimes\calI(\varphi+\psi))$. By Fatou's lemma, the monotonicity of $I(t)$ and the minimality of $F_\tau$, we have
	\begin{equation*}
		\|F_\tau\|_{\calA_\tau}^2 \leqslant \|F'\|_{\calA_\tau}^2 \leqslant \lim_{j\to+\infty} \|F_{t_j}\|_{\calA_{t_j}}^2 = \lim_{t\to\tau^+} \|F_t\|_{\calA_t}^2 \leqslant \|F_\tau\|_{\calA_\tau}^2.
	\end{equation*}
	Therefore, $\lim_{t\to\tau^+}I(t)=I(\tau)$ for any $\tau$, i.e. $I(t)$ is right continuous on $[0,+\infty)$. Moreover, by Lebesgue's dominated convergence theorem,
	\begin{equation*}
		0 \leqslant \lim_{t\rightarrow+\infty} I(t) \leqslant \lim_{t\to+\infty} \int_\Omega \one_{\Omega_t} |F|_{\omega,h}^2e^{-\varphi}dV_\omega = 0.
	\end{equation*}
	
	Next, we assume that $I(s)=0$ for some $s\geqslant0$. In this case, $F_s\equiv0$ and
	\begin{equation*}
		F|_{\Omega_s} \in \Gamma(\Omega_s,\calO(K_\Omega\otimes E)\otimes\calI(\varphi+\psi)).
	\end{equation*}
	Using the strong openness property and H\"older's inequality, it is easy to see that $\calI(\varphi+\psi)_x=\calI(\varphi)_x$ for all $x\notin\Omega_s$.
	Since $\int_\Omega |F|_{\omega,h}^2e^{-\varphi}dV_\omega<+\infty$, it follows that $F\in \Gamma(\Omega,\calO(K_\Omega\otimes E)\otimes\calI(\varphi+ \psi))$, and then $I(t)=0$ for any $t$.
\end{proof}

In the following, we assume that $I(t)>0$ for all $t\in[0,+\infty)$. Using the method of Guan \cite{Guan2019}, we can prove a concavity for $I(t)$. Guan \cite{Guan2019} considered the case of Stein manifolds, but there are no essential difference if we assume that $\Omega$ is a weakly pseudoconvex K\"ahler manifold.

\begin{theorem} \label{Thm:Concave}
	$r \mapsto I(-\log r)$ is a concave increasing function on $(0,1]$.	In particular, $I(0) \leqslant I(t)e^t \leqslant I(s)e^s$ for any $0\leqslant t\leqslant s$.
\end{theorem}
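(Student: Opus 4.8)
The plan is to follow Guan's method for establishing the concavity of minimal $L^2$ integrals, adapted to the weakly pseudoconvex Kähler setting. The statement that $r\mapsto I(-\log r)$ is concave on $(0,1]$ is equivalent, after the substitution $t=-\log r$ (so $r=e^{-t}$, $r\in(0,1]\leftrightarrow t\in[0,+\infty)$), to a differential inequality of the form that for $0\le t_1\le t_2$ one has
\begin{equation*}
	I(t_1)\big(e^{-t_1}-e^{-t_2}\big)\;\ge\; \big(I(t_1)-I(t_2)\big)e^{-t_1}+\text{(correction controlled by minimality)},
\end{equation*}
or more cleanly, that the chord-slope condition holds for the reparametrized function. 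In practice the cleanest route is: first prove, via an Ohsawa--Takegoshi-type extension step, that for any $0\le t_1< t_2$ and any $a\in[t_1,t_2]$,
\begin{equation*}
	I(a)\;\le\; \frac{e^{-t_1}-e^{-t_2}}{e^{-a}-e^{-t_2}}\cdot\frac{e^{-a}-e^{-t_1}}{\,?\,}\cdots
\end{equation*}
— more precisely the standard conclusion that $y\mapsto I(-\log y)$ satisfies $I(-\log r)\le \frac{r-r_2}{r_1-r_2}I(-\log r_1)+\frac{r_1-r}{r_1-r_2}I(-\log r_2)$ for $r_1\le r\le r_2$ reversed appropriately, which is exactly concavity. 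Then right-continuity (already proved in the preceding Proposition) upgrades the discrete inequality to genuine concavity on the whole interval, and the monotonicity/positivity of $I$ forces the increasing statement in the $r$-variable. The final ``In particular'' clause is then immediate: concavity of $g(r):=I(-\log r)$ together with $g$ increasing and $g(r)\ge 0$ gives, for $r\le 1$, that $g(r)/r$ is decreasing (a concave function through a point above the origin has decreasing difference quotients from $0$), whence $g(1)\le g(e^{-t})e^{t}\le g(e^{-s})e^{s}$, i.e. $I(0)\le I(t)e^t\le I(s)e^s$ for $0\le t\le s$.

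The heart of the argument is the extension step producing the chord inequality. Here I would fix $t_1<t_2$ and $a\in(t_1,t_2)$, take the minimizer $F_{t_2}\in\calA_{t_2}$, and construct from it an element of $\calA_a$ agreeing with $F$ (equivalently with $F_{t_2}$) modulo $\calO(K_\Omega\otimes E)\otimes\calI(\varphi+\psi)$ on $\Omega_a$, using an $L^2$-existence theorem for $\bar\partial$ with weight built from $\psi$ and a carefully chosen convex function of $\psi$ — exactly the mechanism of Theorem \ref{Thm:CoarseExtThm} but now with the cut-off and the auxiliary functions $u,s,g$ tuned so that the resulting estimate reads
\begin{equation*}
	I(a)\;\le\; C(t_1,t_2,a)\,I(t_1)+ \big(1-C'(t_1,t_2,a)\big)\,I(t_2)
\end{equation*}
with $C,C'$ the affine-interpolation coefficients in the variable $e^{-t}$. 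One runs the same approximation scheme (exhaustion by weakly pseudoconvex $D^k$ with complete Kähler metrics on $D^k\setminus S_{k,\nu}$, equisingular approximations $\psi_{k,\nu}\searrow\psi$, $\varphi_{k,\nu}\searrow\varphi$, solve $\bar\partial$ with a correction term via Theorem \ref{Thm:L2ExistenceErrTerm}, then pass to weak limits using Lemmas \ref{Lemma:WeakLim} and \ref{Lemma:DoubleWeakLim} and Montel), the only new ingredient being the bookkeeping that gives the sharp affine constants rather than $e^a+1$. I would also invoke the Proposition just proved to know $I$ is finite, positive (by hypothesis in this subsection), right-continuous, and $\to 0$, which is what makes the limiting/continuity arguments legitimate.

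The main obstacle I anticipate is getting the \emph{sharp} affine constants out of the $L^2$ machinery: in Theorem \ref{Thm:CoarseExtThm} the freedom in choosing $\chi$ and $c$ only gave $e^a+1$, and squeezing the extra ``$+1$'' down to the exact chord value requires, following Guan, replacing the single cut-off argument by a comparison between the minimizers at three levels $t_1,a,t_2$ — essentially testing the extremal characterization of $F_a$ against the candidate built from $F_{t_1}$ and $F_{t_2}$ and optimizing the convex weight. A second, more technical nuisance is that $F_{t_2}$ is only defined on $\Omega_{t_2}\subsetneq\Omega_a$, so the extension from $\Omega_{t_2}$ to $\Omega_a$ must itself be carried out with the right weight; one has to be careful that the class of the extension modulo $\calI(\varphi+\psi)$ is preserved across the strip $\Omega_a\setminus\Omega_{t_2}$, which uses Lemma \ref{Lemma:dbarOverS} and the coherence Lemma \ref{Lemma:Coherent} exactly as in the coarse theorem. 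Beyond these points the proof is a careful re-run of the now-standard Guan--Zhou technique, so I would not expect any genuinely new analytic difficulty, only careful constant-tracking.
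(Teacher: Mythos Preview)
Your plan misidentifies both the structure of the argument and the key analytic step. The paper (following Guan) does \emph{not} prove the three-point chord inequality directly by engineering sharp affine constants in a single extension; instead it establishes the one-sided differential inequality
\[
	\varliminf_{t\to 0^+}\frac{I(a)-I(a+t)}{t}\;\ge\;\frac{I(R)-I(a)}{e^{a-R}-1}
\]
for all admissible $R<a$, and then appeals to an elementary lemma (lower semicontinuity of $J(r)=I(-\log r)$ together with this slope bound forces concavity). So there is no need to ``squeeze the $+1$ down'' to exact affine constants --- that is a red herring.

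More seriously, the extension runs in the opposite direction from what you propose: one takes the \emph{middle} minimizer $F_a$ on $\Omega_a$ and extends it outward to $\Omega_R$ (the larger domain), not $F_{t_2}$ inward to $\Omega_a$. The decisive ingredient you do not name is the orthogonality coming from the minimality of $F_a$: for the extension $f\in\calA_R$ one has $(f|_{\Omega_a}-F_a)\perp F_a$ in $\calA_a$, hence $\|f|_{\Omega_a}\|^2=\|f-F_a\|^2+\|F_a\|^2$. Combining this Pythagorean identity with the weighted estimate $\int|f-\chi_{a,b}(\psi)F_a|^2e^{-\varphi-\psi+v_{a,b}(\psi)}\le\cdots$ (from Theorem~\ref{Thm:BasicL2Existence} with $c(t)=e^t$, letting $b\to 0$) and the inequality $e^{-\psi-a}\ge 1$ on $\Omega_a$ is precisely what produces $\|f\|_{\calA_R}^2-I(a)\ge I(R)-I(a)$ on the left and the difference quotient $(I(a)-I(a+t))/t$ times $(e^{a-R}-1)$ on the right. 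Without this orthogonality step your ``comparison between minimizers at three levels'' has no mechanism to convert the $L^2$ estimate on $f-\chi F_a$ into a lower bound for $I(a)$, and the plan as written does not close.
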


\begin{proof}
	At first, we shall prove that
	\begin{equation} \label{Eq:ConcaveIneq}
		\varliminf_{t\rightarrow0^+} \frac{I(a) - I(a+t)}{t} \geqslant \frac{I(0) - I(a)}{e^a - 1} \quad \text{for all } a>0.
	\end{equation}
	Clearly, it is sufficient to consider the case that $\varliminf_{t\rightarrow0^+} \frac{I(a)-I(a+t)}{t} < +\infty$. We choose a decreasing sequence $\{t_j\}_{j=1}^\infty$ of positive reals so that $t_j\searrow0$ and
	\begin{equation*}
		\varliminf_{t\rightarrow0^+} \frac{I(a) - I(a+t)}{t} = \lim_{j\rightarrow+\infty} \frac{I(a) - I(a+t_j)}{t_j}.
	\end{equation*}
	In particular, there is a constant $C$ so that $(e^{a+t_j}-1) \frac{I(a)-I(a+t_j)}{t_j} \leqslant C$ for all $j$.
	
	For each $j\in\NN_+$, we define
	\begin{equation*}
		v_j(t) := v_{a,t_j}(t) = \begin{cases} -a-t_j/2 & (t\leqslant-a-t_j) \\ (t^2+2(a+t_j)t+a^2)/2t_j & (-a-t_j<t<-a) \\ t & (t\geqslant-a) \end{cases},
	\end{equation*}
	and $\chi_j(t):=1-v_j'(t)$. By the $L^2$ techniques developed by Guan-Zhou \cite{GuanZhou2015} (see Theorem \ref{Thm:BasicL2Existence} with $A=0$ and $c(t)\equiv e^t$), for each $j\in\NN_+$, there exists a holomorphic section $f_j\in\Gamma(\Omega,K_\Omega\otimes E)$ such that
	\begin{equation*}
		f_j|_{\Omega_a}-F_a \in \Gamma(\Omega_a,\calO(K_\Omega\otimes E)\otimes \calI(\varphi+\psi))
	\end{equation*}
	and
	\begin{align*}
		&\, \int_\Omega |f_j-\chi_j(\psi)F_a|_{\omega,h}^2 e^{-\varphi-\psi+v_j(\psi)} dV_\omega \\
		\leqslant &\, \frac{1-e^{-a-t_j/2}}{t_j} \int_{\{-a-t_j<\psi<-a\}} |F_a|_{\omega,h}^2 e^{-\varphi-\psi} dV_\omega.
	\end{align*}
	Since $v_j(t)\geqslant t$ and $\|F_a|_{\Omega_{a+t_j}}\|_{\calA_{a+t_j}}^2 \geqslant \|F_{a+t_j}\|_{\calA_{a+t_j}}^2$, the above inequality implies
	\begin{multline*}
		\int_\Omega |f_j-\chi_j(\psi)F_a|_{\omega,h}^2 e^{-\varphi} dV_\omega
		\leqslant \int_\Omega |f_j-\chi_j(\psi)F_a|_{\omega,h}^2 e^{-\varphi-\psi+v_j(\psi)} dV_\omega \\
		\leqslant \frac{e^{a+t_j}-e^{t_j/2}}{t_j} ( \int_{\{\psi<-a\}} |F_a|_{\omega,h}^2 e^{-\varphi} dV_\omega - \int_{\{\psi\leqslant-a-t_j\}} |F_a|_{\omega,h}^2 e^{-\varphi} dV_\omega ) \\
		\leqslant \frac{e^{a+t_j}-1}{t_j} (I(a)-I(a+t_j)) \leqslant C < +\infty.
	\end{multline*}
	On the other hand,
	\begin{equation*}
		\int_\Omega |\chi_j(\psi)F_a|_{\omega,h}^2 e^{-\varphi} dV_\omega \leqslant \int_{\Omega_a} |F_a|_{\omega,h}^2 e^{-\varphi} dV_\omega < +\infty.
	\end{equation*}
	Then it is clear that $\{f_j\}_{j=1}^\infty$ is a bounded sequence in $\calA_0$. Using Montel's theorem, we may assume that $\{f_j\}_{j=1}^\infty$ converges uniformly on any compact subsets of $\Omega$ to some holomorphic section $f\in\Gamma(\Omega,K_\Omega\otimes E)$. By Lemma \ref{Lemma:Coherent},
	\begin{equation*}
		f|_{\Omega_a}-F_a \in \Gamma(\Omega_a,\calO(K_\Omega\otimes E)\otimes \calI(\varphi+\psi)).
	\end{equation*}
	By the definitions of $\chi_j$ and $v_j$,
	\begin{equation*}
		\lim_{j\to+\infty}\chi_j(t) = \one_{(-\infty,-a)} \quad\text{and}\quad \lim_{j\to+\infty}v_j(t) = v(t): = \begin{cases} -a & (t<-a) \\ t & (t\geqslant-a) \end{cases}.
	\end{equation*}
	Then it follows from Fatou's lemma that
	\begin{align*}
		&\, \int_\Omega |f-\one_{\{\psi<-a\}}F_a|_{\omega,h}^2 e^{-\varphi-\psi+v(\psi)} dV_\omega \\
		\leqslant &\, \varliminf_{j\rightarrow+\infty} \int_\Omega |f_j-\chi_j(\psi)F_a|_{\omega,h}^2 e^{-\varphi-\psi+v_j(\psi)} dV_\omega \\
		\leqslant &\, \lim_{j\rightarrow+\infty} (e^{a+t_j}-1) \tfrac{I(a)-I(a+t_j)}{t_j} \leqslant C.
	\end{align*}
	Since $\int_{\Omega_a}|F_a|_{\omega,h}^2e^{-\varphi}dV_\omega<+\infty$, we knows $f\in\calA_0$.
	
	Notice that, $f|_{\Omega_a}\in \calA_a$ and $f|_{\Omega_a}-F_a \in \Gamma(\Omega_a,\calO(K_\Omega\otimes E)\otimes \calI(\varphi+\psi))$. By the minimality of $F_a\in \calA_a$, we know $\|F_a+\tau(f|_{\Omega_a}-F_a)\|_{\calA_a}^2 \geqslant \|F_a\|_{\calA_a}^2$ for any $\tau\in\CC$.
	Consequently, $(f|_{\Omega_a} - F_a) \perp F_a$ in $\calA_a$ and
	\begin{equation*}
		\|f|_{\Omega_a}\|_{\calA_a}^2 = \|f|_{\Omega_a}-F_a\|_{\calA_a}^2 + \|F_a\|_{\calA_a}^2.
	\end{equation*}
	Therefore,
	\begin{align}
		&\, \int_\Omega |f-\one_{\{\psi<-a\}}F_a|_{\omega,h}^2 e^{-\varphi-\psi+v(\psi)} dV_\omega \notag \\
		= &\, \int_{\{\psi\geqslant-a\}} |f|_{\omega,h}^2 e^{-\varphi} dV_\omega + \int_{\Omega_a} |f-F_a|_{\omega,h}^2 e^{-\varphi-\psi-a} dV_\omega \notag \\
		\geqslant &\, \int_{\{\psi\geqslant-a\}} |f|_{\omega,h}^2 e^{-\varphi} dV_\omega + \int_{\Omega_a} |f-F_a|_{\omega,h}^2 e^{-\varphi} dV_\omega \label{Eq:Ineq1} \\
		= &\, \int_{\{\psi\geqslant-a\}} |f|_{\omega,h}^2 e^{-\varphi} dV_\omega + \int_{\Omega_a} |f|_{\omega,h}^2 e^{-\varphi} dV_\omega - \int_{\Omega_a} |F_a|_{\omega,h}^2 e^{-\varphi} dV_\omega \notag \\
		= &\, \|f\|_{\calA_0}^2 - \|F_a\|_{\calA_a}^2 \geqslant I(0) - I(a) \label{Eq:Ineq2}.
	\end{align}
	In summary, we prove that
	\begin{align*}
		I(0) - I(a) & \leqslant \lim_{j\rightarrow+\infty} (e^{a+t_j}-1) \frac{I(a)-I(a+t_j)}{t_j} \\
		& = (e^a-1) \varliminf_{t\rightarrow0^+} \frac{I(a) - I(a+t)}{t}.
	\end{align*}
	
	Given constants $0<r<r_1<r_2\leqslant 1$, we choose $R,a,t\in\RR_+$ such that
	\begin{equation*}
		-\log r=R+a+t, \quad -\log r_1=R+a, \quad -\log r_2=R.
	\end{equation*}
	In the above arguments, we replace $\Omega$ by $\Omega_R:=\{\psi<-R\}$ and $\psi$ by $\psi_R:=\psi+R$. Since $\{\psi_R<-s\}=\Omega_{R+s}$ $(\forall s\geqslant0)$, the inequality \eqref{Eq:ConcaveIneq} becomes
	\begin{equation} \label{Eq:ConcaveIneq2}
		\varliminf_{t\rightarrow0^+} \frac{I(R+a) - I(R+a+t)}{t} \geqslant \frac{I(R) - I(R+a)}{e^a-1}.
	\end{equation}
	(Notice that, corresponding to $f_j\in\Gamma(\Omega,K_\Omega\otimes E)$, we need to construct holomorphic section satisfying suitable $L^2$ estimate on $\Omega_R$. Although we can not apply Theorem \ref{Thm:BasicL2Existence} directly, the existence of such holomorphic section is guaranteed by Theorem \ref{Thm:L2ExistenceSub}.)
	
	Let $J(s):=I(-\log s)$, then \eqref{Eq:ConcaveIneq2} can be reformulated as
	\begin{equation*}
		\varliminf_{r\rightarrow r_1^-} \frac{J(r_1) - J(r)}{\log r_1 - \log r} \geqslant \frac{J(r_2) - J(r_1)}{r_2/r_1-1}.
	\end{equation*}
	Therefore, for any $0<r_1<r_2\leqslant 1$, we have
	\begin{equation*}
		\varliminf_{r\rightarrow r_1^-} \frac{J(r_1) - J(r)}{r_1 - r}
		= \varliminf_{r\rightarrow r_1^-} \frac{J(r_1) - J(r)}{\log r_1 - \log r} \frac{\log r_1 - \log r}{r_1-r}
		\geqslant \frac{J(r_2) - J(r_1)}{r_2-r_1}.
	\end{equation*}
	Since $J(r)=I(-\log r)$ is lower semi-continuous on $(0,1]$, we conclude that $J(r)$ is a concave function (see \cite[Lemma 4.7]{Guan2019}). By the concavity,
	\begin{equation*}
		J(r_1)/r_1 \geqslant J(r_2)/r_2 \geqslant J(1), \quad 0<r_1\leqslant r_2\leqslant 1,
	\end{equation*}
	i.e. $I(0) \leqslant I(t)e^t \leqslant I(s)e^s$ for any $0\leqslant t\leqslant s$.
\end{proof}

\begin{remark} \label{Rmk:ConcaveLinear}
	We now assume that the concave function $r\mapsto I(-\log r)$ is linear on $(0,1]$. In this case, $I(-\log r)\equiv r I(0)$ and $I(a)\equiv e^{-a}I(0)$. Then \eqref{Eq:ConcaveIneq} becomes an equality. By tracing the proof of \eqref{Eq:ConcaveIneq}, we find that the inequalities at \eqref{Eq:Ineq1} and \eqref{Eq:Ineq2} must be equalities.
	Clearly, the equality at \eqref{Eq:Ineq2} means that $\|f\|_{\calA_0}^2=I(0)$. By the uniqueness of the minimal element, $f\equiv F_0$. Since $e^{-\psi-a}>1$ on $\Omega_a$, the equality at \eqref{Eq:Ineq1} implies that $f|_{\Omega_a}\equiv F_a$. In summary,
	\begin{center}
		\itshape if $I(-\log r)$ is a linear function on $(0,1]$, then $F_a \equiv F_0|_{\Omega_a}$ for any $a>0$.
	\end{center}
	
	We notice that, this necessary condition can be used to prove the equality part of Suita's conjecture, this approach is different from Guan-Zhou \cite{GuanZhou2015} at one key step (see Section \ref{Sec:Suita} for details).
	Independently, Guan-Mi \cite{GuanMi2021} also obtained a necessary condition for the general concavity degenerating to linearity.
\end{remark}

Using Theorem \ref{Thm:Concave}, we obtain the optimal version of Theorem \ref{Thm:CoarseExtThm}.

\begin{proof}[The proof of Theorem \ref{MainThm:Opt}]
	We may assume that $f \notin \Gamma(\Omega_a,\calO(K_\Omega\otimes E)\otimes\calI(\varphi+\psi))$. Otherwise, we can simply take $F\equiv 0$. According to Theorem \ref{Thm:CoarseExtThm}, there exists a holomorphic section $\tilde{f}\in A^2(\Omega,K_\Omega\otimes E; (\det\omega)^{-1}\otimes h,e^{-\varphi}dV_\omega)$ such that
	\begin{equation*}
		\tilde{f}|_{\Omega_a}-f \in \Gamma(\Omega_a,\calO(K_\Omega\otimes E)\otimes\calI(\varphi+\psi)).
	\end{equation*}
	Let $\Omega_t := \{\psi<-t\}$ and $\calA_t := A^2(\Omega_t,K_\Omega\otimes E; (\det\omega)^{-1}\otimes h,e^{-\varphi}dV_\omega)$. For each $t\geqslant 0$, let $F_t$ be the unique element with minimal norm in $\calA_t$ such that
	\begin{equation*}
		F_t-\tilde{f}|_{\Omega_t} \in \Gamma(\Omega_t,\calO(K_\Omega\otimes E)\otimes\calI(\varphi+\psi)).
	\end{equation*}
	We define $I(t)=\|F_t\|_{\calA_t}^2$, then $I(t)>0$ for all $t\geqslant0$. By Theorem \ref{Thm:Concave}, $I(-\log r)$ is a concave increasing function on $(0,1]$ and $I(0)\leqslant e^tI(t)$ for any $t\geqslant 0$. In summary, the holomorphic section $F_0\in\Gamma(\Omega,K_\Omega\otimes E)$ satisfying
	\begin{equation*}
		F_0|_{\Omega_a}-f\in\Gamma(\Omega_a,\calO(K_\Omega\otimes E)\otimes\calI(\varphi+\psi))
	\end{equation*}
	and
	\begin{equation*}
		\int_\Omega |F_0|_{\omega,h}^2 e^{-\varphi}dV_\omega = I(0) \leqslant e^aI(a) \leqslant e^a \int_{\Omega_a} |f|_{\omega,h}^2 e^{-\varphi}dV_\omega. \qedhere
	\end{equation*}
\end{proof}

The example after Theorem \ref{MainThm:Opt} shows that the uniform constant $e^a$ in Theorem \ref{MainThm:Opt} is optimal. In the following, we give a more general example.

\begin{example} \label{Ex:GenOptConst}
	Let $D=\{z\in\CC^n:h(z)<1\}$ be a bounded balanced domain, in which $h:\CC^n\to[0,\infty)$ is upper semi-continuous and homogeneous (i.e. $h(\tau z)=|\tau|h(z)$ for all $\tau\in\CC$ and $z\in\CC^n$). We assume that $D$ is pseudoconvex ($\Leftrightarrow$ $\log h$ is psh), then $C^{-1}|z|\leqslant h(z)\leqslant C|z|$ for some constant $C>0$ and $G_D(\cdot,0)\equiv\log h$. For $r>0$, we write $rD=\{z\in\CC^n:h(z)<r\}$. Given $k\geqslant0$, let $[k]$ be the largest integer so that $[k]\leqslant k$, then $\calI(2(n+k)\log h)_0 = \frakm_0^{[k]+1}$.
	
	Let $M$ be a weakly pseudoconvex K\"ahler manifold of dimension $m$ such that $A^2(M,K_M)\neq\{0\}$. The product manifold $\Omega:=D\times M$ is also weakly pseudo-convex. Let $p_1:\Omega\rightarrow D$ and $p_2:\Omega\rightarrow M$ be the natural projections, then $\psi:=p_1^*(2(n+k)\log h)$ is a psh function on $\Omega$ and $S:=\{\psi=-\infty\} =\{0\}\times M$. Let $a>0$ be a constant, then $\Omega_a := \{\psi<-a\} = rD\times M$ with $r=\exp(\frac{-a}{2(n+k)})$.
	
	Given $f\in A^2(rD)$ and $\eta\in A^2(M,K_M)$, we define
	\begin{equation*}
		F = p_1^*(fdz)\wedge p_2^*(\eta) \in A^2(\Omega_a,K_\Omega),
	\end{equation*}
	where $dz=dz_1\wedge\cdots\wedge dz_n$. By Fubini's theorem,
	\begin{equation*}
		\|F\|_{A^2(\Omega_a,K_\Omega)}^2 = 2^n \|f\|_{A^2(rD)}^2 \|\eta\|_{A^2(M,K_M)}^2.
	\end{equation*}
	Since $rD$ is balanced, there exist homogeneous polynomials $s_i(z)$ of degree $i$ such that $f(z)=\sum_{i=0}^\infty s_i(z)$ with uniform convergence on any compact set of $rD$. Let $f_0(z) = \sum\nolimits_{i\leqslant[k]} s_i(z)$ be the truncated series, then
	\begin{align*}
		\|f_0\|_{A^2(D)}^2 & = \sum\nolimits_{i\leqslant[k]} \|s_i\|_{A^2(D)}^2 = \sum\nolimits_{i\leqslant[k]} r^{-2(n+i)}\|s_i\|_{A^2(rD)}^2 \\
		& \leqslant e^a \sum\nolimits_{i\leqslant[k]} \|s_i\|_{A^2(rD)}^2 \leqslant e^a \|f\|_{A^2(rD)}^2.
	\end{align*}
	In particular, if $k\in\NN$ and $f(z)$ is a homogeneous polynomial of degree $k$, then
	\begin{equation*}
		\|f_0\|_{A^2(D)}^2 = e^a \|f\|_{A^2(rD)}^2.
	\end{equation*}
	We define $F_0 = p_1^*(f_0dz)\wedge p_2^*(\eta) \in A^2(\Omega,K_\Omega)$, then
	\begin{equation*}
		\|F_0\|_{A^2(\Omega,K_\Omega)}^2 = 2^n \|f_0\|_{A^2(D)}^2 \|\eta\|_{A^2(M,K_M)}^2.
	\end{equation*}
	
	Using Fubini's theorem and the fact that $\psi(z,w)=\log|z|^{2(n+k)}+O(1)$, we know
	\begin{itemize}
		\item[(i)] $F_0|_{\Omega_a}-F \in \Gamma(\Omega_a,\calO(K_\Omega)\otimes\calI(\psi))$;
		\item[(ii)] if $G\in A^2(\Omega,K_\Omega)\cap\Gamma(\Omega,\calO(K_\Omega)\otimes\calI(\psi))$, then $F_0\perp G$ in $A^2(\Omega,K_\Omega)$.
	\end{itemize}
	Assume that $F'\in A^2(\Omega,K_\Omega)$ is another holomorphic section such that
	\begin{equation*}
		F'|_{\Omega_a}-F \in \Gamma(\Omega_a,\calO(K_\Omega)\otimes\calI(\psi)).
	\end{equation*}
	By (i), we have $F'-F_0\in A^2(\Omega,K_\Omega)\cap\Gamma(\Omega,\calO(K_\Omega)\otimes\calI(\psi))$. By (ii), we know $(F'-F_0)\perp F_0$ in $A^2(\Omega,K_\Omega)$, and then
	\begin{equation*}
		\|F'\|_{A^2(\Omega,K_\Omega)}^2 = \|F_0\|_{A^2(\Omega,K_\Omega)}^2 + \|F'-F_0\|_{A^2(\Omega,K_\Omega)}^2 \geqslant \|F_0\|_{A^2(\Omega,K_\Omega)}^2.
	\end{equation*}
	Therefore, $F_0$ is the unique element with minimal norm in $A^2(\Omega,K_\Omega)$ such that
	\begin{equation*}
		F_0|_{\Omega_a}-F \in \Gamma(\Omega_a,\calO(K_\Omega)\otimes\calI(\psi)).
	\end{equation*}
	We have prove that $\|F_0\|_{A^2(\Omega,K_\Omega)}^2 \leqslant e^a\|F\|_{A^2(\Omega_a,K_\Omega)}^2$. Moreover, if $k\in\NN$ and $f(z)$ is a homogeneous polynomial of degree $k$, then $\|F_0\|_{A^2(\Omega,K_\Omega)}^2 = e^a\|F\|_{A^2(\Omega_a,K_\Omega)}^2$. Therefore, the uniform constant $e^a$ in Theorem \ref{MainThm:Opt} is optimal.
\end{example}

\subsection{Application: The Equality Part of Suita's Conjecture} \label{Sec:Suita} \hfill

Let $\Omega$ be an open Riemann surface admitting Green functions $G_\Omega$. Let $(V,w)$ be a connected coordinate chart of $\Omega$. Let $\kappa_\Omega(w) = B_\Omega(w)|dw|^2$ and $c_\beta(w)|dw|$ be the Bergman kernel form and the logarithmic capacity of $\Omega$, i.e.
\begin{gather*}
	\kappa_\Omega(z_0) := \sup\left\{ \sqrt{-1}F(z_0)\wedge\overline{F(z_0)}: F\in\Gamma(\Omega,K_\Omega), \int_\Omega \tfrac{\sqrt{-1}}{2} F\wedge\overline{F} \leqslant 1 \right\}, \\
	c_\beta(z_0) := \lim_{z\to z_0}\exp(G_\Omega(z,z_0)-\log|w(z)-w(z_0)|).
\end{gather*}
Suita \cite{Suita1972} conjectured that $\pi B_\Omega(z_0) \geqslant c_\beta(z_0)^2$, and the equality holds if and only if $\Omega$ is conformally equivalent to $\DD$ less a possible closed polar set. The inequality part of the conjecture was solved by Blocki \cite{Blocki2013} and Guan-Zhou \cite{GuanZhou2012}, and the equality part was solved by Guan-Zhou \cite{GuanZhou2015}. Using Remark \ref{Rmk:ConcaveLinear}, we have a different approach to a key step of proving the equality part of Suita's conjecture.

Let $\psi=2G_\Omega(\cdot,z_0)$. For each $t\geqslant0$, we define $\Omega_t:=\{\psi<-t\}$. Let $\kappa_{\Omega_t}= B_t|dw|^2$ be the Bergman kernel of $\Omega_t$, and $F_t$ be the unique element with minimal norm in $A^2(\Omega_t,K_\Omega)$ such that $F_t(z_0)=dw$, then
\begin{equation*}
	\int_{\Omega_t} \tfrac{\sqrt{-1}}{2} F_t\wedge\overline{F_t} = \frac{1}{B_t(z_0)}.
\end{equation*}
Since $\calI(\psi)_{z_0} = \frakm_{z_0}$, it follows from Theorem \ref{Thm:Concave} that $r\mapsto \frac{1}{B_{-\log r}(z_0)}$ is a concave increasing function and $B_s(z_0)e^{-s}\leqslant B_t(z_0)e^{-t}\leqslant B_\Omega(z_0)$ for any $0\leqslant t\leqslant s$.

After a change of coordinate, we may assume $G_\Omega(\cdot,z_0)|_V = \log|c_\beta(z_0)w|$. If $s\gg1$, then $\Omega_s=\DD(0;c_\beta(z_0)^{-1}e^{-s/2})$ is an open disc in $(V,w)$. For such $s\gg1$, it is clear that $F_s \equiv dw$ and $B_s(z_0) = \pi^{-1}c_\beta(z_0)^2e^s$. Consequently,
\begin{equation*}
	\pi^{-1}c_\beta(z_0)^2 = B_s(z_0)e^{-s} \leqslant B_t(z_0)e^{-t} \leqslant B_\Omega(z_0), \quad 0\leqslant t\ll s.
\end{equation*}
Then we obtain the inequality part of the conjecture (also see \cite{Guan2019Saitoh}). We further assume that $\pi B_\Omega(z_0) = c_\beta(z_0)^2$, then $B_t(z_0)e^{-t} \equiv \pi^{-1}c_\beta(z_0)^2$ and $\frac{1}{B_{-\log r}(z_0)}$ is a linear function of $r\in(0,1]$. By Remark \ref{Rmk:ConcaveLinear}, $F_t \equiv F_0|_{\Omega_t}$ for any $t\geqslant0$. Since $F_s\equiv dw$ for $s\gg 1$, we conclude that $F_0|_{\Omega_s} \equiv dw$ and then $F_0|_V \equiv dw$.

In summary,
\begin{quotation}
	\itshape if $\pi B_\Omega(z_0) = c_\beta(z_0)^2$ and $(V,w)$ is a connected coordinate chart around $z_0$ such that $G_\Omega(\cdot,z_0)|_V = \log|c_\beta(z_0)w|$, then there exists a holomorphic 1-form $F_0\in\Gamma(\Omega,K_\Omega)$ with $F_0|_V \equiv dw$.
\end{quotation}

This fact is a key step in Guan-Zhou's proof to the equality part of the conjecture (see \cite[Lemma 4.21]{GuanZhou2015}). The proof of this fact here is different from Guan-Zhou \cite{GuanZhou2015} and Dong \cite{DongArxiv}. Now, we complete the proof by repeating the arguments of \cite{GuanZhou2015}:

Let $p:\DD\to\Omega$ be a universal covering of $\Omega$ and $g\in\calO(\DD)$ be a holomorphic function so that $\log|g|=p^* G_\Omega(\cdot,z_0)$. Shrinking $V$ if necessary, we may assume that $p$ is biholomorphic on any connected component of $p^{-1}(V)$. Let $U$ be a fixed connected component of $p^{-1}(V)$, let $h=p_*(g|_U)$, then $\log|h|= G_\Omega(\cdot,z_0)$ on $V$. Consequently, $h\equiv cw$ on $V$ for some constant $c$, and there exists an $F\in\Gamma(\Omega,K_\Omega)$ with $F|_V=dh$. Since $p^*F|_U=dg|_U$, by the uniqueness of analytic continuation, we have $p^*F\equiv dg$ on $\DD$.

The fundamental group $\pi_1(\Omega)$ acts holomorphically on $\DD$ and $p\circ\sigma=p$ for any $\sigma\in\pi_1(\Omega)$. Since $\log|g|=p^* G_\Omega(\cdot,z_0)$ and $dg\equiv p^*F$, we have $|\sigma^*g|=|g|$ and $\sigma^*(dg)=dg$. Then it is clear that $\sigma^*g\equiv g$. Consequently, $\hat{g}=p_*g$ is a well-defined holomorphic function on $\Omega$ with $\log|\hat{g}| = G_\Omega(\cdot,z_0)$. By Lemma 4.25 and 4.26 of \cite{GuanZhou2015} (or Theorem 1 of \cite{Minda1987}), we conclude that $\Omega$ is conformally equivalent to $\DD$ less a possible closed polar set.

We remark that our approach is applicable to generalized Suita conjectures with weights (see \cite{GuanZhou2015CN,GuanZhou2015}) and jets (see \cite{BlockiZwonek2018}).

\subsection{Approach 2: A Tensor Power Trick} \hfill

Recall that, Blocki \cite{Blocki2014} obtained the optimal estimate in Theorem \ref{Thm:BlockiExt} by using a tensor power trick that relies on the product property for Bergman kernels. Using Theorem \ref{Thm:ProdProp} as a replacement, we can prove a more general result.

\begin{definition}[see \cite{RashkovskiiSigurdsson2005}]
	Let $M$ be a connected complex manifold and $S$ be a closed analytic subset of $M$. The class $\mathcal{G}_{M,S}$ consists of all negative psh functions $u$ on $M$ satisfying the following conditions:
	
	for every $x\in S$, there exist local generators $w_1,\cdots,w_p$ for the ideal sheaf $\calI_S$ near $x$ and a constant $C$ depending on $u$ such that $u\leqslant\log|w|+C$ near $x$.
	
	Then the \textbf{pluricomplex Green function} $G_{M,S}$ with singularities along $S$ is the upper envelope of all functions in $\mathcal{G}_{M,S}$, i.e. $G_{M,S} = (\sup\{u:u\in\mathcal{G}_{M,S}\})^*$.
	
	Rashkovskii-Sigurdsson \cite{RashkovskiiSigurdsson2005} proved that $G_{M,S} \in \mathcal{G}_{M,S}$ or $G_{M,S}\equiv-\infty$, and such pluricomplex Green functions satisfy a product property.
\end{definition}

\begin{definition} \label{Def:LogTypeSing}
	Let $M$ be a complex manifold and $S$ be a closed analytic subset of $M$. An upper semi-continuous function $\psi:M\to[-\infty,+\infty)$ is said to has \textbf{log-type singularities} along $S$ if $\psi$ satisfies the following conditions:\par
	(1) $S=\{z\in M:\psi(z)=-\infty\}$; \par
	(2) there exists a dense subset $S_0\subset S$, for each $x\in S_0$, there is a coordinate chart $(U,z=(z',z''))$ around $x$ so that $S\cap U=\{z'=0\}$ and $\psi(z)-\log|z'|$ is bounded from above on $U\backslash S$.
\end{definition}

Clearly, if $u\in\mathcal{G}_{M,S}$, then $u$ has log-type singularities along $S$. However, for a function $\psi$ having log-type singularities along $S$, we do not explicitly require any local behaviour of $\psi$ near a singular point $x\in S_{\sing}$.

\begin{theorem} \label{Thm:OptExtWeakForm}
	Let $(\Omega,\omega)$ be a weakly pseudoconvex K\"ahler manifold, $S$ be a closed analytic subset of $\Omega$ and $(E,h)$ be a Hermitian holomorphic vector bundle over $\Omega$. Let $\psi<0$ be a psh function on $\Omega$ having log-type singularities along $S$. Assume that there exists a quasi-psh function $\varphi$ and a continuous real (1,1)-form $\rho$ on $\Omega$ such that
	\begin{equation*}
		\ddbar\varphi\geqslant\rho \quad\text{and}\quad \sqrt{-1}\Theta(E,h)+\rho\otimes\Id_E \geqslant_\Nak 0.
	\end{equation*}
	Let $p$ be the maximal codimension of the irreducible components of $S$. Let $m\in\NN$, $a\in\RR_+$ and $\Omega_a:=\{z\in\Omega:\psi(z)<-a\}$. Then for any holomorphic section $f\in\Gamma(\Omega_a,K_\Omega\otimes E)$ satisfying $\int_{\Omega_a} |f|_{\omega,h}^2 e^{-\varphi}dV_\omega < +\infty$, there exists a holomorphic section $F\in\Gamma(\Omega,K_\Omega\otimes E)$ such that $F$ coincides with $f$ up to order $m$ on $S$ and
	\begin{equation*}
		\int_\Omega |F|_{\omega,h}^2 e^{-\varphi}dV_\omega \leqslant e^{2(p+m)a} \int_{\Omega_a} |f|_{\omega,h}^2 e^{-\varphi}dV_\omega.
	\end{equation*}
\end{theorem}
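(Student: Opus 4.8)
The plan is to imitate Blocki's tensor power trick \cite{Blocki2014}, with the coarse extension theorem (Theorem \ref{Thm:CoarseExtThm}) supplying $L^2$ bounds on the powers $\Omega^N$ and the product property for minimal extensions (Theorem \ref{Thm:ProdProp}) turning those into a sharp bound on $\Omega$ itself. Since only the order-$m$ jet $\sigma$ of $f$ along $S$ enters the conclusion, I would work throughout with minimal $L^2$ extensions of jets: for a complex manifold $X$ containing $S$ as a closed subset and carrying $K_\Omega\otimes E$ with metric $(\det\omega)^{-1}\otimes h$ and weight $e^{-\varphi}dV_\omega$, let $\mu_X(\tau)$ be the minimal $L^2$ norm of a holomorphic section on $X$ whose jet along $S$ (of the relevant order) is $\tau$. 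The target is $\mu_\Omega(\sigma)^2\leq e^{2(p+m)a}\mu_{\Omega_a}(\sigma)^2$; granting it, the minimal realization $F$ of $\sigma$ on $\Omega$ coincides with $f$ up to order $m$ on $S$ by construction, and $\mu_{\Omega_a}(\sigma)^2\leq\int_{\Omega_a}|f|_{\omega,h}^2e^{-\varphi}dV_\omega$ because $f$ realizes $\sigma$ on $\Omega_a$, which finishes the proof.

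I would first record an ``encoding lemma'': if $\Psi'$ is a psh function with log-type singularities along a closed analytic set $T$ whose irreducible components have codimension at most $d$, and $\varphi'$ is quasi-psh, then a holomorphic section differing from a given one by a section of $\calO(K_\Omega\otimes E)\otimes\calI(\varphi'+c\Psi')$ coincides with it up to order $k$ on $T$ whenever $c\geq 2(d+k)$ — this is a direct computation using the model behaviour $\Psi'=\log|z'|+O(1)$ near generic points of $T$ (where it gives vanishing to order $k$ along a dense subset of $T$, hence, by holomorphy, along all of $T$) together with $\varphi'$ being locally bounded above; note also $\{c\Psi'<-ca\}=\{\Psi'<-a\}$ for $c>0$. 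Using this, two applications of Theorem \ref{Thm:CoarseExtThm} set the stage: (a) applying it on $\Omega$ to $f$, with $2(p+m)\psi$ at level $2(p+m)a$ — whose sublevel set is $\Omega_a$ — gives an $L^2$ section on $\Omega$ realizing $\sigma$, so $\mu_\Omega(\sigma)<\infty$ ($\mu_{\Omega_a}(\sigma)<\infty$ being trivial); let $G$ denote the minimal realization of $\sigma$ on $\Omega_a$, so $\|G\|=\mu_{\Omega_a}(\sigma)\leq\|f\|$; (b) for each $N\geq1$, applying it on $\Omega$ to $G$, with $2(p+mN)\psi$ at level $2(p+mN)a$, produces $\widehat G_N\in A^2(\Omega,\dots)$ coinciding with $G$ (hence with $f$) up to order $mN$ on $S$. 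In particular the order-$m$ jet of $\widehat G_N$ along $S$ is again $\sigma$, so its minimal realization on $\Omega$ has norm $\mu_\Omega(\sigma)$, \emph{independently of $N$} — this is the device that makes the product property usable without any vanishing hypothesis on $f$.

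For the tensor step, fix $N$ and form $\Omega^N$ with the external tensor data: $\boxtimes^N(K_\Omega\otimes E)=K_{\Omega^N}\otimes(\boxtimes^N E)$, metric $\boxtimes^N((\det\omega)^{-1}\otimes h)$, weight $\varphi^{(N)}=\sum_i p_i^*\varphi$, and a product K\"ahler metric; then $\Omega^N$ is again weakly pseudoconvex K\"ahler, $\ddbar\varphi^{(N)}\geq\sum_i p_i^*\rho$, and, the curvature of an external tensor product having no mixed terms, $\sqrt{-1}\Theta(\boxtimes^N E)+(\textstyle\sum_i p_i^*\rho)\otimes\Id=\sum_i p_i^*\big(\sqrt{-1}\Theta(E,h)+\rho\otimes\Id_E\big)\geq_\Nak 0$. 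Put $\Psi=\max_i p_i^*\psi$: a negative psh function with $\{\Psi<-a\}=(\Omega_a)^N$, still of log type along $S^N$, whose components have codimension at most $pN$. Apply Theorem \ref{Thm:CoarseExtThm} on $\Omega^N$ with $2(p+m)N\,\Psi$ at level $2(p+m)Na$ to $G^{\otimes N}$ (in the relevant Bergman space, of norm $\mu_{\Omega_a}(\sigma)^N$ by Fubini): one obtains $\Phi_N\in A^2(\Omega^N,\dots)$ with
\[
\|\Phi_N\|^2\ \leq\ \big(e^{2(p+m)Na}+1\big)\,\mu_{\Omega_a}(\sigma)^{2N},
\]
and, since $2(p+m)N\geq 2(pN+mN)$, the encoding lemma makes $\Phi_N$ coincide with $G^{\otimes N}$, hence with $\widehat G_N^{\otimes N}$, up to order $mN$ on $S^N$. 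Thus $\Phi_N$ is a competitor for the minimal realization on $\Omega^N$ of the order-$mN$ jet of $\widehat G_N^{\otimes N}$ along $S^N$, which by Theorem \ref{Thm:ProdProp} (the first, unconditional, inequality, applied iteratively with all factors equal to $\Omega$, $f^{(i)}=\widehat G_N$, $m_i=m$) has norm at least $\mu_\Omega(\sigma)^N$. Hence $\mu_\Omega(\sigma)^{2N}\leq\|\Phi_N\|^2\leq\big(e^{2(p+m)Na}+1\big)\mu_{\Omega_a}(\sigma)^{2N}$; taking $2N$-th roots and letting $N\to\infty$, the factor $\big(e^{2(p+m)Na}+1\big)^{1/N}\to e^{2(p+m)a}$, which is the asserted inequality.

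The main obstacle — and the only really computational point — is the encoding lemma, where one must check that $\psi$, and then $\Psi=\max_i p_i^*\psi$ on the product, has the model singularity $\log|z'|+O(1)$ along a dense subset of $S$ (resp.\ $S^N$), bound the codimension of the components of $S^N$ by $pN$ (this is the single place where ``$p=$ maximal codimension'' is used, and the bound is tight), and verify that $\calI(\varphi'+c\Psi')\subset\frakm^{k+1}$ holds precisely for $c\geq 2(d+k)$, so that the multiplier ideal appearing in Theorem \ref{Thm:CoarseExtThm} enforces jet coincidence of the correct order. The one conceptual point is the use of the auxiliary extensions $\widehat G_N$ of $G$'s \emph{higher}-order ($mN$-th) jet: without them the order-$mN$ jet of $G^{\otimes N}$ along $S^N$ would not be of the form needed to invoke Theorem \ref{Thm:ProdProp}, whereas with them the order-$m$ truncation is still $\sigma$, keeping the lower bound $\mu_\Omega(\sigma)^N$ uniform in $N$. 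Everything else is routine: products of weakly pseudoconvex K\"ahler manifolds are of the same type, external tensor products preserve Nakano semi-positivity, the product $L^2$ norms split by Fubini, and $\Omega_a$ never needs to be pseudoconvex since Theorem \ref{Thm:CoarseExtThm} extends \emph{from} a sublevel set \emph{to} the ambient manifold.
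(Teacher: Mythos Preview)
Your proposal is correct and follows essentially the same approach as the paper: both use Theorem \ref{Thm:CoarseExtThm} on a product manifold together with the product property (Theorem \ref{Thm:ProdProp}) for minimal $L^2$ extensions, and both need an auxiliary higher-order extension ($\widehat{G}_N$ in your version, $f'$ coinciding with $f$ up to order $2m$ in the paper) to make Theorem \ref{Thm:ProdProp} applicable. The only organizational difference is that the paper iterates a doubling step $\Omega\to\Omega\times\Omega$ (showing $\text{Ext}(C)\Rightarrow\text{Ext}(\sqrt{C})$, starting from $\text{Ext}(2)$, and passing to the limit via Montel), whereas you go directly to $\Omega^N$ and let $N\to\infty$; these are equivalent formulations of the same tensor power trick.
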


Given a holomorphic section $u\in\Gamma(\Omega, \calO(K_\Omega\otimes E)\otimes \calI(2(p+m)\psi))$, it is easy to show that $u$ vanishes up to order $m$ on $S_0$, where $S_0$ is the same as Definition \ref{Def:LogTypeSing}. By the density of $S_0\subset S$, $u$ vanishes up to order $m$ along $S$. Therefore, Theorem \ref{Thm:OptExtWeakForm} can be regarded as a variant of Theorem \ref{MainThm:Opt}.

\begin{proof}
	Given a constant $C\geqslant 1$, we denote by $\text{Ext}(C)$ the statement that
	\begin{center}
		\itshape ``Theorem \ref{Thm:OptExtWeakForm} is true if we replace the constant $e^{2(p+m)a}$ by $C\cdot e^{2(p+m)a}$''.
	\end{center}
	Since $\calI(2(p+m)\psi)_x\subset\frakm_x^{m+1}$ for any $x\in S$, by Theorem \ref{Thm:CoarseExtThm}, there exists an $F\in\Gamma(\Omega,K_\Omega\otimes E)$ such that $F$ coincides with $f$ up to order $m$ on $S$ and
	\begin{equation*}
		\int_\Omega |F|_{\omega,h}^2 e^{-\varphi}dV_\omega \leqslant (e^{2(p+m)a} + 1) \int_{\Omega_a} |f|_{\omega,h}^2 e^{-\varphi}dV_\omega.
	\end{equation*}
	Therefore, $\text{Ext}(2)$ is true.
	
	In the following, we assume that $\text{Ext}(C)$ is true for some constant $C>1$.
	
	Let $\tilde{\Omega}=\Omega\times\Omega$ and let $p_1,p_2:\tilde{\Omega}\rightarrow\Omega$ be the natural projections. Then $\tilde{\Omega}$ is a weakly pseudoconvex K\"ahler manifold equipped with a K\"ahler metric $\tilde{\omega}:=p_1^*\omega+p_2^*\omega$ and $\tilde{S}:=S\times S$ is a closed analytic subset in $\tilde{\Omega}$ whose maximal codimension is $2p$.
	Let $\tilde{E}:=p_1^*E\otimes p_2^*E$, $\tilde{h}:=p_1^*h\otimes p_2^*h$, $\tilde{\varphi}:=p_1^*\varphi+p_2^*\varphi$ and $\tilde{\rho}=p_1^*\rho+p_2^*\rho$, then
	\begin{equation*}
		\ddbar\tilde{\varphi}\geqslant\tilde{\rho} \quad\text{and}\quad \sqrt{-1}\Theta(\tilde{E},\tilde{h}) + \tilde{\rho}\otimes\Id_{\tilde{E}} \geqslant_\Nak 0.
	\end{equation*}
	We define $\tilde{\psi}:=\max\{p_1^*\psi,p_2^*\psi\},$ then $\tilde{\psi}<0$ is a psh function on $\tilde{\Omega}$ having log-type singularities on $\tilde{S}$. Indeed, let $S_0\subset S$ be the same as Definition \ref{Def:LogTypeSing}, then $S_0\times S_0$ is a dense subset of $\tilde{S}$, and $\tilde{\psi}$ has the desired behavior near each point of $S_0\times S_0$. Clearly,
	\begin{equation*}
		\tilde{\Omega}_a:=\{(z,w)\in\tilde{\Omega}:\tilde{\psi}(z,w)<-a\}=\Omega_a\times\Omega_a.
	\end{equation*}
	
	By Theorem \ref{Thm:CoarseExtThm}, there exists a holomorphic section $g\in\Bergman$ such that $g$ coincides with $f$ up to order $2m$ on $S$. Let $F_0$ be the unique element with minimal norm in $\Bergman$ that coincides with $g$ up to order $m$ on $S$. Let $\tilde{F}_0$ be the unique element with minimal norm in $\BergmanN$ that coincides with $g\otimes g$ up to order $2m$ on $\tilde{S}$.
	Notice that, the Bergman spaces $\Bergman$ and $\BergmanN$ fit in the setting of Section \ref{Chap:Prod}. According to Theorem \ref{Thm:ProdProp},
	\begin{equation*}
		\|\tilde{F}_0\|_{\BergmanN} \geqslant \|F_0\|_{\Bergman}^2.
	\end{equation*}
	
	Since $\text{Ext}(C)$ is true, there exists a holomorphic section $\tilde{F}\in\Gamma(\tilde{\Omega},K_{\tilde{\Omega}}\otimes\tilde{E})$ such that $\tilde{F}$ coincides with $\tilde{f} := f\otimes f \in\Gamma(\tilde{\Omega}_a, K_{\tilde\Omega}\otimes\tilde{E})$ up to order $2m$ on $\tilde{S}$ and
	\begin{align*}
		\|\tilde{F}\|_{\BergmanN}^2 & \leqslant Ce^{2(2p+2m)a} \int_{\tilde{\Omega}_a} |\tilde{f}|_{\tilde{\omega},\tilde{h}}^2 e^{-\tilde{\varphi}}dV_{\tilde{\omega}} \\
		& = Ce^{4(p+m)a} \big( \int_{\Omega_a} |f|_{\omega,h}^2 e^{-\varphi}dV_\omega \big)^2.
	\end{align*}
	Clearly, $\tilde{F}$ coincides with $g\otimes g$ up to order $2m$ on $\tilde{S}$.
	Since $\tilde{F}_0\in\BergmanN$ is the minimal $L^2$ extension,
	\begin{equation*}
		\|F_0\|^2 \leqslant \|\tilde{F}_0\| \leqslant \|\tilde{F}\| \leqslant \sqrt{C}e^{2(p+m)a} \int_{\Omega_a} |f|_{\omega,h}^2 e^{-\varphi}dV_\omega.
	\end{equation*}
	
	In summary, if $\text{Ext}(C)$ is true, then $\text{Ext}(\sqrt{C})$ is also true. Since $\sqrt[2^N]{2}\to1$ as $N\rightarrow+\infty$, using Montel's theorem, it is easy to conclude that $\text{Ext}(1)$ is true.
\end{proof}

The following corollary generalizes Blocki's Theorem \ref{Thm:BlockiExt}.

\begin{corollary} \label{Cor:ExtPluriGreen2}
	Let $\Omega\subset\CC^n$ be a bounded pseudoconvex domain, $S\subset\Omega$ be a closed submanifold of codimension $p$ and $\varphi$ be a psh function on $\Omega$. Let $G_{\Omega,S}$ be the pluricomplex Green function of $\Omega$ with singularities along $S$. Assume that $G_{\Omega,S}\not\equiv-\infty$. Let $m\in\NN$ and $a\in\RR_+$ be given, let $U=\{G_{\Omega,S}<-a\}$. For any $f\in A^2(U;e^{-\varphi})$, there exists a holomorphic function $F\in A^2(\Omega;e^{-\varphi})$ such that $F$ coincides with $f$ up to order $m$ along $S$ and
	\begin{equation*}
		\int_\Omega |F|^2e^{-\varphi} d\lambda \leqslant e^{2(p+m)a} \int_U |f|^2e^{-\varphi} d\lambda.
	\end{equation*}
\end{corollary}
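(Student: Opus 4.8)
The plan is to deduce Corollary~\ref{Cor:ExtPluriGreen2} as a direct application of Theorem~\ref{Thm:OptExtWeakForm} with the weight $\psi:=G_{\Omega,S}$. First I would record the routine identifications. A bounded pseudoconvex domain $\Omega\subset\CC^n$ is a weakly pseudoconvex K\"ahler manifold: it carries the flat metric $\omega=\ddbar|z|^2$ and a smooth psh exhaustion. Take $E$ to be the trivial line bundle with the trivial metric $h$, so $\sqrt{-1}\Theta(E,h)=0$; set $\rho:=0$, so that $\ddbar\varphi\geq0=\rho$ (as $\varphi$ is psh) and $\sqrt{-1}\Theta(E,h)+\rho\otimes\Id_E=0\geq_\Nak0$. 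Identifying a holomorphic function $g$ on an open subset of $\Omega$ with the section $g\,dz_1\wedge\cdots\wedge dz_n$ of $K_\Omega\otimes E$, the pointwise quantity $|g\,dz_1\wedge\cdots\wedge dz_n|_{\omega,h}^2\,dV_\omega$ equals $c_n|g|^2\,d\lambda$ for a dimensional constant $c_n>0$ that will cancel on the two sides of the estimate; and under this dictionary ``coincides with up to order $m$ along $S$'' (Definition~\ref{Def:mEqual}) corresponds to the same statement for the associated sections.

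The substantive step is to verify that $\psi=G_{\Omega,S}$ meets the hypotheses of Theorem~\ref{Thm:OptExtWeakForm}. Since $G_{\Omega,S}\not\equiv-\infty$, Rashkovskii-Sigurdsson~\cite{RashkovskiiSigurdsson2005} give $G_{\Omega,S}\in\mathcal{G}_{\Omega,S}$; in particular $G_{\Omega,S}$ is a negative psh function on $\Omega$ with $\{G_{\Omega,S}=-\infty\}=S$. It remains to check that $G_{\Omega,S}$ has log-type singularities along $S$ in the sense of Definition~\ref{Def:LogTypeSing}. Because $S$ is a closed submanifold of codimension $p$, each point $x\in S$ admits holomorphic coordinates $(z',z'')$ on a neighborhood $U$ with $z'=(z_1,\dots,z_p)$, such that $S\cap U=\{z'=0\}$ and $z_1,\dots,z_p$ generate $\calI_S$ near $x$. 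By the defining property of $\mathcal{G}_{\Omega,S}$ in Definition~\ref{Def:PluriGreen}, there are some generators $w_1,\dots,w_p$ of $\calI_S$ near $x$ and a constant $C$ with $G_{\Omega,S}\leq\log|w|+C$ near $x$; since $S$ is smooth, any two systems of generators of $\calI_S$ near $x$ have moduli bounded by fixed constant multiples of one another, so $G_{\Omega,S}-\log|z'|$ is bounded from above near $x$. Hence the dense subset $S_0\subset S$ in Definition~\ref{Def:LogTypeSing} may be taken to be $S$ itself. Finally, $S$ is of pure codimension $p$, so the maximal codimension of its irreducible components equals $p$. (Connected components of $\Omega$ disjoint from $S$ play no role: there one takes $F\equiv0$.)

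With these checks done, one simply invokes Theorem~\ref{Thm:OptExtWeakForm}: here $\Omega_a=\{G_{\Omega,S}<-a\}=U$, and a given $f\in A^2(U;e^{-\varphi})$ corresponds to $f\,dz_1\wedge\cdots\wedge dz_n\in\Gamma(U,K_\Omega\otimes E)$ with finite weighted $L^2$ norm, so the theorem produces a holomorphic section of $K_\Omega\otimes E$ over $\Omega$, i.e. a holomorphic function $F$ on $\Omega$, coinciding with $f$ up to order $m$ along $S$ and satisfying
\begin{equation*}
	\int_\Omega |F|^2e^{-\varphi}\,d\lambda \leq e^{2(p+m)a}\int_U |f|^2e^{-\varphi}\,d\lambda,
\end{equation*}
after cancelling $c_n$; in particular $F\in A^2(\Omega;e^{-\varphi})$, as asserted. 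I do not expect a serious obstacle here, since all of the analysis is carried out by Theorem~\ref{Thm:OptExtWeakForm}; the only place needing a moment's care is converting the bound of Definition~\ref{Def:PluriGreen}, which controls $G_{\Omega,S}$ in terms of \emph{some} generators of $\calI_S$, into the normalized form $G_{\Omega,S}-\log|z'|=O(1)$ required by Definition~\ref{Def:LogTypeSing} --- and this uses nothing beyond the smoothness of $S$.
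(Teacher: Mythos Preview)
Your proposal is correct and matches the paper's intended derivation: the corollary is stated without proof right after Theorem~\ref{Thm:OptExtWeakForm}, and the paper has already remarked just after Definition~\ref{Def:LogTypeSing} that any $u\in\mathcal{G}_{M,S}$ has log-type singularities along $S$, so the application with $\psi=G_{\Omega,S}$ is exactly what is meant. One very small point: you assert $\{G_{\Omega,S}=-\infty\}=S$ without argument; the inclusion $\supset$ is immediate from $G_{\Omega,S}\in\mathcal{G}_{\Omega,S}$, and for $\subset$ you may note that on a bounded pseudoconvex (hence Stein) $\Omega$ the ideal sheaf $\calI_S$ has global generators $f_1,\dots,f_N$, so $\tfrac12\log\sum|f_i|^2-C\in\mathcal{G}_{\Omega,S}$ is finite off $S$ (and the parenthetical about components disjoint from $S$ is moot since $\Omega$ is a domain).
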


\subsection{Application: The Case of Strictly Positive Curvature} \hfill

Let $\Omega$ be a weakly pseudoconvex K\"ahler manifold of dimension $n$. Let $(E,h)$ be a Hermitian holomorphic vector bundle on $\Omega$, whose curvature is Nakano semi-positive. Suppose there is a psh function $\psi:\Omega\rightarrow[-\infty,0)$ having a logarithmic pole at $w\in\Omega$: let $(U,z)$ be a coordinate chart so that $z(w)=0$, then $\psi-\log|z|$ is bounded near $w$. Let $dz=dz_1\wedge\cdots\wedge dz_n$ and $ c:=\varliminf_{z\rightarrow0}(\psi(z)-\log|z|)$.

By the optimal $L^2$ extension theorem (see Guan-Zhou \cite{GuanZhou2015} and Zhou-Zhu \cite{ZhouZhu2018}): {\itshape for any $\xi\in E_w$, there exists a holomorphic section $F\in\Gamma(\Omega,K_\Omega\otimes E)$ such that}
\begin{equation}\label{Eq:OptEst}
	F(w)=dz\otimes\xi \quad\text{and}\quad \int_\Omega(\sqrt{-1})^{n^2}F\wedge_h\overline{F} \leqslant \frac{(2\pi)^n}{n!}e^{-2nc}|\xi|_h^2.
\end{equation}
The uniform constant $\frac{(2\pi)^n}{n!}e^{-2nc}$ is optimal in the sense that there exist examples satisfying the above conditions, in which the constant can not be replaced by any smaller one.
Notice that, $(E,h)$ is only assumed to be Nakano \textit{semi-positive}. Clearly, with some stronger curvature conditions, we may obtain a sharper $L^2$ estimate.

Using Theorem \ref{Thm:OptExtWeakForm} (or Theorem \ref{MainThm:Opt}), we can prove the following result.

\begin{theorem} \label{Thm:Sharper}
	Let $\Omega\ni w$, $(E,h)$ and $\psi<0$ be the same as above. Moreover, we assume that
	(1) $(E,h)$ is Griffiths positive at $w$;
	(2) there exists a coordinate chart $(U,z)$ so that $z(w)=0$ and $\psi(z)=c+\log|z|+o(|z|^2)$.
	
	Then there exists a constant $\tau\in(0,1)$ depends on $h$ and $\psi$, for any $\xi\in E_w$, we can find a holomorphic section $F\in\Gamma(\Omega,K_\Omega\otimes E)$ so that $F(w)=dz\otimes\xi$ and
	\begin{equation*}
		\int_\Omega(\sqrt{-1})^{n^2}F\wedge_h\overline{F} \leqslant (1-\tau)\frac{(2\pi)^n}{n!}e^{-2nc}|\xi|_h^2.
	\end{equation*}
\end{theorem}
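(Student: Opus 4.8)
The plan is to derive the estimate from the optimal $L^2$ extension theorem of openness type (Theorem~\ref{Thm:OptExtThm}), applied with $\varphi\equiv 0$ and with the weight function of that theorem taken to be $2n\psi$; the gain over \eqref{Eq:OptEst} will come from the second-order behaviour of $h$ at $w$ (Griffiths positivity) and of $\psi$ near $w$ (condition (2)), read off on a small sublevel set $\Omega_a:=\{2n\psi<-a\}$ for a suitable fixed large $a$.

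Concretely, I would first fix the chart $(U,z)$ of condition (2) and a holomorphic frame $(e_\alpha)$ of $E|_U$ normalized at $w$, so that $h_{\alpha\bar\beta}(z)=\delta_{\alpha\beta}-R_{i\bar j\alpha\bar\beta}(w)z_i\bar z_j+O(|z|^3)$, with $(R_{i\bar j\alpha\bar\beta}(w))$ the Chern curvature at $w$. For $\xi=\sum_\alpha\xi_\alpha e_\alpha(w)$ put $s:=\sum_\alpha\xi_\alpha e_\alpha$ and consider the ``constant'' local extension $f:=dz_1\wedge\cdots\wedge dz_n\otimes s$. Griffiths positivity of $(E,h)$ at $w$ (condition (1)) gives a constant $\delta>0$, depending only on $h$, with $\sum R_{i\bar j\alpha\bar\beta}(w)z_i\bar z_j\xi_\alpha\bar\xi_\beta\geq\delta|z|^2|\xi|_h^2$ (uniformly in $\xi$ by compactness), hence on a small ball $B(w;r_0)\Subset U$
\begin{equation*}
	(\sqrt{-1})^{n^2}f\wedge_h\overline{f}\ =\ 2^n\Big(|\xi|_h^2-\sum R_{i\bar j\alpha\bar\beta}(w)z_i\bar z_j\xi_\alpha\bar\xi_\beta+O(|z|^3|\xi|_h^2)\Big)\,d\lambda\ \leq\ 2^n|\xi|_h^2\big(1-\tfrac{\delta}{2}|z|^2\big)\,d\lambda .
\end{equation*}

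Next I would record the shape of $\Omega_a$ near $w$. By condition (2), $2n\psi(z)=2nc+2n\log|z|+o(|z|^2)$, so if $\Omega_a^{(w)}$ denotes the connected component of $w$ in $\Omega_a$, then for $a$ large $\Omega_a^{(w)}\subset\{|z|<\rho_a(1+o(\rho_a^2))\}$ with $\rho_a:=e^{-a/(2n)}e^{-c}$; moreover $\Omega_a^{(w)}\Subset B(w;r_0)$ once $a$ is large (by (2) the point $w$ is isolated in $\psi^{-1}(-\infty)$, and a connectedness argument then confines $\Omega_a^{(w)}$ to any prescribed neighbourhood of $w$). Taking $f$ as above on $\Omega_a^{(w)}$ and $f\equiv 0$ on the other components of $\Omega_a$ defines $f\in\Gamma(\Omega_a,K_\Omega\otimes E)$, and since $1-\tfrac{\delta}{2}|z|^2>0$ on $\Omega_a^{(w)}$,
\begin{equation*}
	\int_{\Omega_a}(\sqrt{-1})^{n^2}f\wedge_h\overline{f}\ \leq\ 2^n|\xi|_h^2\int_{\{|z|<\rho_a(1+o(\rho_a^2))\}}\big(1-\tfrac{\delta}{2}|z|^2\big)\,d\lambda\ =\ 2^n|\xi|_h^2\,\frac{\pi^n}{n!}\rho_a^{2n}\Big(1-\frac{\delta n}{2(n+1)}\rho_a^2+o(\rho_a^2)\Big),
\end{equation*}
using $\int_{\{|z|<R\}}|z|^2\,d\lambda=\tfrac{n}{n+1}R^2\cdot\tfrac{\pi^n}{n!}R^{2n}$.

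Finally, Theorem~\ref{Thm:OptExtThm} with weight $2n\psi$ (valid: $2n\psi<0$ is psh, $\ddbar(2n\psi)\geq 0$, and $\sqrt{-1}\Theta(E,h)\geq_{\Nak}0$ by hypothesis, so one may take $\gamma=\rho=0$, $\varphi=0$) produces $F\in\Gamma(\Omega,K_\Omega\otimes E)$ with $F|_{\Omega_a}-f\in\Gamma(\Omega_a,\calO(K_\Omega\otimes E)\otimes\calI(2n\psi))$ and $\int_\Omega(\sqrt{-1})^{n^2}F\wedge_h\overline{F}\leq e^a\int_{\Omega_a}(\sqrt{-1})^{n^2}f\wedge_h\overline{f}$. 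Because the pole of $2n\psi$ at $w$ is a clean logarithmic one (condition (2) again), $\calI(2n\psi)_w=\frakm_w$, so the inclusion forces $F(w)=f(w)=dz_1\wedge\cdots\wedge dz_n\otimes\xi$; and since $e^a\rho_a^{2n}=e^{-2nc}$, the two displays above give
\begin{equation*}
	\int_\Omega(\sqrt{-1})^{n^2}F\wedge_h\overline{F}\ \leq\ \frac{(2\pi)^n}{n!}e^{-2nc}|\xi|_h^2\Big(1-\frac{\delta n}{2(n+1)}\rho_a^2+o(\rho_a^2)\Big).
\end{equation*}
Fixing $a$ large enough that the $o(\rho_a^2)$-error is at most half of $\tfrac{\delta n}{2(n+1)}\rho_a^2$ (and that $\Omega_a^{(w)}\Subset B(w;r_0)$) yields the theorem with $\tau:=\tfrac{\delta n}{4(n+1)}\rho_a^2\in(0,1)$, depending only on $h$ and $\psi$. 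The main obstacle is the second-order bookkeeping in the middle step: the curvature gain is of order $\rho_a^2$, so the deviation of the sublevel set $\Omega_a^{(w)}$ from the round ball $B(0;\rho_a)$ must be negligible to that order — which is precisely why condition (2) is stated with $o(|z|^2)$ rather than $O(|z|^2)$ — and the constants have to be tracked carefully to be sure the resulting $\tau$ is strictly positive; the local-to-global passage is routine given Theorem~\ref{Thm:OptExtThm}.
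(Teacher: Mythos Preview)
Your proposal is correct and follows essentially the same route as the paper: choose a normalized frame, use Griffiths positivity to get a second-order gain in $|f|_h^2$ on a small ball, trap the connected component of the sublevel set in a ball whose radius deviates from $\rho_a$ only to order $o(\rho_a^2)$ (this is exactly where condition~(2) is used), and then apply Theorem~\ref{Thm:OptExtThm} with weight $2n\psi$. The only noteworthy difference is in how the local integral estimate is obtained: the paper observes that $-\eps|\xi|_h^2|z|^2-|f|_h^2$ is plurisubharmonic on a small ball and applies the mean value inequality to get $\int_{\BB^n(0;r)}|f|_h^2\,d\lambda\leq\tfrac{\pi^n}{n!}r^{2n}(1-\eps\tfrac{n}{n+1}r^2)|\xi|_h^2$ directly, thereby absorbing the $O(|z|^3)$ remainder without tracking it; you instead carry the Taylor remainder explicitly and shrink $r_0$ to dominate it. Both yield the same final bound, and the paper's explicit constant is $\tau=1-e^{2n\kappa r_1^2}(1-2n\kappa r_1^2)$ with $\kappa=\tfrac{\eps}{2(n+1)}$, which matches the order of magnitude of your $\tau=\tfrac{\delta n}{4(n+1)}\rho_a^2$.
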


\begin{proof}
	Shrinking $U$ if necessary, we may assume that $E|_U$ is trivial. Let $(e_1,\ldots,e_r)$ be a holomorphic frame of $E|_U$ so that $h_{\alpha\bar{\beta}}(w)=\delta_{\alpha\beta}$ and $dh_{\alpha\bar{\beta}}(w)=0$, in which $h_{\alpha\bar{\beta}} := \inner{e_\alpha,e_\beta}_h$. With respect to the local trivialization $(U,z,e)$, the curvature components of $(E,h)$ are
	\begin{equation*}
		R_{i\bar{j}\alpha\bar{\beta}} = -\frac{\pd^2 h_{\alpha\bar{\beta}}}{\pd z_i\pd\bar{z}_j} + h^{\alpha'\overline{\beta'}} \frac{\pd h_{\alpha\overline{\beta'}}}{\pd z_i} \frac{\pd h_{\alpha'\bar{\beta}}}{\pd\bar{z}_j} \overset{(\text{at }w)}{=} -\frac{\pd^2 h_{\alpha\bar{\beta}}}{\pd z_i\pd\bar{z}_j}(0).
	\end{equation*}
	Since $(E,h)$ is Griffiths positive at $w=0$, there exists an $\eps>0$ such that
	\begin{equation*}
		\sum\nolimits_{i,j,\alpha,\beta} -\frac{\pd^2h_{\alpha\bar{\beta}}}{\pd z_i\pd\bar{z}_j}(0) a_i\overline{a_j}b_\alpha\overline{b_\beta} \geqslant 2\eps|a|^2|b|^2, \quad a\in\CC^n, b\in\CC^r.
	\end{equation*}
	By the smoothness of $h$, we find a neighborhood $\mathbb{B}^n(0;r_0)\subset U$ of $w=0$ so that
	\begin{equation*}
		\sum\nolimits_{i,j,\alpha,\beta} -\frac{\pd^2h_{\alpha\bar{\beta}}}{\pd z_i\pd\bar{z}_j}(z) a_i\overline{a_j}b_\alpha\overline{b_\beta} \geqslant \eps|a|^2|b|^2, \quad z\in\mathbb{B}^n(0;r_0), a\in\CC^n, b\in\CC^r.
	\end{equation*}
	
	Given $\xi=\sum_\alpha\xi_\alpha\cdot e_\alpha(w)\in E_w$, we define a holomorphic section of $E$ on $U$ by $f(z)=\sum_\alpha \xi_\alpha\cdot e_\alpha(z)$. It is clear that $f(w)=\xi$ and
	\begin{equation*}
		\frac{\pd^2}{\pd z_i\pd\bar{z}_j} \left(-\eps|\xi|_h^2|z|^2-|f|_h^2\right) = \sum\nolimits_{\alpha,\beta} \left(-\eps\delta_{ij}\delta_{\alpha\beta} -\tfrac{\pd^2 h_{\alpha\bar{\beta}}}{\pd z_i\pd\bar{z}_j}\right) \xi_\alpha\overline{\xi_\beta}.
	\end{equation*}
	Since the complex Hessian of $(-\eps|\xi|_h^2|z|^2-|f|_h^2)$ is semi-positive on $\mathbb{B}^n(0;r_0)$, we know that $(-\eps|\xi|_h^2|z|^2-|f|_h^2)$ is a psh function on $\mathbb{B}^n(0;r_0)$. Therefore, by the mean value inequality, for any $r\in(0,r_0)$,
	\begin{equation*}
		-|\xi|_h^2 \leqslant \frac{1}{\textup{Vol}(\mathbb{B}^n(0;r))} \int_{\mathbb{B}^n(0;r)}\left(-\eps|\xi|_h^2|z|^2-|f|_h^2\right)d\lambda.
	\end{equation*}
	By direct computations, the above inequality can be reformulated as
	\begin{equation} \label{Eq:LocalExt}
		\int_{\mathbb{B}^n(0;r)}|f|_h^2d\lambda \leqslant \frac{\pi^n}{n!}r^{2n}(1-\eps\tfrac{n}{n+1}r^2)|\xi|_h^2.
	\end{equation}
	
	Since $\psi(z)=c+\log|z|+o(|z|^2)$, there exists a small ball $\mathbb{B}^n(0;r_1)\Subset\mathbb{B}^n(0;r_0)$ such that
	\begin{equation*}
		-\kappa|z|^2 < \psi(z)-\log|z|-c < \kappa|z|^2, \quad z\in\overline{\mathbb{B}^n(0;r_1)},
	\end{equation*}
	in which $\kappa:=\frac{\eps}{2(n+1)}>0$. In particular,
	\begin{equation*}
		\psi(z) > \log r_1+c-\kappa r_1^2, \quad z\in\pd\mathbb{B}^n(0;r_1).
	\end{equation*}
	We define $a:=-\log r_1-c+\kappa r_1^2$ and $D:=\{z\in\Omega:\psi(z)<-a\}$. Let $D^*$ be the connected component of $D$ that contains $w$, then $D^* \subset \mathbb{B}^n(0;r_1)$.
	
	Let $\tilde{f}=dz\otimes f\in\Gamma(D^*,K_\Omega\otimes E)$. By Theorem \ref{Thm:OptExtWeakForm} (or Theorem \ref{MainThm:Opt}), there is a holomorphic section $F\in\Gamma(\Omega,K_\Omega\otimes E)$ such that $F(w)=\tilde{f}(w)=dz\otimes\xi$ and
	\begin{equation*}
		\int_\Omega(\sqrt{-1})^{n^2}F\wedge_h\overline{F} \leqslant e^{2na} \int_{D^*}(\sqrt{-1})^{n^2}\tilde{f}\wedge_h\overline{\tilde{f}}.
	\end{equation*}
	Since
	\begin{equation*}
		(\sqrt{-1})^{n^2}\tilde{f}\wedge_h\overline{\tilde{f}} = |f|_h^2(\sqrt{-1})^{n^2}dz\wedge\overline{dz} = 2^n|f|_h^2d\lambda,
	\end{equation*}
	it follows from \eqref{Eq:LocalExt} that
	\begin{align*}
		\int_\Omega(\sqrt{-1})^{n^2}F\wedge_h\overline{F} & \leqslant 2^ne^{2na}\int_{\mathbb{B}^n(0;r_1)}|f|_h^2d\lambda \\
		& \leqslant \frac{(2\pi)^n}{n!} e^{-2nc}e^{2n\kappa r_1^2}(1-2n\kappa r_1^2)|\xi|_h^2.
	\end{align*}
	Since $e^{2n\kappa r_1^2}(1-2n\kappa r_1^2)<1$, this completes the proof.
\end{proof}

\begin{remark}
	The technical assumption (2) in Theorem \ref{Thm:Sharper} can be satisfied in many cases: e.g. $\Omega$ is an open Riemann surface admitting Green functions and $\psi=G_\Omega(\cdot,w)$. Let $z$ be any local coordinate of $\Omega$ centered at $w$, since $G_\Omega(z,w)-\log|z|$ is harmonic, there exists a holomorphic function $f$ in a neighborhood of $w$ such that $G_\Omega(z,w)-\log|z| = \textup{Re}\,f(z)$, and then $G_\Omega(z,w) = \log|e^{f(z)}z|$ in a neighborhood of $w$. Clearly, $e^{f(z)}z$ is a local coordinate around $w$ satisfying the assumption (2).
\end{remark}

\begin{remark} \label{Rmk:NakSharper}
	Let $\Omega\ni w$, $(E,h)$ and $\psi<0$ be the same as above, but we do not require the conditions (1) and (2) of Theorem \ref{Thm:Sharper}.
	
	We assume that $(E,h)$ is Nakano positive at some point $x\in\Omega$. Then there exists an open set $W\Subset\Omega\backslash\{w\}$ such that $(E,h)$ is Nakano positive on $W$. We choose a function $\sigma\in C_c^\infty(W)$ such that $\sigma\leqslant 0$ and $\sigma\not\equiv0$. Let $\eps\in\RR_+$ be small enough so that $\sqrt{-1}\Theta(E,h) + \eps\ddbar\sigma\otimes\Id_E$ is Nakano semi-positive on $\Omega$. By the optimal $L^2$ extension theorem, for each $0\neq\xi\in E_w$, there exists a holomorphic section $F\in\Gamma(\Omega,K_\Omega\otimes E)$ such that
	\begin{equation*}
		F(w)=dz\otimes\xi \quad\text{and}\quad \int_\Omega (\sqrt{-1})^{n^2} F\wedge_h\overline{F} e^{-\eps\sigma} \leqslant \frac{(2\pi)^n}{n!}e^{-2nc}|\xi|_h^2e^{-\eps\sigma(w)}.
	\end{equation*}
	Since $\sigma(w)=0$ and $\sigma<0$ on some open subset of $W$, it is clear that
	\begin{equation*}
		\int_\Omega (\sqrt{-1})^{n^2} F\wedge_h\overline{F} < \int_\Omega (\sqrt{-1})^{n^2} F\wedge_h\overline{F} e^{-\eps\sigma} \leqslant \frac{(2\pi)^n}{n!}e^{-2nc}|\xi|_h^2.
	\end{equation*}
	Therefore, if $(E,h)$ is Nakano positive somewhere, then the estimate in \eqref{Eq:OptEst} can be improved. However, ``Nakano positivity'' is stronger than ``Griffiths positivity'', and we have an explicit estimate of $\tau$ in Theorem \ref{Thm:Sharper}, i.e. $\tau=1-e^{2n\kappa r_1^2}(1-2n\kappa r_1^2)$.
\end{remark}

\begin{remark} \label{Rmk:Hosono}
	Using the theory of complex Monge-Amp\`{e}re equations, Hosono \cite{Hosono2019} proved the following results:
	\begin{quotation}
		\itshape Let $\Omega\ni 0$ be a smoothly bounded domain in $\CC$. Assume that there exists a strictly subharmonic function $\rho\in C^\infty(\overline{\Omega})$ such that $\Omega=\{z:\rho(z)<0\}$ and $\rho(0)=-1$. Let $\varphi=-\log(-\rho)$ and $c=\lim_{z\rightarrow 0}(G_\Omega(z,0)-\log|z|)$. Then there exists a holomorphic function $f\in\calO(\Omega)$ such that $f(0)=1$ and
		\begin{equation*}
			\int_\Omega|f(z)|^2e^{-\varphi(z)}d\lambda_z < \pi e^{-2c}.
		\end{equation*}
	\end{quotation}
	The strictly subharmonic function $\varphi$ can be regarded as a positively curved Hermitian metric on a trivial line bundle over $\Omega$, then Hosono's result is a special case of Theorem \ref{Thm:Sharper} or Remark \ref{Rmk:NakSharper}.
\end{remark}

\section{Relations to the Usual \texorpdfstring{$L^2$}{L2} Extension Problem} \label{Chap:OTtype}

In this section, we discuss the relations between the $L^2$ extension problem of openness type and the usual one. Roughly speaking, the limiting cases of $L^2$ extension theorems of openness type are $L^2$ extension theorems of Ohsawa-Takegoshi type. We also discuss Guan-Zhou's approach \cite{GuanZhou2015} to the optimal $L^2$ extension problem, whose main point can be summarized as an $L^2$ existence theorem of openness type (i.e. Theorem \ref{Thm:BasicL2Existence}).

\subsection{A Basic \texorpdfstring{$L^2$}{L2} Existence Theorem} \hfill

Let us recall some auxiliary functions introduced by Guan-Zhou \cite{GuanZhou2015}.

Let $A\in(-\infty,+\infty]$, $a\in(-A,+\infty)$ and $b\in(0,+\infty)$ be given. We define
\begin{equation*}
	v_{a,b}(t) = \int_{-\infty}^t \left( \int_{-\infty}^{\tau_1} \frac{1}{b}\one_{(-a-b,-a)} d\tau_2 \right) d\tau_1
	- \left(a+\frac{b}{2}\right)
\end{equation*}
and
\begin{equation*}
	\chi_{a,b}(t) = 1 - v_{a,b}'(t) = 1 - \int_{-\infty}^t \frac{1}{b}\one_{(-a-b,-a)} d\tau.
\end{equation*}
Then $v_{a,b}$ and $\chi_{a,b}$ are continuous functions on $\RR$ satisfying the following properties:

$\bullet$ $0\leqslant\chi_{a,b}(t)\leqslant1$ and $v_{a,b}(t)\geqslant t$ for all $t\in\RR$; \par
$\bullet$ $\chi_{a,b}(t)\equiv1$ and $v_{a,b}(t)\equiv-a-\frac{b}{2}$ for $t\leqslant-a-b$; \par
$\bullet$ $\chi_{a,b}(t)\equiv0$ and $v_{a,b}(t)\equiv t$ for $t\geqslant-a$.

\noindent Let $c(t)>0$ be a smooth function on $(-\infty,A)$ so that
\begin{equation} \label{Eq:CondForC}
	\varliminf_{t\rightarrow-\infty} c(t)e^{-t} > 0 \quad\text{and}\quad \int_{-\infty}^Ac(t)dt<+\infty.
\end{equation}
Moreover, we assume that
\begin{equation} \label{Eq:IneqForC}
	\left(\int_t^Ac(\tau)d\tau\right)^2 > c(t)\int_t^A\int_{\tau_1}^Ac(\tau_2)d\tau_2d\tau_1 \quad \text{for all } t<A.
\end{equation}
If $A=+\infty$, we need to assume that the double integral in \eqref{Eq:IneqForC} is convergent.

It is easy to show that, if $A<+\infty$ and $c(t)$ is increasing, then \eqref{Eq:IneqForC} holds for any $t<A$. In general, if there exists a $t_0\in[-\infty,A]$ such that $c'\geqslant0$ on $(-\infty,t_0)$, but $c'<0$ and $(\log c)''<0$ on $(t_0,A)$, then \eqref{Eq:IneqForC} holds for any $t<A$ (see \cite[Remark 4.12]{GuanZhou2015}). For example, we may take $c(t)=\frac{e^t}{(1+e^{t/m})^{m+\eps}}$, where $\eps\in\RR_+$ and $m\in\NN_+$.

\begin{theorem} \label{Thm:BasicL2Existence}
	Let $(\Omega,\omega)$ be a weakly pseudoconvex K\"ahler manifold and $(E,h)$ be a Hermitian holomorphic vector bundle over $\Omega$. Let $\psi<A$ and $\varphi$ be quasi-psh functions on $\Omega$. Suppose there are continuous real $(1,1)$-forms $\gamma\geqslant0$ and $\rho$ on $\Omega$ such that
	\begin{equation*}
		\ddbar\psi\geqslant\gamma, \quad \ddbar\varphi\geqslant\rho \quad\text{and}\quad
		\sqrt{-1}\Theta(E,h)+(\gamma+\rho)\otimes\Id_E \geqslant_\Nak 0.
	\end{equation*}
	Let $\Omega_a:=\{\psi<-a\}$ and let $f\in\Gamma(\Omega_a,K_\Omega\otimes E)$ be a holomorphic section so that
	\begin{equation} \label{Eq:L2ExistenceCond1}
		\int_{\{-a-b<\psi<-a\}} |f|_{\omega,h}^2e^{-\varphi} dV_\omega < +\infty,
	\end{equation}
	and
	\begin{equation} \label{Eq:L2ExistenceCond2}
		\int_{D\cap \Omega_a} |f|_{\omega,h}^2e^{-\varphi} dV_\omega < +\infty \quad\text{for any}\quad D\Subset\Omega.
	\end{equation}
	Then there exists a holomorphic section $F\in\Gamma(\Omega,K_\Omega\otimes E)$ such that
	\begin{equation*}
		F|_{\Omega_a}-f \in \Gamma(\Omega_a,\calO(K_\Omega\otimes E) \otimes \calI(\varphi+\psi))
	\end{equation*}
	and
	\begin{align*}
		&\, \int_\Omega |F-\chi_{a,b}(\psi)f|_{\omega,h}^2 e^{-\varphi-\psi}c(v_{a,b}(\psi)) dV_\omega \\
		\leqslant &\, \frac{1}{b} \int_{-a-b/2}^Ac(t)dt \int_{\{-a-b<\psi<-a\}} |f|_{\omega,h}^2e^{-\varphi-\psi} dV_\omega.
	\end{align*}
\end{theorem}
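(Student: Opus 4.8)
The plan is to run the proof of Theorem~\ref{Thm:CoarseExtThm} almost verbatim, the only changes being that we keep the weight $c$ general, replace the specific auxiliary function $v$ used there by the Guan--Zhou function $v_{a,b}$, and allow $A\in(-\infty,+\infty]$; the stated constant is then read off from the resulting estimate. Since $v_{a,b}$ is only Lipschitz, the first step is to approximate it by a family of smooth convex increasing functions $v_\eps$ ($0<\eps\ll1$) with $v_\eps\to v_{a,b}$ locally uniformly, $v_\eps'$ decreasing from $1$ to $0$, $v_\eps''$ supported near $[-a-b,-a]$ and equal to $\tfrac1b$ on $[-a-b+\eps,-a-\eps]$ (a routine mollification, as in Step~1 of the proof of Theorem~\ref{Thm:CoarseExtThm}), and to set $\chi_\eps:=1-v_\eps'$. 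Let $u,s,g$ be the solutions on $(-\infty,A)$ of $su'-s'\equiv1$, $su''-s''-g^{-1}s's'\equiv0$, $e^{-u}\equiv c\,(s+g)$ — explicitly $u(t)=-\log\int_t^Ac$ and $s(t)+g(t)=c(t)^{-1}\int_t^Ac$ — and observe that hypothesis~\eqref{Eq:IneqForC} is exactly what forces $s,g>0$ and $u'>0$. Because $v_{a,b}(t)\in[-a-\tfrac b2,A)$ for all $t$, the compositions $u(v_\eps(\psi_{k,\nu}))$, $s(v_\eps(\psi_{k,\nu}))$, $g(v_\eps(\psi_{k,\nu}))$ are meaningful; $s(v_\eps(\cdot))$, $g(v_\eps(\cdot))$ are bounded and $u(v_\eps(\cdot))$ is bounded below — this last point uses $s,g\to0$ near $A$ and is the only place where the case $A=+\infty$ needs a separate check.

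Next I would exhaust $\Omega$ by weakly pseudoconvex domains $D^k\Subset D^{k+1}\Subset\Omega$ carrying complete K\"ahler metrics and, on $D^{k+1}$, approximate $\psi,\varphi$ from above by quasi-psh functions $\psi_{k,\nu},\varphi_{k,\nu}$ with analytic singular sets $S_{k,\nu}$ via Theorem~\ref{Thm:EquiSingAppro}, so that $\DkSn$ is complete K\"ahler and $\ddbar\psi_{k,\nu}\geq\gamma-\sigma_{k,\nu}\omega$, $\ddbar\varphi_{k,\nu}\geq\rho-\sigma_{k,\nu}\omega$ with $\sigma_{k,\nu}\to0$. Put $h_{\eps,k,\nu}:=he^{-\varphi_{k,\nu}-\psi_{k,\nu}-u(v_\eps(\psi_{k,\nu}))}$, $\eta:=s(v_\eps(\psi_{k,\nu}))$, $\lambda:=g(v_\eps(\psi_{k,\nu}))$, and feed the three ODE identities into $\Theta_{\eps,k,\nu}:=\eta\sqrt{-1}\Theta(E,h_{\eps,k,\nu})-\ddbar\eta-\lambda^{-1}\dwdbar{\eta}$; together with $\gamma\geq0$, $\ddbar\psi_{k,\nu}\geq\gamma-\sigma_{k,\nu}\omega$, $\ddbar\varphi_{k,\nu}\geq\rho-\sigma_{k,\nu}\omega$ and $\sqrt{-1}\Theta(E,h)+(\gamma+\rho)\otimes\Id_E\geq_\Nak0$ this gives, exactly as in the curvature computation inside the proof of Theorem~\ref{Thm:CoarseExtThm}, a constant $C_\eps<+\infty$ with $\Theta_{\eps,k,\nu}+\sigma_{k,\nu}C_\eps\omega\geq_\Nak v_\eps''(\psi_{k,\nu})\,\dwdbar{\psi_{k,\nu}}\geq_\Nak0$. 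With $g_{\eps,k,\nu}:=\bar\pd(\chi_\eps(\psi_{k,\nu})f)=-v_\eps''(\psi_{k,\nu})\,\bar\pd\psi_{k,\nu}\wedge f$ (a smooth form on $D^k$ since $f\in\Gamma(\Omega_a,K_\Omega\otimes E)$), Lemma~\ref{Lemma:AnIneq} and the curvature bound yield $\inner{B_{\eps,k,\nu}^{-1}g_{\eps,k,\nu},g_{\eps,k,\nu}}_{\eps,k,\nu}\leq v_\eps''(\psi_{k,\nu})\,|f|_{\omega,h}^2e^{-\varphi_{k,\nu}-\psi_{k,\nu}}\int_{v_\eps(\psi_{k,\nu})}^Ac$, which is integrable by \eqref{Eq:L2ExistenceCond1} and \eqref{Eq:L2ExistenceCond2}. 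Theorem~\ref{Thm:L2ExistenceErrTerm} then produces $\gamma_{\eps,k,\nu},w_{\eps,k,\nu}$ with $\bar\pd\gamma_{\eps,k,\nu}+\sqrt{\sigma_{k,\nu}C_\eps}\,w_{\eps,k,\nu}=\bar\pd(\chi_\eps(\psi_{k,\nu})f)$ on $\DkSn$ and $\int|\gamma_{\eps,k,\nu}|_{\omega,h}^2e^{-\varphi_{k,\nu}-\psi_{k,\nu}}c(v_\eps(\psi_{k,\nu}))\,dV_\omega+\int|w_{\eps,k,\nu}|^2\leq\int\inner{B_{\eps,k,\nu}^{-1}g_{\eps,k,\nu},g_{\eps,k,\nu}}_{\eps,k,\nu}$, using $e^{-u}=c(s+g)$.

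The final step is the triple limit, handled as in Steps~3--5 of the proof of Theorem~\ref{Thm:CoarseExtThm}: let $\nu\to+\infty$, extending $\bar\pd$-solutions across the analytic sets $S_{k,\nu}$ by Lemma~\ref{Lemma:dbarOverS} and passing to weak limits with the help of Lemmas~\ref{Lemma:WeakLim} and \ref{Lemma:DoubleWeakLim} and the closedness of $\bar\pd$, to obtain a holomorphic $F_{\eps,k}=\chi_\eps(\psi)f-\gamma_{\eps,k}$ on $D^k$; then $\eps\to0$ (Montel plus Fatou) to get $F_k=\chi_{a,b}(\psi)f-\gamma_k$ on $D^k$ with the weighted estimate; then $k\to+\infty$ (Montel) to get $F\in\Gamma(\Omega,K_\Omega\otimes E)$. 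The precise constant falls out because $v_{a,b}$ maps $(-a-b,-a)$ into $(-a-\tfrac b2,-a)$ with $v_{a,b}''\equiv\tfrac1b$ there, so on the support of $\bar\pd(\chi_{a,b}(\psi)f)$ one has $\int_{v_{a,b}(\psi)}^Ac\leq\int_{-a-b/2}^Ac$; and $F|_{\Omega_a}-f\in\Gamma(\Omega_a,\calO(K_\Omega\otimes E)\otimes\calI(\varphi+\psi))$ follows via Lemma~\ref{Lemma:Coherent} from the $e^{-\varphi-\psi}$-weighted bound on $\gamma_k$ together with $\chi_{a,b}(\psi)\equiv1$ on $\{\psi\leq-a-b\}$, exactly as in Step~3 of that proof.

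The main obstacle is the curvature identity of the second paragraph — verifying that the Guan--Zhou choice of $u,s,g$ makes $\Theta_{\eps,k,\nu}$ Nakano semi-positive up to the vanishing term $\sigma_{k,\nu}C_\eps\omega$. It is the step where the hypotheses $\gamma\geq0$ and $\sqrt{-1}\Theta(E,h)+(\gamma+\rho)\otimes\Id_E\geq_\Nak0$ are consumed, and where condition~\eqref{Eq:IneqForC} on $c$ becomes indispensable (it guarantees $\lambda=g(v_\eps(\psi_{k,\nu}))>0$, so that Theorem~\ref{Thm:L2ExistenceErrTerm} applies); beyond that, the only delicate point is keeping the $e^{-\varphi-\psi}c(v_{a,b}(\psi))$-weighted bounds alive through the three successive limits.
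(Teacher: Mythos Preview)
Your proposal is correct and matches the paper's own approach: the paper does not write out a proof of Theorem~\ref{Thm:BasicL2Existence} at all, saying only that ``the proof of the theorem is standard: the main idea is the same as the case of Stein manifolds, but we use Theorem~\ref{Thm:EquiSingAppro} to approximate the quasi-psh functions, and use Theorem~\ref{Thm:L2ExistenceErrTerm} to deal with the loss of positivity. We will not repeat such a proof.'' Your plan---rerun the proof of Theorem~\ref{Thm:CoarseExtThm} with the Guan--Zhou auxiliary $v_{a,b}$, the general weight $c$, and the ODE-defined functions $u,s,g$---is precisely this, and your identification of the key computation $e^{-u}/(s+g)=c$ that produces the weight $e^{-\varphi-\psi}c(v_{a,b}(\psi))$ and of the bound $\int_{v_{a,b}(\psi)}^Ac\leq\int_{-a-b/2}^Ac$ that yields the constant is exactly right.
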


Various forms of Theorem \ref{Thm:BasicL2Existence} already appeared in \cite[etc]{GuanZhou2015, GuanZhou2015Eff, Guan2019, Guan2018arxiv, GuanMi2021}, and here we consider the case of weakly pseudoconvex K\"ahler manifolds. The proof of the theorem is standard: the main idea is the same as the case of Stein manifolds, but we use Theorem \ref{Thm:EquiSingAppro} to approximate the quasi-psh functions, and use Theorem \ref{Thm:L2ExistenceErrTerm} to deal with the loss of positivity. We will not repeat such a proof.

\begin{remark} \label{Rmk:Coarse}
	We assume the setting of Theorem \ref{Thm:BasicL2Existence}, and let $A=0$, $b=1$, $c(t)\equiv e^t$. Assume that $f\in\Gamma(\Omega_a,K_\Omega\otimes E)$ is a holomorphic section such that $\int_{\Omega_a} |f|_{\omega,h}^2e^{-\varphi} dV_\omega<+\infty$. Since $c(v_{a,b}(t))e^{-t} = e^{v_{a,b}(t)-t} \geqslant 1$, it follows from Theorem \ref{Thm:BasicL2Existence} that there exists a holomorphic section $F\in\Gamma(\Omega,K_\Omega\otimes E)$ such that
	\begin{gather*}
		F|_{\Omega_a}-f \in \Gamma(\Omega_a, \calO(K_\Omega\otimes E)\otimes \calI(\varphi+\psi)), \\
		\int_\Omega |F-\chi_{a,b}(\psi)f|_{\omega,h}^2 e^{-\varphi} dV_\omega
		\leqslant (1-e^{-a-\frac{1}{2}}) \int_{\{-a-1<\psi<-a\}} |f|_{\omega,h}^2e^{-\varphi-\psi} dV_\omega.
	\end{gather*}
	By direct computations, one has the following estimate:
	\begin{align*}
		\int_\Omega |F|_{\omega,h}^2 e^{-\varphi} dV_\omega & \leqslant \int_\Omega 2\left(|F-\chi_{a,b}(\psi)f|_{\omega,h}^2 + |\chi_{a,b}(\psi)f|_{\omega,h}^2 \right) e^{-\varphi} dV_\omega \\
		& \leqslant 2e^{a+1} \int_{\Omega_a} |f|_{\omega,h}^2 e^{-\varphi} dV_\omega.
	\end{align*}
	Therefore, using Theorem \ref{Thm:BasicL2Existence} directly, we can prove a coarse version of Theorem \ref{MainThm:Opt}, where the uniform constant is $2e^{a+1}$. In Theorem \ref{Thm:CoarseExtThm}, we modify the proof of Theorem \ref{Thm:BasicL2Existence} by choosing different auxiliary functions, then the uniform constant becomes $e^a+1$, which is asymptotically optimal as $a\rightarrow+\infty$.
\end{remark}

\begin{lemma} \label{lemma:ctAppro}
	Let $\tilde{c}(t)>0$ be a smooth increasing function on $(-\infty,A)$ so that
	\begin{equation*}
		\varliminf_{t\rightarrow-\infty} \tilde{c}(t)e^{-t} > 0 \quad\text{and}\quad \int_{-\infty}^{-R}\tilde{c}(t)dt<+\infty,
	\end{equation*}
	in which $R>-A$ is a constant. Then there exists a sequence $\{c_m\}_{m=1}^\infty$ of positive smooth functions on $(-\infty,A)$ such that $c_m(t)\equiv \tilde{c}(t)$ on $(-\infty,-R]$,
	\begin{gather*}
		\lim_{m\to+\infty} \int_{-R}^A c_m(t)dt = 0, \\
		\left(\int_t^A c_m(\tau)d\tau\right)^2 > c_m(t) \int_t^A\int_{\tau_1}^A c_m(\tau_2)d\tau_2d\tau_1
		\quad\text{for all}\quad t<A.
	\end{gather*}
\end{lemma}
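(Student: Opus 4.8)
The plan is to leave $\tilde c$ untouched on $(-\infty,-R]$ and, on $(-R,A)$, replace it by a profile that decays so fast that its integral over $(-R,A)$ tends to $0$, while still obeying \eqref{Eq:CondForC} and \eqref{Eq:IneqForC}. The conditions \eqref{Eq:CondForC} will be automatic once $c_m\equiv\tilde c$ near $-\infty$: then $\varliminf_{t\to-\infty}c_m(t)e^{-t}=\varliminf_{t\to-\infty}\tilde c(t)e^{-t}>0$ by hypothesis, and $\int_{-\infty}^{A}c_m=\int_{-\infty}^{-R}\tilde c+\int_{-R}^{A}c_m<+\infty$, the first summand being finite by hypothesis and the second by construction. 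For \eqref{Eq:IneqForC} I will use the sufficient condition recalled just before Theorem~\ref{Thm:BasicL2Existence} (see \cite[Remark~4.12]{GuanZhou2015}): it is enough that each $c_m$ is non-decreasing on some $(-\infty,t_0(m))$ and satisfies $c_m'<0$ and $(\log c_m)''<0$ on $(t_0(m),A)$; when $A=+\infty$, the fast decay of $c_m$ will also make the double integral in \eqref{Eq:IneqForC} converge for each finite $t$. A naive rescaling $c_m=\tilde c\,\phi_m$ is not convenient, since $(\log c_m)''=(\log\tilde c)''+(\log\phi_m)''$ and we have no control on $(\log\tilde c)''$; instead I will build $c_m$ by prescribing the second derivative of $\log c_m$.

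Concretely, fix a sequence $\eps_m\downarrow0$ with $-R+\eps_m<A$, and let $\sigma_m\in C^\infty((-\infty,A))$ be a function with $\sigma_m\equiv(\log\tilde c)''$ on $(-\infty,-R]$, with $\sigma_m\le-m$ on $[-R+\eps_m,A)$, and with $\int_{-R}^{-R+\eps_m}|\sigma_m|$ bounded uniformly in $m$ (a one-variable smooth interpolation on the slab $[-R,-R+\eps_m]$). Put $\ell_m(t):=\log\tilde c(-R)+\int_{-R}^{t}\Big((\log\tilde c)'(-R)+\int_{-R}^{\tau}\sigma_m\Big)\,d\tau$ and $c_m:=e^{\ell_m}$. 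Then $c_m\in C^\infty((-\infty,A))$ and $c_m>0$ automatically; since $\ell_m''=\sigma_m=(\log\tilde c)''$ on $(-\infty,-R]$ and $\ell_m,\ell_m'$ match those of $\log\tilde c$ at $-R$, we get $c_m\equiv\tilde c$ on $(-\infty,-R]$. Now $\ell_m'(t)=(\log\tilde c)'(-R)+\int_{-R}^{t}\sigma_m$ is $\ge0$ near $-R$ (there $\ell_m'=(\log\tilde c)'\ge0$) and becomes strictly decreasing once $\sigma_m$ turns negative, hence has a unique zero $t_0(m)$, with $t_0(m)\downarrow-R$ because $\sigma_m\le-m$ on $[-R+\eps_m,A)$; on $(t_0(m),A)$ one has $\ell_m'<0$ and $\ell_m''=\sigma_m<0$, so the criterion above yields \eqref{Eq:IneqForC}.

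It remains to see that $\int_{-R}^{A}c_m\to0$. On $(-R,t_0(m))$ the uniform bound on $\int_{-R}^{-R+\eps_m}|\sigma_m|$ keeps $\ell_m'$ below a constant independent of $m$, so $\ell_m\le\log\tilde c(-R)+(t_0(m)+R)\,C$ there, whence $c_m\le\tilde c(-R)(1+o(1))$ and $\int_{-R}^{t_0(m)}c_m\le(t_0(m)+R)\,\tilde c(-R)(1+o(1))\to0$. On $(t_0(m),A)$ the slope $\ell_m'$ decreases at rate at least $m$ (on $[-R+\eps_m,A)$, after the short slab), which forces $\ell_m\le\log\tilde c(-R)+o(1)$ and dominates $c_m$ by a Gaussian of width $O(m^{-1/2})$ on that range, so $\int_{t_0(m)}^{A}c_m=O(m^{-1/2})\to0$. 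Taking, say, $\eps_m=1/m$ makes all of this explicit.

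The only genuinely fussy point is the choice of $\sigma_m$ on the transition slab $[-R,-R+\eps_m]$: it must be arranged so that $\ell_m'$ does not dip below $0$ before $\sigma_m$ itself becomes negative (and so that $\ell_m''<0$ indeed holds past $t_0(m)$), which needs a small amount of care exactly in the degenerate cases $(\log\tilde c)'(-R)=0$ or $(\log\tilde c)''(-R)=0$; and one must keep $\ell_m$ uniformly bounded above on $(-R,t_0(m))$ so that the integral estimate closes. Both are elementary once the interpolation is chosen with the signs in mind, but neither follows from a closed formula, so this is where the real (if modest) work lies.
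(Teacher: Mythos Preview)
Your argument is correct and follows the same overall strategy as the paper---keep $\tilde c$ on $(-\infty,-R]$ and graft on a rapidly decaying Gaussian-type tail---but the implementation differs in a useful way. The paper (after translating to $R=0$) writes down an explicit piecewise profile $\tilde c_m(t)=\tilde c(t)$ for $t\le\eps_m$ and $\tilde c_m(t)=\tilde c(\eps_m)e^{-m(t-\eps_m)^2}$ for $t>\eps_m$, then verifies \eqref{Eq:IneqForC} by a direct computation: setting $J_m(t)=(\int_t^A\tilde c_m)^2/\tilde c_m(t)-\int_t^A\!\int_{\tau_1}^A\tilde c_m$, one shows $J_m'<0$ on each piece and $\varliminf_{t\to A}J_m\ge0$, whence $J_m>0$; finally the $C^0$ kink at $\eps_m$ is smoothed by a small perturbation on $[0,2\eps_m]$, using that $J_m$ has positive margin there. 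By contrast, you build $c_m$ smooth from the outset by prescribing $(\log c_m)''=\sigma_m$ and invoking the sufficient criterion from \cite[Remark~4.12]{GuanZhou2015} (recalled just before Theorem~\ref{Thm:BasicL2Existence}): $c_m$ non-decreasing on $(-\infty,t_0(m))$, then $c_m'<0$ and $(\log c_m)''<0$ on $(t_0(m),A)$. This trades the $J_m/K_m$ calculation for the transition-slab bookkeeping you flag at the end; with a monotone interpolation of $\sigma_m$ on $[-R,-R+\eps_m]$ and $\eps_m=1/m$, the sign and boundedness requirements you need (in particular $\int_{-R}^{-R+\eps_m}|\sigma_m|\le C$ and $\sigma_m<0$ past the unique sign change, so that $t_0(m)$ lands where $\sigma_m<0$) are all elementary to arrange, including in the degenerate cases $(\log\tilde c)'(-R)=0$ or $(\log\tilde c)''(-R)=0$. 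Either route works; yours is a bit softer and avoids the explicit $J_m$ analysis, while the paper's is more concrete and sidesteps the slab case-splitting.
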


\begin{proof}
	By translation, we may assume that $R=0$. For each $m\geqslant1$, let
	\begin{equation*}
		\tilde{c}_m(t) := \begin{cases}
			\tilde{c}(t) & (-\infty<t\leqslant \eps_m) \\ \tilde{c}(\eps_m) e^{-m(t-\eps_m)^2} & (\eps_m<t<A)
		\end{cases},
	\end{equation*}
	where $0<\eps_m\ll\frac{1}{m}$ is a constant. Clearly, $\int_0^A \tilde{c}_m(t)dt \to 0$ as $m\to+\infty$ and
	\begin{equation*}
		\int_t^A \int_{\tau_1}^A \tilde{c}_m(\tau_2) d\tau_2 d\tau_1 = \int_t^A (\tau_2-t)\tilde{c}_m(\tau_2) d\tau_2 < +\infty.
	\end{equation*}
	
	For convenience, we define
	\begin{equation*}
		J_m(t) := \frac{(\int_t^A \tilde{c}_m(\tau) d\tau)^2}{\tilde{c}_m(t)} - \int_t^A \int_{\tau_1}^A \tilde{c}_m(\tau_2) d\tau_2 d\tau_1.
	\end{equation*}
	By direct computations, for $t\neq\eps_m$,
	\begin{equation*}
		J_m'(t) = - \int_t^A \tilde{c}_m(\tau) d\tau - \frac{\tilde{c}_m'(t)(\int_t^A \tilde{c}_m(\tau) d\tau)^2}{\tilde{c}_m(t)^2}.
	\end{equation*}
	Since $\tilde{c}_m'(t)=\tilde{c}'(t)\geqslant0$ on $(-\infty,\eps_m)$, we know $J_m'(t)<0$ on $(-\infty,\eps_m)$. On the other hand, $\tilde{c}_m'(t)<0$ on $(\eps_m,A)$. For $t>\eps_m$, we write
	\begin{equation*}
		J_m'(t) = - \frac{\tilde{c}_m'(t) \int_t^A \tilde{c}_m(\tau) d\tau}{\tilde{c}_m(t)^2} K_m(t),
		\text{ where }
		K_m(t) := \frac{\tilde{c}_m(t)^2}{\tilde{c}_m'(t)} + \int_t^A \tilde{c}_m(\tau) d\tau.
	\end{equation*}
	Direct computations show that
	\begin{equation*}
		K_m'(t) = \tilde{c}_m(t) - \frac{\tilde{c}_m''(t)\tilde{c}_m(t)^2}{\tilde{c}_m'(t)^2} = \tilde{c}(\eps_m) \frac{e^{-m(t-\eps_m)^2}}{2m(t-\eps_m)^2} > 0, \quad t>\eps_m.
	\end{equation*}
	Since $\tilde{c}_m'(t)<0$ on $(\eps_m,A)$ and $\varlimsup_{t\to A} K_m(t) = \varlimsup_{t\to A} {\tilde{c}_m(t)^2}/{\tilde{c}_m'(t)} \leqslant 0$, it is clear that $K_m(t)<0$ and $J_m'(t)<0$ for $t\in(\eps_m,A)$.
	
	Since $\varliminf_{t\to A}J_m(t)\geqslant0$ and $J_m'(t)<0$ on $(-\infty,\eps_m)\cup(\eps_m,A)$, we conclude that $J_m(t)>0$ for all $t\in(-\infty,A)$. Equivalently,
	\begin{equation*}
		\big(\int_t^A \tilde{c}_m(\tau)d\tau\big)^2 - \tilde{c}_m(t) \int_t^A\int_{\tau_1}^A \tilde{c}_m(\tau_2)d\tau_2d\tau_1 > 0 \quad\text{for all}\quad t<A.
	\end{equation*}
	By the continuity of $\tilde{c}_m$, the left hand side has a positive lower bound for $t\in[0,2\eps_m]$.
	Let $c_m(t)>0$ be a smooth function on $(-\infty,A)$ so that $\sup|c_m(t)-\tilde{c}_m(t)|<\delta_m$ and $c_m(t)\equiv\tilde{c}_m(t)$ on $(-\infty,0]\cup[2\eps_m,A)$. We can choose $0<\delta_m\ll\frac{1}{m}$ so that
	\begin{equation*}
		\big(\int_t^A c_m(\tau)d\tau\big)^2 - c_m(t) \int_t^A\int_{\tau_1}^A c_m(\tau_2)d\tau_2d\tau_1 > 0 \quad \text{for all } t\in[0,2\eps_m].
	\end{equation*}
	Since $c_m(t) \equiv \tilde{c}(t)$ is increasing on $(-\infty,0]$, the same inequality holds for all $t<0$. It is easy to check that $c_m(t)$ satisfies all the requirements.
\end{proof}

\begin{theorem} \label{Thm:L2ExistenceSub}
	Let $(\Omega,\omega)$, $(E,h)$, $\psi<A$ and $\varphi$ be the same as Theorem \ref{Thm:BasicL2Existence}.
	For each $t\geqslant-A$, let $\Omega_t:=\{\psi<-t\}$. Given $a>R>-A$, assume that $\tilde{c}(t)>0$ is a smooth increasing function on $(-\infty,-R)$ so that
	\begin{equation*}
		\varliminf_{t\rightarrow-\infty} \tilde{c}(t)e^{-t} > 0 \quad\text{and}\quad \int_{-\infty}^{-R}\tilde{c}(t)dt<+\infty.
	\end{equation*}
	Let $f\in\Gamma(\Omega_a,K_\Omega\otimes E)$ be a holomorphic section satisfying \eqref{Eq:L2ExistenceCond1} and \eqref{Eq:L2ExistenceCond2}. Then there exists a holomorphic section $F\in\Gamma(\Omega_R,K_\Omega\otimes E)$ such that
	\begin{equation*}
		F|_{\Omega_a}-f \in \Gamma(\Omega_a,\calO(K_\Omega\otimes E) \otimes \calI(\varphi+\psi))
	\end{equation*}
	and
	\begin{align*}
		&\, \int_{\Omega_R} |F-\chi_{a,b}(\psi)f|_{\omega,h}^2 e^{-\varphi-\psi}\tilde{c}(v_{a,b}(\psi)) dV_\omega \\
		\leqslant &\, \frac{1}{b} \int_{-a-b/2}^{-R}\tilde{c}(t)dt \int_{\{-a-b<\psi<-a\}} |f|_{\omega,h}^2e^{-\varphi-\psi} dV_\omega.
	\end{align*}
\end{theorem}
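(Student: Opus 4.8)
The plan is to deduce Theorem \ref{Thm:L2ExistenceSub} from Theorem \ref{Thm:BasicL2Existence} by approximating the weight $\tilde c$ and then taking a Montel limit on the sublevel set $\Omega_R$. The point is that Theorem \ref{Thm:BasicL2Existence} requires the weight to satisfy \eqref{Eq:IneqForC} and to be integrable up to $A$, whereas $\tilde c$ is only assumed increasing on $(-\infty,-R)$; Lemma \ref{lemma:ctAppro} is tailor-made to bridge this gap.

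Concretely, I would first extend $\tilde c$ from $(-\infty,-R)$ to a positive smooth increasing function on $(-\infty,A)$ with $\int_{-\infty}^{-R}\tilde c<+\infty$ (possible since $\tilde c$ is increasing). Applying Lemma \ref{lemma:ctAppro} produces positive smooth functions $c_m$ on $(-\infty,A)$ with $c_m\equiv\tilde c$ on $(-\infty,-R]$, $\int_{-R}^A c_m(t)\,dt\to0$, and $c_m$ satisfying \eqref{Eq:IneqForC}; since $\varliminf_{t\to-\infty}c_m(t)e^{-t}=\varliminf_{t\to-\infty}\tilde c(t)e^{-t}>0$ and $\int_{-\infty}^A c_m<+\infty$, each $c_m$ meets all hypotheses of Theorem \ref{Thm:BasicL2Existence}. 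Thus for every $m$ there is $F_m\in\Gamma(\Omega,K_\Omega\otimes E)$ with $F_m|_{\Omega_a}-f\in\Gamma(\Omega_a,\calO(K_\Omega\otimes E)\otimes\calI(\varphi+\psi))$ and
\begin{equation*}
	\int_\Omega |F_m-\chi_{a,b}(\psi)f|_{\omega,h}^2 e^{-\varphi-\psi}c_m(v_{a,b}(\psi))\,dV_\omega \leq \frac1b\int_{-a-b/2}^A c_m(t)\,dt\int_{\{-a-b<\psi<-a\}}|f|_{\omega,h}^2 e^{-\varphi-\psi}\,dV_\omega.
\end{equation*}

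The key elementary observation is that $v_{a,b}(\psi)<-R$ on $\Omega_R$: since $a>R$ we have $-a<-R$, so where $\psi\geq-a$ one gets $v_{a,b}(\psi)=\psi\in[-a,-R)$, and where $\psi<-a$ one gets $v_{a,b}(\psi)\leq v_{a,b}(-a)=-a<-R$; moreover $v_{a,b}(\psi)\geq-a-b/2$ everywhere. Hence $c_m(v_{a,b}(\psi))=\tilde c(v_{a,b}(\psi))$ on $\Omega_R$, and restricting the estimate to $\Omega_R$ and splitting $\int_{-a-b/2}^A c_m=\int_{-a-b/2}^{-R}\tilde c+\int_{-R}^A c_m$ gives
\begin{equation*}
	\int_{\Omega_R}|F_m-\chi_{a,b}(\psi)f|_{\omega,h}^2 e^{-\varphi-\psi}\tilde c(v_{a,b}(\psi))\,dV_\omega \leq \frac1b\Big(\int_{-a-b/2}^{-R}\tilde c+\int_{-R}^A c_m\Big)\int_{\{-a-b<\psi<-a\}}|f|_{\omega,h}^2 e^{-\varphi-\psi}\,dV_\omega,
\end{equation*}
whose right-hand side is bounded uniformly in $m$ and converges to the target constant as $m\to+\infty$. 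On a compact $K\Subset\Omega_R$ the weight $e^{-\varphi-\psi}\tilde c(v_{a,b}(\psi))$ is bounded below by a positive constant (use $\psi<-R$, so $e^{-\psi}>e^R$; $\varphi$ quasi-psh, so $e^{-\varphi}$ is bounded below on $K$; and $\tilde c$ increasing with $v_{a,b}(\psi)\geq-a-b/2$, so $\tilde c(v_{a,b}(\psi))\geq\tilde c(-a-b/2)>0$), and $\chi_{a,b}(\psi)f\in L^2_{\loc}(\Omega_R)$ because it is supported in $\Omega_a\subset\Omega_R$ and $f$ satisfies \eqref{Eq:L2ExistenceCond2}. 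Therefore $\{F_m\}$ is bounded in $L^2$ on compact subsets of $\Omega_R$, and Montel's theorem yields a subsequence converging uniformly on compact subsets of $\Omega_R$ to some $F\in\Gamma(\Omega_R,K_\Omega\otimes E)$.

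To finish, I would pass to the limit: $F|_{\Omega_a}-f\in\Gamma(\Omega_a,\calO(K_\Omega\otimes E)\otimes\calI(\varphi+\psi))$ follows from Lemma \ref{Lemma:Coherent} applied on $\Omega_a\subset\Omega_R$, and Fatou's lemma applied to the last displayed estimate (using $F_m\to F$ pointwise and $\int_{-R}^A c_m\to0$) gives the asserted inequality on $\Omega_R$. I do not expect a genuine obstacle here beyond the bookkeeping that pins down the range of $v_{a,b}(\psi)$ on $\Omega_R$: this is precisely what makes the modification of $c_m$ outside $(-\infty,-R]$ invisible on $\Omega_R$, and at the same time explains why the conclusion must be stated on $\Omega_R$ rather than on all of $\Omega$ — outside $\Omega_R$ the weight $c_m(v_{a,b}(\psi))$ degenerates as $m\to+\infty$, so the $F_m$ need not converge there. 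The local $L^2$ estimates needed for Montel's theorem are routine once the lower bound for the weight on compacts of $\Omega_R$ is noted.
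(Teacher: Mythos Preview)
Your approach is the same as the paper's, and the core argument is right: apply Lemma~\ref{lemma:ctAppro} to get the $c_m$, feed them into Theorem~\ref{Thm:BasicL2Existence}, use that $v_{a,b}(\psi)\in[-a-b/2,-R)$ on $\Omega_R$ so that $c_m(v_{a,b}(\psi))=\tilde c(v_{a,b}(\psi))$ there, then Montel and Fatou.

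There is one small gap. You assert that $\tilde c$ can be extended from $(-\infty,-R)$ to a positive smooth increasing function on $(-\infty,A)$, justifying this by ``since $\tilde c$ is increasing''. But the hypotheses allow $\tilde c(t)\to+\infty$ as $t\to -R^-$ (for example $\tilde c(t)\sim(-R-t)^{-1/2}$ near $-R$ is increasing and has finite integral), in which case no finite-valued increasing extension past $-R$ exists, and Lemma~\ref{lemma:ctAppro} cannot be invoked directly. The paper handles this by a second approximation layer: it first proves the result assuming $\tilde c$ \emph{is} defined on all of $(-\infty,A)$, and then for general $\tilde c$ sets $R_j=R+j^{-1}$, notes that $\tilde c$ is bounded on $(-\infty,-R_j]$ and hence extendable there, obtains $F_j\in\Gamma(\Omega_{R_j},K_\Omega\otimes E)$ with the estimate having $\int_{-a-b/2}^{-R_j}\tilde c$ on the right, and takes one more Montel limit as $j\to\infty$. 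With this patch your argument is complete.
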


\begin{proof}
	At first, we assume that $\tilde{c}(t)>0$ is a smooth increasing function on $(-\infty,A)$. Let $\{c_m\}_{m=1}^\infty$ be a sequence of positive smooth functions as Lemma \ref{lemma:ctAppro}.
	By Theorem \ref{Thm:BasicL2Existence}, for each $m\geqslant1$, there exists an $F_m\in\Gamma(\Omega,K_\Omega\otimes E)$ such that
	\begin{equation*}
		F_m|_{\Omega_a}-f \in \Gamma(\Omega_a,\calO(K_\Omega\otimes E) \otimes \calI(\varphi+\psi))
	\end{equation*}
	and
	\begin{align*}
		&\, \int_\Omega |F_m-\chi_{a,b}(\psi)f|_{\omega,h}^2 e^{-\varphi-\psi}c_m(v_{a,b}(\psi)) dV_\omega \\
		\leqslant &\, \frac{1}{b} \int_{-a-b/2}^Ac_m(t)dt \int_{\{-a-b<\psi<-a\}} |f|_{\omega,h}^2e^{-\varphi-\psi} dV_\omega =: Q_m < +\infty.
	\end{align*}
	Since $c_m(t)\equiv\tilde{c}(t)$ on $(-\infty,-R]$ and $v_{a,b}(\psi)<-R$ on $\Omega_R$, we have
	\begin{equation*}
		\int_{\Omega_R} |F_m-\chi_{a,b}(\psi)f|_{\omega,h}^2 e^{-\varphi-\psi} \tilde{c}(v_{a,b}(\psi)) dV_\omega \leqslant Q_m.
	\end{equation*}
	Clearly, $\lim_{m\to+\infty}Q_m = Q := \frac{1}{b} \int_{-a-b/2}^{-R}\tilde{c}(t)dt \int_{\{-a-b<\psi<-a\}} |f|_{\omega,h}^2e^{-\varphi-\psi} dV_\omega$. We may assume that $Q_m\leqslant2Q$ for all $m\geqslant1$.
	
	Notice that, for any relatively compact open set $D\Subset\Omega_R$, we have
	\begin{gather*}
		\int_D |F_m-\chi_{a,b}(\psi)f|_{\omega,h}^2 dV_\omega \leqslant \exp(\sup_D\varphi+\sup_D\psi) \frac{2Q}{\tilde{c}(-a-b/2)} < +\infty, \\
		\int_D |\chi_{a,b}(\psi)f|_{\omega,h}^2 dV_\omega \leqslant \exp(\sup_D\varphi) \int_{D\cap\Omega_a} |f|_{\omega,h}^2e^{-\varphi} dV_\omega < +\infty.
	\end{gather*}
	Therefore, for any $D\Subset\Omega_R$, the integral $\int_D |F_m|_{\omega,h}^2 dV_\omega$ is uniformly bounded. By Montel's theorem, there exists a subsequence of $\{F_m\}_{m=1}^\infty$ that converges uniformly on any compact subsets of $\Omega_R$ to some holomorphic section $F\in\Gamma(\Omega_R,K_\Omega\otimes E)$. It is clear that
	\begin{equation*}
		F|_{\Omega_a}-f \in \Gamma(\Omega_a,\calO(K_\Omega\otimes E) \otimes \calI(\varphi+\psi))
	\end{equation*}
	and
	\begin{equation*}
		\int_{\Omega_R} |F-\chi_{a,b}(\psi)f|_{\omega,h}^2 e^{-\varphi-\psi}\tilde{c}(v_{a,b}(\psi)) dV_\omega \leqslant Q.
	\end{equation*}
	
	If $\tilde{c}(t)$ is only defined on $(-\infty,-R)$, let $R_j=R+j^{-1}$ ($j\gg1$), then the same arguments show that there exist holomorphic sections $F_j\in\Gamma(\Omega_{R_j},K_\Omega\otimes E)$ so that
	\begin{gather*}
		F_j|_{\Omega_a}-f \in \Gamma(\Omega_a,\calO(K_\Omega\otimes E) \otimes \calI(\varphi+\psi)), \\
		\int_{\Omega_{R_j}} |F_j-\chi_{a,b}(\psi)f|_{\omega,h}^2 e^{-\varphi-\psi}\tilde{c}(v_{a,b}(\psi)) dV_\omega \leqslant Q.
	\end{gather*}
	Another application of Montel's theorem yields the desired conclusion.
\end{proof}

In Section \ref{Sec:LogConcave}, we use a special case of Theorem \ref{Thm:BasicL2Existence} to prove a concavity for minimal $L^2$ integrals, from which we obtain the desired optimal estimate. Using Theorem \ref{Thm:BasicL2Existence} and the essential idea of \cite{Guan2018arxiv}, we can prove an optimal $L^2$ extension theorem of openness type that has an extra multiplying term $c(\psi)e^{-\psi}$.

\begin{theorem} \label{Thm:NewOptOpenExt}
	Let $(\Omega,\omega)$ be a weakly pseudoconvex K\"ahler manifold and $(E,h)$ be a Hermitian holomorphic vector bundle over $\Omega$. Let $\psi<A$ and $\varphi$ be quasi-psh functions on $\Omega$, where $A\in(-\infty,+\infty]$ is a constant. Suppose there are continuous real $(1,1)$-forms $\gamma\geqslant0$ and $\rho$ on $\Omega$ so that
	\begin{equation*}
		\ddbar\psi\geqslant\gamma, \quad \ddbar\varphi\geqslant\rho \quad\text{and}\quad \sqrt{-1}\Theta(E,h)+(\gamma+\rho)\otimes\Id_E \geqslant_\Nak 0.
	\end{equation*}
	\indent Let $c(t)$ be a positive smooth function on $(-\infty,A)$ satisfying \eqref{Eq:CondForC} and \eqref{Eq:IneqForC}. Assume that there is a constant $t_0\in(-\infty,A]$ so that $c(t)$ is increasing on $(-\infty,t_0)$. Let $\alpha\geqslant-t_0$ and $\Omega_\alpha:=\{z\in\Omega:\psi(z)<-\alpha\}$. Then for any holomorphic section $F'\in\Gamma(\Omega_\alpha,K_\Omega\otimes E)$ satisfying
	\begin{equation*}
		\int_{\Omega_\alpha} |F'|_{\omega,h}^2e^{-\varphi-\psi}c(\psi) dV_\omega < +\infty,
	\end{equation*}
	there exists a holomorphic section $F\in\Gamma(\Omega,K_\Omega\otimes E)$ such that
	\begin{equation} \label{Eq:NewEqwrtI}
		F|_{\Omega_\alpha}-F'\in\Gamma(\Omega_\alpha, \calO(K_\Omega\otimes E)\otimes\calI(\varphi+\psi))
	\end{equation}
	and
	\begin{equation} \label{Eq:NewOptEst}
		\int_\Omega |F|_{\omega,h}^2 e^{-\varphi-\psi}c(\psi)dV_\omega \leqslant \frac{\int_{-\infty}^Ac(\tau)d\tau}{\int_{-\infty}^{-\alpha}c(\tau)d\tau} \int_{\Omega_\alpha} |F'|_{\omega,h}^2 e^{-\varphi-\psi}c(\psi) dV_\omega.
	\end{equation}
\end{theorem}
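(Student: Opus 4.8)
The plan is to mimic Section~\ref{Sec:LogConcave}: first produce one $L^2$ extension of $F'$ to $\Omega$ with finite weighted norm via the basic existence theorem, then run Guan's minimal-$L^2$-integral (general concavity) machinery with the weight $e^{-\varphi-\psi}c(\psi)$ to extract the sharp constant. It is convenient to set
\begin{equation*}
	H(t):=\int_{-\infty}^{-t}c(\tau)\,d\tau,\qquad t\in[-A,+\infty),
\end{equation*}
which by \eqref{Eq:CondForC} is a positive, continuous, strictly decreasing function of $t$ (with $H(t)=e^{-t}$ in the model case $c(t)\equiv e^t$, $A=0$); it will play the role that $e^{-t}$ plays in Theorem~\ref{Thm:Concave}. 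Note that $\Omega_{-A}=\{\psi<A\}=\Omega$, and that we may assume $\alpha>-A$, since otherwise $\Omega_\alpha=\Omega$ and $F=F'$ does the job.

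First I would settle the existence step. Since $\varliminf_{t\to-\infty}c(t)e^{-t}>0$ and $c>0$ is continuous, $c(\psi)e^{-\psi}$ is bounded below by a positive constant on $\Omega_\alpha$; hence the hypothesis $\int_{\Omega_\alpha}|F'|_{\omega,h}^2e^{-\varphi-\psi}c(\psi)\,dV_\omega<+\infty$ forces $\int_{D\cap\Omega_\alpha}|F'|_{\omega,h}^2e^{-\varphi}\,dV_\omega<+\infty$ for all $D\Subset\Omega$ and $\int_{\{-\alpha-1<\psi<-\alpha\}}|F'|_{\omega,h}^2e^{-\varphi}\,dV_\omega<+\infty$. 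Applying Theorem~\ref{Thm:BasicL2Existence} with $a=\alpha$, $b=1$ and the given $c$, I obtain a holomorphic section $\widetilde F\in\Gamma(\Omega,K_\Omega\otimes E)$ with $\widetilde F|_{\Omega_\alpha}-F'\in\Gamma(\Omega_\alpha,\calO(K_\Omega\otimes E)\otimes\calI(\varphi+\psi))$ and a finite bound on $\int_\Omega|\widetilde F-\chi_{\alpha,1}(\psi)F'|_{\omega,h}^2e^{-\varphi-\psi}c(v_{\alpha,1}(\psi))\,dV_\omega$. To promote this to $\int_\Omega|\widetilde F|_{\omega,h}^2e^{-\varphi-\psi}c(\psi)\,dV_\omega<+\infty$ I would note that on $\{\psi\geq-\alpha\}$ one has $\chi_{\alpha,1}(\psi)=0$ and $v_{\alpha,1}(\psi)=\psi$, while on $\Omega_\alpha$ one has $\psi\leq v_{\alpha,1}(\psi)\leq-\alpha\leq t_0$, so $c(\psi)\leq c(v_{\alpha,1}(\psi))$ because $c$ is increasing on $(-\infty,t_0)$; combined with $|\chi_{\alpha,1}(\psi)F'|\leq|F'|$ and the triangle inequality this gives the desired finiteness.

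Next I would set up the minimal $L^2$ integral. For $t\geq-A$ put $\Omega_t:=\{\psi<-t\}$ and $\calA_t:=A^2(\Omega_t,K_\Omega\otimes E;(\det\omega)^{-1}\otimes h,e^{-\varphi-\psi}c(\psi)\,dV_\omega)$, let $F_t$ be the unique element of minimal norm in $\calA_t$ with $F_t-\widetilde F|_{\Omega_t}\in\Gamma(\Omega_t,\calO(K_\Omega\otimes E)\otimes\calI(\varphi+\psi))$, and set $I(t):=\|F_t\|_{\calA_t}^2$. Exactly as in Section~\ref{Sec:LogConcave}, $I$ is decreasing, right continuous, $I(t)\to0$ as $t\to+\infty$, and $I$ is either identically $0$ (in which case $F'\in\calI(\varphi+\psi)$ and $F\equiv0$ works) or everywhere positive; assume the latter. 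The heart of the matter is the analogue of \eqref{Eq:ConcaveIneq} and its rescaled form \eqref{Eq:ConcaveIneq2}, namely
\begin{equation*}
	\varliminf_{t\to0^+}\frac{I(a)-I(a+t)}{H(a)-H(a+t)}\ \geq\ \frac{I(R)-I(a)}{H(R)-H(a)}\qquad(-A\leq R<a),
\end{equation*}
proved precisely as in Theorem~\ref{Thm:Concave}, feeding Theorem~\ref{Thm:BasicL2Existence} (and, for the rescaled version, Theorem~\ref{Thm:L2ExistenceSub}) with the auxiliary functions $v_{a,t_j}$, $\chi_{a,t_j}$ and the given $c$ in place of $c\equiv e^t$; the weight book-keeping collapses into differences of $H$. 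By the lower semicontinuity of $r\mapsto I(H^{-1}(r))$ on $(0,H(-A)]$, this inequality forces $r\mapsto I(H^{-1}(r))$ to be concave; since $I$ and $H$ are decreasing and $I(H^{-1}(r))\to0$ as $r\to0^+$, the function $r\mapsto I(H^{-1}(r))/r$ is non-increasing, whence $I(-A)/H(-A)\leq I(\alpha)/H(\alpha)$ (using $\alpha\geq-t_0\geq-A$, so $H(\alpha)\leq H(-A)$).

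Finally, take $F:=F_{-A}\in\Gamma(\Omega,K_\Omega\otimes E)$. Restricting $F_{-A}-\widetilde F\in\calI(\varphi+\psi)$ to $\Omega_\alpha$ and adding $\widetilde F|_{\Omega_\alpha}-F'\in\calI(\varphi+\psi)$ yields \eqref{Eq:NewEqwrtI}; and since $F'$ is an admissible competitor in $\calA_\alpha$ (it lies in $\calA_\alpha$ by hypothesis and $F'-\widetilde F|_{\Omega_\alpha}\in\calI(\varphi+\psi)$), we get $I(\alpha)\leq\|F'\|_{\calA_\alpha}^2$, so
\begin{equation*}
	\int_\Omega|F|_{\omega,h}^2e^{-\varphi-\psi}c(\psi)\,dV_\omega=I(-A)\leq\frac{H(-A)}{H(\alpha)}\,I(\alpha)\leq\frac{\int_{-\infty}^Ac(\tau)\,d\tau}{\int_{-\infty}^{-\alpha}c(\tau)\,d\tau}\int_{\Omega_\alpha}|F'|_{\omega,h}^2e^{-\varphi-\psi}c(\psi)\,dV_\omega,
\end{equation*}
which is \eqref{Eq:NewOptEst}. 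The main obstacle is the concavity: one must transplant Guan's general-concavity argument (the proof of Theorem~\ref{Thm:Concave}) to a weakly pseudoconvex K\"ahler base and to the general weight $c(\psi)e^{-\psi}$, carrying out the weak-limit and Fatou estimates and checking that $c$ uniformly takes over the role of $e^t$ through the integral $H$; once the displayed slope inequality is in hand, everything else is formal.
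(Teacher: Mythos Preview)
Your overall strategy---produce an initial extension via Theorem~\ref{Thm:BasicL2Existence}, set up the minimal $L^2$ integral $I(t)$ in the weighted space $\calA_t$, and extract the sharp constant from a concavity of $r\mapsto I(H^{-1}(r))$---matches the paper's, and your existence step and final unpacking are fine. The gap is in the concavity step: you assert the slope inequality for \emph{all} $-A\leq R<a$ and hence concavity on the full interval $(0,H(-A)]$, saying it is ``proved precisely as in Theorem~\ref{Thm:Concave}''. But this is exactly where the transplant fails. In the proof of Theorem~\ref{Thm:Concave} (where $c(t)=e^t$), the key step \eqref{Eq:Ineq1} used $e^{-\psi-a}\geq1$ on $\Omega_a$; the analogue here is $c(v_a(\psi))\geq c(\psi)$, i.e.\ $c(-a)\geq c(\psi)$ for $\psi<-a$, which needs $c$ increasing on $(-\infty,-a]$, hence $a\geq-t_0$. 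Similarly, running the argument on a sublevel $\Omega_R$ via Theorem~\ref{Thm:L2ExistenceSub} requires $c$ increasing on $(-\infty,-R)$, hence $R\geq-t_0$ (while $R=-A$ is handled by Theorem~\ref{Thm:BasicL2Existence} directly). Thus the slope inequality is only available for $a\geq-t_0$ and $R\in\{-A\}\cup[-t_0,+\infty)$, and when $t_0<A$ you do \emph{not} get concavity on all of $(0,H(-A)]$. The paper flags this explicitly: ``the original arguments only work for the case that $c(t)$ is increasing on $(-\infty,A)$ \dots\ we need some modifications.''

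The paper's fix is to use only what the restricted slope inequality actually gives: concavity of $J(r):=I(H^{-1}(r))$ on the subinterval $(0,\kappa_0]$ with $\kappa_0:=H(-t_0)$, which yields $\varliminf_{r\to r_1^-}\tfrac{J(r_1)-J(r)}{r_1-r}\leq J(r_1)/r_1$ for $r_1\leq\kappa_0$; then the special endpoint case $R=-A$ of the slope inequality (legal, since Theorem~\ref{Thm:BasicL2Existence} applies on $\Omega$ itself) supplies $\tfrac{J(\kappa)-J(r_1)}{\kappa-r_1}\leq\varliminf_{r\to r_1^-}\tfrac{J(r_1)-J(r)}{r_1-r}$ with $\kappa:=H(-A)$. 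Chaining these gives $J(\kappa)/\kappa\leq J(r_1)/r_1$ for all $r_1\leq\kappa_0$, and since $\alpha\geq-t_0$ forces $H(\alpha)\leq\kappa_0$, this is precisely the inequality you need. So your proof sketch becomes correct once you replace ``concave on $(0,H(-A)]$'' by this two-step argument.
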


The essential idea of the following proof comes from Guan \cite{Guan2018arxiv}, but the original arguments only work for the case that $c(t)$ is increasing on $(-\infty,A)$ (compare the proof of Theorem \ref{Thm:Concave}). Therefore, we need some modifications.

\begin{proof}
	For any $t\geqslant-A$, we define $\Omega_t:=\{z\in\Omega:\psi(z)<-t\}$ and
	\begin{equation*}
		\calA_t := A^2(\Omega_t,K_\Omega\otimes E;(\det\omega)^{-1}\otimes h,e^{-\varphi-\psi}c(\psi)dV_\omega).
	\end{equation*}
	Using Theorem \ref{Thm:BasicL2Existence}, one can find an $L^2$ holomorphic section $\tilde{F}\in\calA_{-A}$ such that $\tilde{F}|_{\Omega_\alpha}-F'\in\Gamma(\Omega_\alpha, \calO(K_\Omega\otimes E)\otimes\calI(\varphi+\psi))$ (compare Remark \ref{Rmk:Coarse}). For each $t\geqslant-A$, let $F_t$ be the unique element with minimal norm in $\calA_t$ such that
	\begin{equation*}
		F_t-\tilde{F}|_{\Omega_t} \in \Gamma(\Omega_t,\calO(K_\Omega\otimes E)\otimes\calI(\varphi+\psi)).
	\end{equation*}
	For convenience, let $I(t):=\|F_t\|_{\calA_t}^2$. We may assume that $I(t)>0$ for all $t$.
	
	Let $a>R\geqslant-A$ be given, where $R\in\{-A\}\cup[-t_0,+\infty)$ and $a\geqslant-t_0$. We shall find a lower bound for $\varliminf_{t\to0^+}\frac{I(a)-I(a+t)}{t}$. It is sufficient to consider the case that $\varliminf_{t\rightarrow0^+} \frac{I(a)-I(a+t)}{t} < +\infty$. We choose a decreasing sequence $\{t_j\}_{j=1}^\infty$ of positive reals so that $t_j\searrow0$ and
	\begin{equation*}
		\varliminf_{t\rightarrow0^+} \frac{I(a) - I(a+t)}{t} = \lim_{j\rightarrow+\infty} \frac{I(a) - I(a+t_j)}{t_j}.
	\end{equation*}
	In particular, there exists a constant $C$ so that $\frac{I(a)-I(a+t_j)}{t_j} \leqslant C$ for all $j$.
	
	We apply Theorem \ref{Thm:BasicL2Existence} (if $R=-A$) or Theorem \ref{Thm:L2ExistenceSub} (if $R\geqslant-t_0$) to $\Omega_R$: for each $j\in\NN_+$, there exists a holomorphic section $f_j\in\Gamma(\Omega_R,K_\Omega\otimes E)$ such that
	\begin{equation*}
		f_j|_{\Omega_a}-F_a \in \Gamma(\Omega_a,\calO(K_\Omega\otimes E)\otimes \calI(\varphi+\psi))
	\end{equation*}
	and
	\begin{align*}
		&\, \int_{\Omega_R} |f_j-\chi_{a,t_j}(\psi)F_a|_{\omega,h}^2 e^{-\varphi-\psi}c(v_{a,t_j}(\psi)) dV_\omega \\
		\leqslant &\, \frac{1}{t_j}\int_{-a-t_j/2}^{-R}c(\tau)d\tau \int_{\{-a-t_j<\psi<-a\}} |F_a|_{\omega,h}^2 e^{-\varphi-\psi} dV_\omega.
	\end{align*}
	{\itshape Notice that, $v_{a,t_j}(t)\geqslant t$ on $(-\infty,-a)$ and $v_{a,t_j}(t)\equiv t$ on $[-a,+\infty)$. Since $-a\leqslant t_0$ and $c(t)$ is increasing on $(-\infty,t_0)$, we know $c(v_{a,t_j}(\psi)) \geqslant c(\psi)$.} Consequently,
	\begin{align*}
		&\, \int_{\Omega_R} |f_j-\chi_{a,t_j}(\psi)F_a|_{\omega,h}^2 e^{-\varphi-\psi}c(\psi) dV_\omega \\
		\leqslant &\, \int_{\Omega_R} |f_j-\chi_{a,t_j}(\psi)F_a|_{\omega,h}^2 e^{-\varphi-\psi}c(v_{a,t_j}(\psi)) dV_\omega \\
		\leqslant &\, \frac{1}{t_j} \frac{\int_{-a-t_j}^{-R}c(\tau)d\tau}{c(-a-t_j)} \int_{\{-a-t_j<\psi<-a\}} |F_a|_{\omega,h}^2 e^{-\varphi-\psi} c(\psi) dV_\omega \\
		\leqslant &\, \frac{\int_{-a-t_j}^{-R}c(\tau)d\tau}{c(-a-t_j)} \frac{I(a)-I(a+t_j)}{t_j} \leqslant C' < +\infty,
	\end{align*}
	where $C':=C\left(\int_{-a-t_1}^{-R}c(\tau)d\tau\right)/c(-a-t_1)<+\infty$. Since
	\begin{equation*}
		\int_{\Omega_R} |\chi_{a,t_j}(\psi)F_a|_{\omega,h}^2 e^{-\varphi-\psi}c(\psi) dV_\omega \leqslant \int_{\Omega_a} |F_a|_{\omega,h}^2 e^{-\varphi-\psi}c(\psi) dV_\omega < +\infty,
	\end{equation*}
	it is clear that $\{f_j\}_{j=1}^\infty$ is a bounded sequence in $\calA_R$. Since $e^{-\varphi-\psi}c(\psi)$ locally has a positive lower bound on $\Omega$, using Montel's theorem, we may assume that $\{f_j\}_{j=1}^\infty$ converges uniformly on any compact subsets of $\Omega_R$ to some holomorphic section $f\in\Gamma(\Omega_R,K_\Omega\otimes E)$. By Lemma \ref{Lemma:Coherent},
	\begin{equation*}
		f|_{\Omega_a}-F_a \in \Gamma(\Omega_a,\calO(K_\Omega\otimes E)\otimes \calI(\varphi+\psi)).
	\end{equation*}
	By the definitions of $\chi_{a,t_j}$ and $v_{a,t_j}$,
	\begin{equation*}
		\lim_{j\to+\infty} \chi_{a,t_j} = \one_{(-\infty,-a)} \quad\text{and}\quad
		\lim_{j\to+\infty} v_{a,t_j}(t) = v_a(t) :=
		\begin{cases} -a & (t<-a) \\ t & (t\geqslant-a) \end{cases}.
	\end{equation*}
	Then it follows from Fatou's lemma that
	\begin{align*}
		&\, \int_{\Omega_R} |f-\one_{\{\psi<-a\}}F_a|_{\omega,h}^2 e^{-\varphi-\psi}c(v_a(\psi)) dV_\omega \\
		\leqslant &\, \varliminf_{j\rightarrow+\infty} \int_{\Omega_R} |f_j-\chi_{a,t_j}(\psi)F_a|_{\omega,h}^2 e^{-\varphi-\psi}c(v_{a,t_j}(\psi)) dV_\omega \\
		\leqslant &\, \frac{\int_{-a}^{-R}c(\tau)d\tau}{c(-a)} \lim_{j\rightarrow+\infty}\frac{I(a)-I(a+t_j)}{t_j} \leqslant C'.
	\end{align*}
	Since $c(v_a(\psi))\geqslant c(\psi)$ and $\int_{\Omega_a}|F_a|_{\omega,h}^2e^{-\varphi-\psi}c(\psi)dV_\omega<+\infty$, we know $f\in\calA_R$.
	
	Notice that, $f|_{\Omega_a}\in \calA_a$ and $f|_{\Omega_a}-F_a \in \Gamma(\Omega_a,\calO(K_\Omega\otimes E)\otimes \calI(\varphi+\psi))$. By the minimality of $F_a\in \calA_a$, it is clear that $(f|_{\Omega_a} - F_a) \perp F_a$ in $\calA_a$ and then
	\begin{equation*}
		\|f|_{\Omega_a}\|_{\calA_a}^2 = \|f|_{\Omega_a}-F_a\|_{\calA_a}^2 + \|F_a\|_{\calA_a}^2.
	\end{equation*}
	For simplicity, we temporary write $d\mu=e^{-\varphi-\psi}c(\psi) dV_\omega$. Then
	\begin{align*}
		&\, \int_{\Omega_R} |f-\one_{\{\psi<-a\}}F_a|_{\omega,h}^2 e^{-\varphi-\psi}c(v_a(\psi)) dV_\omega \\
		= &\, \int_{\{-a\leqslant\psi<-R\}} |f|_{\omega,h}^2 e^{-\varphi-\psi}c(\psi) dV_\omega + \int_{\Omega_a} |f-F_a|_{\omega,h}^2 e^{-\varphi-\psi}c(-a) dV_\omega \\
		\geqslant &\, \int_{\{-a\leqslant\psi<-R\}} |f|_{\omega,h}^2 e^{-\varphi-\psi}c(\psi) dV_\omega + \int_{\Omega_a} |f-F_a|_{\omega,h}^2 e^{-\varphi-\psi}c(\psi) dV_\omega \\
		= &\, \int_{\{-a\leqslant\psi<-R\}} |f|_{\omega,h}^2 d\mu + \int_{\Omega_a} |f|_{\omega,h}^2 d\mu - \int_{\Omega_a} |F_a|_{\omega,h}^2 d\mu \\
		= &\, \|f\|_{\calA_R}^2 - \|F_a\|_{\calA_a}^2 \geqslant I(R) - I(a).
	\end{align*}
	In summary, for any $a>R\geqslant-A$ with $R\in\{-A\}\cup[-t_0,+\infty)$ and $a\geqslant-t_0$, we prove that
	\begin{equation*}
		I(R) - I(a) \leqslant \frac{\int_{-\infty}^{-R}c(\tau)d\tau-\int_{-\infty}^{-a}c(\tau)d\tau}{c(-a)} \varliminf_{t\to0^+}\frac{I(a)-I(a+t)}{t}.
	\end{equation*}
	Consequently,
	\begin{equation} \label{Eq:LowBdDer}
		\frac{I(R) - I(a)}{ \int_{-\infty}^{-R}c(\tau)d\tau-\int_{-\infty}^{-a}c(\tau)d\tau } \leqslant \varliminf_{t\to0^+}\frac{I(a)-I(a+t)}{ \int_{-\infty}^{-a}c(\tau)d\tau-\int_{-\infty}^{-a-t}c(\tau)d\tau }.
	\end{equation}
	
	We define $i_c(t):=\int_{-\infty}^{-t}c(\tau)d\tau$, then $i_c(t)$ is a strictly decreasing function on $[-A,+\infty)$. Let $\kappa:=i_c(-A)=\int_{-\infty}^Ac(\tau)d\tau$ and $\kappa_0:=i_c(-t_0)$. We denote by $i_c^{-1}$ the inverse function of $i_c$, then $i_c^{-1}$ is strictly decreasing on $(0,\kappa]$. Let
	\begin{equation*}
		r_2=i_c(R), \quad r_1=i_c(a) \quad\text{and}\quad r=i_c(a+t),
	\end{equation*}
	then \eqref{Eq:LowBdDer} can be reformulated as
	\begin{equation} \label{Eq:LowBdDer2}
		\frac{I(i_c^{-1}(r_2)) - I(i_c^{-1}(r_1))}{r_2-r_1} \leqslant
		\varliminf_{r\to r_1^-} \frac{I(i_c^{-1}(r_1)) - I(i_c^{-1}(r))}{r_1-r}.
	\end{equation}
	Notice that, we need to assume $R\in\{-A\}\cup[-t_0,+\infty)$ and $a\geqslant-t_0$. Therefore, the inequality \eqref{Eq:LowBdDer2} holds for any $0<r_1<r_2\leqslant\kappa$ with $r_2\in\{\kappa\}\cup(0,\kappa_0]$ and $r_1\leqslant\kappa_0$.
	
	By \eqref{Eq:LowBdDer2} and Lemma 4.7 of \cite{Guan2019}, we conclude that $J(r):=I(i_c^{-1}(r))$ is a positive concave increasing function on $(0,\kappa_0]$. By the concavity and \eqref{Eq:LowBdDer2}, it is clear that
	\begin{equation*}
		\frac{J(\kappa)-J(r_1)}{\kappa-r_1} \leqslant \varliminf_{r\to r_1^-} \frac{J(r_1)-J(r)}{r_1-r} \leqslant \frac{J(r_1)}{r_1}, \quad r_1\leqslant\kappa_0.
	\end{equation*}
	Therefore, $J(\kappa)/\kappa \leqslant J(r_1)/r_1$ for any $r_1\leqslant\kappa_0$. Equivalently,
	\begin{equation*}
		\frac{\int_\Omega|F_{-A}|_{\omega,h}^2 e^{-\varphi-\psi}c(\psi)dV_\omega}{ \int_{-\infty}^Ac(\tau)d\tau } \leqslant \frac{\int_{\Omega_a}|F_a|_{\omega,h}^2 e^{-\varphi-\psi}c(\psi)dV_\omega}{ \int_{-\infty}^{-a}c(\tau)d\tau }, \quad a\geqslant-t_0.
	\end{equation*}
	Clearly, $F_{-A}|_{\Omega_\alpha}-F'\in\Gamma(\Omega_\alpha, \calO(K_\Omega\otimes E)\otimes\calI(\varphi+\psi))$.
	Since $\|F_\alpha\|_{\calA_\alpha}^2 \leqslant \|F'\|_{\calA_\alpha}^2$ and $\alpha\geqslant-t_0$, we find that
	\begin{equation*}
		\int_\Omega|F_{-A}|_{\omega,h}^2 e^{-\varphi-\psi}c(\psi)dV_\omega \leqslant
		\frac{\int_{-\infty}^Ac(\tau)d\tau}{\int_{-\infty}^{-\alpha}c(\tau)d\tau} \int_{\Omega_\alpha}|F'|_{\omega,h}^2 e^{-\varphi-\psi}c(\psi)dV_\omega.
	\end{equation*}
	Therefore, $F_{-A}\in\Gamma(\Omega,K_\Omega\otimes E)$ satisfies all the requirements.
\end{proof}

In Theorem \ref{Thm:NewOptOpenExt}, let $\Omega=\BB^n(0;\exp(\frac{A}{2(n+k)}))$ be a ball in $\CC^n$, $\psi=2(n+k)\log|z|$ and $\varphi\equiv0$, where $k\in\NN$. Let $F'\in\calO(\Omega_\alpha)$ be a homogeneous polynomial of degree $k$, then the minimal $L^2$ extension subjects to \eqref{Eq:NewEqwrtI} is the same polynomial, and the estimate \eqref{Eq:NewOptEst} is optimal (compare Example \ref{Ex:GenOptConst}).

As an application of Theorem \ref{Thm:NewOptOpenExt}, we prove Theorem \ref{MainThm:DemExt}. By the same reason as above, the uniform estimate in Theorem \ref{MainThm:DemExt} is also optimal.

\begin{proof}[The proof of Theorem \ref{MainThm:DemExt}]
	Let $\psi=2(m+p)\log|w|$, then $\psi\in\psh(\Omega)$ and $U=\{\psi<0\}$. Clearly, $\calI(\psi)_x\subset\frakm_x^{m+1}$ for any $x\in S$. We shall apply Theorem \ref{Thm:NewOptOpenExt} with
	\begin{equation*}
		c(t)=\frac{e^t}{(1+e^{t/(m+p)})^{m+p+\eps}}.
	\end{equation*}
	By direct computations,
	\begin{equation*}
		(\log c)' = \frac{m+p-\eps e^{t/(m+p)}}{(m+p)(1+e^{t/(m+p)})}, \quad
		(\log c)'' = \frac{-(m+p+\eps)e^{t/(m+p)}}{(m+p)^2(1+e^{t/(m+p)})^2}.
	\end{equation*}
	Let $t_0:=(m+p)\log(\eps^{-1}(m+p))$, then $(\log c)'>0$ on $(-\infty,t_0)$, while $(\log c)'<0$ and $(\log c)''<0$ on $(t_0,A)$. Therefore, $c(t)$ satisfies the conditions \eqref{Eq:CondForC} and \eqref{Eq:IneqForC} (see the paragraph after \eqref{Eq:IneqForC}). Notice that, $t_0>0$ and
	\begin{align*}
		\int_{-\infty}^{+\infty}c(t)dt & = \int_0^{+\infty} \frac{(m+p)s^{m+p-1}}{(1+s)^{m+p+\eps}}ds \\
		& = \int_0^1 (m+p)\tau^{m+p-1}(1-\tau)^{\eps-1}d\tau.
	\end{align*}
	There is a similar computation for $\int_{-\infty}^0c(t)dt$. Then the theorem follows from a direct application of Theorem \ref{Thm:NewOptOpenExt}.
\end{proof}

\subsection{The Optimal \texorpdfstring{$L^2$}{L2} Extension Theorem of Guan-Zhou} \hfill

The essential idea of Theorem \ref{Thm:BasicL2Existence} is contained in Guan-Zhou's article \cite{GuanZhou2015} on optimal $L^2$ extensions. Indeed, let $a\rightarrow+\infty$ in Theorem \ref{Thm:BasicL2Existence}, we can easily obtain again the optimal $L^2$ extension theorem of Guan-Zhou. Similarly, let $a\rightarrow+\infty$ in Theorem \ref{Thm:CoarseExtThm}, we can prove a slightly different optimal $L^2$ extension theorem.

In this section, let $M$ be a complex manifold of dimension $n$, $S\subset M$ be a closed analytic subset and $(E,h)$ be a Hermitian holomorphic vector bundle over $M$.

\begin{definition} \label{Def:PolarFun}
	Given $A\in(-\infty,+\infty]$, we denote by $\#_A(M,S)$ the set of all upper semi-continuous functions $\psi:M\rightarrow[-\infty,A)$ satisfying the following conditions:
	
	(1) $\psi^{-1}(-\infty) = \{z\in M:\psi(z)=-\infty\}$ is a closed set containing $S$;
	
	(2) for any regular point $w\in S_{\reg}$, if $S$ is $(n-p)$-dimensional around $w$, then there exists a coordinate chart $(U,z=(z_1,\ldots,z_n))$ around $w$ such that $S\cap U=\{z_1=\cdots=z_p=0\}$ and
	\begin{equation*}
		\sup\nolimits_{U\backslash S}|\psi(z)-\log(|z_1|^2+\cdots+|z_p|^2)^p|<+\infty.
	\end{equation*}
	Clearly, if $\psi\in\#_A(M,S)$, then $\calI(\psi)_x\subset\frakm_x$ for any $x\in S$.
\end{definition}

Let $\psi\in\#_A(M,S)$, $b\in\RR_+$ and $dV_M$ be a continuous volume form on $M$. Following \cite{OhsawaExtV,GuanZhou2015}, we define a positive measure $dV_M^b[\psi]$ on $S_{\reg}$ as the minimal element of the partially ordered set of all positive measures $d\mu$ satisfying
\begin{equation} \label{Def:Mes1} \begin{aligned}
		\int_{S_{\reg}} g d\mu \geqslant \varlimsup_{a\rightarrow+\infty} \frac{1}{b} \int_{\{-a-b<\psi<-a\}}ge^{-\psi}dV_M, \qquad\qquad \\
		0\leqslant g\in C_c^0(M\backslash S_{\sing}).
\end{aligned}\end{equation}
{\itshape For simplicity, we drop the coefficients from the original definition!}
Similarly, we let $dV_M^*[\psi]$ be the minimal positive measure on $S_{\reg}$ such that
\begin{equation} \label{Def:Mes2}
	\int_{S_{\reg}} g dV_M^*[\psi] \geqslant \varlimsup_{a\rightarrow+\infty} e^a \int_{\{\psi<-a\}}gdV_M,
	\quad 0\leqslant g\in C_c^0(M\backslash S_{\sing}).
\end{equation}

Assume that there exists a closed subset $X\subset M$ such that
\begin{enumerate}
	\item[(a)] $X$ is locally negligible with respect to $L^2$ holomorphic functions, i.e. for any open set $U\subset M$, one has $A^2(U\setminus X)=A^2(U)$;
	\item[(b)] $M\backslash X$ is a Stein manifold that intersects with every component of $S$ and $S_{\sing}\subset X$.
\end{enumerate}
In particular, if $M$ is a Stein manifold / a projective manifold / a projective family, then we can find a closed subset $X\subset M$ satisfying the conditions (a) and (b).

Let $\omega$ be a continuous Hermitian metric on $M$ and let $dV_\omega=\frac{\omega^n}{n!}$ be the associated volume form. Let $c(t)>0$ be a smooth function on $(-\infty,A)$ satisfying the conditions \eqref{Eq:CondForC} and \eqref{Eq:IneqForC}. We further assume that
\begin{enumerate}
	\item[(i)] $\psi\in \psh(M)\cap C^\infty(M\backslash(S\cup X))$;
	\item[(ii)] $(E,he^{-\psi})$ is Nakano semi-positive on $M\backslash(S\cup X)$.
\end{enumerate}
Let $a\rightarrow+\infty$ in Theorem \ref{Thm:BasicL2Existence}, we recover the Main Theorem 2 of Guan-Zhou \cite{GuanZhou2015} (compare the proof Theorem \ref{Thm:NewMesExt} and \ref{Thm:OptJetExt}).

\begin{theorem}[see \cite{GuanZhou2015}] \label{Thm:GuanZhouExt}
	Let $(M,S,X)$, $(E,h)$, $\psi$, $\omega$, $c(t)$ be the same as above, and we define a positive measure $dV_\omega^b[\psi]$ on $S_{\reg}$ by \eqref{Def:Mes1}. For any holomorphic section $f\in\Gamma(S,(K_M\otimes E)|_S)$ satisfying $\int_{S_{\reg}} |f|_{\omega,h}^2 dV_\omega^b[\psi] < +\infty$,
	there exists a holomorphic section $F\in\Gamma(M,K_M\otimes E)$ such that $F|_S=f$ and
	\begin{equation*}
		\int_M c(\psi)|F|_{\omega,h}^2e^{-\psi}dV_\omega \leqslant \int_{-\infty}^Ac(t)dt \int_{S_{\reg}} |f|_{\omega,h}^2 dV_\omega^b[\psi].
	\end{equation*}
\end{theorem}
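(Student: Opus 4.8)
The plan is to deduce the statement from Theorem \ref{Thm:BasicL2Existence} by letting $a\to+\infty$. First I would restrict everything to $\Omega:=M\setminus X$, which by hypothesis (b) is a Stein manifold, hence weakly pseudoconvex Kähler, so that the $L^2$ machinery of Theorem \ref{Thm:BasicL2Existence} applies; at the very end the resulting section will be pushed back to $M$ using the negligibility hypothesis (a). On $\Omega$ the set $S$ meets every component of $S$ and $S_{sing}\subset X$, so $S\cap\Omega$ is a closed submanifold of $\Omega$ and the relevant data are in place: $\psi\in\psh(\Omega)$, the measure $dV_\omega^b[\psi]$ on $S_{reg}$ from \eqref{Def:Mes1}, and the curvature condition of Theorem \ref{Thm:BasicL2Existence} read with $\varphi\equiv0$, $\rho=0$, $\gamma=0$, which is precisely the hypothesis (ii) that $(E,he^{-\psi})$ is Nakano semi-positive. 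The singularity of $\psi$ along $S$ is absorbed exactly as in the proof of Theorem \ref{Thm:BasicL2Existence}, through the equisingular approximation of Theorem \ref{Thm:EquiSingAppro}.

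Next, for each $a\gg1$ I would produce the holomorphic input that Theorem \ref{Thm:BasicL2Existence} requires. The sublevel set $\Omega_a=\{\psi<-a\}\subset\Omega$ is again Stein, and $S\subset\{\psi=-\infty\}\subset\Omega_a$, so by the Oka--Cartan extension theorem there is a holomorphic section $f_a\in\Gamma(\Omega_a,K_M\otimes E)$ with $f_a|_{S}=f$. After arranging the finiteness conditions \eqref{Eq:L2ExistenceCond1} and \eqref{Eq:L2ExistenceCond2} for $f_a$ --- which, on a possibly noncompact $\Omega$, is handled by first working on a weakly pseudoconvex exhaustion $D^k\Subset D^{k+1}\Subset\Omega$ and taking a further limit in $k$, exactly as in the proofs of Theorem \ref{Thm:BasicL2Existence} and Theorem \ref{Thm:L2ExistenceSub} --- an application of Theorem \ref{Thm:BasicL2Existence} yields $F_a\in\Gamma(\Omega,K_M\otimes E)$ with $F_a|_{\Omega_a}-f_a\in\Gamma(\Omega_a,\calO(K_M\otimes E)\otimes\calI(\psi))$, hence $F_a|_{S}=f$ since $\calI(\psi)_x\subset\frakm_x$ on $S$ by Definition \ref{Def:PolarFun}, together with
\[
\int_\Omega |F_a-\chi_{a,b}(\psi)f_a|_{\omega,h}^2\, e^{-\psi}c(v_{a,b}(\psi))\, dV_\omega
\leq \frac{1}{b}\int_{-a-b/2}^{A}c(t)\,dt\;\int_{\{-a-b<\psi<-a\}} |f_a|_{\omega,h}^2\, e^{-\psi}\, dV_\omega .
\]

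Then I would pass to the limit $a\to+\infty$. On $\{\psi\geq-a\}$ one has $\chi_{a,b}(\psi)=0$ and $v_{a,b}(\psi)=\psi$, so the displayed inequality already bounds $\int_{\{\psi\geq-a\}}c(\psi)|F_a|_{\omega,h}^2 e^{-\psi}dV_\omega$ by its right-hand side; using $\varliminf_{t\to-\infty}c(t)e^{-t}>0$ and $\int_{-\infty}^{A}c(t)\,dt<\infty$ from \eqref{Eq:CondForC}, this turns into uniform local $L^2$ bounds for $\{F_a\}$, so by Montel's theorem a subsequence converges locally uniformly on $\Omega$ to a holomorphic section $F$, and Fatou's lemma gives
\[
\int_\Omega c(\psi)|F|_{\omega,h}^2 e^{-\psi}\, dV_\omega
\leq \Big(\int_{-\infty}^{A}c(t)\,dt\Big)\cdot\varlimsup_{a\to+\infty}\frac{1}{b}\int_{\{-a-b<\psi<-a\}} |f_a|_{\omega,h}^2\, e^{-\psi}\, dV_\omega .
\]
It remains to identify the last $\varlimsup$ with $\int_{S_{reg}}|f|_{\omega,h}^2\,dV_\omega^b[\psi]$: this follows from the defining inequality \eqref{Def:Mes1} of the minimal measure $dV_\omega^b[\psi]$, once one checks that in the local coordinates of Definition \ref{Def:PolarFun} near $S_{reg}$ the functions $|f_a|_{\omega,h}^2$ converge to $|f|_{\omega,h}^2$ along the shrinking shells $\{-a-b<\psi<-a\}$ and that the portion of these shells away from $S_{reg}$ contributes nothing in the limit. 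Finally $F$ extends across $X$ by hypothesis (a), and $F|_S=f$ by continuity together with the density of $S_{reg}\setminus X$ in $S$, which completes the proof.

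I expect the main obstacle to be precisely this last identification of the boundary integral with the surface measure: controlling the annular integrals uniformly near $S_{reg}$, verifying that $|f_a|_{\omega,h}^2$ genuinely approaches the restriction $|f|_{\omega,h}^2$ in the relevant averaged sense, and confirming that the possibly larger set $\{\psi=-\infty\}$ does not pollute the limit. The secondary technical point is the bookkeeping needed to make the inputs $f_a$ satisfy \eqref{Eq:L2ExistenceCond1}--\eqref{Eq:L2ExistenceCond2} on a noncompact $M\setminus X$, but this is the same difficulty already dealt with via exhaustions in Theorem \ref{Thm:BasicL2Existence} and Theorem \ref{Thm:L2ExistenceSub}.
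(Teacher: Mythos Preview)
Your overall strategy---restrict to $\Omega=M\setminus X$, apply Theorem~\ref{Thm:BasicL2Existence} on a Stein exhaustion $D^k\Subset\Omega$, let $a\to+\infty$ and then $k\to+\infty$ via Montel, and finally extend across $X$---is exactly the paper's route (compare the proofs of Theorems~\ref{Thm:NewMesExt} and~\ref{Thm:OptJetExt}). But you introduce an unnecessary complication that creates the very ``main obstacle'' you flag at the end.

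The issue is your choice to construct, for each $a$, a separate Oka--Cartan extension $f_a\in\Gamma(\Omega_a,K_M\otimes E)$. With $f_a$ varying arbitrarily in $a$, there is no reason for $|f_a|_{\omega,h}^2$ to converge to anything along the shells $\{-a-b<\psi<-a\}$, so the identification of the $\varlimsup$ with $\int_{S_{reg}}|f|_{\omega,h}^2\,dV_\omega^b[\psi]$ is not justified. The paper avoids this entirely by fixing a \emph{single} extension $\widetilde f\in\Gamma(\Omega,K_M\otimes E)$ with $\widetilde f|_{S\setminus X}=f$ (Stein theory on $\Omega$) and using $\widetilde f|_{\Omega_a}$ as the input for every $a$. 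The right-hand side of the estimate then carries the fixed continuous function $|\widetilde f|_{\omega,h}^2$; after inserting a cutoff $0\leq\eta\leq1$ with $\eta\in C_c^\infty(\Omega)$ and $\eta|_{D^k}\equiv1$, the defining inequality~\eqref{Def:Mes1} applies directly to $g=\eta|\widetilde f|_{\omega,h}^2\in C_c^0(M\setminus S_{sing})$ and the limit identification is immediate. No local analysis of shrinking shells or convergence of varying integrands is needed.

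One smaller point: your remark that the curvature hypothesis is ``read with $\varphi\equiv0$, $\rho=0$, $\gamma=0$'' and handled by the equisingular approximation of Theorem~\ref{Thm:EquiSingAppro} is not quite right. As the paper warns just after Theorem~\ref{Thm:NewMesExt}, there is in general no continuous $\gamma\geq0$ on $\Omega$ with $\ddbar\psi\geq\gamma$ and $\sqrt{-1}\Theta(E,h)+\gamma\otimes\Id_E\geq_\Nak0$; the point is rather that $\psi$ is already smooth on $\Omega\setminus S'$, so no approximation of $\psi$ is needed and hypothesis~(ii) is used directly inside the proof of Theorem~\ref{Thm:BasicL2Existence}.
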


Similarly, let $a\rightarrow+\infty$ in Theorem \ref{Thm:CoarseExtThm}, we can prove a slightly different optimal $L^2$ extension theorem. Notice that, the asymptotically optimal estimate of Theorem \ref{Thm:CoarseExtThm} is enough to do that!

\begin{theorem}[Assume that $A=0$] \label{Thm:NewMesExt}
	Let $(M,S,X)$, $(E,h)$, $\psi$, $\omega$ be the same as above, and we define a positive measure $dV_\omega^*[\psi]$ on $S_{\reg}$ by \eqref{Def:Mes2}. Then for any holomorphic section $f\in\Gamma(S,(K_M\otimes E)|_S)$ satisfying $\int_{S_{\reg}} |f|_{\omega,h}^2 dV_\omega^*[\psi] < +\infty$,
	there exists a holomorphic section $F\in\Gamma(M,K_M\otimes E)$ such that $F|_S=f$ and
	\begin{equation*}
		\int_M |F|_{\omega,h}^2 dV_\omega \leqslant \int_{S_{\reg}}|f|_{\omega,h}^2dV_\omega^*[\psi].
	\end{equation*}
\end{theorem}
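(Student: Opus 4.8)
The plan is to obtain Theorem \ref{Thm:NewMesExt} as the limiting case $a\to+\infty$ of the asymptotically optimal estimate of Theorem \ref{Thm:CoarseExtThm}, following the scheme by which Guan-Zhou \cite{GuanZhou2015} derive Theorem \ref{Thm:GuanZhouExt} from Theorem \ref{Thm:BasicL2Existence}. The key numerical point is that the uniform constant $e^a+1$ of Theorem \ref{Thm:CoarseExtThm}, divided by the factor $e^a$ that appears in the definition \eqref{Def:Mes2} of $dV_\omega^*[\psi]$, equals $1+e^{-a}\to1$; so the asymptotically optimal bound already suffices to produce the sharp constant $1$.

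First I would reduce the problem to one of openness type. Since $M\setminus X$ is Stein (condition (b)) and $S\setminus X$ is a closed analytic subset of it, the Oka-Cartan theory provides a holomorphic section $\widetilde{f}\in\Gamma(\widetilde{U},K_M\otimes E)$ on an open neighbourhood $\widetilde{U}$ of $S$ in $M\setminus X$ with $\widetilde{f}=f$ on $S\cap\widetilde{U}$; because $\{\psi<-a\}$ shrinks to $\psi^{-1}(-\infty)\subset S\cup X$ and $\psi$ has the polar-type local form of Definition \ref{Def:PolarFun} near $S_{reg}$, after shrinking $\widetilde{U}$ I may assume $\{\psi<-a\}\subset\widetilde{U}$ and $\int_{\{\psi<-a\}}|\widetilde{f}|_{\omega,h}^2dV_\omega<+\infty$ for all large $a$. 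The analytic heart of this step is the residue inequality
\begin{equation*}
	\varlimsup_{a\to+\infty}e^a\int_{\{\psi<-a\}}|\widetilde{f}|_{\omega,h}^2dV_\omega \leq \int_{S_{reg}}|f|_{\omega,h}^2dV_\omega^*[\psi],
\end{equation*}
valid for every such extension $\widetilde{f}$; it is proved by a partition of unity subordinate to $M\setminus S_{sing}$, the explicit local form of $\psi$ along $S_{reg}$, and the definition of $dV_\omega^*[\psi]$ as the minimal positive measure satisfying \eqref{Def:Mes2} — exactly the mean-value computation of \cite{OhsawaExtV} and \cite[\S2]{GuanZhou2015}. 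The right-hand side is finite by hypothesis.

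Next, for each large $a$ I would apply Theorem \ref{Thm:CoarseExtThm} on the manifold $M\setminus X$ (Stein, hence weakly pseudoconvex K\"ahler) with defining function $\psi$: conditions (i) and (ii) — $\psi\in\psh(M)\cap C^\infty(M\setminus(S\cup X))$ and $(E,he^{-\psi})$ Nakano semi-positive on $M\setminus(S\cup X)$ — supply, via the same combination of weights and equisingular approximations (Theorem \ref{Thm:EquiSingAppro}) that drives the proof of Theorem \ref{Thm:CoarseExtThm}, the curvature positivity needed there. This produces $F_a\in\Gamma(M\setminus X,K_M\otimes E)$ with $F_a-\widetilde{f}\in\Gamma(\{\psi<-a\},\calO(K_M\otimes E)\otimes\calI(\psi))$ — hence $F_a|_S=f$, since $\calI(\psi)_x\subset\frakm_x$ for $x\in S$ by Definition \ref{Def:PolarFun} — and
\begin{equation*}
	\int_{M\setminus X}|F_a|_{\omega,h}^2dV_\omega \leq (e^a+1)\int_{\{\psi<-a\}}|\widetilde{f}|_{\omega,h}^2dV_\omega = (1+e^{-a})\,e^a\int_{\{\psi<-a\}}|\widetilde{f}|_{\omega,h}^2dV_\omega.
\end{equation*}
By condition (a), $X$ is locally negligible for $L^2$ holomorphic sections, so each $F_a$ extends uniquely, with the same norm, to an element of $\Gamma(M,K_M\otimes E)$. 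By Step 1 the family $\{F_a\}$ is uniformly bounded in $A^2(M,K_M\otimes E)$, so Montel's theorem yields a subsequence $F_{a_k}$ converging locally uniformly to some $F\in\Gamma(M,K_M\otimes E)$ with $F|_S=f$; then Fatou's lemma together with the residue inequality gives
\begin{equation*}
	\int_M|F|_{\omega,h}^2dV_\omega \leq \varliminf_{k\to\infty}(1+e^{-a_k})\,e^{a_k}\int_{\{\psi<-a_k\}}|\widetilde{f}|_{\omega,h}^2dV_\omega \leq \int_{S_{reg}}|f|_{\omega,h}^2dV_\omega^*[\psi],
\end{equation*}
which is the desired estimate.

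I expect the main obstacle to be Step 1: making the passage from the shrinking volume integrals $e^a\int_{\{\psi<-a\}}|\widetilde{f}|_{\omega,h}^2dV_\omega$ to the residue integral $\int_{S_{reg}}|f|_{\omega,h}^2dV_\omega^*[\psi]$ fully rigorous, including the contributions near $S_{sing}$ and over the negligible set $X$; this rests on the localization built into \eqref{Def:Mes2} (testing against $g\in C_c^0(M\setminus S_{sing})$) and on the explicit local description of $\psi$ in Definition \ref{Def:PolarFun}. The curvature bookkeeping needed to feed $(E,he^{-\psi})$ into Theorem \ref{Thm:CoarseExtThm} on $M\setminus X$ is routine but must be spelled out, since $he^{-\psi}$ is only a singular metric and the positivity in condition (ii) holds only off $S\cup X$.
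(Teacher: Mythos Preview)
Your strategy is exactly the paper's: apply Theorem \ref{Thm:CoarseExtThm} with the asymptotically optimal constant $e^a+1$ and let $a\to+\infty$, using that $(e^a+1)/e^a\to1$ matches the definition \eqref{Def:Mes2} of $dV_\omega^*[\psi]$. The curvature bookkeeping you flag is handled just as you say (and as the paper remarks after its proof): since $\psi$ is smooth on $\Omega\setminus S'$, no approximation of $\psi$ is needed, and condition (ii) supplies the required positivity directly.

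There is, however, one genuine gap in your execution. You apply Theorem \ref{Thm:CoarseExtThm} on all of $\Omega=M\setminus X$ at once, which requires the global finiteness $\int_{\{\psi<-a\}}|\widetilde f|_{\omega,h}^2dV_\omega<+\infty$ and the residue inequality for the \emph{non}-compactly-supported integrand $|\widetilde f|_{\omega,h}^2$. Neither follows: the sublevel sets $\{\psi<-a\}$ need not be relatively compact (so your assertion $\{\psi<-a\}\subset\widetilde U$ after shrinking $\widetilde U$ fails in general), and the definition \eqref{Def:Mes2} only tests against $g\in C_c^0(M\setminus S_{sing})$ --- passing to the supremum over compactly supported minorants gives an inequality in the wrong direction for the $\varlimsup$.

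The paper fixes this cleanly by exhausting $\Omega$ with relatively compact Stein domains $D^k\Subset D^{k+1}\Subset\Omega$ and extending $\widetilde f$ globally on $\Omega$ (not just on a neighbourhood of $S$). On each $D^k$ the section $\widetilde f$ is automatically $L^2$, so Theorem \ref{Thm:CoarseExtThm} applies; then one bounds the right-hand side using a compactly supported cutoff $\eta\in C_c^\infty(\Omega)$ with $\eta|_{D^k}\equiv1$, so that $\eta|\widetilde f|_{\omega,h}^2\in C_c^0(M\setminus S_{sing})$ is a legitimate test function for \eqref{Def:Mes2}. Two Montel limits ($a\to+\infty$, then $k\to+\infty$) give $F$ on $\Omega$, and the negligibility of $X$ extends it to $M$. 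Your outline becomes correct once you insert this exhaustion step.
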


\begin{proof}
	Let $\Omega=M\backslash X$ and $S'=S\backslash X$, then $\Omega$ is Stein and $S'\subset\Omega$ is smooth. Since $\Omega$ is a Stein manifold, there exists a holomorphic section $\tilde{f}\in\Gamma(\Omega,K_M\otimes E)$ such that $\tilde{f}=f$ on $S'$. We choose a sequence $\{D^k\}_{k=0}^\infty$ of relatively compact Stein domains in $\Omega$ such that $D^k\Subset D^{k+1}\Subset\Omega$ and $\cup_kD^k=\Omega$.
	
	Let $k\in\NN$ be fixed for the moment. Clearly, $\tilde{f}\in A^2(D^k,K_M\otimes E)$. According to Theorem \ref{Thm:CoarseExtThm}, for each $a>0$, there exists an $F_{k,a}\in\Gamma(D^k,K_M\otimes E)$ such that $F_{k,a}=f$ on $S'\cap D^k$ and
	\begin{equation*}
		\int_{D^k}|F_{k,a}|_{\omega,h}^2dV_\omega \leqslant (e^a+1)\int_{\{z\in D^k:\psi(z)<-a\}}|\tilde{f}|_{\omega,h}^2dV_\omega.
	\end{equation*}
	Let $0\leqslant\eta\leqslant1$ be a compactly supported smooth function on $\Omega$ with $\eta|_{D^k}\equiv 1$, then
	\begin{align*}
		&\, \varlimsup_{a\rightarrow+\infty} (e^a+1)\int_{\{\psi<-a\}\cap D^k}|\tilde{f}|_{\omega,h}^2dV_\omega \\
		\leqslant &\, \varlimsup_{a\rightarrow+\infty} (e^a+1)\int_{\{\psi<-a\}}\eta|\tilde{f}|_{\omega,h}^2dV_\omega
		= \varlimsup_{a\rightarrow+\infty} e^a\int_{\{\psi<-a\}}\eta|\tilde{f}|_{\omega,h}^2dV_\omega \\
		\leqslant &\, \int_{S_{\reg}}\eta|\tilde{f}|_{\omega,h}^2dV_\omega^*[\psi] \leqslant \int_{S_{\reg}}|f|_{\omega,h}^2dV_\omega^*[\psi] < +\infty.
	\end{align*}
	Let $a\to+\infty$ and then $k\to+\infty$, using Montel's theorem twice, we find a holomorphic section $F\in\Gamma(\Omega,K_\Omega\otimes E)$ such that $F=f$ on $S'$ and
	\begin{equation*}
		\int_{\Omega}|F|_{\omega,h}^2dV_\omega \leqslant \int_{S_{\reg}}|f|_{\omega,h}^2dV_\omega^*[\psi].
	\end{equation*}
	Since $X$ is locally negligible with respect to $L^2$ holomorphic functions, $F$ extends to the whole manifold $M$ as a holomorphic section of $K_M\otimes E$. Clearly, $F|_S=f$ and $F$ satisfies the desired estimate on $M$.
\end{proof}

There is a few words of warning: we do not have a continuous real $(1,1)$-form $\gamma\geqslant0$ on $\Omega=M\backslash X$ such that $\ddbar\psi\geqslant\gamma$ and $\sqrt{-1}\Theta(E,h)+\gamma\otimes\Id_E \geqslant_\Nak 0$. However, since $\psi\in C^\infty(\Omega\backslash S')$, we do not need the approximation procedure for $\psi$, then the condition (ii) is exactly what we need to complete the proofs of Theorem \ref{Thm:CoarseExtThm} and \ref{Thm:BasicL2Existence}. Therefore, both theorems are available in our situation.

\begin{remark}
	Given $\psi\in\#_\infty(M,S)$ and $b\in\RR_+$, it is easy to show that
	\begin{equation*}
		dV_M^b[\psi] \leqslant \frac{e^b}{b} dV_M^*[\psi] \quad\text{and}\quad dV_M^*[\psi] \leqslant \frac{b}{1-e^{-b}} dV_M^b[\psi].
	\end{equation*}
	
	The computations of $dV_M^b[\psi]$ and $dV_M^*[\psi]$ are local. Therefore, we may assume that $M=\BB^p\times\BB^{n-p}$, $S=\{0\}\times\BB^{n-p}$ and $C:=\sup_{M\backslash S}|\psi(z)-\log|z'|^{2p}|<+\infty$. Here, we denote the coordinates of $\CC^n=\CC^p\times\CC^{n-p}$ by $z=(z',z'')$.
	
	Let $dV_M$ and $dV_S$ be continuous volume forms on $M$ and $S$, then $dV_M=ud\lambda_z$ and $dV_S=vd\lambda_{z''}$ for some $u\in C^0(M)$ and $v\in C^0(S)$. Let $h(z'') = (h_{ij}(z''))_{i,j}$ be a family of positive definite $(p\times p)$-Hermitian matrices so that $z''\mapsto h_{ij}(z'')$ are measurable functions on $S$. For each $z''\in\BB^{n-p}$, we define
	\begin{align*}
		\eta(z'') := \varliminf_{z'\rightarrow0} (\psi(z',z'') - \log|z'|_{h(z'')}^{2p}), \\
		\gamma(z'') := \varlimsup_{z'\rightarrow0} (\psi(z',z'') - \log|z'|_{h(z'')}^{2p}),
	\end{align*}
	in which $|z'|_{h(z'')}^2 := \sum_{i,j=1}^p h_{ij}(z'')z_i\overline{z_j}$.
	
	Using Fubini's theorem and Fatou's lemma, it is easy to show that
	\begin{equation*}
		dV_M^b[\psi] \leqslant \frac{\pi^p}{p!} \frac{e^{-\eta}}{\det h} \frac{(b+\gamma-\eta)u}{b v} dV_S, \quad
		dV_M^*[\psi] \leqslant \frac{\pi^p}{p!} \frac{e^{-\eta}}{\det h} \frac{u}{v} dV_S.
	\end{equation*}
	Moreover, if $\eta\equiv\gamma$ on $S$, it follows from the dominated convergence theorem that
	\begin{equation} \label{Eq:CompMes}
		dV_M^b[\psi] = dV_M^*[\psi] = \frac{\pi^p}{p!} \frac{e^{-\gamma}}{\det h} \frac{u}{v} dV_S.
	\end{equation}
\end{remark}

\begin{remark}
	Let $M$ be a complex manifold of dimension $n$ and $(E,h)$ be a Hermitian holomorphic vector bundle of rank $p$ $(p\leqslant n)$. Assume that $\sigma\in\Gamma(M,E)$ is a holomorphic section that transverses with the zero section, then $S:=\sigma^{-1}(0)$ is a closed submanifold of codimension $p$. Let $\omega$ be a continuous Hermitian metric on $M$, let $dV_M=\frac{\omega^n}{n!}$ and $dV_S=\frac{(\omega|_S)^{n-p}}{(n-p)!}$ be the associated volume forms on $M$ and $S$. Clearly, $\psi:=\log|\sigma|_h^{2p} \in\#_\infty(M,S)$.   We shall compute $dV_M^b[\psi]$ and $dV_M^*[\psi]$.
	
	Let $(U,(z_1,\ldots,z_n),(e_1,\ldots,e_p))$ be a local trivialization of $E\rightarrow M$ such that $\sigma|_U = \sum_{i=1}^p z_i\cdot e_i$. Assume that $\omega|_U = \sum_{i,j} \sqrt{-1}\omega_{ij}dz_i\wedge d\bar{z}_j$ and $h_{ij} = \inner{e_i,e_j}_h$. We write $A=(\omega_{ij})_{1\leqslant i,j\leqslant n}$, $B=(\omega_{ij})_{p+1\leqslant i,j\leqslant n}$ and $H=(h_{ij})_{1\leqslant i,j\leqslant p}$.
	
	Since $\psi = p\log\sum_{i,j}h_{ij}z_i\overline{z_j}$, it is clear that
	\begin{equation*}
		\lim_{z'\rightarrow0} \left( \psi(z',z'') - p\log\sum\nolimits_{i,j=1}^p h_{ij}(0,z'')z_i\overline{z_j} \right) = 0, \quad z''\in S\cap U.
	\end{equation*}
	On $S\cap U$, $\wedge^p(d\sigma) = dz_1\wedge\cdots\wedge dz_p\otimes e_1\wedge\cdots\wedge e_p$ is well-defined, and
	\begin{equation*}
		|dz_1 \wedge\cdots\wedge dz_p|_\omega^2 = \frac{\det B}{\det A} \quad\Rightarrow\quad
		|\wedge^p(d\sigma)|_{\omega,h}^2 = \frac{\det B\det H}{\det A}.
	\end{equation*}
	Since $dV_M = 2^n\det A\,d\lambda_z$ and $dV_S = 2^{n-p}\det B\,d\lambda_{z''}$, it follows from \eqref{Eq:CompMes} that
	\begin{equation*}
		dV_M^b[\psi] = dV_M^*[\psi] = \frac{\pi^p}{p!} \frac{1}{\det H} \frac{2^n\det A}{2^{n-p} \det B} dV_S = \frac{(2\pi)^p}{p!} \frac{dV_S}{|\wedge^p(d\sigma)|_{\omega,h}^2}.
	\end{equation*}
	Since $\wedge^p(d\sigma) \in \Gamma(S,(\wedge^{p,0}T^*M\otimes \det E)|_S)$ is globally defined, we conclude that
	\begin{equation*}
		dV_M^b[\psi] = dV_M^*[\psi] = \frac{(2\pi)^p}{p!} \frac{dV_S}{|\wedge^p(d\sigma)|_{\omega,h}^2}.
	\end{equation*}
\end{remark}

\subsection{An Optimal \texorpdfstring{$L^2$}{L2} Extension Theorem for Homogeneous Jets} \hfill

Using the method of Berndtsson-Lempert \cite{BerndtssonLempert2016}, Hosono \cite{Hosono2020} proved an optimal $L^2$ extension theorem for homogeneous jets on Stein manifolds. Indeed, using Theorem \ref{Thm:BasicL2Existence}, we can prove a general result. For related results, we refer the reader to \cite{DemaillyNonRed} and \cite{ZhouZhu}.

In this section, $\Omega$ is a complex manifold of dimension $n$ and $S\subset\Omega$ is a closed submanifold of codimension $p$. Let $\calI_S\subset\calO_\Omega$ be the ideal sheaf associated to $S$, then $\calI_S^m$ is a coherent analytic sheaf for any $m\in\NN_+$. Let $\calI_S^0:=\calO_\Omega$. For each $m\in\NN$, we define $\calJ^{(m)}:=\calI_S^m/\calI_S^{m+1}$, then $\calJ^{(m)}|_S$ is a locally free $\calO_S$-module. We denote by $J^{(m)}$ the associated holomorphic vector bundle over $S$:

Let $(U,z=(z',z''))$ be a coordinate chart of $\Omega$ such that $S\cap U=\{z'=0\}$, where $z'=(z_1,\ldots,z_p)$ and $z''=(z_{p+1},\ldots,z_n)$. Then $z''$ is a coordinate system on $S\cap U$ and $\{\mathbf{z}'_\alpha = z_1^{\alpha_1}\cdots z_p^{\alpha_p}:|\alpha|=m\}$ is a holomorphic frame of $J^{(m)}|_{S\cap U}$. Let $(V,w=(w',w''))$ be another coordinate chart of $\Omega$ so that $S\cap V=\{w'=0\}$. On $S\cap(U\cap V)$, for any $\beta\in\NN^p$ with $|\beta|=m$, we have
\begin{equation*}
	w_1^{\beta_1}\cdots w_p^{\beta_p} = (a_{11}z_1+\cdots+a_{p1}z_p)^{\beta_1}\cdots(a_{1p}z_1+\cdots+a_{pp}z_p)^{\beta_p} \mod \calI_S^{m+1},
\end{equation*}
in which $a_{ik}(z''):=\frac{\pd w_k}{\pd z_i}(0,z'')$ are holomorphic functions on $S\cap(U\cap V)$. Therefore, the transition matrix between $\{\mathbf{z}'_\alpha:|\alpha|=m\}$ and $\{\mathbf{w}'_\beta:|\beta|=m\}$ is holomorphic on $S\cap(U\cap V)$.

Let $(E,h)$ be a Hermitian holomorphic vector bundle over $\Omega$. Given a local holomorphic section $F\in\Gamma(V,\calO(E)\otimes\calI_S^m)$, for any $x\in S\cap V$, the germ $[F]_x$ determines a vector of $(E|_S\otimes J^{(m)})_x$:

Let $(U,(z',z''),(e_1,\ldots,e_r))$ be a local trivialization of $E\to\Omega$ so that $U\subset V$, $S\cap U=\{z'=0\}$ and $U\cong\BB^p\times\BB^{n-p}$. We write $F|_U=\sum_{i=1}^{r}F_i\cdot e_i$, then the holomorphic function $F_i\in\Gamma(U,\calI_S^m)$ admits a Taylor expansion on $U$,
\begin{equation*}
	F_i(z',z'') = \sum\nolimits_{|\alpha|\geqslant m}c_{i,\alpha}(z'')z_1^{\alpha_1}\cdots z_p^{\alpha_p} \quad\text{with}\quad c_{i,\alpha}(z''):=\tfrac{1}{\alpha!}\pd_{z'}^\alpha F_i(0,z'').
\end{equation*}
Therefore, for any $x=(0,z'')\in S\cap U$, the germ $[F]_x$ naturally determines a vector
\begin{equation*}
	\sum\nolimits_{i=1}^r\sum\nolimits_{|\alpha|=m} c_{i,\alpha}(z'') \cdot e_i(x)\otimes \mathbf{z}'_\alpha(x) \in (E|_S\otimes J^{(m)})_x.
\end{equation*}
This vector is independent of the choice of $(U,z,e)$. For simplicity, we identify the germ $[F]_x$ with the associated vector of $(E|_S\otimes J^{(m)})_x$. Moreover, since $c_{i,\alpha}(z'')$ are holomorphic functions, $x\mapsto[F]_x$ is a holomorphic section of $E|_S\otimes J^{(m)}$.\\

In the following, $\omega$ is a continuous Hermitian metric on $\Omega$, $dV_\Omega$ and $dV_S$ are the induced volume forms on $\Omega$ and $S$. Moreover, we assume that there exists an upper semi-continuous function $\psi$ on $\Omega$ satisfying the following condition:
\begin{quotation}
	\itshape $S=\{\psi=-\infty\}$; for any $x\in S$, there exists a coordinate chart $(U,z=(z',z''))$ around $x$ such that $S\cap U=\{z\in U:z'=0\}$ and $\sup_{U\setminus S}|\psi(z)-\log|z'|| < +\infty$. \hfill \upshape ($\sharp$)
\end{quotation}
Associated to $h,\omega$ and $\psi$, one can define a ``\textbf{singular metric}'' on $E|_S\otimes J^{(m)}$:

Let $\xi\in\Gamma(D,E|_S\otimes J^{(m)})$ be a local holomorphic section of $E|_S\otimes J^{(m)}$. Let $(U,(z',z''),(e_1,\ldots,e_r))$ be a local trivialization of $E$ so that $S\cap U=\{z'=0\}$. We assume that $S\cap U\subset D$ and $U\cong\BB^p\times\BB^{n-p}$. We take a holomorphic section $F\in\Gamma(U,\calO(E)\otimes\calI_S^m)$ such that $[F]_x = \xi(x)$ for any $x\in S\cap U$. Indeed, if
\begin{equation*}
	\xi(z'') = \sum\nolimits_{i=1}^r \sum\nolimits_{|\alpha|=m} c_{i,\alpha}(z'') \cdot e_i(z'')\otimes \mathbf{z}'_\alpha(z'')
\end{equation*}
with $c_{i,\alpha}\in \calO(S\cap U)$, then we may take
\begin{equation*}
	F(z) = \sum\nolimits_{i=1}^r \sum\nolimits_{|\alpha|=m} c_{i,\alpha}(z'')z_1^{\alpha_1}\cdots z_p^{\alpha_p} \cdot e_i(z).
\end{equation*}
Given a constant $b>0$, for any $0\leqslant\rho\in C_c^0(D)$ with $\supp\rho\subset S\cap U$, we define
\begin{equation}\begin{aligned}\label{Eq:DefHermJm}
		& \int_D \rho|\xi|_{E|_S\otimes J^{(m)}}^2 dV_S := \\
		& \qquad\qquad \varlimsup_{a\rightarrow+\infty} \frac{1}{b} \int_{\{-a-b<2(m+p)\psi<-a\}\cap U} \tilde{\rho}|F|_h^2 e^{-2(m+p)\psi} dV_\Omega,
\end{aligned}\end{equation}
where $0\leqslant\tilde{\rho}\in C_c^0(U)$ is a continuous extension of $\rho$. One can check that this definition is independent of the choices of $F$ and $\tilde{\rho}$. For general $0\leqslant\rho\in C_c^0(D)$, we define $\int_D \rho|\xi|_{E|_S\otimes J^{(m)}}^2 dV_S$ by using a partition of unity. (Also see \cite{DemaillyNonRed})

The ``singular metric'' $|\cdot|_{E|_S\otimes J^{(m)}}$ defined above is just a formal notation. If $(E,h)$ is trivial, we simply write $|\cdot|_{J^{(m)}}$.

\begin{remark}
	In certain cases, we do have an explicit formula for $|\cdot|_{E|_S\otimes J^{(m)}}$.
	Assume there exists a coordinate chart $(U,(z',z''))$ such that $S\cap U=\{z'=0\}$ and $\psi|_U=\log|z'|+\gamma$ for some continuous function $\gamma\in C^0(U)$. We write $dV_\Omega=ud\lambda_z$ and $dV_S=vd\lambda_{z''}$, where $u\in C^0(U)$ and $v\in C^0(S\cap U)$ are continuous functions. Recall that, $\{\mathbf{z}'_\alpha:|\alpha|=m\}$ is a holomorphic frame of $J^{(m)}|_{S\cap U}$. By careful computations, we can show that $|\cdot|_{J^{(m)}}$ is a Hermitian metric on $J^{(m)}$ given by
	\begin{equation*}
		\inner{\mathbf{z}'_\alpha(x), \mathbf{z}'_\beta(x)}_{J^{(m)}}
		= \frac{e^{-2(m+p)\gamma(x)}}{2(m+p)}\frac{u(x)}{v(x)} \int_{\mathbb{S}^{2p-1}} z_1^{\alpha_1}\cdots z_p^{\alpha_p} \overline{z_1^{\beta_1}\cdots z_p^{\beta_p}} dS(z'),
	\end{equation*}
	where $dS$ is the standard volume form on the unit sphere $\mathbb{S}^{2p-1}\subset\CC^p$. Moreover, the metric $|\cdot|_{E|_S\otimes J^{(m)}}$ is induced by $h|_S$ and $|\cdot|_{J^{(m)}}$.
\end{remark}

Using Theorem \ref{Thm:BasicL2Existence}, we can prove an optimal $L^2$ extension theorem for homogeneous jets on Stein manifolds.

\begin{theorem} \label{Thm:OptJetExt}
	Let $\Omega$ be a Stein manifold of dimension $n$, $S\subset\Omega$ be a closed submanifold of codimension $p$ and $(E,h)$ be a Hermitian holomorphic vector bundle over $\Omega$. Let $J^{(m)}$ be the holomorphic vector bundle determined by $(\calI_S^m/\calI_S^{m+1})|_S$. Let $\omega$ be a continuous Hermitian metric on $\Omega$, let $dV_\Omega$ and $dV_S$ be the induced volume forms on $\Omega$ and $S$. Suppose there exists a psh function $\psi$ on $\Omega$ satisfying the condition $(\sharp)$ and we define a singular metric $|\cdot|$ on $(K_\Omega\otimes E)|_S\otimes J^{(m)}$ as \eqref{Eq:DefHermJm}.
	
	Assume that $\Psi:=2(m+p)\psi<A$, where $A\in (-\infty,+\infty]$ is a constant.	Let $c(t)>0$ be a smooth function on $(-\infty,A)$ satisfying the conditions \eqref{Eq:CondForC} and \eqref{Eq:IneqForC}. Let $\varphi$ be a quasi-psh function on $\Omega$. Assume that there are continuous real (1,1)-forms $\gamma\geqslant0$ and $\rho$ so that
	\begin{equation*}
		\ddbar\Psi\geqslant\gamma, \quad \ddbar\varphi\geqslant\rho \quad\text{and}\quad \sqrt{-1}\Theta(E,h)+(\gamma+\rho)\otimes\Id_E \geqslant_\Nak 0.
	\end{equation*}
	Then for any holomorphic section $\xi\in\Gamma(S,(K_\Omega\otimes E)|_S\otimes J^{(m)})$ satisfying
	\begin{equation*}
		\int_S |\xi|^2e^{-\varphi} dV_S <+\infty,
	\end{equation*}
	there exists a holomorphic extension $F\in\Gamma(\Omega,\calO(K_\Omega\otimes E)\otimes\calI_S^m)$ of $\xi$ such that
	\begin{equation*}
		\int_\Omega c(\Psi)|F|_{\omega,h}^2e^{-\varphi-\Psi} dV_\Omega \leqslant \int_{-\infty}^Ac(t)dt \int_S |\xi|^2e^{-\varphi} dV_S.
	\end{equation*}
\end{theorem}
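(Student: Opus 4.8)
The plan is to derive Theorem \ref{Thm:OptJetExt} from the basic $L^2$ existence theorem of openness type, Theorem \ref{Thm:BasicL2Existence}, by applying it with the weight function $\psi$ there replaced by $\Psi=2(m+p)\psi$ and then letting the truncation level $a\to+\infty$; this is the same mechanism by which Theorem \ref{Thm:GuanZhouExt} and Theorem \ref{Thm:NewMesExt} are obtained from the corresponding openness-type theorems.

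The first step is to fix a global holomorphic representative of the jet $\xi$. Since $\Omega$ is Stein and $\calI_S^m,\calI_S^{m+1}$ are coherent, $H^1(\Omega,\calO(K_\Omega\otimes E)\otimes\calI_S^{m+1})=0$ by Cartan's Theorem B, so the $m$-jet map $\Gamma(\Omega,\calO(K_\Omega\otimes E)\otimes\calI_S^m)\to\Gamma(S,(K_\Omega\otimes E)|_S\otimes J^{(m)})$ is onto, and we may choose $\tilde f\in\Gamma(\Omega,\calO(K_\Omega\otimes E)\otimes\calI_S^m)$ with $[\tilde f]_x=\xi(x)$ for every $x\in S$. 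Next we exhaust $\Omega$ by relatively compact Stein domains $D^k\Subset D^{k+1}$, endow each with a smooth K\"ahler metric (harmless, since for sections of $K_\Omega\otimes E$ the density $|\cdot|_{\omega,h}^2\,dV_\omega$ is independent of the metric), and, for $k$ fixed and $a$ large, apply Theorem \ref{Thm:BasicL2Existence} on $D^k$ to $f=\tilde f|_{\{\Psi<-a\}\cap D^k}$, with $\Psi$ in place of $\psi$, the given weight $\varphi$, and the given $c$. The integrability hypotheses \eqref{Eq:L2ExistenceCond1}--\eqref{Eq:L2ExistenceCond2} hold for all large $a$: indeed $\int_S|\xi|^2e^{-\varphi}dV_S<+\infty$ is, by Definition \eqref{Eq:DefHermJm}, the $\varlimsup_{a\to+\infty}$ of the thin-shell integrals $\tfrac1b\int_{\{-a-b<\Psi<-a\}}|\tilde f|_{\omega,h}^2e^{-\varphi-\Psi}dV_\Omega$, so these are finite for $a$ large, and $\tilde f\in\calI_S^m$ takes care of \eqref{Eq:L2ExistenceCond2}. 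We obtain $F_{k,a}\in\Gamma(D^k,K_\Omega\otimes E)$ with $F_{k,a}-\tilde f\in\calO(K_\Omega\otimes E)\otimes\calI(\varphi+\Psi)$ on $\{\Psi<-a\}\cap D^k$ and the weighted estimate of Theorem \ref{Thm:BasicL2Existence}. Since $\Psi$ behaves like $2(m+p)\log|z'|$ near $S$ by condition $(\sharp)$, one has $\calI(\varphi+\Psi)_x\subset\calI(\Psi)_x=(\calI_S^{m+1})_x$ for $x\in S$, hence $[F_{k,a}]_x=\xi(x)$ on $S\cap D^k$ and $F_{k,a}\in\Gamma(D^k,\calO(K_\Omega\otimes E)\otimes\calI_S^m)$.

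Then we pass to the limit twice. Fixing $k$ and letting $a\to+\infty$: from the properties of $\chi_{a,b},v_{a,b}$ we have $\chi_{a,b}(\Psi)\to0$ and $c(v_{a,b}(\Psi))\to c(\Psi)$ a.e.\ on $\Omega$, while the condition $\varliminf_{t\to-\infty}c(t)e^{-t}>0$ from \eqref{Eq:CondForC} furnishes, for every $D'\Subset D^k$, a lower bound for the weight $c(v_{a,b}(\Psi))e^{-\varphi-\Psi}$ that is uniform in large $a$; combined with $\int_{D'}|\tilde f|_{\omega,h}^2dV_\omega<+\infty$ this makes $\{F_{k,a}\}_a$ bounded in $L^2_{\loc}(D^k)$. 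By Montel's theorem (and Lemma \ref{Lemma:Coherent} for the coherent sheaf $\calI_S^m$) a subsequence converges locally uniformly to $F_k\in\Gamma(D^k,\calO(K_\Omega\otimes E)\otimes\calI_S^m)$ with $[F_k]_x=\xi(x)$ on $S\cap D^k$; Fatou's lemma applied to the weighted estimate, using $\tfrac1b\int_{-a-b/2}^Ac(t)dt\to\tfrac1b\int_{-\infty}^Ac(t)dt$ and Definition \eqref{Eq:DefHermJm} once more, yields
\begin{equation*}
	\int_{D^k}c(\Psi)|F_k|_{\omega,h}^2e^{-\varphi-\Psi}dV_\Omega\le\Big(\int_{-\infty}^Ac(t)dt\Big)\int_S|\xi|^2e^{-\varphi}dV_S.
\end{equation*}
A second application of Montel's theorem as $k\to+\infty$ then produces $F\in\Gamma(\Omega,\calO(K_\Omega\otimes E)\otimes\calI_S^m)$, an extension of $\xi$, satisfying the asserted estimate on $\Omega$ by monotone convergence.

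I expect the main difficulty to lie in making precise the two places where the thin-shell integrals are matched with the boundary integral $\int_S|\xi|^2e^{-\varphi}dV_S$: verifying that $\varlimsup_{a\to+\infty}\tfrac1b\int_{\{-a-b<\Psi<-a\}\cap D'}|\tilde f|_{\omega,h}^2e^{-\varphi-\Psi}dV_\Omega\le\int_S|\xi|^2e^{-\varphi}dV_S$ for $D'\Subset\Omega$ (this is Definition \eqref{Eq:DefHermJm} together with a routine approximation to accommodate the locally bounded-above weight $e^{-\varphi}$ and the domain cut-off), and securing the uniform-in-$a$ local $L^2$ bounds used in the two compactness steps, where the growth hypothesis on $c$ near $S$ must be exploited carefully. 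Everything else is routine bookkeeping with the auxiliary functions of Guan--Zhou.
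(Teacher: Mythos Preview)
Your plan is correct and matches the paper's proof: lift $\xi$ to a global $\tilde f\in\Gamma(\Omega,\calO(K_\Omega\otimes E)\otimes\calI_S^m)$ via Cartan~B, apply Theorem~\ref{Thm:BasicL2Existence} on Stein exhaustions $D^k$ with $\Psi$ in place of $\psi$, then let $a\to+\infty$ and $k\to+\infty$ by Montel and Fatou.

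The one point the paper handles more carefully is the weight $\varphi$. Before applying Theorem~\ref{Thm:BasicL2Existence}, the paper invokes Theorem~\ref{Thm:SteinAppro} to replace $\varphi$ by a smooth approximant $\varphi_\nu\searrow\varphi$ (with $\ddbar\varphi_\nu\geq\rho$), runs the entire argument for smooth $\varphi_\nu$, and only at the very end lets $\nu\to+\infty$ via Montel and Fatou. This is not cosmetic. Your justification ``$\tilde f\in\calI_S^m$ takes care of \eqref{Eq:L2ExistenceCond2}'' is off: that condition reads $\int_{D\cap\{\Psi<-a\}}|\tilde f|^2e^{-\varphi}\,dV_\Omega<+\infty$ for $D\Subset D^k$, which concerns local integrability of $e^{-\varphi}$ and has nothing to do with vanishing along $S$; it can genuinely fail for quasi-psh $\varphi$. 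Likewise, Definition~\eqref{Eq:DefHermJm} is formulated for \emph{continuous} test functions $\rho$, so identifying $\varlimsup_a\frac1b\int_{\{-a-b<\Psi<-a\}}\eta|\tilde f|^2e^{-\varphi-\Psi}\,dV_\Omega$ with $\int_S\eta|\xi|^2e^{-\varphi}\,dV_S$ requires $\eta e^{-\varphi}$ to be continuous. Trying to approximate $\varphi$ from above at the matching step gives inequalities in the wrong direction; front-loading the smoothing, as the paper does, removes both issues at once.
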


\begin{proof}
	For convenience, let $E':=K_\Omega\otimes E$. Since
	\begin{equation*}
		0 \rightarrow \calO(E')\otimes\calI_S^{m+1} \rightarrow \calO(E')\otimes\calI_S^{m} \rightarrow \calO(E')\otimes\calJ^{(m)} \rightarrow 0
	\end{equation*}
	is a short exact sequence of coherent analytic sheaves, then we have a long exact sequence of cohomology groups:
	\begin{equation*}
		\cdots \rightarrow \Gamma(\Omega, \calO(E')\otimes\calI_S^m) \rightarrow \Gamma(\Omega, \calO(E')\otimes\calJ^{(m)}) \rightarrow H^1(\Omega, \calO(E')\otimes\calI_S^{m+1}) \rightarrow \cdots
	\end{equation*}
	Since $\Omega$ is Stein, $H^1(\Omega, \calO(E')\otimes\calI_S^{m+1})=0$. As a consequence, the restriction map
	\begin{equation*}
		\Gamma(\Omega, \calO(E')\otimes\calI_S^m) \rightarrow \Gamma(S, E'|_S\otimes J^{(m)}) \cong \Gamma(\Omega, \calO(E')\otimes\calJ^{(m)})
	\end{equation*}
	is surjective. In particular, there exists a holomorphic section $f\in\Gamma(\Omega,\calO(E')\otimes\calI_S^m)$ such that $[f]_x = \xi(x) \in (E'|_S\otimes J^{(m)})_x$ for any $x\in S$.
	
	Since $\Omega$ is a Stein manifold, by Theorem \ref{Thm:SteinAppro}, there exists a decreasing sequence $\{\varphi_\nu\}_{\nu=1}^\infty$ of smooth functions on $\Omega$ such that $\ddbar\varphi_\nu\geqslant\rho$. Using Montel's theorem, we may simply assume that $\varphi$ is smooth. Let $\{D^k\}_{k=1}^\infty$ be a sequence of Stein domains in $\Omega$ so that $D^k\Subset D^{k+1}\Subset\Omega$ and $\cup_k D^k=\Omega$. Clearly, $\int_{D^k}|f|_{\omega,h}^2e^{-\varphi}dV_\Omega<+\infty$ for any $k\in\NN_+$.
	
	Let $b\in\RR_+$ be given. We apply Theorem \ref{Thm:BasicL2Existence} to $D^k$, $E$, $\Psi<A$ and $\varphi$: for each $k\in\NN_+$ and $a>-A$, there exists a holomorphic section $F_{k,a}\in\Gamma(D^k,K_\Omega\otimes E)$ such that $F_{k,a}-f \in \Gamma(D^k, \calO(K_\Omega\otimes E)\otimes \calI(\Psi))$ and
	\begin{align*}
		&\, \int_{D^k} |F_{k,a}-\chi_{a,b}(\Psi)f|_{\omega,h}^2 e^{-\varphi-\Psi}c(v_{a,b}(\Psi)) dV_\Omega \\
		\leqslant &\, \frac{1}{b} \int_{-a-b/2}^Ac(t)dt \int_{\{-a-b<\Psi<-a\}\cap D^k} |f|_{\omega,h}^2e^{-\varphi-\Psi} dV_\Omega.
	\end{align*}
	Let $0\leqslant\eta\leqslant1$ be a compactly supported smooth function on $\Omega$ with $\eta|_{D^k}\equiv 1$, then
	\begin{align*}
		&\, \varlimsup_{a\rightarrow+\infty} \frac{1}{b} \int_{\{-a-b<\Psi<-a\}\cap D^k} |f|_{\omega,h}^2e^{-\varphi-\Psi} dV_\Omega \\
		\leqslant &\, \varlimsup_{a\rightarrow+\infty} \frac{1}{b} \int_{\{-a-b<\Psi<-a\}} \eta|f|_{\omega,h}^2e^{-\varphi-\Psi} dV_\Omega \\
		= &\, \int_S \eta(x)|[f]_x|^2e^{-\varphi(x)} dV_S(x) \leqslant \int_S|\xi|^2e^{-\varphi}dV_S < +\infty.
	\end{align*}
	
	Since $\varliminf_{t\to-\infty}c(t)e^{-t}>0$, we find that $c(v_{a,b}(\Psi))e^{-\Psi} \geqslant c(v_{a,b}(\Psi))e^{-v_{a,b}(\Psi)}$ is bounded below on $D^k\Subset\Omega$ by a positive constant independent of $a$. Therefore, $\int_{D^k}|F_{k,a}|_{\omega,h}^2 dV_\omega$ is uniformly bounded as $a\to+\infty$. By Montel's theorem, there exists a sequence $\{a_j\}_{j=1}^\infty$ such that $a_j\to+\infty$ and $F_{k,a_j}$ converges uniformly on any compact subsets of $D^k$ to some $F_k\in\Gamma(D^k,K_\Omega\otimes E)$. Then it is clear that $F_k-f \in \Gamma(D^k, \calO(K_\Omega\otimes E)\otimes \calI(\Psi))$ and
	\begin{equation*}
		\int_{D_k} c(\Psi)|F_k|_{\omega,h}^2e^{-\varphi-\Psi} dV_\Omega \leqslant \int_{-\infty}^Ac(t)dt \int_S |\xi|^2e^{-\varphi} dV_S.
	\end{equation*}
	Let $k\rightarrow+\infty$, using Montel's theorem again, we find an $F\in\Gamma(\Omega,K_\Omega\otimes E)$ such that $F-f \in \Gamma(\Omega, \calO(K_\Omega\otimes E)\otimes \calI(\Psi))$ and
	\begin{equation*}
		\int_\Omega c(\Psi)|F|_{\omega,h}^2e^{-\varphi-\Psi} dV_\Omega \leqslant \int_{-\infty}^Ac(t)dt \int_S |\xi|^2e^{-\varphi} dV_S.
	\end{equation*}
	Since $\psi$ satisfies the condition ($\sharp$), $\calI(\Psi)_x \subset \mathfrak{m}_x^{m+1}$ for any $x\in S$. Then it is clear that $F\in\Gamma(\Omega, \calO(K_\Omega\otimes E)\otimes\calI_S^m)$ and
	\begin{equation*}
		[F]_x = [f]_x = \xi(x) \in ((K_\Omega\otimes E)|_S\otimes J^{(m)})_x, \quad x\in S. \qedhere
	\end{equation*}
\end{proof}

\begin{remark}
	If $m=0$, then Theorem \ref{Thm:OptJetExt} reduces to Theorem \ref{Thm:GuanZhouExt}. Moreover, Hosono's \cite{Hosono2020} result corresponds to the case of $A=0$ and $c(t)=\exp(\frac{t}{m+p})$. Similar to Theorem \ref{Thm:NewMesExt}, by letting $a\to+\infty$ in Theorem \ref{Thm:CoarseExtThm}, we can prove Theorem \ref{Thm:OptJetExt} in the case of $A=0$ and $c(t)\equiv e^t$, but we shall replace the definition \eqref{Eq:DefHermJm} by
	\begin{equation*}
		\int_D \rho|f|_{E|_S\otimes J^{(m)}}^2 dV_S := \varlimsup_{a\rightarrow+\infty} e^a \int_{\{2(m+p)\psi<-a\}\cap U} \tilde{\rho}|F|_h^2 dV_\Omega.
	\end{equation*}
\end{remark}

\section{Generalized Suita Conjectures with Jets and Weights}

We survey different approaches to Suita's conjecture and its various generalizations. We present a new and unified proof for generalized Suita conjectures with jets and weights, which is based on the concavity of certain minimal $L^2$ integrals and the necessary condition for linearity. Additionally, we provide some examples and counterexamples for the equalities in generalized Suita conjectures.

\subsection{Introduction} \hfill

In \cite{Suita1972}, Suita conjectured an inequality between the Bergman kernel and the logarithmic capacity of a hyperbolic Riemann surface. Later, Ohsawa \cite{Ohsawa1993Add} noticed a connection between the $L^2$ extension problem and Suita's conjecture, and he was able to prove a weaker inequality.
By proving $L^2$ extension theorems with optimal estimates, Blocki \cite{Blocki2013} and Guan-Zhou \cite{GuanZhou2012} solved Suita's conjecture.
By carefully using the optimal $L^2$ extension theorem with `gain', Guan-Zhou \cite{GuanZhou2015} also settled the equality part of the conjecture (i.e. to characterize when the equality holds).
Since then, various approaches (see \cite[etc]{Blocki2014, BerndtssonLempert2016, DongArxiv}) and generalizations (see \cite[etc]{GuanZhou2015CN, BlockiZwonek2015, BlockiZwonek2018, BlockiZwonek2020}) to Suita's conjecture have emerged.

The first purpose of this section is to survey these progress made in Suita's conjecture and its generalizations.
We also present a new approach to one dimensional generalizations with jets (see \cite{BlockiZwonek2018}) or weights (see \cite{GuanZhou2015CN, GuanZhou2015}), which is based on the concavity of certain minimal $L^2$ integrals (see \cite{Guan2019}) and the necessary condition for linearity (see Remark \ref{Rmk:ConcaveLinear}).
Actually, we prove a result unifying \cite{BlockiZwonek2018} and \cite{GuanZhou2015CN, GuanZhou2015} (see Theorem \ref{Thm:GenSuita}).
We also construct a family of counterexamples for the equality in higher order Suita conjecture (see Theorem \ref{counterexample}), which contrasts with the phenomenon observed in simply/doubly connected planar domains.

Our approach is also applicable to higher dimensional generalizations (see \cite{BlockiZwonek2015, BlockiZwonek2020}), and we obtain a necessary condition for the equality case (see Proposition \ref{Prop}).
To authors' knowledge, the only known example for the equality case is the biholomorphic image of a balanced domain (with a possible closed pluripolar set removed).
In this article, we provide a new family of examples (see Theorem \ref{Example1} and \ref{Example2}) for the equality in higher dimensional Suita conjecture.

\subsection{Capacities and Kernels on Riemann Surfaces} \hfill

In this section, $\Omega$ is a potential-theoretical hyperbolic Riemann surface, which means that $\Omega$ admits a negative non-constant subharmonic function. Then $\Omega$ has non-trivial Green's functions (see \cite{GTM71}). Recall that, the \textbf{Bergman kernel} of $\Omega$ is
\begin{equation*}
	\kappa_\Omega(z) := \sup\left\{ \sqrt{-1}F(z)\wedge\overline{F(z)}: F\in\Gamma(\Omega,K_\Omega), \int_\Omega \tfrac{\sqrt{-1}}{2} F\wedge\overline{F} \leqslant 1 \right\},
\end{equation*}
and the \textbf{exact Bergman kernel} of $\Omega$ is
\begin{equation*}
	\widetilde{\kappa}_\Omega(z) := \sup\left\{ \sqrt{-1}\pd f(z)\wedge\overline{\pd f(z)}: f\in\calO(\Omega), \int_\Omega \tfrac{\sqrt{-1}}{2} \pd f\wedge\overline{\pd f} \leqslant 1 \right\}.
\end{equation*}
Let $(V,w)$ be a coordinate chart of $\Omega$. We write $\kappa_\Omega|_V = B_\Omega|dw|^2$, $\widetilde{\kappa}_\Omega|_V = \widetilde{B}_\Omega|dw|^2$ and $c_D(z) := \sqrt{\pi\widetilde{B}_\Omega(z)}$. By definitions, $\widetilde{B}_\Omega \leqslant B_\Omega$. Recall that, the \textbf{logarithmic capacity} of $\Omega$ is locally defined by
\begin{equation*}
	c_\beta(z_0) := \lim_{z\to z_0} \exp \big(G_\Omega(z,z_0)-\log|w(z)-w(z_0)|\big),
\end{equation*}
and the \textbf{analytic capacity} of $\Omega$ is locally defined by
\begin{equation*}
	c_B(z_0) := \sup\left\{ |\tfrac{\pd f}{\pd w}(z_0)|: f\in\calO(\Omega),f(z_0)=0,\sup\nolimits_\Omega|f|\leqslant1 \right\}.
\end{equation*}
Clearly, $c_\beta|dw|$ and $c_B|dw|$ are globally defined conformal invariants. 

In the following, we collect some results on the comparison between these conformal invariants.

\begin{theorem}[see \cite{GuanZhou2015}] \label{Thm:cBcBeta}
	$c_B\leqslant c_\beta$. Moreover, $c_B(z_0)=c_\beta(z_0)$ for some $z_0\in\Omega$ if and only if there exists a holomorphic function $g\in\calO(\Omega)$ such that $\log|g|=G_\Omega(\cdot,z_0)$.
\end{theorem}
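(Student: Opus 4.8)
The plan is to prove the inequality directly from the variational description of $G_\Omega(\cdot,z_0)$, and then treat the two implications of the equality statement separately, the harder one resting on an Ahlfors-type extremal function together with the minimum principle for superharmonic functions.

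First I would prove $c_B\le c_\beta$. Fix $z_0$, pick a coordinate chart $(V,w)$ with $w(z_0)=0$, and take any competitor $f\in\calO(\Omega)$ with $f(z_0)=0$ and $\sup_\Omega|f|\le1$; the cases $f\equiv0$ or $f'(z_0)=0$ are trivial since hyperbolicity gives $c_\beta(z_0)>0$, so assume $f$ is non-constant with $f'(z_0)\ne0$. By the maximum principle $|f|<1$, hence $u:=\log|f|$ is negative and subharmonic on $\Omega$, and near $z_0$ it satisfies $u(w)=\log|f'(z_0)|+\log|w|+o(1)$; thus $u$ belongs to the defining family of $G_\Omega(\cdot,z_0)$, so $u\le G_\Omega(\cdot,z_0)$. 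Subtracting $\log|w|$ from both sides and letting $w\to0$ yields $\log|f'(z_0)|\le\log c_\beta(z_0)$, and taking the supremum over all such $f$ gives $c_B(z_0)\le c_\beta(z_0)$.

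Next I would handle the ``if'' direction: if $\hat g\in\calO(\Omega)$ has $\log|\hat g|=G_\Omega(\cdot,z_0)$, then $|\hat g|=e^{G_\Omega(\cdot,z_0)}<1$ and $\hat g(z_0)=0$, so $\hat g$ is admissible for $c_B$; writing $\hat g=w^k h$ near $z_0$ with $h(z_0)\ne0$, finiteness of $\lim_{w\to0}(\log|\hat g(w)|-\log|w|)$ forces $k=1$ and $|\hat g'(z_0)|=c_\beta(z_0)$, whence $c_B(z_0)\ge c_\beta(z_0)$, and equality follows from the first part. For the ``only if'' direction, assuming $c_B(z_0)=c_\beta(z_0)>0$, I would first produce an extremal function: the family $\{f\in\calO(\Omega):f(z_0)=0,\ \sup_\Omega|f|\le1\}$ is normal by uniform boundedness, and $f\mapsto|f'(z_0)|$ is continuous under local uniform convergence, so a maximizing sequence converges locally uniformly along a subsequence to some $f_0\in\calO(\Omega)$ with $f_0(z_0)=0$, $\sup_\Omega|f_0|\le1$, and $|f_0'(z_0)|=c_B(z_0)>0$. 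Then $f_0$ has a simple zero at $z_0$; applying the inequality step to $f_0$ gives $\log|f_0|\le G_\Omega(\cdot,z_0)$ on $\Omega$ with matching residues at $z_0$, so $v:=G_\Omega(\cdot,z_0)-\log|f_0|$ is a nonnegative superharmonic function on $\Omega$ (the logarithmic poles cancel, leaving $v$ harmonic near $z_0$) with $v(z_0)=0$; the minimum principle for superharmonic functions then forces $v\equiv0$, i.e. $\log|f_0|=G_\Omega(\cdot,z_0)$, so $\hat g:=f_0$ is the desired function.

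The main obstacle I anticipate is the ``only if'' direction, specifically the existence of the extremal function $f_0$ and the fact that its zero at $z_0$ is simple, so that $G_\Omega(\cdot,z_0)-\log|f_0|$ extends across $z_0$ as a superharmonic function to which the minimum principle applies; once these normal-families and potential-theoretic points are settled, the remaining computations with the defining family of $G_\Omega(\cdot,z_0)$ are routine.
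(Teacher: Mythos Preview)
Your proof is correct and follows essentially the same approach as the paper: both use that $\log|f|$ lies in the defining family for $G_\Omega(\cdot,z_0)$ to get the inequality, produce an extremal function via normal families, and then apply the maximum/minimum principle to the difference. The only cosmetic differences are that the paper fixes the extremal function $h$ at the outset and works with the subharmonic $\varphi=\log|h|-G_\Omega(\cdot,z_0)\le0$ and the maximum principle, whereas you first prove the inequality for arbitrary competitors and then phrase the rigidity step via the superharmonic $v=-\varphi\ge0$ and the minimum principle.
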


\begin{proof}
	Let $\mathcal{F}_{z_0} := \{f\in\calO(\Omega): f(z_0)=0, ~ \sup_\Omega|f|\leqslant1\}$. Since $\mathcal{F}_{z_0}$ is a normal family, there exists an $h\in\mathcal{F}_{z_0}$ with $|\frac{\pd h}{\pd w}(z_0)| =c_B(z_0)$. If $c_B(z_0)=0$, there is nothing to prove.
	In the following, we assume that $c_B(z_0)>0$. By the maximum principle, $|h|<1$ everywhere. Since $\log|h|<0$ is subharmonic on $\Omega$ and $\log|h(z)|-\log|w(z)-w(z_0)|$ is bounded near $z_0$, we know $\log|h|\leqslant G_\Omega(\cdot,z_0)$, and then
	\begin{align*}
		c_B(z_0) = |\tfrac{\pd h}{\pd w}(z_0)| & = \lim_{z\to z_0} \exp\big(\log|h(z)|-\log|w(z)-w(z_0)|\big) \\
		& \leqslant \lim_{z\to z_0} \exp\big(G_\Omega(z,z_0)-\log|w(z)-w(z_0)|\big) = c_\beta(z_0).
	\end{align*}
	Therefore, $c_B(z_0) \leqslant c_\beta(z_0)$ in general.
	
	If $c_B(z_0)>0$, then $\varphi:=\log|h|-G_\Omega(\cdot,z_0)\leqslant0$ is a subharmonic function on $\Omega$ and $\varphi(z_0)=\log\frac{c_B(z_0)}{c_\beta(z_0)}$. If $c_B(z_0)=c_\beta(z_0)$, then $\varphi(z_0)=0$. By the maximum principle, $\varphi\equiv0$, i.e. $\log|h|=G_\Omega(\cdot,z_0)$. Conversely, if there exists an $g\in\calO(\Omega)$ such that $\log|g|=G_\Omega(\cdot,z_0)$, then $g\in\mathcal{F}_{z_0}$ and $c_\beta(z_0)=|\tfrac{\pd g}{\pd w}(z_0)|\leqslant c_B(z_0)$. This implies $c_B(z_0)=c_\beta(z_0)$.
\end{proof}

\begin{theorem}[{Sakai \cite{Sakai1969}}]
	$c_D \leqslant c_B$. Moreover, $c_D(z_0)=c_B(z_0)>0$ for some $z_0\in\Omega$ if and only if $\Omega$ is conformally equivalent to the unit disc less a possible closed set which is expressed as the union of at most a countable number of compact sets of class $\mathcal{N}_B$.
\end{theorem}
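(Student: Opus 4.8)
The plan is to prove the inequality $c_D\le c_B$ first, and then to treat the equality case separately. Unwinding the definitions, the inequality $c_D(z)\le c_B(z)$ is equivalent to the pointwise estimate $\pi\,\widetilde{B}_\Omega(z_0)\le c_B(z_0)^2$; that is, one must show that every exact square-integrable holomorphic $1$-form $\partial f$ on $\Omega$ (with $f\in\calO(\Omega)$) normalized so that $\int_\Omega\tfrac{\sqrt{-1}}{2}\partial f\wedge\overline{\partial f}\le 1$ satisfies $\sqrt{\pi}\,|f'|\le c_B(z_0)$ at $z_0$ in the coordinate $w$. A direct maximum-principle argument, in the spirit of the proof of Theorem \ref{Thm:cBcBeta}, is not available here, because one side of the comparison is an $L^2$ quantity and the other an $L^\infty$ quantity; so instead I would route the estimate through the boundary (Szeg\H{o}) kernel. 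First I would exhaust $\Omega$ by relatively compact subsurfaces $\Omega_j\nearrow\Omega$ with smooth boundary and check that $c_D(z_0;\Omega_j)\searrow c_D(z_0;\Omega)$ and $c_B(z_0;\Omega_j)\searrow c_B(z_0;\Omega)$ — the former by a weak-$L^2$/normal-families limiting argument on the extremal $1$-forms, the latter by a normal-families argument on the Ahlfors extremal functions, together with the monotonicity of both capacities under enlargement of the domain. This reduces the inequality to a bordered Riemann surface, where the Ahlfors extremal function together with the Szeg\H{o} and Garabedian kernels are available, where $c_B(z_0)=2\pi S_{\Omega_j}(z_0,z_0)$, and where the desired bound follows from the classical comparison of the reduced (exact) Bergman kernel with the square of the Szeg\H{o} kernel, $\widetilde{B}_{\Omega_j}(z_0)\le 4\pi S_{\Omega_j}(z_0,z_0)^2$ — an identity when $\Omega_j$ is simply connected, and in general the quantitative statement that the presence of handles or extra boundary curves only lowers the exact Bergman kernel relative to what the Szeg\H{o} kernel records.

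For the equality case, if $c_D(z_0)=c_B(z_0)>0$, then all the inequalities in the reduction must be equalities; in particular, in the limit the exact Bergman extremal $1$-form and the section coming from the square of the Szeg\H{o} kernel must agree at $z_0$, which forces the "period part" of the Bergman kernel to be invisible at $z_0$ (the harmonic-measure differentials contribute nothing there). I would then upgrade this pointwise rigidity to a global statement by the same mechanism used to complete the equality part of Suita's conjecture in Section \ref{Sec:Suita}: produce a bounded holomorphic function on $\Omega$ whose boundary values have modulus one and which realizes a conformal map of $\Omega$ onto $\DD$ off a negligible exceptional set, lift it to the universal covering, and use a monodromy/identity-theorem argument to descend. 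The exceptional set is simultaneously null for bounded holomorphic functions (so that $c_B$ is computed as on the full disc) and of zero area (so that $\widetilde{B}$ is computed as on the full disc); a closed set with both properties is exactly an at most countable union of compact sets of class $\mathcal{N}_B$. The converse direction is direct: if $\Omega$ is conformally equivalent to $\DD\setminus E$ with $E$ of this type, then $E$ is removable for $H^\infty$ and has zero area, so by the conformal invariance of $c_B|dw|$ and $c_D|dw|$ one gets $c_D(z_0;\Omega)=c_D(z_0;\DD)=1=c_B(z_0;\DD)=c_B(z_0;\Omega)$.

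I expect the main obstacle to be the equality part, and specifically the step of converting the pointwise coincidence of kernels at $z_0$ into the global conformal-type statement with the correct exceptional class. The inequality itself is essentially classical once one has reduced to bordered surfaces and invoked the Szeg\H{o}-kernel comparison; but identifying the precise removability class $\mathcal{N}_B$ requires carefully matching the two different notions of negligibility that govern the two capacities — removability for bounded holomorphic functions on the $c_B$ side, removability for area (equivalently for $L^2$ holomorphic functions) on the $c_D$ side — and then running a uniformization argument that is parallel to, but somewhat more delicate than, the one in Section \ref{Sec:Suita}. As a consistency check one would verify the simply connected case, where $c_D$, $c_B$, $c_\beta$ all coincide and the chain $c_D\le c_B\le c_\beta$ (Theorem \ref{Thm:cBcBeta}) is tight.
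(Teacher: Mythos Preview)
The paper does not prove this theorem at all: it is simply stated and attributed to Sakai \cite{Sakai1969}, with no proof environment following it. So there is no ``paper's own proof'' to compare your proposal against. Your task description asked you to compare with the paper's proof, but here the paper is merely quoting a classical result from the literature as part of the survey in the appendix.

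That said, a brief comment on your proposal itself. The overall architecture --- exhaust by bordered subsurfaces, use the Szeg\H{o}/Garabedian kernel identities there, then pass to the limit --- is in the right spirit and is indeed how results of this type are classically approached. But your write-up is a sketch rather than a proof: the key comparison $\widetilde{B}_{\Omega_j}(z_0)\le 4\pi S_{\Omega_j}(z_0,z_0)^2$ is asserted, not argued, and the equality analysis (``forces the period part to be invisible'', ``upgrade pointwise rigidity to a global statement'') is programmatic. The identification of the exceptional class $\mathcal{N}_B$ is more subtle than you indicate: it is not simply the intersection of $H^\infty$-removability and zero area, and Sakai's original argument uses specific structure of the extremal functions. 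If you actually need this result, you should consult Sakai's paper directly rather than attempt to reconstruct it.
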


\begin{theorem}[Suita \cite{Suita1972}] \label{Thm:SuitacB}
	$\pi B_\Omega\geqslant c_B^2$. Moreover, $\pi B_\Omega(z_0)=c_B(z_0)^2$ for some $z_0\in\Omega$ if and only if $\Omega$ is conformally equivalent to the unit disc less a possible closed set of inner capacity zero.
\end{theorem}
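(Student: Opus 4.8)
The plan is to reduce Theorem~\ref{Thm:SuitacB} to two facts already available: the comparison $c_B\leq c_\beta$ of Theorem~\ref{Thm:cBcBeta}, and the logarithmic-capacity form of Suita's conjecture (both its inequality and equality parts) established in Section~\ref{Sec:Suita}. For the inequality I would simply chain
\[
	\pi B_\Omega(z_0)\ \geq\ c_\beta(z_0)^2\ \geq\ c_B(z_0)^2,\qquad z_0\in\Omega,
\]
where the first step is the inequality part of the classical Suita conjecture (proved in Section~\ref{Sec:Suita} via the concavity of $t\mapsto I(t)$ coming from Theorem~\ref{Thm:Concave}) and the second is $c_B\leq c_\beta$. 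This already gives $\pi B_\Omega\geq c_B^2$, with nothing further to do.

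For the equality statement, assume first that $\pi B_\Omega(z_0)=c_B(z_0)^2$ at some $z_0$. Then the displayed chain collapses, so that simultaneously $\pi B_\Omega(z_0)=c_\beta(z_0)^2$ and $c_\beta(z_0)=c_B(z_0)$. The first equality, fed into the equality part of the logarithmic Suita conjecture from Section~\ref{Sec:Suita}, shows that $\Omega$ is conformally equivalent to $\DD$ minus a possible closed polar set; since for a closed (hence $\sigma$-compact) subset of $\DD$ being polar is the same as having inner capacity zero, this is the asserted conclusion. Conversely, suppose $\Omega$ is conformally equivalent to $\DD\setminus E$ with $E$ a closed set of inner capacity zero, hence polar. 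Transporting the coordinate $w$ of $\DD$ to $\Omega$, the identity map furnishes a bounded holomorphic function $\hat g$ on $\Omega$ with $\log|\hat g|=G_\DD(\cdot,z_0)=G_\Omega(\cdot,z_0)$, the second equality because a polar set is removable for the Green function; Theorem~\ref{Thm:cBcBeta} then yields $c_B(z_0)=c_\beta(z_0)$. On the other hand, $\Omega$ is conformally equivalent to $\DD$ less a closed polar set, so the (converse direction of the) equality part of the logarithmic Suita conjecture gives $\pi B_\Omega(z_0)=c_\beta(z_0)^2$; combining the two, $\pi B_\Omega(z_0)=c_B(z_0)^2$.

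The only input not of the form ``quote an earlier result'' is the potential theory used twice above: that a closed subset of a Riemann surface of inner capacity zero is polar (a countable union of compact polar sets is polar), and that a polar set is removable both for the Green function and for the space of square-integrable holomorphic $1$-forms, so that removing $E$ changes none of $B_\Omega$, $c_\beta$, $c_B$. I expect this bookkeeping --- not any analytic difficulty --- to be the main point to pin down, and I would isolate it as a short lemma with references (e.g. to Klimek's or Ransford's book) rather than reprove it. A minor remark is that once $\Omega\cong\DD$ less a polar set the equalities $\pi B_\Omega=c_\beta^2=c_B^2$ hold at every point of $\Omega$, which is automatic since both cited equality statements are pointwise.
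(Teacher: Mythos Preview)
The paper does not prove Theorem~\ref{Thm:SuitacB}; it is merely cited as Suita's 1972 result, so there is no ``paper's own proof'' to compare against. Your argument is mathematically valid but deliberately anachronistic: you deduce Suita's 1972 theorem from the full Guan--Zhou solution of the logarithmic Suita conjecture (Theorem~\ref{Thm:SolSuitaConj}, proved in Section~\ref{Sec:Suita}), which is a strictly deeper and historically much later result. Suita's original argument was direct and did not --- could not --- pass through $c_\beta$ in this way.

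Two small points worth flagging. First, you invoke the converse direction of the equality part of the $c_\beta$ Suita conjecture (if $\Omega\cong\DD\setminus E$ with $E$ polar then $\pi B_\Omega=c_\beta^2$); Section~\ref{Sec:Suita} only proves the forward implication, so you should either prove the converse yourself (it is the easy direction, via removability of polar sets for $A^2$ and for the Green function) or cite Theorem~\ref{Thm:SolSuitaConj} as stated. Second, be careful about circularity: in Section~\ref{Sec:VariousSuita} the Guan--Zhou approach to the equality part of the $c_\beta$ conjecture is described as \emph{using} Theorem~\ref{Thm:SuitacB} in its final step. Your reduction is safe only because the paper's own proof in Section~\ref{Sec:Suita} finishes instead via Minda's theorem (or Lemmas~4.25--4.26 of \cite{GuanZhou2015}) and does not touch Theorem~\ref{Thm:SuitacB}; you should make this non-circularity explicit.
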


Recall that, a compact set $E$ in $\hat{\CC}=\CC\cup\{\infty\}$ is of class $\mathcal{N}_B$ if all bounded holomorphic functions on $\hat{\CC}\backslash E$ are constant, and a closed set $E$ in $\DD$ has inner capacity zero if and only if $E$ is polar.

In 1972, Suita \cite{Suita1972} conjectured that the curvature of $c_\beta|dw|$ is not greater than $-4$, i.e.
\begin{equation*}
	- \frac{4}{c_\beta^2} \frac{\pd^2 \log c_\beta}{\pd w\pd\overline{w}} \leqslant -4,
\end{equation*}
and the equality holds at some point if and only if $\Omega$ is conformally equivalent to the unit disc less a possible closed set of inner capacity zero.

According to \cite{Suita1972}, $\frac{\pd^2}{\pd w\pd\overline{w}}(\log c_\beta) = \pi B_\Omega$, then the inequality in Suita's conjecture is equivalent to
\begin{equation*}
	\pi B_\Omega(z) \geqslant c_\beta(z)^2.
\end{equation*}
For doubly connected planar domain $\Omega$ with no degenerate boundary component, Suita \cite{Suita1972} proved that $\pi B_\Omega > c_\beta^2$.

In \cite{Ohsawa1993Add}, Ohsawa observed a connection between the $L^2$ extension problem and the inequality in Suita's conjecture, and he proved that $750\pi B_\Omega(z)\geqslant c_\beta(z)^2$. Since then, there are many attempts to sharpen the estimate. In 2012, by proving $L^2$ extension theorems with optimal estimates, Blocki \cite{Blocki2013} (for planar domains) and Guan-Zhou \cite{GuanZhou2012} (for Riemann surfaces) solved the inequality part of the conjecture. Later, Guan-Zhou \cite{GuanZhou2015} also settled the equality part of the conjecture through a careful use of the optimal $L^2$ extension theorem with 'gain'.

\begin{theorem}[Blocki \cite{Blocki2013}; Guan-Zhou \cite{GuanZhou2012,GuanZhou2015}] \label{Thm:SolSuitaConj}
	$\pi B_\Omega\geqslant c_\beta^2$. Moreover, $\pi B_\Omega(z_0) = c_\beta(z_0)^2$ for some $z_0\in\Omega$ if and only if $\Omega$ is conformally equivalent to the unit disc less a possible closed polar set.
\end{theorem}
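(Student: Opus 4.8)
The plan is to follow the route of Section~\ref{Sec:Suita}, deriving both parts of the conjecture from the concavity of minimal $L^2$ integrals (Theorem~\ref{Thm:Concave}) together with its linearity criterion (Remark~\ref{Rmk:ConcaveLinear}). Fix $z_0\in\Omega$ and a connected coordinate chart $(V,w)$ with $w(z_0)=0$, normalized so that $G_\Omega(\cdot,z_0)|_V=\log|c_\beta(z_0)w|$. Set $\psi=2G_\Omega(\cdot,z_0)<0$ and, for $t\geq0$, $\Omega_t:=\{\psi<-t\}$; write $\kappa_{\Omega_t}=B_t|dw|^2$ and let $F_t\in A^2(\Omega_t,K_\Omega)$ be the unique minimal-norm element with $F_t(z_0)=dw$, so that $\int_{\Omega_t}\tfrac{\sqrt{-1}}{2}F_t\wedge\overline{F_t}=1/B_t(z_0)$. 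Since $\calI(\psi)_{z_0}=\frakm_{z_0}$, Theorem~\ref{Thm:Concave} applies and gives that $r\mapsto 1/B_{-\log r}(z_0)$ is a concave increasing function on $(0,1]$; equivalently $B_s(z_0)e^{-s}\leq B_t(z_0)e^{-t}\leq B_\Omega(z_0)$ for all $0\leq t\leq s$.

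First I would dispatch the inequality $\pi B_\Omega(z_0)\geq c_\beta(z_0)^2$. For $s$ large, $\Omega_s$ is the round disc $\DD\big(0;c_\beta(z_0)^{-1}e^{-s/2}\big)$ inside $(V,w)$; on such a disc one checks directly that $F_s\equiv dw$ and $B_s(z_0)=\pi^{-1}c_\beta(z_0)^2 e^s$. Plugging this into the displayed monotonicity gives $\pi^{-1}c_\beta(z_0)^2=B_s(z_0)e^{-s}\leq B_\Omega(z_0)$, which is the inequality part.

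For the equality part, suppose $\pi B_\Omega(z_0)=c_\beta(z_0)^2$. Then $B_t(z_0)e^{-t}$ is constant in $t$, so $r\mapsto 1/B_{-\log r}(z_0)$ is linear on $(0,1]$, and Remark~\ref{Rmk:ConcaveLinear} forces $F_t\equiv F_0|_{\Omega_t}$ for every $t\geq0$. Since $F_s\equiv dw$ for $s\gg1$, this yields a global holomorphic $1$-form $F_0\in\Gamma(\Omega,K_\Omega)$ with $F_0|_V\equiv dw$. Next I would convert $F_0$ into a global holomorphic function with prescribed modulus: pass to a universal covering $p:\DD\to\Omega$, pick $g\in\calO(\DD)$ with $\log|g|=p^*G_\Omega(\cdot,z_0)$, shrink $V$ so that $p$ is biholomorphic on each component of $p^{-1}(V)$, transport $g$ along one fixed component to obtain a local primitive $h$ with $dh=F_0$ on $V$, and use $\log|g|=p^*G_\Omega(\cdot,z_0)$ together with $dg\equiv p^*F_0$ to conclude that every deck transformation $\sigma\in\pi_1(\Omega)$ fixes $g$; hence $g$ descends to $\hat g=p_*g\in\calO(\Omega)$ with $\log|\hat g|=G_\Omega(\cdot,z_0)$. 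By Theorem~\ref{Thm:cBcBeta} the existence of such $\hat g$ means $c_B(z_0)=c_\beta(z_0)$, so $\pi B_\Omega(z_0)=c_\beta(z_0)^2=c_B(z_0)^2$, and Suita's Theorem~\ref{Thm:SuitacB} (equivalently Lemmas~4.25 and~4.26 of \cite{GuanZhou2015}, or Theorem~1 of \cite{Minda1987}) concludes that $\Omega$ is conformally equivalent to $\DD$ less a possible closed polar set.

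Conversely, if $\Omega$ is conformally equivalent to $\DD$ minus a closed polar set, then, polar sets being removable for $G_\Omega$, for bounded holomorphic functions and for $L^2$ holomorphic forms, the quantities $G_\Omega(\cdot,z_0)$, $c_\beta(z_0)$ and $B_\Omega(z_0)$ all equal the corresponding quantities of $\DD$; a one-line computation on $\DD$ then gives $\pi B_\Omega(z_0)=c_\beta(z_0)^2$. The hard part is the forward direction of the equality case: first, that linearity of the concave function propagates the uniqueness of the minimal extension to the identity $F_t\equiv F_0|_{\Omega_t}$ on all sublevel sets (this is the content of Remark~\ref{Rmk:ConcaveLinear}, but one must verify that the $\Omega_t$ exhaust a neighbourhood of $z_0$ on which $F_0$ is forced to equal $dw$), and second, the covering-space descent producing $\hat g$ and the potential-theoretic characterization of $\DD$ minus a polar set; by contrast the inequality part, the disc computations, and the converse are routine.
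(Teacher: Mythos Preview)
Your proposal is correct and follows essentially the same route as the paper's proof in Section~\ref{Sec:Suita} (recapitulated in Section~\ref{Sec:VariousSuita}): the concavity from Theorem~\ref{Thm:Concave} yields the inequality via the disc computation for $s\gg1$, and in the equality case the linearity criterion of Remark~\ref{Rmk:ConcaveLinear} produces a global $F_0$ with $F_0|_V=dw$, after which the universal-covering descent gives $\hat g\in\calO(\Omega)$ with $\log|\hat g|=G_\Omega(\cdot,z_0)$ and one concludes via Minda's theorem (or the Guan--Zhou lemmas). The only cosmetic slip is that the pushed-down $h=p_*(g|_U)$ satisfies $dh=c\,dw$ for some constant $c$, not literally $dh=F_0|_V$, so one works with $cF_0$ rather than $F_0$ in the analytic-continuation step; this is harmless.
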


In summary, one has
\begin{equation*}
	\pi \widetilde{B}_\Omega \leqslant c_B^2 \leqslant c_\beta^2 \leqslant \pi B_\Omega,
\end{equation*}
and Theorems \ref{Thm:cBcBeta} to \ref{Thm:SolSuitaConj} also give the necessary and sufficient conditions for these inequalities to become equalities.

\subsection{Various Approaches to the Suita Conjecture} \label{Sec:VariousSuita} \hfill

After \cite{Blocki2013,GuanZhou2012,GuanZhou2015}, there are several new approaches to the Suita conjecture.
For the inequality part, Blocki \cite{Blocki2014} gave a new proof based on the tensor power trick, and Berndtsson-Lempert \cite{BerndtssonLempert2016} presented another proof based on the log-psh variation of fibrewise Bergman kernels.
Recently, Dong \cite{DongArxiv} proposed a simplified proof for the equality part by using Maitani-Yamaguchi's \cite{MaitaniYamaguchi2004} variation formula for fibrewise Bergman kernels.
In Section \ref{Sec:Suita}, we presented a slightly different proof for the equality part of Suita's conjecture, which is based on the concavity of certain minimal $L^2$ integrals and the necessary condition for linearity.

In the following, we compare these different approaches to the Suita conjecture. We shall adjust the original notations to ensure consistency.
As before, $\Omega$ is a hyperbolic Riemann surface, $(V,w)$ is a connected coordinate chart around $z_0\in\Omega$, $\kappa_\Omega=B_\Omega|dw|^2$ is the Bergman kernel, $c_\beta|dw|$ is the logarithmic capacity, and $G:=G_\Omega(\cdot,z_0)$ is the Green function.
Suita \cite{Suita1972} conjectured that $\pi B_\Omega(z_0)\geqslant c_\beta(z_0)^2$, and the equality holds if and only if $\Omega$ is conformally equivalent to $\DD$ less a possible closed polar set.

\textbf{The inequality part of Suita's conjecture.}

\begin{proof}[\itshape The approach of Blocki \cite{Blocki2013} and Guan-Zhou \cite{GuanZhou2015}.]
	As noticed by Ohsawa \cite{Ohsawa1993Add}, proving the inequality is equivalent to prove an $L^2$ extension theorem with optimal estimate, i.e. to find a holomorphic 1-form $F\in\Gamma(\Omega,K_\Omega)$ with $F(z_0) = dw$ and
	\begin{equation*}
		\int_\Omega \tfrac{\sqrt{-1}}{2} F\wedge\overline{F} \leqslant \frac{\pi}{c_\beta(z_0)^2}.
	\end{equation*}
	The existence of such $F$ would imply $B_\Omega(z_0) \geqslant (\int_\Omega \tfrac{\sqrt{-1}}{2} F\wedge\overline{F})^{-1} \geqslant \pi^{-1}c_\beta(z_0)^2$.
	By proving certain optimal $L^2$ extension theorems, Blocki and Guan-Zhou solved the inequality part of the conjecture.
\end{proof}

\begin{proof}[\itshape The approach of Blocki \cite{Blocki2014}.]
	Using Donnelly-Fefferman's $L^2$ estimates of $\bar{\pd}$, together with a tensor power trick, Blocki showed that, for pseudoconvex domain $D\Subset\CC^n$,
	\begin{equation} \label{Eq:LowBBer}
		B_D(z) \geqslant \frac{1}{e^{2na}\textup{Vol}(\{G_D(\cdot,z)<-a\})}, \quad z\in D, a\in\RR_+.
	\end{equation}
	Here, $G_D$ is the pluricomplex Green function of $D$. In dimension $1$, for $a\gg1$, the sublevel set $\{G_D(\cdot,z)<-a\}$ is almost a disc with radius $e^{-a}c_\beta(z;D)^{-1}$. The right hand side converges to $\pi^{-1}c_\beta(z;D)^2$ as $a\to+\infty$, and then $B_D(z) \geqslant \pi^{-1}c_\beta(z;D)^2$.
\end{proof}

\begin{proof}[\itshape The approach of Berndtsson-Lempert \cite{BerndtssonLempert2016}.]
	For each $t\geqslant0$, define $\Omega_t:=\{2G<-t\}$ and $B(t):=B_{\Omega_t}(z_0)$. Consider the following Stein manifold:
	\begin{equation*}
		\mathcal{X} = \{ (\tau,z)\in\CC\times\Omega: 2G(z)+\re\tau<0 \}.
	\end{equation*}
	By the log-psh variation of fibrewise Bergman kernels (see \cite{MaitaniYamaguchi2004, Berndtsson2006}), $\tau\mapsto\log B(\re\tau)$ is a psh function, then $t\mapsto\log B(t)$ is a convex function.
	By the local behavior of $G_\Omega(\cdot,z_0)$ near $z_0$, one has $B(t) \sim \pi^{-1}c_\beta(z_0)^2e^t$ as $t\to+\infty$.
	Since the convex function $k(t):=\log B(t)-t$ is bounded from above as $t\to+\infty$, $k(t)$ must be decreasing. Therefore, $k(0)\geqslant \lim_{t\to+\infty}k(t)$, which implies $B_\Omega(z_0) \geqslant \pi^{-1}c_\beta(z_0)^2$.
	
	There is a slightly different proof due to Guan \cite{Guan2019Saitoh}. Since $B(t)$ is the reciprocal of certain minimal $L^2$ integral on $\Omega_t$, by a general concavity property (see \cite[Proposition 4.1]{Guan2019}), $r\mapsto\frac{1}{B(-\log r)}$ is a concave increasing function on $(0,1]$.
	Therefore, $\frac{1}{r B(-\log r)}$ is decreasing in $r$ and $e^{k(t)}=e^{-t}B(t)$ is decreasing in $t$. As a consequence, $B_\Omega(z_0) \geqslant \pi^{-1}c_\beta(z_0)^2$.
	Notice that, in this particular case, $B(t)\leqslant Ce^t$ for some $C>0$, then the convexity of $\log B(t)$ implies the concavity of $\frac{1}{B(-\log r)}$.
\end{proof}

\textbf{The equality part of Suita's conjecture.}

\begin{proof}[\itshape The approach of Guan-Zhou \cite{GuanZhou2015}.]
	After a suitable change of coordinate, we may assume that $G(w) \equiv \log|c_\beta(z_0)w|$ on $V$. Since $\pi B_\Omega(z_0) = c_\beta(z_0)^2$, there exists a unique holomorphic 1-form $F\in\Gamma(\Omega,K_\Omega)$ such that $F(z_0)=dw$ and $\int_\Omega \frac{\sqrt{-1}}{2}F\wedge\overline{F} = \pi c_\beta(z_0)^{-2}$. Given $r_1<r_2<r_3<0$ such that $\{2G<r_3\} \Subset V$, let $d_1(t)\equiv1$ and let $d_2(t)$ be a smooth function on $(-\infty,0)$ so that $d_1\equiv d_2$ on $(-\infty,r_1)\cup(r_3,0)$, $d_1>d_2$ on $(r_1,r_2)$, $d_1<d_2$ on $(r_2,r_3)$, $d_2(t)e^t$ is increasing on $(-\infty,0)$, and $\int_{-\infty}^0 d_2(t)e^tdt = \int_{-\infty}^0 d_1(t)e^tdt = 1$.
	
	According to \cite[Theorem 2.2]{GuanZhou2015}, there exists a holomorphic 1-form $F'\in\Gamma(\Omega,K_\Omega)$ with $F'(z_0)=dw$ and $\int_\Omega \frac{\sqrt{-1}}{2} d_2(2G) F'\wedge\overline{F'} \leqslant \pi c_\beta(z_0)^{-2}$. By careful computations,
	\begin{equation*}
		\int_\Omega \tfrac{\sqrt{-1}}{2}F'\wedge\overline{F'} \leqslant \int_\Omega \tfrac{\sqrt{-1}}{2} d_2(2G)F'\wedge\overline{F'} \leqslant \pi c_\beta(z_0)^{-2}.
	\end{equation*}
	On the other hand, $\int_\Omega \tfrac{\sqrt{-1}}{2}F'\wedge\overline{F'} \geqslant B_\Omega(z_0)^{-1} = \pi c_\beta(z_0)^{-2}$. Since the minimal element is unique, one has $F\equiv F'$, and then
	\begin{equation*}
		\int_\Omega \tfrac{\sqrt{-1}}{2}F\wedge\overline{F} = \int_\Omega \tfrac{\sqrt{-1}}{2} d_2(2G)F\wedge\overline{F}.
	\end{equation*}
	By careful computations, this equality implies $F|_V \equiv dw$ (see \cite[Lemma 4.21]{GuanZhou2015}). In summary,
	\begin{center}
		\itshape if $\pi B_\Omega(z_0) = c_\beta(z_0)^2$ and $(V,w)$ is a connected coordinate chart \\ around $z_0$ such that $G|_V\equiv\log|c_\beta(z_0)w|$, then there exists a \\ global holomorphic 1-form $F\in\Gamma(\Omega,K_\Omega)$ with $F|_V\equiv dw$.
	\end{center}
	Using this fact and the theory of Riemann surfaces, Guan-Zhou constructed a holomorphic function $g\in\calO(\Omega)$ such that $G=\log|g|$. By Theorem \ref{Thm:cBcBeta} and \ref{Thm:SuitacB}, $c_B(z_0)^2 = c_\beta(z_0)^2 = \pi B_\Omega(z_0)$, and $\Omega$ is conformally equivalent to $\DD$ less a possible closed polar set.
\end{proof}

\begin{proof}[\itshape The approach of Dong \cite{DongArxiv}.]
	For each $t\geqslant0$, put $\Omega_t=\{2G<-t\}$ and $B(t)=B_{\Omega_t}(z_0)$. Let $\kappa_t(\cdot,\cdot)$ be the Bergman kernel of $\Omega_t$, i.e. $\kappa_t(x,y) = \sqrt{-1}\sum_\alpha \phi_t^\alpha(x)\wedge\overline{\phi_t^\alpha(y)}$, where $\{\phi_t^\alpha\}_\alpha$ is a complete orthonormal basis of $A^2(\Omega_t,K_\Omega)$. If we write $\kappa_t(\cdot,z_0)=K_t(\cdot)\wedge\overline{dw}$, then $K_t\in\Gamma(\Omega_t,K_\Omega)$ is the unique holomorphic 1-form with minimal $L^2$-norm such that $K_t(z_0)=B(t)dw$.
	Recall from \cite{BerndtssonLempert2016} that, $k(t):=\log B(t)-t$ is a decreasing function. If $\pi B_\Omega(z_0)=c_\beta(z_0)^2$, then $k(t)$ is constant and $B(t)\equiv B_\Omega(z_0)e^t$.
	
	Using the variation formula of Maitani-Yamaguchi \cite{MaitaniYamaguchi2004}, Dong proved that $K_0|_{\Omega_t} \equiv K_te^{-t}$ for all $t\geqslant0$ such that $K_t$ is zero-free.
	In this proof, he needed to approximate $\Omega$ by smoothly bordered Riemann surfaces while keeping the equality  $\pi B_\Omega(z_0)=c_\beta(z_0)^2$.
	
	Assume that $(V,w)$ is a connected coordinate chart around $z_0$ such that $G|_V\equiv\log|c_\beta(z_0)w|$, then $\Omega_t\Subset V$ and $K_t(w)\equiv B(t)dw$ for $t\gg1$. Consequently, $K_0|_V\equiv B_\Omega(z_0)dw$.
	Set $F:=K_0/B_\Omega(z_0)$, then $F|_V\equiv dw$. By careful analysis, Dong showed that $g:=F/(2\pd G)$ is a holomorphic function on $\Omega$ and $G=\log|g|$. By \cite[Theorem 1]{Minda1987}, $\Omega$ is conformally equivalent to $\DD$ less a possible closed polar set.
\end{proof}

\begin{proof}[\itshape The approach of Section \ref{Sec:Suita}.]
	For each $t\geqslant0$, set $\Omega_t=\{2G<-t\}$ and $B(t)=B_{\Omega_t}(z_0)$. Let $F_t\in\Gamma(\Omega_t,K_\Omega)$ be the unique holomorphic 1-form with minimal $L^2$-norm such that $F_t(z_0)=dw$, then $B(t)=\|F_t\|^{-2}$.
	By the concavity of minimal $L^2$ integrals (see \cite{Guan2019}), $r\mapsto\|F_{-\log r}\|^2$ is a concave function. If $\pi B_\Omega(z_0) = c_\beta(z_0)^2$, then $B(t)\equiv B_\Omega(z_0)e^t$ and $\|F_{-\log r}\|^2\equiv r/B_\Omega(z_0)$ is linear in $r$.
	By the necessary condition for linearity (see Remark \ref{Rmk:ConcaveLinear}), $F_0|_{\Omega_t} \equiv F_t$ for any $t\geqslant0$. If $(V,w)$ is a connected coordinate chart around $z_0$ such that $G|_V\equiv\log|c_\beta(z_0)w|$, then $F_s\equiv dw$ for $s\gg1$ and then $F_0|_V=dw$. The other part of the proof is the same as in \cite{GuanZhou2015}.
\end{proof}

\begin{remark}
	The first part of all three proofs is to find a  holomorphic 1-form $F\in\Gamma(\Omega,K_\Omega)$ such that $F|_V\equiv dw$, in which $(V,w)$ is a connected coordinate chart around $z_0$ with $G|_V\equiv\log|c_\beta(z_0)w|$. But the approaches are different. Having such an $F$, one can construct an $g\in\calO(\Omega)$ such that $\log|g|=G_\Omega(\cdot,z_0)$.
	
	We remark that, without requiring $\pi B_\Omega(z_0)=c_\beta(z_0)^2$ in advance, the existence of $g\in\calO(\Omega)$ satisfying $\log|g| = G_\Omega(\cdot,z_0)$ guarantees the rigidity.
	This fact is implicitly contained in Suita's article \cite[p213]{Suita1972}.
	It also follows from a theorem of Minda \cite{Minda1987}: if $f:X\to Y$ is a holomorphic map between hyperbolic Riemann surfaces, and $G_Y(f(a),f(b))=G_X(a,b)$ for some $a\neq b$, then $f$ is injective and $Y\backslash f(X)$ is a closed set of capacity zero.
\end{remark}

\subsection{One Dimensional Generalizations} \label{Sec:GenSuita} \hfill

In this section, $\Omega$ is a hyperbolic Riemann surface, $z_0$ is a distinguished point of $\Omega$ and $(V,w)$ is a coordinate chart around $z_0$. Let $p:\DD\to\Omega$ be a universal covering of $\Omega$. Recall that the group of deck transformations $\text{Deck}(\DD/\Omega)$ is isomorphic to the fundamental group $\pi_1(\Omega)$.
Therefore, any $\sigma\in\pi_1(\Omega)$ can be identified with an element in $\textup{Aut}(\DD)$ which we shall also denote by $\sigma$. Moreover, any such automorphism satisfies $p\circ\sigma=p$.

\begin{lemma} 
	If $f_1$ and $f_2$ are holomorphic functions on a connected complex manifold $M$ such that $|f_1|\equiv|f_2|$, then $f_1\equiv\alpha f_2$ for some $\alpha\in\CC$ with $|\alpha|=1$.
\end{lemma}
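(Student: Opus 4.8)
The plan is to reduce to the classical fact that a holomorphic function of constant modulus on a connected open set is constant, and then propagate the resulting identity to all of $M$ via the identity theorem.

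First I would dispose of the trivial case: if $f_2\equiv 0$, then $|f_1|\equiv|f_2|\equiv 0$ forces $f_1\equiv 0$, and any unimodular $\alpha$ works. So assume $f_2\not\equiv 0$ and pick a point $p\in M$ with $f_2(p)\ne 0$. Choose a connected coordinate neighborhood $U\ni p$ (for instance a polydisc in a chart) on which $f_2$ is nowhere zero; then $g:=f_1/f_2$ is holomorphic on $U$, and $|g|\equiv 1$ there because $|f_1|\equiv|f_2|$.

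Next I would show that $g$ is constant on $U$. From $g\overline{g}\equiv 1$ and $\partial g/\partial\overline{z}_j\equiv 0$ one obtains $g\cdot\partial\overline{g}/\partial\overline{z}_j\equiv 0$, and since $g$ is nowhere zero this gives $\partial\overline{g}/\partial\overline{z}_j\equiv 0$, i.e. $\overline{\partial g/\partial z_j}\equiv 0$. Thus every first-order partial derivative of $g$ vanishes on $U$, so $g$ is locally constant, hence constant on the connected set $U$; write $g\equiv\alpha$ with $|\alpha|=1$. (Alternatively, invoke the open mapping theorem: a non-constant holomorphic function on a connected complex manifold is an open map, but here $g$ maps $U$ into the unit circle, which has empty interior.)

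Finally, $f_1-\alpha f_2$ is holomorphic on $M$ and vanishes identically on the nonempty open set $U$; since $M$ is connected, the identity theorem for holomorphic functions on connected complex manifolds yields $f_1-\alpha f_2\equiv 0$ on $M$, which is the assertion. I do not expect a genuine obstacle here; the only points requiring care are that $U$ be connected and that $f_2$ be nonvanishing on it (both arranged by shrinking), and the appeal to the identity theorem on all of $M$, which is standard.
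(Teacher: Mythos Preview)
Your proof is correct. The paper states this lemma without proof, treating it as an elementary fact; your argument via the constant-modulus principle and the identity theorem is the standard one and fills in exactly what the paper omits.
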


\begin{proof}
	Apply the Riemann extension theorem and the maximum principle to $f_1/f_2$.
\end{proof}

\begin{lemma}
	There exists an $g\in\calO(\DD)$ such that $\log|g|=p^* G_\Omega(\cdot,z_0)$.
\end{lemma}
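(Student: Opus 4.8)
The plan is to pull the Green function back to the disc, differentiate it to obtain a meromorphic $1$-form whose residues are integers, and then exponentiate a primitive of that form; simple connectivity of $\DD$ will kill all monodromy obstructions, and the classical removable singularity theorem will take care of the poles.

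First I would set $u:=p^*G_\Omega(\cdot,z_0)$ and $Z:=p^{-1}(z_0)$. Since $p$ is a covering map, $Z$ is a discrete subset of $\DD$ which is closed in $\DD$; since $p$ is a local biholomorphism and $G_\Omega(\cdot,z_0)$ is harmonic on $\Omega\setminus\{z_0\}$ with $G_\Omega(w,z_0)-\log|w|$ a bounded harmonic function in a local coordinate $w$ centred at $z_0$, the function $u$ is harmonic on $\DD\setminus Z$, negative, and in a holomorphic coordinate $\zeta$ near each $a\in Z$ one has $u(\zeta)=\log|\zeta-a|+h_a(\zeta)$ with $h_a$ harmonic near $a$.

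Next, consider the $(1,0)$-form $\theta:=2\,\partial u$. On $\DD\setminus Z$, harmonicity of $u$ forces $\bar\partial\partial u=0$, so $\theta$ is a holomorphic $1$-form there; by the local description above, near $a\in Z$ one has $\theta=\dfrac{d\zeta}{\zeta-a}+2\,\partial h_a$, so $\theta$ extends to a meromorphic $1$-form on $\DD$ with a simple pole at each point of $Z$ and all residues equal to $1$. Now fix $\zeta_0\in\DD\setminus Z$. Any loop $\gamma$ in $\DD\setminus Z$ is compact, hence encircles only finitely many points of $Z$; since $\DD$ is simply connected, the residue theorem gives $\tfrac{1}{2\pi i}\oint_\gamma\theta\in\mathbb Z$. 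Consequently the multivalued function $\zeta\mapsto\int_{\zeta_0}^\zeta\theta$ is well defined modulo $2\pi i\mathbb Z$, so $g:=\exp\big(\int_{\zeta_0}^{\zeta}\theta\big)$ is a single-valued, nowhere-vanishing holomorphic function on $\DD\setminus Z$ with $d\log g=\theta$.

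Finally, since $d\log|g|=\operatorname{Re}\theta=\operatorname{Re}(2\,\partial u)=\partial u+\overline{\partial u}=du$ on the connected set $\DD\setminus Z$, we get $\log|g|=u+c$ for some real constant $c$; replacing $g$ by $e^{-c}g$ we may assume $|g|=e^{u}$ on $\DD\setminus Z$. Near each $a\in Z$ we have $|g|=e^{u}=|\zeta-a|\,e^{h_a}\to 0$, so $g$ is bounded near $a$ and extends holomorphically across $a$ by Riemann's removable singularity theorem; hence $g\in\calO(\DD)$, and the identity $\log|g|=u=p^*G_\Omega(\cdot,z_0)$ holds on all of $\DD$, both sides being $-\infty$ exactly on $Z$. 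There is no deep obstacle here; the single point that needs care is that $Z$ is typically infinite, so $\theta$ cannot be written as one convergent global sum $\sum_a d\log(\zeta-a)$ — which is precisely why the integrality of residues is exploited loop by loop, using only that each compact loop meets finitely many poles and that $\DD$ is simply connected.
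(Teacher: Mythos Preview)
Your argument is correct. The paper, however, takes a different and somewhat shorter route: it invokes the Weierstrass theorem on the open Riemann surface $\Omega$ to produce a global $h\in\calO(\Omega)$ with a simple zero exactly at $z_0$, so that $G_\Omega(\cdot,z_0)-\log|h|$ is harmonic on \emph{all} of $\Omega$; pulling this back to $\DD$ and writing it as $\operatorname{Re}f$ with $f\in\calO(\DD)$ (possible because $\DD$ is simply connected), one sets $g=p^*(h)\exp(f)$. In the paper's approach the zeros are packaged into $h$ from the start, so no residue argument or removable-singularity step is needed, at the cost of importing the Weierstrass theorem for open Riemann surfaces. Your approach avoids that input entirely and stays within elementary one-variable analysis on $\DD$: the simple connectivity is used through the integrality of the periods of $2\partial u$ rather than through ``harmonic $=\operatorname{Re}$ holomorphic'', and the zeros are recovered a posteriori via Riemann's theorem. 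Both are standard; yours is more self-contained, while the paper's absorbs the infinite fibre $p^{-1}(z_0)$ in a single stroke via $p^*h$.
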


\begin{proof}
	By the Weierstrass theorem for open Riemann surfaces, there is an $h\in\calO(\Omega)$ so that $h(z_0)=0$, $dh(z_0)\neq0$ and $h|_{\Omega\backslash\{z_0\}}\neq0$. Since $p^{*}(G_\Omega(\cdot,z_0)-\log|h|)$ is harmonic on $\DD$, there exists an $f\in\calO(\DD)$ such that $\re f=p^{*}(G_\Omega(\cdot,z_0)-\log|h|)$. Let $g:=p^*(h)\exp(f)$, then $g\in\calO(\DD)$ and $\log|g|=p^* G_\Omega(\cdot,z_0)$.
\end{proof}

Let $g\in\calO(\DD)$ be a holomorphic function such that $\log|g|=p^* G_\Omega(\cdot,z_0)$. For any $\sigma\in\pi_1(\Omega)$, we have $|g| = \exp(p^* G_\Omega(\cdot,z_0)) = \exp(\sigma^*p^* G_\Omega(\cdot,z_0)) = |\sigma^*g|$, which implies $\sigma^*g/g$ is a constant of modulus one. Clearly,
\begin{equation*}
	\chi_{z_0}:\sigma\in\pi_1(\Omega) \mapsto\sigma^*g/g\in\mathbb{S}^1
\end{equation*}
is a group homomorphism, which is independent of the choice of $g$.

Let $\eta$ be a harmonic function on $\Omega$, then there exists a holomorphic function $\xi\in\calO(\DD)$ so that $|\xi|=\exp(p^*\eta)$. For any $\sigma\in\pi_1(\Omega)$, we have $|\xi| = \exp(p^*\eta) = \exp(\sigma^*p^*\eta) = |\sigma^*\xi|$, and then $\sigma^*\xi/\xi$ is a constant of modulus one. Clearly,
\begin{equation*}
	\chi_\eta:\sigma\in\pi_1(\Omega) \mapsto\sigma^*\xi/\xi\in\mathbb{S}^1	
\end{equation*}
is also a group homomorphism, which is independent of the choice of $\xi$.

Given a group homomorphism $\chi\in\textup{Hom}(\pi_1(\Omega),\mathbb{S}^1)$, we define
\begin{gather*}
	\calO^\chi(\Omega) := \left\{ f\in\calO(\DD): \sigma^*f=\chi(\sigma)f \text{ for all } \sigma\in\pi_1(\Omega) \right\}, \\
	\Gamma^\chi(\Omega) := \left\{ F\in\Gamma(\DD,K_\DD): \sigma^*F=\chi(\sigma)F \text{ for all } \sigma\in\pi_1(\Omega) \right\}.
\end{gather*}
A typical element $f\in\calO^\chi(\Omega)$ (resp. $F\in\Gamma^\chi(\Omega)$) is called a multiplicative function (resp. Prym differential). Recall that, the \textbf{multiplicative Bergman kernel} (or $\chi$-Bergman kernel) of $\Omega$ is defined by
\begin{equation*}
	\kappa_\Omega^\chi(z) := \sup\left\{ \sqrt{-1}F(z)\wedge\overline{F(z)}: F\in\Gamma^\chi(\Omega), \int_\Omega \tfrac{\sqrt{-1}}{2} F\wedge\overline{F} \leqslant 1 \right\}.
\end{equation*}
Since $p_*(F\wedge\overline{F})$ is well-defined on $\Omega$, we can simply write $F\wedge\overline{F}$ on $\Omega$.

The extended Suita conjecture (see Yamada \cite{Yamada1998}) is the following:
\begin{center}
	\itshape $\pi\kappa_\Omega^\chi \geqslant c_\beta^2|dw|^2$ and the equality holds at $z_0\in\Omega$ \\
	if and only if $\chi=\chi_{z_0}$.
\end{center}
Notice that, if $\chi=\chi_{z_0}$, then $\pi\kappa_\Omega^\chi(z_0) = c_\beta(z_0)^2|dw|^2$ (see \cite[Theorem 7]{Yamada1998}).

There is an equivalent formulation in terms of \textbf{weighted Bergman kernels}. Given a harmonic function $\eta$ on $\Omega$, we define
\begin{equation*}
	\kappa_{\Omega,\eta}(z) = \sup\left\{ \sqrt{-1}F(z)\wedge\overline{F(z)}: F\in\Gamma(\Omega,K_\Omega), \int_\Omega \tfrac{\sqrt{-1}}{2}F\wedge\overline{F}e^{-2\eta} \leqslant 1 \right\}.
\end{equation*}
Then the extended Suita conjecture is equivalent to the following:
\begin{center}
	\itshape $\pi\kappa_{\Omega,\eta} \geqslant c_\beta^2e^{2\eta}|dw|^2$ and the equality holds at $z_0\in\Omega$ \\
	if and only if $\chi_\eta\chi_{z_0}=1$.
\end{center}
The inequality part of the conjecture was proved in Guan-Zhou \cite{GuanZhou2015CN} and the equality part was proved in Guan-Zhou \cite{GuanZhou2015}.

\begin{theorem}[Guan-Zhou \cite{GuanZhou2015CN,GuanZhou2015}] \label{Thm:ESC}
	$\pi\kappa_{\Omega,\eta}(z_0) \geqslant c_\beta(z_0)^2e^{2\eta(z_0)}|dw|^2$ and the equality holds if and only if $\chi_\eta\chi_{z_0}=1$.
\end{theorem}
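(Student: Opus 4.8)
The plan is to imitate the argument of Section \ref{Sec:Suita}, replacing the Bergman kernel by the weighted Bergman kernel, and invoking the universal covering only at the very last, rigidity step. First I would set the problem up so that Theorem \ref{Thm:OptExtThm} (equivalently Theorem \ref{Thm:Concave}) applies. Since $\eta$ is harmonic, $h:=e^{-2\eta}$ is a Hermitian metric on the trivial line bundle $E:=\Omega\times\CC$ with $\sqrt{-1}\Theta(E,h)=\ddbar(2\eta)=0$; so with $\psi:=2G_\Omega(\cdot,z_0)<0$ and $\varphi:=0$ all hypotheses hold with $\gamma=\rho=0$. Because $\dim_\CC\Omega=1$ and $G_\Omega(\cdot,z_0)$ has a logarithmic pole, $\calI(\varphi+\psi)_{z_0}=\frakm_{z_0}$ while $\calI(\varphi+\psi)_x=\calO_x$ for $x\neq z_0$, so for sections of $K_\Omega\otimes E$ the relation ``$F-G\in\Gamma(\,\cdot\,,\calO(K_\Omega\otimes E)\otimes\calI(\varphi+\psi))$'' just means ``$F$ and $G$ agree at $z_0$''. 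Fix a coordinate chart $(V,w)$ with $w(z_0)=0$; for $t\geq0$ put $\Omega_t:=\{\psi<-t\}$ and let $F_t$ be the minimal element of $\calA_t:=A^2(\Omega_t,K_\Omega\otimes E;(\det\omega)^{-1}\otimes h,e^{-\varphi}dV_\omega)$ with $F_t(z_0)=dw$ (such an element exists, and lies in $\calA_0$, by Theorem \ref{Thm:CoarseExtThm}). Then $I(t):=\|F_t\|_{\calA_t}^2=1/B_{\Omega_t,\eta}(z_0)$, where $\kappa_{\Omega_t,\eta}|_V=B_{\Omega_t,\eta}|dw|^2$.

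Next I would prove the inequality. By Theorem \ref{Thm:Concave}, $r\mapsto I(-\log r)$ is concave increasing on $(0,1]$ and $I(0)\leq I(t)e^t\leq I(s)e^s$ for $0\leq t\leq s$; equivalently $t\mapsto B_{\Omega_t,\eta}(z_0)e^{-t}$ is non-increasing and $\leq B_{\Omega,\eta}(z_0)$. After a conformal change of the coordinate $w$ we may assume $G_\Omega(\cdot,z_0)|_V=\log|c_\beta(z_0)w|$, so that for all large $s$ the set $\Omega_s$ is the Euclidean disc $\{|w|<c_\beta(z_0)^{-1}e^{-s/2}\}$. On such a disc, writing $\eta=\operatorname{Re}f_s$ with $f_s\in\calO(\Omega_s)$, $f_s(z_0)=\eta(z_0)$, multiplication by $e^{-f_s}$ is an isometry of $\calA_s$ onto the unweighted $A^2(\Omega_s,K_\Omega)$, under which the minimal extension of $dw$ at $z_0$ is the constant $e^{-\eta(z_0)}dw$; hence
\begin{equation*}
F_s=e^{f_s-\eta(z_0)}dw,\qquad I(s)=e^{-2\eta(z_0)}\pi c_\beta(z_0)^{-2}e^{-s},
\end{equation*}
so $B_{\Omega_s,\eta}(z_0)e^{-s}=e^{2\eta(z_0)}c_\beta(z_0)^2/\pi$ for all large $s$. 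Combined with the monotonicity this gives $B_{\Omega,\eta}(z_0)\geq e^{2\eta(z_0)}c_\beta(z_0)^2/\pi$, i.e. $\pi e^{-2\eta(z_0)}\kappa_{\Omega,\eta}(z_0)\geq c_\beta(z_0)^2|dw|^2$.

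For the equality case, suppose $\pi e^{-2\eta(z_0)}\kappa_{\Omega,\eta}(z_0)=c_\beta(z_0)^2|dw|^2$. Then $t\mapsto I(t)e^t$ is non-decreasing, starts at $I(0)=\pi e^{-2\eta(z_0)}/c_\beta(z_0)^2$ and is equal to this constant for large $t$, so $I(t)e^t$ is identically constant and $I(-\log r)=\bigl(\pi e^{-2\eta(z_0)}/c_\beta(z_0)^2\bigr)r$ is linear on $(0,1]$; by Remark \ref{Rmk:ConcaveLinear}, $F_a\equiv F_0|_{\Omega_a}$ for every $a>0$, hence $F_0|_{\Omega_s}=F_s=e^{f_s-\eta(z_0)}dw$ for all large $s$. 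Now lift to the universal covering $p:\DD\to\Omega$, fix $\tilde z_0\in p^{-1}(z_0)$, and use $w$ as a local coordinate near $\tilde z_0$ via $p$. Choose $g\in\calO(\DD)$ with $\log|g|=p^*G_\Omega(\cdot,z_0)$; after a unimodular normalization $g=c_\beta(z_0)w$ near $\tilde z_0$ and $\sigma^*g=\chi_{z_0}(\sigma)g$. Choose $\xi\in\calO(\DD)$ with $|\xi|=e^{p^*\eta}$; since $p^*\eta=\operatorname{Re}f_s$ near $\tilde z_0$, after another unimodular normalization $\xi=e^{f_s}$ near $\tilde z_0$ and $\sigma^*\xi=\chi_\eta(\sigma)\xi$. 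The meromorphic function $\Phi:=p^*F_0/(\xi\,dg)$ on $\DD$ equals the nonzero constant $e^{-\eta(z_0)}/c_\beta(z_0)$ on a neighbourhood of $\tilde z_0$, hence $\Phi\equiv e^{-\eta(z_0)}/c_\beta(z_0)$ on $\DD$. Applying $\sigma^*$ gives $\Phi=\sigma^*\Phi=\Phi/\bigl(\chi_\eta(\sigma)\chi_{z_0}(\sigma)\bigr)$, so $\chi_\eta(\sigma)\chi_{z_0}(\sigma)=1$ for every $\sigma\in\pi_1(\Omega)$, i.e. $\chi_\eta^{-1}=\chi_{z_0}$. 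Conversely, when $\chi_\eta^{-1}=\chi_{z_0}$ the equality is classical: it is equivalent, via the identification $e^{-2\eta(z_0)}\kappa_{\Omega,\eta}(z_0)=\kappa_\Omega^{\chi_\eta^{-1}}(z_0)$ recalled before the statement, to $\pi\kappa_\Omega^{\chi_{z_0}}(z_0)=c_\beta(z_0)^2|dw|^2$ (\cite{Yamada1998}); alternatively $\xi g$ descends to $\hat h\in\calO(\Omega)$ with $\log|\hat h|=\eta+G_\Omega(\cdot,z_0)$, and $(c_\beta(z_0)e^{\eta(z_0)})^{-1}d\hat h$ is the extremal, using $\int_\Omega\tfrac{\sqrt{-1}}{2}\partial\bar\partial e^{2G_\Omega(\cdot,z_0)}=\pi$.

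The analytic heart of the argument is entirely supplied by Theorem \ref{Thm:Concave} and Remark \ref{Rmk:ConcaveLinear}, so I do not expect a genuinely new difficulty. The step to be careful with is the rigidity one: pinning down $F_0|_{\Omega_s}$ as the explicit form $e^{f_s-\eta(z_0)}dw$ (which is where the one-disc model really gets used), and then bookkeeping the two cocycles $\chi_{z_0}$, $\chi_\eta$ through the single identity $\Phi\equiv\text{const}$ on the covering — in particular choosing the unimodular normalizations of $g$ and $\xi$ consistently near $\tilde z_0$ and keeping the directions of all the monotonicities straight. The converse direction, if one wishes to include a self-contained proof rather than cite \cite{Yamada1998}, requires the orthogonality of $d\hat h$ to the sections vanishing at $z_0$, which is the only slightly laborious computation.
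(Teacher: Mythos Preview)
Your approach is correct and is exactly the paper's method: it is the $m=0$ case of Theorem \ref{Thm:GenSuita}, proved via Proposition \ref{Prop:ConcaveLinear} (i.e.\ Theorem \ref{Thm:Concave} and Remark \ref{Rmk:ConcaveLinear}). Your packaging of the weight $e^{-2\eta}$ into the metric $h$ rather than into $\varphi$ is harmless, and your rigidity step via the meromorphic function $\Phi=p^*F_0/(\xi\,dg)$ is just a reformulation of the paper's analytic-continuation argument $p^*F\equiv \xi\,dg$.

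One point to correct in your sketch of the converse: the extremal section for $B_{\Omega,\eta}(z_0)$ is \emph{not} $d\hat h$ but rather
\[
F=\partial\hat h-2\hat h\,\partial\eta=e^{2\eta}\partial(e^{-2\eta}\hat h)=2\hat h\,\partial G_\Omega(\cdot,z_0)\quad\text{on }\Omega\setminus\{z_0\}.
\]
Indeed, near $z_0$ (with $G=\log|c_\beta w|$ and $\hat h=e^{f_V}c_\beta w$) one has $d\hat h=c_\beta e^{f_V}(1+wf_V')\,dw$, which does not agree with $F_0=e^{f_V-\eta(z_0)}dw$ unless $\eta$ is constant; and the orthogonality $\int_\Omega d\hat h\wedge\overline{F'}\,e^{-2\eta}=0$ for $F'(z_0)=0$ fails because integration by parts leaves the term $2\int_\Omega \hat h\,\overline{F'}\,\partial\eta\,e^{-2\eta}$. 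With the corrected $F$ the norm computation reduces to $\int_\Omega \tfrac{\sqrt{-1}}{2}\,4\,\partial G\wedge\bar\partial G\,e^{2G}=\pi$ and the orthogonality to $\int_\Omega \partial(e^{2G})\wedge\overline{F''}=0$, exactly as in the paper. Citing \cite{Yamada1998} for this direction, as you primarily do, is of course fine.
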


\begin{lemma}
	Let $\eta$ be a harmonic function on $\Omega$, then $\chi_\eta\chi_{z_0}^k=1$ for some $k\in\NN$ if and only if there exists a holomorphic function $\hat{g}\in\calO(\Omega)$ such that $\log|\hat{g}|=kG_\Omega(\cdot,z_0)+\eta$.
\end{lemma}

\begin{proof}
	We choose $g,\xi\in\calO(\DD)$ such that $\log|g|=p^* G_\Omega(\cdot,z_0)$ and $|\xi|=\exp(p^*\eta)$.
	If $\chi_\eta\chi_{z_0}^k=1$, then $\sigma^*(\xi g^k) = \chi_\eta(\sigma)\xi\cdot(\chi_{z_0}(\sigma)g)^k = \xi g^k$ for any $\sigma\in\pi_1(\Omega)$. As a consequence, $\hat{g}:=p_*(\xi g^k)$ is a well-defined holomorphic function on $\Omega$. Since
	\begin{equation*}
		\log|\xi g^k| = kp^*G_\Omega(\cdot,z_0) + p^*\eta,
	\end{equation*}
	it is clear that $\log|\hat{g}| = kG_\Omega(\cdot,z_0) + \eta$.
	
	Conversely, if there is an $\hat{g}\in\calO(\Omega)$ such that $\log|\hat{g}|=kG_\Omega(\cdot,z_0)+\eta$, then $|p^*\hat{g}| = |\xi g^k|$. Hence, $\xi g^k=c\cdot p^*\hat{g}$ for some $c\in\mathbb{S}^1$. For any $\sigma\in\pi_1(\Omega)$, we have $p^*\hat{g}=\sigma^*(p^*\hat{g})$, this implies
	\begin{equation*}
		\xi g^k = \sigma^*(\xi g^k) = \chi_\eta(\sigma)\xi \cdot (\chi_{z_0}(\sigma)g)^k.
	\end{equation*}
	Therefore, $\chi_\eta(\sigma)\chi_{z_0}(\sigma)^k=1$ for all $\sigma\in\pi_1(\Omega)$.
\end{proof}

Recall that $\Omega$ is a hyperbolic Riemann surface and $(V,w)$ is a coordinate chart around $z_0\in\Omega$. Denote by $\frakm_{z_0}$ the unique maximal ideal of $\calO_{z_0}$. Given $m\in\NN$, consider the generalized Bergman kernel
\begin{equation*}
	B_\Omega^{(m)}(z_0) := \sup\left\{ \left|\frac{\pd^mf}{\pd w^m}(z_0)\right|^2:
	\begin{aligned} F\in\Gamma(\Omega,K_\Omega) \text{ with } F|_V=fdw, \\ \int_\Omega \tfrac{\sqrt{-1}}{2} F\wedge\overline{F} \leqslant 1, [f]_{z_0}\in\frakm_{z_0}^m \end{aligned} \right\}.
\end{equation*}
Clearly, $B_\Omega^{(m)}(z_0)|dw|^{2m+2}$ is independent of the choice of $(V,w)$.
For planar domain $\Omega\subset\CC_w$, following the method of \cite{Blocki2013}, Blocki-Zwonek \cite{BlockiZwonek2018} proved that
\begin{equation} \label{Eq:HighSuita}
	\pi B_\Omega^{(m)}(z_0) \geqslant m!(m+1)!c_\beta(z_0)^{2m+2}.
\end{equation}
By modifying Guan-Zhou's proof for the equality part of Suita's conjecture, Li \cite{LiPHD} obtained an equivalent condition for \eqref{Eq:HighSuita} to become an equality (also see \cite{LiXuZhou}).

\begin{theorem}[see \cite{BlockiZwonek2018} and \cite{LiPHD}] \label{Thm:HighSC}
	$\pi B_\Omega^{(m)}(z_0) \geqslant m!(m+1)!c_\beta(z_0)^{2m+2}$. Moreover, the equality holds if and only if there exists a holomorphic function $\hat{g}\in\calO(\Omega)$ such that $\log|\hat{g}|=(m+1)G_\Omega(\cdot,z_0)$.
\end{theorem}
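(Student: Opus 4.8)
The plan is to run the argument of Section~\ref{Sec:Suita} with the weight $\psi:=2(m+1)G_\Omega(\cdot,z_0)$ in place of $2G_\Omega(\cdot,z_0)$. Write $G:=G_\Omega(\cdot,z_0)$ and assume $G$ exists; after a conformal change of the coordinate $w$ on $V$ (straightening the harmonic function $G-\log|w-w(z_0)|$, as in Section~\ref{Sec:Suita}) we may assume $G|_V=\log|c_\beta(z_0)w|$. For $t\geq0$ put $\Omega_t:=\{\psi<-t\}$; near $z_0$ this is the coordinate disc $\{|w|<c_\beta(z_0)^{-1}e^{-t/(2(m+1))}\}$, and for $t$ large the whole of $\Omega_t$ is this disc. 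Fix the germ $f$ at $z_0$ of $\frac1{m!}w^m\,dw$; applying Theorem~\ref{Thm:CoarseExtThm} with this $\psi$, with $\varphi\equiv0$, and with the trivial line bundle (all curvature hypotheses being satisfied trivially) to the input $f$ on a small disc $\Omega_a$, we obtain a global $L^2$ holomorphic $1$-form $\widetilde F\in A^2(\Omega,K_\Omega)$ with the same $(m+1)$-jet as $f$ at $z_0$. For each $t\geq0$ let $F_t\in\calA_t:=A^2(\Omega_t,K_\Omega)$ be the minimal element with $F_t-\widetilde F|_{\Omega_t}\in\Gamma(\Omega_t,\calO(K_\Omega)\otimes\calI(\psi))$ --- equivalently, the minimal $1$-form on $\Omega_t$ coinciding with $f$ up to order $m$ at $z_0$ --- and set $I(t):=\|F_t\|^2_{\calA_t}$. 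Since the $m$-jet of $f$ is nonzero, $I(t)>0$, and directly from the extremal description of $B_\Omega^{(m)}(z_0)$ one has $I(0)=1/B_\Omega^{(m)}(z_0)$.

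First I would invoke Theorem~\ref{Thm:Concave} (here $\calI(\varphi+\psi)_{z_0}=\frakm_{z_0}^{m+1}$): $r\mapsto I(-\log r)$ is concave and increasing on $(0,1]$, hence $t\mapsto e^tI(t)$ is increasing and $I(0)\leq e^tI(t)\leq e^sI(s)$ for $0\leq t\leq s$. For $s$ large, $\Omega_s$ is a disc of radius $\rho_s=c_\beta(z_0)^{-1}e^{-s/(2(m+1))}$, and since monomials are mutually orthogonal in the Bergman space of a disc while no term of degree $<m$ is admissible, $F_s=\frac1{m!}w^m\,dw$; a one-line computation then gives $e^sI(s)=\frac{\pi}{m!(m+1)!}\,c_\beta(z_0)^{-2m-2}$ (using $(m+1)(m!)^2=m!(m+1)!$). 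Consequently $I(0)\leq\frac{\pi}{m!(m+1)!}c_\beta(z_0)^{-2m-2}$, that is, $\pi B_\Omega^{(m)}(z_0)\geq m!(m+1)!\,c_\beta(z_0)^{2m+2}$, which is the asserted inequality (and reduces to Suita's inequality for $m=0$).

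Equality means $e^0I(0)=e^sI(s)$ for $s$ large; by the monotonicity of $e^tI(t)$ this forces $I(-\log r)$ to be linear on $(0,1]$, and then Remark~\ref{Rmk:ConcaveLinear} gives $F_t=F_0|_{\Omega_t}$ for all $t\geq0$. Taking $t$ large yields $F_0|_{\Omega_t}=\frac1{m!}w^m\,dw$, so $F_0|_V=\frac1{(m+1)!}d(w^{m+1})$ by analytic continuation. I would then pass to the universal cover $p:\DD\to\Omega$: write $p^*F_0=d\widetilde h$ with $\widetilde h\in\calO(\DD)$ normalized so that $\widetilde h=\frac1{(m+1)!}\,\widetilde w^{m+1}$ on a component of $p^{-1}(V)$ (where $\widetilde w$ is the lift of $w$), and take $g\in\calO(\DD)$ with $\log|g|=p^*G$ normalized so that $g=c_\beta(z_0)\widetilde w$ there. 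Analytic continuation gives $g^{m+1}=c_\beta(z_0)^{m+1}(m+1)!\,\widetilde h$ on $\DD$. For $\sigma\in\pi_1(\Omega)$ one has $\sigma^*\widetilde h=\widetilde h+c_\sigma$ for a constant $c_\sigma$ (because $\sigma^*p^*F_0=p^*F_0$) and $|\sigma^*\widetilde h|\equiv|\widetilde h|$ (because $\log|g^{m+1}|=(m+1)p^*G$ is $\pi_1$-invariant); since $\widetilde h$ is nonconstant this forces $c_\sigma=0$ and $\chi_{z_0}(\sigma)^{m+1}=1$. Hence $g^{m+1}$ descends to $\hat g=p_*(g^{m+1})\in\calO(\Omega)$ with $\log|\hat g|=(m+1)G_\Omega(\cdot,z_0)$. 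Conversely, suppose such an $\hat g$ exists; normalize it so that $\hat g|_V=(c_\beta(z_0)w)^{m+1}$ and set $F:=\frac1{(m+1)!\,c_\beta(z_0)^{m+1}}d\hat g\in\Gamma(\Omega,K_\Omega)$, so that $F|_V=\frac1{m!}w^m\,dw$. Since $\hat g$ vanishes only at $z_0$ (with multiplicity $m+1$), $\hat g:\Omega\to\DD$ is a proper holomorphic map of degree $m+1$ with full ramification over $0$; for any $H\in A^2(\Omega,K_\Omega)$ vanishing to order $\geq m+1$ at $z_0$, the pushforward $\hat g_*H$ is a square-integrable holomorphic $1$-form on $\DD$ vanishing at $0$, hence orthogonal to $d\zeta$ in $A^2(\DD,K_\DD)$, so $\langle H,d\hat g\rangle_\Omega=\langle \hat g_*H,d\zeta\rangle_\DD=0$. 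Thus $F$ is the minimal $L^2$ extension of its $m$-jet, $I(0)=\|F\|^2=\frac{(m+1)\pi}{(m+1)!^2\,c_\beta(z_0)^{2m+2}}=\frac{\pi}{m!(m+1)!}c_\beta(z_0)^{-2m-2}$ by the area formula for the degree-$(m+1)$ map, and equality holds.

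The step I expect to be the main obstacle is the converse (``if'') direction of the equality case, specifically verifying that $\frac1{(m+1)!\,c_\beta(z_0)^{m+1}}d\hat g$ really is the norm-minimizing extension with the correct norm. This rests on two points: (i) the orthogonality identity $\langle H,d\hat g\rangle_\Omega=\langle\hat g_*H,d\zeta\rangle_\DD$, which requires checking that pushing forward an $L^2$ holomorphic $1$-form through $\hat g$ yields a holomorphic $1$-form across the (finitely many) critical values and that full ramification over $0$ sends jets vanishing to order $\geq m+1$ to forms vanishing at the origin; and (ii) the classical but nontrivial fact that an $\hat g$ with $\log|\hat g|=(m+1)G_\Omega(\cdot,z_0)$ is automatically proper of degree $m+1$ onto $\DD$ (so that $\int_\Omega\frac{\sqrt{-1}}{2} d\hat g\wedge\overline{d\hat g}=(m+1)\pi$), which comes from the boundary behavior forced by $\log|\hat g|$ being the Green function. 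The remaining ingredients --- the concavity, the disc computation, and the monodromy bookkeeping on the universal cover --- are straightforward adaptations of Section~\ref{Sec:Suita}.
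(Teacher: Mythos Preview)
Your inequality and the necessity direction of the equality case are correct and match the paper's argument (Theorem~\ref{Thm:GenSuita} with $\eta\equiv0$): the concavity of $r\mapsto I(-\log r)$, the disc computation for $s\gg1$, and the use of Remark~\ref{Rmk:ConcaveLinear} followed by monodromy on the universal cover are exactly what the paper does.

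The sufficiency direction, however, has a genuine gap at precisely the point you flagged as (ii). The map $\hat g:\Omega\to\DD$ is \emph{not} proper in general. Already for $m=0$, take $\Omega=\DD\setminus P$ with $P$ polar: then $G_\Omega=G_\DD|_\Omega$, the function $\hat g$ is the restriction of a M\"obius transformation, and its image is $\DD$ minus a polar set; as $z$ approaches a point of $P$ one has $|\hat g(z)|\to|\hat g(p)|<1$, so preimages of compact subsets of $\DD$ need not be compact in $\Omega$. For general $m$ (cf.\ Remark~\ref{Rmk:Annulus} with the annulus) there is no reason to expect properness either. Consequently neither the pushforward identity $\langle H,d\hat g\rangle_\Omega=\langle\hat g_*H,d\zeta\rangle_\DD$ nor the area formula $\int_\Omega\tfrac{\sqrt{-1}}{2}d\hat g\wedge\overline{d\hat g}=(m+1)\pi$ is available by the route you describe.

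The paper circumvents properness entirely. Differentiating $|\hat g|^2=e^{2(m+1)G}$ gives $d\hat g=2(m+1)\hat g\,\partial G$ on $\Omega\setminus\{z_0\}$, so the norm becomes $4(m+1)^2\int_\Omega\tfrac{\sqrt{-1}}{2}\partial G\wedge\bar\partial G\,e^{2(m+1)G}=(m+1)\pi$, computed directly from the Green function. For orthogonality, given $F'$ vanishing to order $\geq m+1$ at $z_0$, one sets $F'':=F'/\hat g\in\Gamma(\Omega,K_\Omega)$ (legitimate since $\hat g$ vanishes only at $z_0$, to order exactly $m+1$) and reduces to showing $\int_\Omega\partial(e^{2(m+1)G})\wedge\overline{F''}=0$. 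This is proved by exhausting $\Omega$ by smoothly bounded relatively compact $\Omega_j$ with Green functions $G_j$: Stokes on $\Omega_j$ gives $\int_{\Omega_j}\partial(e^{2(m+1)G_j})\wedge\overline{F''}=\int_{\partial\Omega_j}\overline{F''}=0$ (since $G_j\equiv0$ on $\partial\Omega_j$), and then one passes to the limit using $G_j\searrow G$ with locally uniform convergence of derivatives. This exhaustion-by-Green-functions device is what replaces your properness assumption.
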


In the following, we illustrate that the method of Section \ref{Sec:Suita} is also applicable to Theorem \ref{Thm:ESC} and \ref{Thm:HighSC}. For simplicity, it is better to consider a unified version.

\begin{theorem} \label{Thm:GenSuita}
	Let $\Omega$ be a hyperbolic Riemann surface, $\eta$ be a harmonic function on $\Omega$ and $(V,w)$ be a coordinate chart around $z_0\in\Omega$. For $m\in\NN$, we define
	\begin{equation*}
		B_{\Omega,\eta}^{(m)}(z_0) := \sup\left\{ \left|\frac{\pd^mf}{\pd w^m}(z_0)\right|^2:
		\begin{aligned} F\in\Gamma(\Omega,K_\Omega) \text{ with } F|_V=fdw, \\ \int_\Omega \tfrac{\sqrt{-1}}{2} F\wedge\overline{F}e^{-2\eta} \leqslant 1, [f]_{z_0}\in\frakm_{z_0}^m \end{aligned}
		\right\}.
	\end{equation*}
	Then
	\begin{equation} \label{Eq:GenSuita}
		\pi B_{\Omega,\eta}^{(m)}(z_0) \geqslant m!(m+1)!c_\beta(z_0)^{2m+2}e^{2\eta(z_0)}.
	\end{equation}
	Moreover, the equality holds if and only if $\chi_\eta\chi_{z_0}^{m+1}=1$, if and only if there is a holomorphic function $\hat{g}\in\calO(\Omega)$ such that $\log|\hat{g}|=(m+1)G_\Omega(\cdot,z_0)+\eta$. In this case, 
	$$ F := \pd\hat{g}-2\hat{g}\pd\eta \in \Gamma(\Omega,K_\Omega) $$
	is extremal with respect to $B_{\Omega,\eta}^{(m)}(z_0)$.
\end{theorem}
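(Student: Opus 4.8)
The plan is to run the scheme of Section~\ref{Sec:Suita} with the weight $2\eta$ and the pole function $\psi:=2(m+1)G_\Omega(\cdot,z_0)$. First I would fix the setup: $\Omega$ is a noncompact Riemann surface, hence Stein and in particular a weakly pseudoconvex K\"ahler manifold; take $E=\calO_\Omega$ with the trivial metric and $\varphi:=2\eta$ (pluriharmonic, so $\ddbar\varphi=0$), so the curvature hypotheses of Theorems~\ref{Thm:CoarseExtThm}, \ref{Thm:OptExtThm}, \ref{Thm:Concave} hold with $\gamma=\rho=0$; since $2\eta$ is locally bounded, $\calI(\varphi+\psi)_x=\calO_x$ for $x\neq z_0$ and $\calI(\varphi+\psi)_{z_0}=\frakm_{z_0}^{m+1}$. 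For $t\ge0$ put $\Omega_t:=\{2(m+1)G_\Omega(\cdot,z_0)<-t\}$ and let $\calA_t$ be the Bergman space of $K_\Omega$ on $\Omega_t$ with weight $e^{-2\eta}$. Fixing a global $L^2$ reference $1$-form $F^{\mathrm{ref}}$ whose $m$-jet at $z_0$ equals that of $w^m\,dw$ (produced by Theorem~\ref{Thm:CoarseExtThm}, or Theorem~\ref{Thm:OptExtThm}, applied to $f=w^m\,dw$ on a small disc $\Omega_a$), let $F_t$ be the minimal element of $\calA_t$ with $F_t-F^{\mathrm{ref}}|_{\Omega_t}\in\Gamma(\Omega_t,\calO(K_\Omega)\otimes\calI(\varphi+\psi))$, and $I(t):=\|F_t\|_{\calA_t}^2>0$. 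A routine orthogonality argument (prescribing the full $m$-jet at $z_0$ is, up to a scalar, the constraint defining $B_{\Omega_t,\eta}^{(m)}$) identifies $I(t)=(m!)^2/B_{\Omega_t,\eta}^{(m)}(z_0)$.

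Second, a direct computation on a small disc. After a conformal change of $w$ we may assume $G_\Omega(\cdot,z_0)|_V=\log|c_\beta(z_0)w|$ and write $\eta=\mathrm{Re}\,h$ with $h$ holomorphic near $z_0$; for $s\gg1$ the $z_0$-component of $\Omega_s$ is the disc $\DD(0;r_s)$ with $r_s=c_\beta(z_0)^{-1}e^{-s/(2(m+1))}$. The substitution $f=g\,e^{h}$ turns $\int|f|^2e^{-2\eta}$ into the unweighted Bergman norm of $g$, and orthogonality of the monomials $w^k$ gives $I(s)=e^{-2\eta(z_0)}\tfrac{\pi r_s^{2m+2}}{m+1}$ with extremal $F_s=\mathrm{const}\cdot e^{h(w)-h(0)}w^m\,dw$; equivalently $I(s)e^s=\pi e^{-2\eta(z_0)}\big((m+1)c_\beta(z_0)^{2m+2}\big)^{-1}$. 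By Theorem~\ref{Thm:Concave}, $r\mapsto I(-\log r)$ is concave increasing on $(0,1]$, so $I(0)\le I(s)e^s$; since $I(0)=(m!)^2/B_{\Omega,\eta}^{(m)}(z_0)$ this rearranges to $\pi e^{-2\eta(z_0)}B_{\Omega,\eta}^{(m)}(z_0)\ge m!(m+1)!\,c_\beta(z_0)^{2m+2}$, which is \eqref{Eq:GenSuita}. (Alternatively, apply Theorem~\ref{Thm:OptExtThm} directly to $f=F_s$.)

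Third, the rigidity. If equality holds in \eqref{Eq:GenSuita} then $I(0)=I(s)e^s$, so by the chain $I(0)\le I(t)e^t\le I(s)e^s$ the function $I(-\log r)$ is linear on a segment adjacent to $1$; tracing the proof of Theorem~\ref{Thm:Concave} (so that \eqref{Eq:ConcaveIneq} becomes an equality, whence \eqref{Eq:Ineq1} and \eqref{Eq:Ineq2} are equalities, exactly as in Remark~\ref{Rmk:ConcaveLinear}) yields $F_a=F_0|_{\Omega_a}$ for all $a<s$; taking $a$ close to $s$, the disc form of $F_a$ plus analytic continuation on the connected $V$ gives $F_0|_V=\mathrm{const}\cdot e^{h(w)}w^m\,dw$. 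Lifting to the universal cover $p:\DD\to\Omega$ with $g\in\calO(\DD)$, $\log|g|=p^*G_\Omega(\cdot,z_0)$, and $H\in\calO(\DD)$, $\mathrm{Re}\,H=p^*\eta$, one finds $p^*F_0=\mathrm{const}\cdot(m+1)g^m g'e^{H}\,d\zeta$, so $g^{m+1}e^{H}$ has modulus $e^{(m+1)p^*G_\Omega(\cdot,z_0)+p^*\eta}$ and transforms under the deck group by the character $\chi_{z_0}^{m+1}\chi_\eta$; since $p^*F_0$ descends, this character is trivial, i.e.\ $\chi_\eta^{-1}=\chi_{z_0}^{m+1}$, and $\hat g:=p_*(g^{m+1}e^{H})\in\calO(\Omega)$ satisfies $\log|\hat g|=(m+1)G_\Omega(\cdot,z_0)+\eta$ with $\partial\hat g-2\hat g\partial\eta=\mathrm{const}\cdot F_0$. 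Conversely, if $\chi_\eta^{-1}=\chi_{z_0}^{m+1}$ then $g^{m+1}e^{H}$ already descends to such a $\hat g$; put $F:=\partial\hat g-2\hat g\partial\eta\in\Gamma(\Omega,K_\Omega)$. Writing $dh=2\partial\eta$ locally gives $F=e^{h}\,du$ with $u:=\hat g\,e^{-h}$ and $|u|=e^{(m+1)G_\Omega(\cdot,z_0)}$ globally, hence $\tfrac{\sqrt{-1}}{2}F\wedge\overline{F}\,e^{-2\eta}=\tfrac{\sqrt{-1}}{2}du\wedge d\bar u$; since $\Omega_t=\{|u|<e^{-t/2}\}$ and $u$ restricts to a proper holomorphic map of degree $m+1$ onto $\DD(0;e^{-t/2})$ (its only zero is the order-$(m+1)$ zero at $z_0$), $\int_{\Omega_t}\tfrac{\sqrt{-1}}{2}F\wedge\overline{F}\,e^{-2\eta}=(m+1)\pi e^{-t}$. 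Together with $F|_V=\mathrm{const}\cdot e^{h(w)}w^m\,dw$, which gives $|\partial^m(F/dw)/\partial w^m(z_0)|^2=((m+1)!)^2c_\beta(z_0)^{2m+2}e^{2\eta(z_0)}$, the ratio equals $\tfrac{m!(m+1)!}{\pi}c_\beta(z_0)^{2m+2}e^{2\eta(z_0)}$, so $B_{\Omega,\eta}^{(m)}(z_0)$ attains the bound; equality holds and $F$ is extremal.

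I expect the main obstacle to be the identity $\int_{\Omega_t}\tfrac{\sqrt{-1}}{2}F\wedge\overline{F}\,e^{-2\eta}=(m+1)\pi e^{-t}$ in the converse: one must justify that $u$ — a priori only a multivalued holomorphic function on $\Omega$, genuinely multivalued unless $\chi_{z_0}=1$ — defines a proper, degree-$(m+1)$ branched covering of $\DD(0;e^{-t/2})$, which requires care with the connectedness of the sublevel sets $\Omega_t$ and is cleanest on the universal cover over a fundamental domain, where $p^*u=\mathrm{const}\cdot g^{m+1}$; this is the Yamada-type area computation underlying the rigidity. A secondary technical point, on the forward side, is to check that the necessary condition extracted in Remark~\ref{Rmk:ConcaveLinear} is available here from linearity of $I(-\log r)$ merely on a segment adjacent to $1$ (equivalently, for the relevant values of $a$), rather than on all of $(0,1]$.
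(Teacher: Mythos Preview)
Your scheme for the inequality and for the \emph{necessity} of $\chi_\eta^{-1}=\chi_{z_0}^{m+1}$ matches the paper's approach (Theorem~\ref{Thm:Concave} plus Remark~\ref{Rmk:ConcaveLinear}), and your secondary worry is harmless: once $I(0)=I(s)e^s$ for one large $s$, the chain $I(0)\le I(t)e^t\le I(s)e^s$ forces $I(-\log r)=I(0)r$ on $[e^{-s},1]$, and together with the explicit disc value for small $r$ and concavity this gives linearity on all of $(0,1]$.

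The genuine gap is in your \emph{sufficiency} argument. Exhibiting an $F=\partial\hat g-2\hat g\,\partial\eta$ whose ratio $|\partial_w^m f(z_0)|^2/\|F\|^2$ equals $\tfrac{m!(m+1)!}{\pi}c_\beta(z_0)^{2m+2}e^{2\eta(z_0)}$ only yields $B_{\Omega,\eta}^{(m)}(z_0)\ge$ this value, since $B$ is a supremum; that is the inequality you already have, not the equality. Equivalently, from $I(t)\le\|F|_{\Omega_t}\|^2=Ce^{-t}$ and $I(s)=Ce^{-s}$ for large $s$, concavity does \emph{not} force $I(0)=C$: a concave $J(r)$ with $J(r)=Cr$ near $0$ can still satisfy $J(1)<C$. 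What is missing is the proof that $F$ is \emph{extremal}, i.e.\ that $F$ is orthogonal in $\calA_0$ to every $F'$ with $[f']_{z_0}\in\frakm_{z_0}^{m+1}$. The paper supplies exactly this step: writing $F=2(m+1)\hat g\,\partial G$ and $F''=F'/\hat g$, one gets $\int_\Omega F\wedge\overline{F'}e^{-2\eta}=\int_\Omega\partial(e^{2(m+1)G})\wedge\overline{F''}$, and shows it vanishes via Stokes on a smooth exhaustion $\{\Omega_j\}$ together with a limiting argument. Your degree-of-$u$ heuristic (itself delicate, as you note, since $u$ is only a multiplicative function) is an alternative way to reach the numerical value $(m+1)\pi$ for $\|F\|^2$, but it cannot replace this orthogonality step.
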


Here, a holomorphic 1-form $F\in\Gamma(\Omega,K_\Omega)$ (with $F|_V=fdw$) is said to be \textbf{extremal} with respect to $B_{\Omega,\eta}^{(m)}(z_0)$, if the following conditions are satisfied:
\begin{equation*}
	[f]_{z_0}\in\frakm_{z_0}^m, \quad B_{\Omega,\eta}^{(m)}(z_0) = \frac{1}{\int_\Omega \tfrac{\sqrt{-1}}{2} F\wedge\overline{F}e^{-2\eta}} \left|\frac{\pd^mf}{\pd w^m}(z_0)\right|^2.
\end{equation*}
Clearly, extremal 1-form always exists and it is unique up to non-zero multiplicative constants.

\begin{remark}
	Obviously, Theorem \ref{Thm:SolSuitaConj} corresponds to the case of $m=0$ and $\eta\equiv0$, Theorem \ref{Thm:ESC} is the case of $m=0$ and Theorem \ref{Thm:HighSC} is the case of $\eta\equiv0$.
	Theorem \ref{Thm:GenSuita} was announced in \cite{XuZhou2023}. We notice that, Guan-Mi-Yuan \cite{GuanMiYuanII} obtained a result generalizing this theorem: they characterized the linearity of certain minimal $L^2$ integrals on hyperbolic Riemann surfaces by using the solution of the extended Suita conjecture (i.e. Theorem \ref{Thm:ESC}). But our purpose is different, we give a new and unified proof to the inequality part and the necessity part of Theorem \ref{Thm:SolSuitaConj}, \ref{Thm:ESC} and \ref{Thm:HighSC}. For completeness, we also include a proof for the sufficiency part.
\end{remark}

Let us recall the concavity of minimal $L^2$ integrals and the necessary condition for linearity (see Remark \ref{Rmk:ConcaveLinear} and \cite[Theorem 1.3]{GuanMi2021}). For simplicity, we only focus on a special case, which is enough to prove Theorem \ref{Thm:GenSuita}.

\begin{proposition} \label{Prop:ConcaveLinear}
	Let $\Omega$ be a hyperbolic Riemann surface, $\varphi$ be a harmonic function on $\Omega$ and $\psi=2(m+1)G_\Omega(\cdot,z_0)$. For each $t\geqslant0$, let $\Omega_t:=\{\psi<-t\}$ and
	\begin{equation*}
		\calA_t := \left\{ F\in\Gamma(\Omega_t,K_\Omega): \|F\|_{\calA_t}^2 = \int_{\Omega_t} \tfrac{\sqrt{-1}}{2} F\wedge\overline{F} e^{-\varphi} < +\infty \right\}.
	\end{equation*}
	Let $F$ be a holomorphic 1-form defined in a neighbourhood of $z_0$. For each $t\geqslant0$, let $F_t\in \calA_t$ be the unique element with minimal norm that coincides with $F$ up to order $m$ at $z_0$.
	
	Set $I(t):=\|F_t\|_{\calA_t}^2$. Then $r\mapsto I(-\log r)$ is a concave increasing function on $(0,1]$ and $I(0)\leqslant I(t)e^t\leqslant I(s)e^s$ for any $0\leqslant t\leqslant s$. Moreover, if $r\mapsto I(-\log r)$ is linear on $(0,1]$, then $F_t\equiv F_0|_{\Omega_t}$ for any $t>0$. 
\end{proposition}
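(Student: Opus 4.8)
The plan is to obtain the entire statement from Theorem~\ref{Thm:Concave} and Remark~\ref{Rmk:ConcaveLinear}, after observing that the present situation is a special case of the set-up of Section~\ref{Sec:LogConcave}. Since $\dim\Omega=1$, the surface $\Omega$ is a weakly pseudoconvex K\"ahler manifold, and taking for $E$ the trivial line bundle with the trivial Hermitian metric $h$ we have $K_\Omega\otimes E=K_\Omega$; with $\gamma=\rho=0$ one checks $\ddbar\psi\geq0=\gamma$, $\ddbar\varphi\geq0=\rho$ and $\sqrt{-1}\Theta(E,h)+(\gamma+\rho)\otimes\Id_E=0\geq_\Nak0$, so all hypotheses of Theorem~\ref{Thm:Concave} are met. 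Moreover $\psi=2(m+1)G_\Omega(\cdot,z_0)<0$ is psh, and the weighted integrals defining $\calA_t$ coincide with those in Section~\ref{Sec:LogConcave} because $(\sqrt{-1})^{n^2}F\wedge_h\overline{F}=|F|_{\omega,h}^2dV_\omega$ is independent of $\omega$ in dimension one. The one bookkeeping point is to match the constraint ``$G$ coincides with $F$ up to order $m$ at $z_0$'' with the constraint ``$G-F\in\Gamma(\cdot,\calO(K_\Omega\otimes E)\otimes\calI(\varphi+\psi))$'' appearing in Theorem~\ref{Thm:Concave}: since $\psi$ has a logarithmic pole of weight $m+1$ at $z_0$ one has $\calI(\varphi+\psi)_{z_0}=\frakm_{z_0}^{m+1}$ (using that $e^{-\varphi}$ is locally integrable near $z_0$, which is forced by the hypothesis on $F$), while $\calI(\varphi+\psi)_x=\calO_x$ for $x\neq z_0$; hence on each $\Omega_t$ the two constraints cut out the same affine subspace of $\calA_t$.

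Next I would make $F_t$ well defined and bring it into the exact form required by Theorem~\ref{Thm:Concave}. Applying Theorem~\ref{Thm:CoarseExtThm} first on $\Omega_s$ for $s\gg1$ (where $\int_{\Omega_s}|F|^2e^{-\varphi}<+\infty$ by local integrability, as $\Omega_s$ shrinks to $z_0$) and then on $\Omega$, exactly as in the proof of Theorem~\ref{Thm:OptExtThm}, produces a reference section $\widetilde F\in\calA_0$ coinciding with $F$ up to order $m$ at $z_0$; then $F_t$ is precisely the minimal element of $\calA_t$ with $F_t-\widetilde F|_{\Omega_t}\in\Gamma(\Omega_t,\calO(K_\Omega)\otimes\calI(\varphi+\psi))$, and in particular $I(t)<+\infty$. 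With this identification Theorem~\ref{Thm:Concave} applies verbatim and yields that $r\mapsto I(-\log r)$ is concave and increasing on $(0,1]$ together with $I(0)\leq I(t)e^t\leq I(s)e^s$ for $0\leq t\leq s$.

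For the last assertion, assume $r\mapsto I(-\log r)$ is linear on $(0,1]$, so that $I(a)\equiv e^{-a}I(0)$ and the inequality~\eqref{Eq:ConcaveIneq} in the proof of Theorem~\ref{Thm:Concave} becomes an equality for every $a>0$. Following the argument of Remark~\ref{Rmk:ConcaveLinear}, this forces the two inequalities at~\eqref{Eq:Ineq1} and~\eqref{Eq:Ineq2} in that proof to be equalities: equality at~\eqref{Eq:Ineq2} gives $\|f\|_{\calA_0}^2=I(0)$, hence $f\equiv F_0$ by uniqueness of the minimal element, and equality at~\eqref{Eq:Ineq1}, where $e^{-\psi-a}>1$ on $\Omega_a$, gives $f|_{\Omega_a}\equiv F_a$; combining the two, $F_a\equiv F_0|_{\Omega_a}$ for every $a>0$. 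I do not anticipate a genuine obstacle here: the analytic content is entirely inside Theorem~\ref{Thm:Concave}, and the only delicate bookkeeping is the identification of the jet constraint with the multiplier-ideal constraint (in particular the computation $\calI(\varphi+\psi)_{z_0}=\frakm_{z_0}^{m+1}$) and the construction of the reference extension $\widetilde F$, both of which are routine consequences of the existence theory developed in Section~\ref{Chap:Coarse}.
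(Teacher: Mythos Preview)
Your approach is correct and is exactly what the paper intends: the proposition is stated without proof in the paper precisely because it is a direct specialization of Theorem~\ref{Thm:Concave} and Remark~\ref{Rmk:ConcaveLinear}, and your verification of the hypotheses and construction of the reference extension $\widetilde F$ are the right bookkeeping steps.

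One small imprecision is worth flagging. You assert that $\calI(\varphi+\psi)_{z_0}=\frakm_{z_0}^{m+1}$ and $\calI(\varphi+\psi)_x=\calO_x$ for $x\neq z_0$, so that the jet constraint and the multiplier-ideal constraint cut out the \emph{same} affine subspace. For a general subharmonic $\varphi$ this can fail (e.g.\ if $\varphi$ has its own pole somewhere). What is always true, and is all you need, is the inclusion $\calI(\varphi+\psi)_{z_0}\subset\frakm_{z_0}^{m+1}$: since $\varphi$ is locally bounded above, $e^{-\varphi}$ is locally bounded below, hence $\calI(\varphi+\psi)\subset\calI(\psi)$ and $\calI(\psi)_{z_0}=\frakm_{z_0}^{m+1}$. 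This inclusion means the sections produced by Theorem~\ref{Thm:BasicL2Existence} automatically satisfy the jet constraint at $z_0$, so the entire proof of Theorem~\ref{Thm:Concave} runs verbatim with $F_t$ defined via the jet constraint (the orthogonality in step~\eqref{Eq:Ineq2} only needs $f|_{\Omega_a}-F_a$ to vanish to order $m+1$ at $z_0$, which the inclusion guarantees). So rather than claiming the two constraints coincide, you should say that the multiplier-ideal output of the existence theorem feeds into the jet-constrained minimization, and then the argument of Section~\ref{Sec:LogConcave} goes through unchanged. The paper itself glosses over this point in the same way (cf.\ the parallel discussion in the higher-dimensional case, where $\varphi\equiv0$ and the issue does not arise).
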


\begin{proof}[Proof of Theorem \ref{Thm:GenSuita}]
	Let $p:\DD\to\Omega$ be a universal covering, let $\xi\in\calO(\DD)$ and $g\in\calO(\DD)$ be holomorphic functions so that $|\xi|=\exp(p^*\eta)$ and $\log|g|=p^*G_\Omega(\cdot,z_0)$.
	Shrinking $V$ if necessary, we may assume that $V$ is connected and $p$ is biholomorphic on any connected component of $p^{-1}(V)$. Let $U$ be a component of $p^{-1}(V)$. We define $h:=p_*(g|_U)$ and $\zeta:=p_*(\xi|_U)$, then $G_\Omega(\cdot,z_0) = \log|h|$ and $|\zeta|=e^{\eta}$ on $V$.
	After a suitable change of coordinate, we further assume that $w\equiv c_\beta(z_0)^{-1}h$ on $V$.
	We will keep these notations throughout the proof.
	
	Let $\psi=2(m+1)G_\Omega(\cdot,z_0)$ and $\varphi=2\eta$. For each $t\geqslant0$, we define $\Omega_t:=\{\psi<-t\}$ and $\calA_t$ as in Proposition \ref{Prop:ConcaveLinear}. Let $F_t\in\calA_t$ be the unique element with minimal norm that coincides with $w^mdw$ up to order $m$ at $z_0$. We write $B(t) := B_{\Omega_t,\eta}^{(m)}(z_0)$, then
	\begin{equation*}
		\int_{\Omega_t} \tfrac{\sqrt{-1}}{2} F_t\wedge\overline{F_t} e^{-2\eta} = \frac{(m!)^2}{B(t)}.
	\end{equation*}
	By Proposition \ref{Prop:ConcaveLinear}, $r\mapsto \frac{1}{B(-\log r)}$ is a concave function on $(0,1]$ and
	\begin{equation*}
		B(s)e^{-s} \leqslant B(t)e^{-t} \leqslant B(0) = B_{\Omega,\eta}^{(m)}(z_0), \quad 0\leqslant t\leqslant s.
	\end{equation*}
	
	\textit{\bfseries Inequality part.}
	We may choose $s\gg1$ so that $\Omega_s\Subset V$. Recall that, $G_\Omega(\cdot,z_0)=\log|c_\beta(z_0)w|$ and $|\zeta|=e^\eta$ on $V$. Therefore, $$ \Omega_s= \left\{ |w|<c_\beta(z_0)^{-1}\exp(\tfrac{-s}{2m+2}) \right\}$$ is an open disc in $(V,w)$. Assume that $F':=udw\in\calA_s$ coincides with $w^mdw$ up to order $m$ at $z_0$. Then $v:=u/(\zeta w^m)$ is a holomorphic function on $\DD(0;r)$ satisfying $v(0)=1/\zeta(z_0)$, where $r:=c_\beta(z_0)^{-1}\exp(\frac{-s}{2m+2})$. We may expand $v$ into a power series with normal convergence on $\DD(0;r)$:
	$$ v(w) = \sum_{k=0}^\infty a_k w^k \quad\text{with}\quad a_0=\frac{1}{\zeta(z_0)}. $$
	By direct computations,
	\begin{multline*}
		\int_{\Omega_s} \tfrac{\sqrt{-1}}{2}F'\wedge\overline{F'}e^{-2\eta} = \int_{\DD(0;r)} |v(w)|^2|w|^{2m} d\lambda_w \\
		= \sum_{k=0}^\infty |a_k|^2 \int_{\DD(0;r)} |w|^{2m+2k} d\lambda_w = \sum_{k=0}^\infty \frac{\pi}{m+k+1} |a_k|^2 r^{2m+2k+2}.
	\end{multline*}
	Clearly, the above expression is minimized when $v$ is a constant function. As a consequence, for such $s\gg1$, $F_s(w)\equiv \zeta(z_0)^{-1}\zeta(w)w^mdw$ and
	\begin{align*}
		\frac{(m!)^2}{B(s)} & = \int_{\Omega_s} \tfrac{\sqrt{-1}}{2}F_s\wedge\overline{F_s}e^{-2\eta}
		= \frac{\pi}{m+1} |a_0|^2 r^{2m+2} \\
		& = \frac{\pi}{m+1} c_\beta(z_0)^{-2m-2}e^{-2\eta(z_0)}e^{-s}.
	\end{align*}
	For any $0\leqslant t\ll s$, we have
	\begin{equation}\label{Eq:ineq-part}
		\begin{aligned}
			B_{\Omega,\eta}^{(m)}(z_0) & \geqslant B(t)e^{-t} \geqslant B(s)e^{-s} \\
			& = \pi^{-1}m!(m+1)!c_\beta(z_0)^{2m+2}e^{2\eta(z_0)}.
		\end{aligned}
	\end{equation}
	this proves the inequality part of the theorem.
	
	\textit{\bfseries Equality part: necessity.}
	In the following, we assume the equality in \eqref{Eq:GenSuita} holds. According to the inequality \eqref{Eq:ineq-part},
	\begin{equation*}
		B(t)e^{-t} \equiv \pi^{-1}m!(m+1)!c_\beta(z_0)^{2m+2}e^{2\eta(z_0)} \quad (\forall t\geqslant0),
	\end{equation*}
	and then $\frac{1}{B(-\log r)}$ is a linear function of $r$. By Proposition \ref{Prop:ConcaveLinear}, $F_t \equiv F_0|_{\Omega_t}$ for any $t\geqslant0$. Since $F_s(w)\equiv\zeta(z_0)^{-1}\zeta(w) w^mdw$ for $s\gg 1$, we conclude that
	\begin{equation*}
		F_0|_V = \zeta(z_0)^{-1}\zeta(w) w^mdw.
	\end{equation*}
	Multiplying $F_0$ by a constant, we obtain a holomorphic 1-form $F\in\Gamma(\Omega,K_\Omega)$ such that $F|_V = \zeta d(h^{m+1})$ and $F$ is extremal with respect to $B_{\Omega,\eta}^{(m)}(z_0)$.
	
	According to the definitions of $\zeta$ and $h$, we know $p^*F = \xi d(g^{m+1})$ on $U$. By the uniqueness of analytic continuation, $p^*F \equiv \xi d(g^{m+1})$ on $\DD$. For any $\sigma\in\pi_1(\Omega)$, we have $p^*F = \sigma^*(p^*F)$, this implies
	\begin{equation*}
		\xi d(g^{m+1}) = \sigma^*(\xi d(g^{m+1})) = \chi_\eta(\sigma)\chi_{z_0}(\sigma)^{m+1} \cdot \xi d(g^{m+1}).
	\end{equation*}
	Therefore, $\chi_\eta(\sigma)\chi_{z_0}(\sigma)^{m+1}=1$ for all $\sigma\in\pi_1(\Omega)$, which means that $\chi_\eta\chi_{z_0}^{m+1}=1$.
	
	In this case, $\hat{g}:=p_*(\xi g^{m+1}) \in\calO(\Omega)$ and $F:=p_*(\xi d(g^{m+1})) \in\Gamma(\Omega,K_\Omega)$ are well-defined, $\log|\hat{g}|=(m+1)G_\Omega(\cdot,z_0)+\eta$, and $F$ is extremal with respect to $B_{\Omega,\eta}^{(m)}(z_0)$.
	We want a neat formula for the extremal 1-form. Notice that,
	\begin{equation*}
		\xi d(g^{m+1}) = \xi\pd(\xi^{-1}\cdot \xi g^{m+1}) = \pd(\xi g^{m+1}) - (\xi g^{m+1})\cdot\xi^{-1}\pd\xi.
	\end{equation*}
	Differentiating $|\xi|^2=\exp(2p^*\eta)$, we get
	\begin{equation*}
		\bar{\xi}\pd\xi = 2\pd(p^*\eta)\exp(2p^*\eta) = 2p^*(\pd\eta)|\xi|^2,
	\end{equation*}
	which implies $\xi^{-1}\pd\xi=2p^*(\pd\eta)$. As a consequence,
	\begin{align*}
		F = p_*\big(\xi d(g^{m+1})\big) & = p_*\big(\pd(\xi g^{m+1}) - (\xi g^{m+1})\cdot2p^*(\pd\eta)\big) \\
		& = \pd\hat{g}-2\hat{g}\pd\eta = e^{2\eta}\pd(e^{-2\eta}\hat{g}).
	\end{align*}
	Similarly, since $|g|^2=\exp(2p^*G)$, where $G:=G_\Omega(\cdot,z_0)$, we have
	\begin{align*}
		\xi d(g^{m+1}) = (m+1)(\xi g^{m+1})\cdot g^{-1}dg = (m+1)(\xi g^{m+1})\cdot 2p^*(\pd G),
	\end{align*}
	which implies $F=p_*(\xi d(g^{m+1}))=2(m+1)\hat{g}\pd G$ on $\Omega\backslash\{z_0\}$.
	
	\textit{\bfseries Equality part: sufficiency.}
	Finally, we assume that $\chi_\eta\chi_{z_0}^{m+1}=1$, then
	$$\hat{g}:=p_*(\xi g^{m+1}) \in\calO(\Omega) \quad\text{and}\quad F:=p_*(\xi d(g^{m+1})) \in\Gamma(\Omega,K_\Omega)$$
	are well-defined objects on $\Omega$.
	The above proof suggests that $F$ is extremal with respect to $B_{\Omega,\eta}^{(m)}(z_0)$. Clearly, to verify this guess, we only need to prove $\int_\Omega F \wedge\overline{F'}e^{-2\eta} = 0$ for any holomorphic 1-form $F'\in\calA_0$ (with $F'|_V=fdw$) satisfying $[f]_{z_0}\in\frakm_{z_0}^{m+1}$.
	
	Since $[\hat{g}]_{z_0}\in\frakm_{z_0}^{m+1}$ and $\hat{g}\neq0$ elsewhere, we know $F'':=F'/\hat{g}$ is a holomorphic 1-form on $\Omega$. Since $F=e^{2\eta}\pd(e^{-2\eta}\hat{g})$ and $|\hat{g}|^2=e^{2(m+1)G+2\eta}$, we have
	\begin{align*}
		\int_\Omega F\wedge\overline{F'}e^{-2\eta} & = \int_\Omega \pd(e^{-2\eta}\hat{g})\wedge\overline{\hat{g}F''} \\
		& = \int_\Omega \pd(e^{-2\eta}|\hat{g}|^2)\wedge\overline{F''} 
		= \int_\Omega \pd\left(e^{2(m+1)G}\right)\wedge\overline{F''}.
	\end{align*}
	
	We take a sequence of subdomains $D_j\ni z_0$  such that $\overline{D_j} \subset D_{j+1}$, $\Omega=\cup_jD_j$ and each $D_j$ is bounded by analytic curves (see \cite[p144]{AhlforsRS}). Then $G_j:=G_{D_j}(\cdot,z_0)$ is continuous up to $\overline{D_j}$ and $G_j\equiv0$ on $\pd D_j$. By the reflection principle, $G_j$ has a harmonic extension in some neighbourhood of $\pd D_j$.
	By the Stokes formula,
	\begin{equation*}
		\int_{D_j} \pd\left(e^{2(m+1)G_j}\right)\wedge\overline{F''} = \int_{\pd D_j} e^{2(m+1)G_j}\overline{F''} = \int_{\pd D_j} \overline{F''} = \int_{D_j} d\overline{F''} = 0.
	\end{equation*}
	
	Notice that, $e^{2(m+1)G_j}$ and $e^{2(m+1)G}$ have no singularity at $z_0$, and $G_j-G$ is a harmonic function on $D_j$. Since $D_j\nearrow\Omega$, it is clear that $G_j\searrow G$. By Harnack's theorem, $G_j-G$ decreases to $0$ uniformly on any compact subset of $\Omega$. Using the estimates on derivatives, $\pd(G_j-G)\to0$ uniformly on any compact subset of $\Omega$.
	Let $\alpha_j:=\pd(e^{2(m+1)G_j})$ and $\alpha:=\pd(e^{2(m+1)G})$. Since
	\begin{multline*}
		\pd(e^{2(m+1)G_j}) - \pd(e^{2(m+1)G}) = \pd\left[ \big(e^{2(m+1)(G_j-G)}-1\big) e^{2(m+1)G} \right] \\
		= \big(e^{2(m+1)(G_j-G)}-1\big) \pd(e^{2(m+1)G}) + 2(m+1)e^{2(m+1)G_j}\pd(G_j-G),
	\end{multline*}
	we conclude that $\alpha_j\to\alpha$ uniformly on any compact subset of $\Omega$.
	
	We recall a useful formula for Green's function $G=G_\Omega(\cdot,z_0)$:
	\begin{equation*}
		\int_\Omega \sqrt{-1}\pd G\wedge\bar{\pd}G e^{aG} = \frac{\pi}{a} \quad (\forall a>0).
	\end{equation*}
	In particular,
	\begin{equation*}
		\int_\Omega \sqrt{-1}\alpha\wedge\overline{\alpha} = 4(m+1)^2 \int_\Omega \sqrt{-1}\pd G\wedge\bar{\pd}G e^{4(m+1)G} = (m+1)\pi.
	\end{equation*}
	Similarly, $\int_{D_j} \sqrt{-1}\alpha_j\wedge\overline{\alpha_j} = (m+1)\pi $. Recall that, $F'=\hat{g}F'' \in\calA_0$, then
	\begin{gather*}
		\|F'\|_{\calA_0}^2 := \int_\Omega \sqrt{-1}F'\wedge\overline{F'}e^{-2\eta} = \int_\Omega \sqrt{-1}F''\wedge\overline{F''}e^{2(m+1)G} < +\infty.
	\end{gather*}
	We may assume that $\{2(m+1)G<-t_0\} \Subset D_1$ for some $t_0\gg1$, then
	\begin{equation*}
		\int_{\Omega\setminus D_1} \sqrt{-1}F''\wedge\overline{F''} \leqslant e^{t_0}\|F'\|_{\calA_0}^2 < +\infty.
	\end{equation*}
	Recall that $\int_{D_j} \alpha_j\wedge\overline{F''} = 0$.
	For any integers $j\geqslant k\geqslant 1$, we have
	\begin{multline*}
		|\int_\Omega \alpha\wedge\overline{F''}| = |\int_\Omega \alpha\wedge\overline{F''} - \int_{D_j} \alpha_j\wedge\overline{F''}| \\
		\leqslant |\int_{\Omega\setminus D_k} \alpha\wedge\overline{F''}| + |\int_{D_j\setminus D_k} \alpha_j\wedge\overline{F''}| + |\int_{D_k} (\alpha_j-\alpha)\wedge\overline{F''}| \\
		\leqslant \big(\|\alpha\|_{L^2(\Omega)}+\|\alpha_j\|_{L^2(D_j)}\big) \|F''\|_{L^2(\Omega\setminus D_k)} + |\int_{D_k} (\alpha_j-\alpha)\wedge\overline{F''}|.
	\end{multline*}
	Since $\|F''\|_{L^2(\Omega\setminus D_1)}<+\infty$ and $D_k\nearrow\Omega$, the first term converges to 0 as $k\to+\infty$. For fixed $k$, since $\alpha_j\to\alpha$ uniformly on $\overline{D_k}$, the second term converges to 0 as $j\to+\infty$. Let $j\to+\infty$ and then $k\to+\infty$, we conclude that $\int_\Omega \alpha\wedge\overline{F''}=0$.
	
	In summary, provided $\chi_\eta\chi_{z_0}^{m+1}=1$, we prove that $F:=p_*(\xi d(g^{m+1}))$ {\itshape is extremal with respect to} $B_{\Omega,\eta}^{(m)}(z_0)$. Recall that, $F|_V = \zeta d(h^{m+1})$ and $h\equiv c_\beta(z_0)w$ on $V$. Therefore,
	\begin{equation*}
		F|_V = (m+1)c_\beta(z_0)^{m+1}\zeta(w)w^mdw.
	\end{equation*}
	If we write $F|_V=f(w)dw$, then $[f]_{z_0}\in\frakm_{z_0}^m$ and
	\begin{equation*}
		|\tfrac{\pd^m f}{\pd w^m}(z_0)|^2 = ((m+1)!)^2 c_\beta(z_0)^{2m+2} e^{2\eta(z_0)}.
	\end{equation*}
	On the other hand, since $F=2(m+1)\hat{g}\pd G$ on $\Omega\setminus\{z_0\}$, we have
	\begin{equation*}
		\int_\Omega \tfrac{\sqrt{-1}}{2}F\wedge\overline{F}e^{-2\eta} = 4(m+1)^2\int_\Omega \tfrac{\sqrt{-1}}{2}\pd G\wedge\bar{\pd}G e^{2(m+1)G} = (m+1)\pi.
	\end{equation*}
	Since $F$ is extremal with respect to $B_{\Omega,\eta}^{(m)}(z_0)$, we conclude that
	\begin{equation*}
		B_{\Omega,\eta}^{(m)}(z_0) = \frac{((m+1)!)^2c_\beta(z_0)^{2m+2}e^{2\eta(z_0)}}{(m+1)\pi},
	\end{equation*}
	this completes the proof.
\end{proof}

\subsection{The Equality in Higher Order Suita Conjecture} \hfill

In this section, we restrict ourselves to the case of planar domains.

By the Riemann mapping theorem, any simply connected domain $\Omega\subsetneq\CC$ is conformally equivalent to the unit disk, then it is clear that $\pi B_\Omega^{(m)}(z_0)=m!(m+1)!c_\beta(z_0)^{2m+2}$ for all $z_0\in\Omega$ and $m\in\NN$.

Next, we consider a domain $\Omega\subset\CC$ of finite connectivity $n\geqslant2$. Since isolated points are removable singularities for $L^2$ holomorphic functions and upper bounded subharmonic functions, we may assume that no connected component of $\hat{\CC}\setminus\Omega$ reduces to a point.
After a conformal transformation, \textit{we assume that $\Omega$ is bounded by $n$ analytic curves $\Gamma_1,\cdots,\Gamma_n$}. Let $\omega_j$ be the harmonic measure of $\Gamma_j$ with respect to $\Omega$.
By the reflection principle, $\omega_j$ and $G_\Omega(\cdot,z_0)$ $(\forall z_0\in\Omega)$ have harmonic extensions in some neighbourhood of $\pd\Omega=\Gamma_1 \cup\cdots\cup \Gamma_n$.
It is known that the period $\int_{\Gamma_j} {^*d}G_\Omega(\cdot,z_0)$ equals to $2\pi\omega_j(z_0)$, where ${^*d}u=-\sqrt{-1}(\pd u-\bar{\pd}u)$ denotes the conjugate differential of $du$. (See \cite{AhlforsCA} for details.)

If $n=2$, then such $\Omega$ is conformally equivalent to some annulus $A_R=\{z\in\CC:1<|z|<R\}$. In a joint work \cite{LiXuZhou} with Li, by studying the multi-valued harmonic conjugate of $G_{A_R}(\cdot,z_0)$, we showed that, if $|z_0|=\exp(\frac{k}{m+1}\log R)$ for some integer $k\in[1,m]$, then there exists a holomorphic function $g\in\calO(A_R)$ such that $\log|g|=(m+1)G_{A_R}(\cdot,z_0)$. According to Theorem \ref{Thm:HighSC}, for any integers $1\leqslant k\leqslant m$, we have
\begin{equation}\label{Eq:Annulus}
	\pi B_{A_R}^{(m)}(z_0) = m!(m+1)!c_\beta(z_0;A_R)^{2m+2}, \quad |z_0|=R^{\frac{k}{m+1}}.
\end{equation}
This also follows from an explicit formula for the Green's function of $A_R$ (see \cite{JarnickiPflugBook}):
\begin{equation*}
	G_{A_R}(z,a)=\log\frac{|(1-a^{-1}z)\Pi(a,z)|}{|z|^{s(a)}}, \quad 1<a<R,
\end{equation*}
where
\begin{gather*}
	\Pi(a,z) := \frac{\prod_{\nu=1}^\infty (1-\frac{z}{a}R^{-2\nu}) (1-\frac{a}{z}R^{-2\nu})} {\prod_{\nu=1}^\infty (1-azR^{-2\nu}) (1-\frac{1}{az}R^{-2\nu+2})} \quad\text{and}\quad s(a) := 1-\frac{\log a}{\log R}.
\end{gather*}
If $a=\exp(\frac{k}{m+1}\log R)$ for some integer $k\in[1,m]$, then $s(a)=1-\frac{k}{m+1}$ and
\begin{equation*}
	g_a(z) := \frac{\big((1-a^{-1}z)\Pi(a,z)\big)^{m+1}}{z^{m+1-k}}
\end{equation*}
is a holomorphic function on $A_R$ such that $\log|g_a|=(m+1)G_{A_R}(\cdot,a)$.

As pointed out by Guan-Sun-Yuan \cite{GuanSunYuan2022}, given $z_0\in\Omega$ and $m\in\NN$, there exists a holomorphic function $g\in\calO(\Omega)$ satisfying $\log|g|=(m+1)G_\Omega(\cdot,z_0)$ if and only if $(m+1)\omega_j(z_0)\in\ZZ$ for all $1\leqslant j\leqslant n$. By Theorem \ref{Thm:HighSC}, these conditions are equivalent to $\pi B_\Omega^{(m)}(z_0)=m!(m+1)!c_\beta(z_0)^{2m+2}$. Since $0<\omega_j<1$ on $\Omega$ and $\omega_1+\cdots+\omega_n\equiv1$, in this case, one has
\begin{equation*}
	1 = \omega_1(z_0)+\cdots+\omega_n(z_0) \geqslant \tfrac{n}{m+1}.
\end{equation*}
Consequently, if the equality in $m$-order Suita conjecture holds somewhere in an $n$-connected domain, then it is necessary that $m\geqslant n-1$. (Every domain considered here is bounded by analytic curves!)
Moreover, Guan-Sun-Yuan \cite{GuanSunYuan2022} showed that, in any 3-connected domain $\Omega$, there exist some $z_0\in\Omega$ and large $m\in\NN$ such that $\pi B_\Omega^{(m)}(z_0)=m!(m+1)!c_\beta(z_0)^{2m+2}$.

In summary, if $\Omega\subsetneq\CC$ is \textit{simply connected}, then for any $m\geqslant0$, the equality in $m$-order Suita conjecture holds for every point of $\Omega$;
if $\Omega$ is \textit{doubly connected}, then for any $m\geqslant1$, the equality in $m$-order Suita conjecture holds for all points on $m$ analytic curves.

It is natural to ask, if $\Omega$ is \textit{$3$-connected}, can we find a point $z_0\in\Omega$ such that the equality in 2-order Suita conjecture holds? However, the following counterexample shows that this is impossible in general.

\begin{theorem} \label{counterexample}
	Given any integers $n\geqslant3$ and $M\gg1$, there exists a family of smoothly bounded $n$-connected domain $\Omega\subset\CC$ such that no point of $\Omega$ can satisfy the equality in $m$-order Suita conjecture, where $m=0,1,\cdots,M$.
\end{theorem}

\begin{proof}
	Let $a,\eps\in(0,1)$ be positive constants to be specified later. Define
	\begin{equation*}
		\varphi_1(z)=\frac{z+a}{1+az} \quad\text{and}\quad \varphi_2(z)=\frac{z-a}{1-az},
	\end{equation*}
	they are automorphisms of the unit disk $\DD$. Let
	\begin{equation*}
		D_1 = \{z\in\DD:|\varphi_1(z)|\leqslant\eps\} \quad\text{and}\quad D_2 = \{z\in\DD:|\varphi_2(z)|\leqslant\eps\}.
	\end{equation*}
	By the property of linear fractional transformations, $D_1$ and $D_2$ are closed disks in $\DD$:
	\begin{equation*}
		D_1,D_2 = \left\{z\in\CC: \left|z\pm\frac{a(1-\eps^2)}{1-a^2\eps^2}\right| \leqslant \frac{\eps(1-a^2)}{1-a^2\eps^2}\right\}.
	\end{equation*}
	If $\eps<a$, then $D_1$ and $D_2$ are disjoint. Let $D_3,\cdots,D_{n-1}$ be arbitrary disjoint closed disks in $\DD\setminus(D_1\cup D_2)$. Denote $\Gamma_j=\pd D_j$ and $\Gamma_n=\pd\DD$.
	Then $$\Omega:=\DD\setminus \big(\cup_{j=1}^{n-1}D_j\big)$$ is an $n$-connected domain bounded by circles $\Gamma_1,\cdots,\Gamma_n$.
	
	Let $\omega_j$ be the harmonic measure of $\Gamma_j$ with respect to $\Omega$, i.e. $\omega_j$ is a harmonic function on $\Omega$, taking boundary values 1 on $\Gamma_j$ and 0 on the other contours.
	Notice that, $\log|\varphi_1(z)|/\log\eps$ is a harmonic function on $\Omega$, taking boundary values 1 on $\Gamma_1$ and is nonnegative on the other contours. By the maximum principle,
	\begin{equation*}
		\omega_1(z) \leqslant \frac{\log|\varphi_1(z)|}{\log\eps}, \quad z\in\Omega.
	\end{equation*}
	Let $c:=\inf\{|\varphi_1(z)|:z\in\DD,\re z\geqslant0\}$. Since $\varphi_1\in\textup{Aut}(\DD)$ and $\varphi_1(-a)=0$, it clear that $0<c<1$. Therefore,
	\begin{equation*}
		\omega_1(z) \leqslant \frac{\log c}{\log\eps}, \quad z\in\Omega\cap\{z:\re z\geqslant0\}.
	\end{equation*}
	Similarly, $\omega_2\leqslant\frac{\log c}{\log\eps}$ on $\Omega\cap\{z:\re z\leqslant0\}$.
	Notice that, the constant $c$ depends only on $a$. Indeed, we can show that $c=|\varphi_1(0)|=a$. 
	
	Let $a\in(0,1)$ be arbitrarily given, we choose $\eps\ll1$ so that $\frac{\log c}{\log\eps}<\frac{1}{M+1}$, i.e. $0<\eps<c^{M+1}$. Since $\omega_1<\frac{1}{M+1}$ on $\Omega\cap\{z:\re z\geqslant0\}$, all the curves $\omega_1=q$, with $q\in \big(\frac{1}{2}\ZZ \cup\cdots\cup \frac{1}{M+1}\ZZ\big)\cap\QQ_+$, completely lie in the left half-plane. Similarly, all the curves $\omega_2=q'$, with $q'\in \big(\frac{1}{2}\ZZ \cup\cdots\cup \frac{1}{M+1}\ZZ\big)\cap\QQ_+$, completely lie in the right half-plane.
	Since these curves are disjoint, for any integer $m=0,1,\cdots,M$, we can not find a point $z_0\in\Omega$ such that $\omega_1(z_0),\omega_2(z_0)\in\frac{1}{m+1}\ZZ$. Therefore, no point of $\Omega$ can satisfy the equality in $m$-order Suita conjecture, where $m=0,1,\cdots,M$.
\end{proof}

\subsection{Higher Dimensional Generalizations} \hfill

In this section, $D$ is a bounded domain in $\CC^n$. Given a measurable function $\varphi$ on $D$ which is locally bounded from above, the weighted Bergman space is defined by
\begin{eqnarray*}
	A^2(D;e^{-\varphi}) := \left\{ f\in\calO(D): \|f\|^2=\int_D|f|^2e^{-\varphi}d\lambda < +\infty \right\},
\end{eqnarray*}
and the weighted Bergman kernel is
\begin{equation*}
	B_D(z;e^{-\varphi}) := \sup\big\{ |f(z)|^2: f\in A^2(D;e^{-\varphi}), \|f\|^2\leqslant1 \big\}.
\end{equation*}
If $\varphi\equiv0$, we shall simplify these notations as $A^2(D)$ and $B_D(z)$.

Denote by $G_D(\cdot,z)$ the pluricomplex Green function of $D$ with a pole at $z\in D$, then the \textbf{Azukawa pseudometric} of $D$ is defined by
\begin{equation*}
	A_D(z;X) := \varlimsup_{\lambda\to0} \big( G_D(z+\lambda X,z) - \log|\lambda| \big), \quad z\in D,X\in\CC^n.
\end{equation*}
Clearly, $A_D(z;\cdot)\in\textup{psh}(\CC^n)$ and $A_D(z;\tau X)=A_D(z;X)+\log|\tau|$ for any $\tau\in\CC$. Therefore, the \textbf{Azukawa indicatrix}
\begin{equation*}
	I_D(z) := \{X\in\CC^n: A_D(z;X)<0\}
\end{equation*}
is a \textit{balanced} pseudoconvex domain in $\CC^n$. (Recall that, a set $U\subset\CC^n$ is said to be balanced, if $\tau z\in U$ for every $z\in U$ and $\tau\in\CC$ with $|\tau|\leqslant1$.)

For simplicity, we assume that $z_0=0$ and $\BB^n(0;r)\subset D\subset\BB^n(0;R)$. For each $a\geqslant0$, let $D_a:=\{G_D(\cdot,0)<-a\}$. Since $\log(|z|/R)\leqslant G_D(z,0)\leqslant\log(|z|/r)$, it is easy to see that $\BB^n(0;r)\subset e^aD_a \subset\BB^n(0;R)$ and $\BB^n(0;r)\subset I_D(0) \subset\BB^n(0;R)$. Here we use the standard convention: given $U\subset\CC^n$ and $c>0$, then $cU$ is a set defined by $\{cz:z\in U\}$.

In the following, we assume that $D\subset\CC^n$ is \textit{hyperconvex}, which means that there exists a negative continuous psh exhaustion function on $D$. In this case, Zwonek \cite{Zwonek2000} proved that $A_D$ is continuous on $D\times\CC^n$ and
\begin{equation*}
	A_D(z;X) = \lim_{\substack{w\to z,w\neq z \\ (w-z)/|w-z|\to X/|X|}} \left( G_D(w,z) - \log\tfrac{|w-z|}{|X|} \right) \quad(X\neq0).
\end{equation*}
For any $0<\eps\ll1$, we can find an $a_\eps>0$ such that $e^aD_a\subset(1+\eps)I_D(0)$ for all $a>a_\eps$. Otherwise, there exist $\eps>0$, $a_j\to+\infty$ and $X_j\in\CC^n$ such that $X_j\in e^{a_j}D_{a_j}$ ($\Leftrightarrow$ $G(e^{-a_j}X_j,0)<-a_j$) and $X_j\notin(1+\eps)I_D(0)$ ($\Leftrightarrow$ $A(0;X_j)\geq\log(1+\eps)$).
Since $(1+\eps)r\leqslant|X_j|\leqslant R$, we may assume that $X_j\to X^*$ as $j\to+\infty$. Using the regularity of $A_D$, we reach a contradiction:
\begin{gather*}
	A_D(0;X^*) = \lim_{j\to+\infty} A_D(0;X_j) \geqslant \log(1+\eps), \\
	A_D(0;X^*)=\lim_{j\to+\infty} \big( G_D(e^{-a_j}X_j,0) - \log\tfrac{|e^{-a_j}X_j|}{|X^*|} \big) \leqslant 0.
\end{gather*}
Similarly, for any $0<\eps\ll1$, we can find an $a'_\eps>0$ such that $e^aD_a\supset(1-\eps)I_D(0)$ for all $a>a'_\eps$.
In summary, for any $0<\eps\ll1$, we have
\begin{equation} \label{Eq:AzuInclude}
	(1-\eps)I_D(0) \subset e^aD_a \subset (1+\eps) I_D(0) \quad \text{for all } a\gg1.
\end{equation}
As a consequence, $\lim_{a\to+\infty} e^{2na}\textup{Vol}(D_a) = \textup{Vol}(I_D(0))$. Using \eqref{Eq:LowBBer}, Blocki-Zwonek \cite{BlockiZwonek2015} obtained the following generalization to Suita's conjecture:
\begin{equation} \label{Eq:BZSuitaAzu}
	B_D(z_0) \geqslant \frac{1}{\textup{Vol}(I_D(z_0))}.
\end{equation}
Via approximation, \eqref{Eq:BZSuitaAzu} holds for general pseudoconvex domains in $\CC^n$.

Let $H(z)=\sum_{|\alpha|=m} c_\alpha z^\alpha$ be a homogeneous polynomial of degree $m$ on $\CC^n$, for any holomorphic function $f(z)$, we define
\begin{equation*}
	\pd_z^H f := \sum\nolimits_{|\alpha|=m} c_\alpha\pd_z^\alpha f = \sum\nolimits_{|\alpha|=m} c_\alpha \frac{\pd^{|\alpha|} f}{\pd z_1^{\alpha_1}\cdots\pd z_n^{\alpha_n}}.
\end{equation*}
Blocki-Zwonek \cite{BlockiZwonek2020} introduced the following generalized Bergman kernel:
\begin{equation*}
	B_D^H(w) := \sup\left\{ |\pd_z^H f(w)|^2 : f\in A^2(D),\int_D|f|^2d\lambda\leqslant1, [f]_w \in\frakm_w^m \right\}.
\end{equation*}

At first, we assume that $D\ni z_0$ is a bounded hyperconvex domain. Assume that $z_0=0$ and let $D_a:=\{G_D(\cdot,0)<-a\}$. By the monotonicity property of Bergman kernels and \eqref{Eq:AzuInclude},
\begin{equation*}
	\lim_{a\to+\infty} e^{-2(n+m)a}B_{D_a}^H(0) = \lim_{a\to+\infty} B_{e^aD_a}^H(0) = B_{I_D(0)}^H(0).
\end{equation*}
Using a tensor power trick, Blocki-Zwonek \cite{BlockiZwonek2020} proved that $$a\mapsto e^{-2(n+m)a}B_{D_a}^H(0)$$ is a decreasing function on $[0,+\infty)$, and then
\begin{equation} \label{Eq:BZSuitaH}
	B_D^H(z_0) \geqslant B_{I_D(z_0)}^H(0).
\end{equation}
Via approximation, \eqref{Eq:BZSuitaH} is true for general pseudoconvex domains.
Clearly, if $H\equiv1$, then $B_\bullet^H$ are the usual Bergman kernels and \eqref{Eq:BZSuitaH} reduces to \eqref{Eq:BZSuitaAzu}.

If $D\subset\CC$ is a planar domain, then $I_D(z_0)=\DD(0;c_\beta(z_0)^{-1})$, where $c_\beta(z_0)$ is the logarithmic capacity of $D$ at $z_0$. In this case, \eqref{Eq:BZSuitaAzu} reduces to Suita's conjecture. Let $H(z)=z^m$, then $B_D^H=B_D^{(m)}$. By direct computations, $B_{I_D(z_0)}^H(0) = \pi^{-1} m!(m+1)! c_\beta(z_0)^{2m+2}$ and then \eqref{Eq:BZSuitaH} reduces to \eqref{Eq:HighSuita}.

\quad

{\itshape In the following, we apply the approach of Section \ref{Sec:GenSuita} to prove \eqref{Eq:BZSuitaH}.}

Recall that, $D\ni z_0$ is a bounded pseudoconvex domain in $\CC^n$ and $H(z)$ is a homogeneous polynomial of degree $m$. There exists some $f\in A^2(D)$ such that $[f]_{z_0}\in\frakm_{z_0}^m$ and $\pd_z^Hf(z_0)=1$. We obtain such an $f$ by solving certain $\bar{\pd}$-equation with $L^2$ estimate. Alternatively, we can apply Corollary \ref{MainCor:Opt} directly.
Let $\psi:=2(n+m)G_D(\cdot,z_0)$. For each $t\geqslant0$, let $\calD_t:=\{\psi<-t\}$ and let $f_t\in A^2(\calD_t)$ be the unique holomorphic function with minimal $L^2$ norm such that
\begin{equation*}
	[f_t]_{z_0} \in \frakm_{z_0}^m \quad\text{and}\quad \pd_z^Hf_t(z_0) = 1.
\end{equation*}
Let $I(t):=\|f_t\|_{A^2(\calD_t)}^2$, then it is clear that $B_{\calD_t}^H(z_0)=I(t)^{-1}$.

Notice that, if $\tilde{f}$ is a holomorphic function such that $|\tilde{f}-f|^2e^{-\psi}$ is locally integrable near $z_0$, then $[\tilde{f}]_{z_0} \in \frakm_{z_0}^m$ and $\pd_z^H\tilde{f}(z_0) = 1$. Therefore, the arguments of Section \ref{Sec:LogConcave} can be applied without any change, and we conclude that (see also \cite{GuanMi2021}):

\begin{itemize}\itshape
	\item[(i)] $r\mapsto I(-\log r)$ is a concave increasing function on $(0,1]$;
	\item[(ii)] if $r\mapsto I(-\log r)$ is linear, then $f_0|_{\calD_t}\equiv f_t$ for any $t\geqslant0$.	
\end{itemize}

By the concavity, $r\mapsto I(-\log r)/r$ is decreasing on $(0,1]$, then $t\mapsto e^{-t}B_{\calD_t}^H(z_0)$ is also decreasing on $[0,+\infty)$. This monotonicity was proved in \cite{BlockiZwonek2020} by using a tensor power trick. (Recall that, $D_a:=\{G_D(\cdot,z_0)<-a\}=\calD_{2(n+m)a}$.) The remaining part of the proof is the same as \cite{BlockiZwonek2020}: if $D$ is hyperconvex, then
\begin{equation*}
	\lim_{t\to+\infty} e^{-t}B_{\calD_t}^H(z_0) = \lim_{a\to+\infty} e^{-2(n+m)a}B_{D_a}^H(z_0) = B_{I_D(z_0)}^H(0),
\end{equation*}
and the monotonicity implies the inequality \eqref{Eq:BZSuitaH}; via approximation, \eqref{Eq:BZSuitaH} is true for general pseudoconvex domains. Actually, we have something more:

\begin{proposition} \label{Prop}
	Let $D\ni z_0$ be a bounded pseudoconvex domain in $\CC^n$ and $H(z)$ be a holomorphic homogeneous polynomial of degree $m$ on $\CC^n$. For each $t\geqslant0$, let $\calD_t:=\{2(n+m)G_D(\cdot,z_0)<-t\}$ and $B(t):=B_{\calD_t}^H(z_0)$.
	
	(1) For any $N\in\NN_+$, $r\mapsto B(-\frac{1}{N}\log r)^{-N}$ is a concave function on $(0,1]$.
	
	(2) If $B_D^H(z_0) = B_{I_D(z_0)}^H(0)$, then $f_0|_{\calD_t} \equiv f_t$ for any $t\geqslant0$, where $f_t\in A^2(\calD_t)$ is the unique holomorphic function with minimal $L^2$ norm such that $[f_t]_{z_0} \in \frakm_{z_0}^m$ and $\pd_z^Hf_t(z_0)=1$.	
\end{proposition}

\begin{proof}
	We assume the same notations as above. By conclusion (i), $r\mapsto B(-\log r)^{-1}$ is a concave function on $(0,1]$. Given $N\in\NN_+$, we consider the product domain $\widetilde{D} := D\times\cdots\times D \subset \CC^{nN}$ and $\widetilde{z_0}:=(z_0,\cdots,z_0)\in\widetilde{D}$. By the same reason,
	\begin{equation*}
		r\in(0,1] \quad\mapsto\quad \left( B_{\{\Psi<\log r\}}^{H\times\cdots\times H}(\widetilde{z_0}) \right)^{-1}
	\end{equation*}
	is a concave function, where $\Psi:=2(n+m)NG_{\widetilde{D}}(\cdot,\widetilde{z_0})$. By the product properties of pluricomplex Green functions (see \cite{JarnickiPflugBook}) and generalized Bergman kernels (see \cite{BlockiZwonek2020}),
	\begin{gather*}
		\big\{\Psi<\log r\big\} = \big\{\psi<\tfrac{\log r}{N}\big\} \times \cdots \times \big\{\psi<\tfrac{\log r}{N}\big\}, \\
		B_{\{\Psi<\log r\}}^{H\times\cdots\times H}(\widetilde{z_0}) = \left( B_{\{\psi<\log r/N\}}^{H}(z_0) \right)^N = B\big(-\tfrac{1}{N}\log r\big)^N.
	\end{gather*}
	Therefore, $r\mapsto B(-\frac{1}{N}\log r)^{-N}$ is a concave function  on $(0,1]$.
	
	If $D$ is hyperconvex, we know
	\begin{equation*}
		B_D^H(z_0) \geqslant e^{-t}B_{\calD_t}^H(z_0) \geqslant B_{I_D(z_0)}^H(0), \quad t>0.
	\end{equation*}
	Via approximation, this is true for general pseudoconvex domains.
	Hence, if $B_D^H(z_0) = B_{I_D(z_0)}^H(0)$, then $B_{\calD_t}^H(z_0) = e^tB_D^H(z_0)$ for all $t$. In this case, $r\mapsto I(-\log r) = r/B_D^H(z_0)$ is linear, then it follows from the conclusion (ii) that $f_0|_{\calD_t}\equiv f_t$ for any $t\geqslant0$.
\end{proof}

If $D\subset\CC$ is a planar domain, then \eqref{Eq:BZSuitaH} reduces to \eqref{Eq:HighSuita}, and Theorem \ref{Thm:HighSC} gives a full characterization for the equality case of \eqref{Eq:HighSuita}. However, in higher dimensions, such a characterization is unknown yet. Nevertheless, the above proposition gives a necessary condition for the equality of \eqref{Eq:BZSuitaH}.

\subsection{The Equality in Higher Dimension Suita Conjecture} \hfill

In this section, we study the equality case of higher order Suita conjecture. The first example is well-known; the second example is one dimensional, it was included for completeness. As an application of Theorem \ref{Thm:GenSuita}, we also give a new family of examples for which \eqref{Eq:BZSuitaH} becomes an equality.

(1) Let $D=\{z\in\CC^n:h(z)<1\}$ be a \textit{bounded balanced pseudoconvex domain}, where $h:\CC^n\to[0,\infty)$ is homogeneous (which means that $h(\tau z)=|\tau|h(z)$ for any $\tau\in\CC$) and $\log h$ is psh. As $G_D(\cdot,0)\equiv\log h$, we know $D_a=\{G_D(\cdot,0)<-a\}=e^{-a}D$ and $I_D(0)=D$. Let $H(z)$ be a homogeneous polynomial of degree $m$ on $\CC^n$, then
\begin{equation*}
	B_D^H(0) = e^{-2(n+m)a}B_{D_a}^H(0) = B_{I_D(0)}^H(0) \quad (\forall a\geqslant0).
\end{equation*}

(2) Let $\Omega=\{z\in\CC:1<|z|<R\}$ be an \textit{annulus} and $H(z)=z^m$. We choose a point $z_0\in\Omega$ with $|z_0|=\exp(\frac{k}{m+1}\log R)$, where $k\in[1,m]$ is an integer. According to the equation \eqref{Eq:Annulus},
\begin{equation*}
	B_\Omega^H(z_0) = \pi^{-1}m!(m+1)!c_\beta(z_0;\Omega)^{2m+2} = B_{I_\Omega(z_0)}^H(0).
\end{equation*}

(3) Let $\Omega$ be a bounded domain in $\CC$ and $\eta$ be a harmonic function on $\Omega$. Let $D=\{w\in\CC^n:h(w)<1\}$ be a bounded balanced pseudoconvex domain in $\CC^n$, where $h:\CC^n\to[0,\infty)$ is homogeneous and $\log h$ is psh. We consider the following \textit{generalized Hartogs domain}:
\begin{equation*}
	\widetilde{\Omega} = \{(z,w)\in\Omega\times\CC^n: h(w)<e^{-\eta(z)} \}.
\end{equation*}
Let $\phi$ be a subharmonic exhaustion function of $\Omega$, then
\begin{equation*}
	\max \left\{ \phi(z), -\frac{1}{\log h(w)+\eta(z)} \right\}
\end{equation*}
is a psh exhaustion function of $\widetilde{\Omega}$. Hence, $\widetilde{\Omega}\subset\CC^{n+1}$ is pseudoconvex.

For any $z_0\in\Omega$, $\psi(z,w) := \max\{G_\Omega(z,z_0),\log h(w)+\eta(z)\}$ is a negative psh function on $\widetilde{\Omega}$. Clearly, $\psi(z,w)$ has a logarithmic pole at $(z_0,0)$. By the definition of pluricomplex Green functions,
\begin{equation*}
	G_{\widetilde{\Omega}}((z,w),(z_0,0)) \geqslant \max\{G_\Omega(z,z_0),\log h(w)+\eta(z)\}.
\end{equation*}

Now we estimate the Azukawa pseudometric of $\widetilde{\Omega}$ at $(z_0,0)$. For any non-zero $(X,Y)\in\CC\times\CC^n$,
\begin{align*}
	A_{\widetilde{\Omega}} & ((z_0,0);(X,Y)) := \varlimsup_{\lambda\to0} \big( G_{\widetilde{\Omega}}((z_0+\lambda X,\lambda Y), (z_0,0)) - \log|\lambda| \big) \\
	& \geqslant \varlimsup_{\lambda\to0} \big( \max\big\{ G_\Omega(z_0+\lambda X,z_0), \log h(\lambda Y)+\eta(z_0+\lambda X) \big\} - \log|\lambda| \big) \\
	& = \varlimsup_{\lambda\to0} \max\big\{ G_\Omega(z_0+\lambda X,z_0) - \log|\lambda|, \log h(Y)+\eta(z_0+\lambda X) \big\}.
\end{align*}
Notice that, if $X\neq0$, then $\lim_{\lambda\to0} \exp(G_\Omega(z_0+\lambda X,z_0) - \log|\lambda X|) = c_\beta(z_0)$, where $c_\beta(z_0)$ is the logarithmic capacity of $\Omega$ at $z_0$. Therefore,
\begin{align*}
	& A_{\widetilde{\Omega}}((z_0,0);(X,Y)) \\
	\geqslant & \max\big\{ \lim_{\lambda\to0} (G_\Omega(z_0+\lambda X,z_0) - \log|\lambda|), \log h(Y) + \lim_{\lambda\to0}\eta(z_0+\lambda X) \big\} \\
	= & \max\big\{ \log|c_\beta(z_0)X|, \log h(Y)+\eta(z_0) \big\}.
\end{align*}
Consequently,
\begin{equation} \label{Eq:AzuInd}
	I_{\widetilde{\Omega}}((z_0,0)) \subset \{ (X,Y)\in\CC\times\CC^n: |X|<c_\beta(z_0)^{-1}, h(Y)<e^{-\eta(z_0)} \}
\end{equation}
and
\begin{equation} \label{Eq:VolAzukawa}
	\Vol\big( I_{\widetilde{\Omega}}((z_0,0)) \big) \leqslant \pi c_\beta(z_0)^{-2} \times \Vol(D)e^{-2n\eta(z_0)}.
\end{equation}

Then we compute the Bergman kernel of $\widetilde{\Omega}$ at $(z_0,0)$. For any $r>0$, since $rD=\{w\in\CC^n:h(w)<r\}$ is a balanced domain, it is clear that
\begin{equation*}
	\int_{rD}|f(w)|^2d\lambda_w \geqslant \Vol(D)|f(0)|^2r^{2n}, \quad f\in \calO(rD).
\end{equation*}
Notice that, the integral on the left hand side may be infinity. For any $\tilde{f}\in A^2(\widetilde{\Omega})$, we have
\begin{align*}
	\int_{\widetilde{\Omega}} |\tilde{f}|^2 d\lambda & = \int_\Omega \left( \int_{e^{-\eta(z)}D} |\tilde{f}(z,w)|^2 d\lambda_w \right) d\lambda_z \\
	& \geqslant \Vol(D) \int_\Omega |\tilde{f}(z,0)|^2e^{-2n\eta(z)} d\lambda_z.
\end{align*}
It follows that $\tilde{f}(\cdot,0)\in A^2(\Omega;e^{-2n\eta})$ and
\begin{multline*}
	B_{\widetilde{\Omega}}((z_0,0)) = \sup\left\{ \frac{|\tilde{f}(z_0,0)|^2}{\int_{\widetilde{\Omega}}|\tilde{f}|^2d\lambda} : \tilde{f}\in A^2(\widetilde{\Omega}) \right\} \\
	\leqslant \sup\left\{ \frac{|g(z_0)|^2}{ \Vol(D)\int_\Omega|g|^2e^{-2n\eta}d\lambda} : g\in A^2(\Omega;e^{-2n\eta}) \right\} = \frac{B_{\Omega,n\eta}(z_0)}{\Vol(D)}.
\end{multline*}

On the other hand, for any $g\in A^2(\Omega;e^{-2n\eta})$, we define $\tilde{g}(z,w):=g(z)$, then
\begin{equation*}
	\tilde{g}\in A^2(\widetilde{\Omega}) \quad\text{and}\quad
	\int_{\widetilde{\Omega}} |\tilde{g}|^2 d\lambda = \Vol(D) \int_\Omega |g|^2e^{-2n\eta} d\lambda.
\end{equation*}
Then it is clear that
\begin{equation} \label{Eq:BerHartogs}
	B_{\widetilde{\Omega}}((z_0,0)) = \frac{B_{\Omega,n\eta}(z_0)}{\Vol(D)}.
\end{equation}
If $D$ is the unit ball in $\CC^n$, \eqref{Eq:BerHartogs} is also known as Ligocka's formula \cite{Ligocka}.

Combing \eqref{Eq:BerHartogs}, \eqref{Eq:BZSuitaAzu} and \eqref{Eq:VolAzukawa}, we get
\begin{equation*}
	\frac{B_{\Omega,n\eta}(z_0)}{\Vol(D)} = B_{\widetilde{\Omega}}((z_0,0)) \geqslant \frac{1}{\Vol(I_{\widetilde{\Omega}}((z_0,0)))} \geqslant \frac{c_\beta(z_0)^2e^{2n\eta(z_0)}}{\pi\Vol(D)}.
\end{equation*}
By Theorem \ref{Thm:ESC}, if $\chi_{z_0}\chi_\eta^n=1$ (equivalently, there exists some $\hat{g}\in\calO(\Omega)$ such that $\log|\hat{g}| = G_\Omega(\cdot,z_0)+n\eta$), then the above inequalities become equalities.

\begin{theorem} \label{Example1}
	Let $\Omega$ be a bounded domain in $\CC$ and $\eta$ be a harmonic function on $\Omega$. Let $D$ be a bounded balanced pseudoconvex domain in $\CC^n$ and $\widetilde{\Omega} = \{(z,w)\in\Omega\times\CC^n: w\in e^{-\eta(z)}D \}$.
	If $\chi_{z_0}\chi_\eta^n=1$ for some $z_0\in\Omega$, then 
	\begin{equation*}
		B_{\widetilde{\Omega}}((z_0,0)) = \frac{1}{\Vol(I_{\widetilde{\Omega}}((z_0,0)))}.
	\end{equation*}
\end{theorem}

It is not hard to generalize the above example to the case of \eqref{Eq:BZSuitaH}. 

\begin{theorem} \label{Example2}
	Let $\Omega$ be a bounded domain in $\CC$ and $\eta$ be a harmonic function on $\Omega$. Let $D$ be a bounded balanced pseudoconvex domain in $\CC^n$ and
	$$ \widetilde{\Omega} = \big\{ (z,w)\in\Omega\times\CC^n: w\in e^{-\eta(z)}D \big\}. $$
	Let $H_2(w)$ be a homogeneous polynomial of degree $k$ on $\CC^n$ and $H(z,w):=z^m H_2(w)$. If $\chi_{z_0}^{m+1}\chi_\eta^{n+k}=1$ for some $z_0\in\Omega$, then 
	\begin{equation*}
		B_{\widetilde{\Omega}}^H((z_0,0)) = B_W^H((0,0)), \quad \text{where } W:=I_{\widetilde{\Omega}}((z_0,0)).
	\end{equation*}
\end{theorem}

\begin{proof}
	Since $rD$ is a balanced domain, any $f\in\calO(rD)$ can be written as a compactly convergent series $f=\sum_{l=0}^\infty f_l$, where each $f_l$ is a homogeneous polynomial of degree $l$. It is clear that $[f_k]_0\in\frakm_0^k$, $\pd_w^{H_2}f_k(0) = \pd_w^{H_2}f(0)$ and
	\begin{equation*}
		\int_{rD}|f|^2d\lambda = \sum\nolimits_l \int_{rD}|f_l|^2d\lambda \geqslant \int_{rD}|f_k|^2d\lambda.
	\end{equation*}
	Notice that, these integrals may be infinite. By the definition of generalized Bergman kernels,
	\begin{equation*}
		\int_{rD}|f|^2d\lambda \geqslant \int_{rD}|f_k|^2d\lambda \geqslant \frac{|\pd_w^{H_2}f_k(0)|^2}{B_{rD}^{H_2}(0)} = \frac{|\pd_w^{H_2}f(0)|^2}{B_{D}^{H_2}(0)r^{-2(n+k)}}.
	\end{equation*}
	For any $\tilde{f}\in A^2(\widetilde{\Omega})$ with $[\tilde{f}]_{(z_0,0)} \in \frakm_{(z_0,0)}^{m+k}$, we have
	\begin{align*}
		\int_{\widetilde{\Omega}} |\tilde{f}|^2 d\lambda & = \int_\Omega \left( \int_{e^{-\eta(z)}D} |\tilde{f}(z,w)|^2 d\lambda_w \right) d\lambda_z \\
		& \geqslant \frac{1}{B_{D}^{H_2}(0)} \int_\Omega |\pd_w^{H_2}\tilde{f}(z,0)|^2e^{-2(n+k)\eta(z)} d\lambda_z.
	\end{align*}
	Therefore, $\pd_w^{H_2}\tilde{f}(\cdot,0)\in A^2(\Omega;e^{-2(n+k)\eta})$. Since $[\pd_w^{H_2}\tilde{f}(\cdot,0)]_{z_0} \in \frakm_{z_0}^m$, we know
	\begin{align*}
		B_{\widetilde{\Omega}}^H((z_0,0)) \leqslant \sup_{\tilde{f}} \frac{|\pd_z^m\pd_w^{H_2} \tilde{f}(z_0,0)|^2} {\int_\Omega |\pd_w^{H_2}\tilde{f}(z,0)|^2e^{-2(n+k)\eta(z)} d\lambda_z / B_{D}^{H_2}(0)} \\
		\leqslant B_{\Omega,(n+k)\eta}^{(m)}(z_0) \times B_D^{H_2}(0).
	\end{align*}
	Here, the supremum is taken over all $\tilde{f}\in A^2(\widetilde{\Omega})$ with $[\tilde{f}]_{(z_0,0)} \in \frakm_{(z_0,0)}^{m+k}$.
	
	It is easy to find a homogeneous polynomial $u$ of degree $k$ on $\CC^n$ such that $\int_D|u|^2d\lambda=1$ and $B_D^{H_2}(0) = |\pd_w^{H_2}u(0)|^2$. For any $g\in A^2(\Omega;e^{-2(n+k)\eta})$ with $[g]_{z_0}\in\frakm_{z_0}^m$, let $\tilde{g}(z,w):=g(z)u(w)$, then $\tilde{g}\in A^2(\widetilde{\Omega})$ and $[\tilde{g}]_{(z_0,0)} \in\frakm_{(z_0,0)}^{m+k}$. Therefore,
	\begin{align*}
		B_{\Omega,(n+k)\eta}^{(m)}(z_0) & = \sup_g \frac{|\pd_z^mg(z_0)|^2} {\int_\Omega|g|^2e^{-2(n+k)\eta}d\lambda} \\
		& = \sup_g \frac{|\pd^H\tilde{g}(z_0,0)|^2/|\pd_w^{H_2}u(0)|^2} {\int_{\widetilde{\Omega}}|\tilde{g}|^2d\lambda} \leqslant \frac{B_{\widetilde{\Omega}}^H((z_0,0))}{B_D^{H_2}(0)}.
	\end{align*}
	Here, the supremum is taken over all $g\in A^2(\Omega;e^{-2(n+k)\eta})$ with $[g]_{z_0}\in\frakm_{z_0}^m$.
	
	Denote by $W$ the Azukawa indicatrix of $\widetilde{\Omega}$ at $(z_0,0)$.
	According to \eqref{Eq:AzuInd}, $W\subset U\times V$, where $U:=\DD(0;c_\beta(z_0)^{-1})$ and $V:=e^{-\eta(z_0)}D$. By the monotonicity property and the product property of generalized Bergman kernels, 
	\begin{align*}
		B_W^H((0,0)) & \geqslant B_{U\times V}^H((0,0)) = B_U^{(m)}(0) \times B_V^{H_2}(0) \\
		& = \tfrac{m!(m+1)!}{\pi}c_\beta(z_0)^{2m+2} \times e^{2(n+k)\eta(z_0)} B_D^{H_2}(0). 
	\end{align*}
	
	Combing these results with \eqref{Eq:BZSuitaH}, we get
	\begin{align*}
		B_{\Omega,(n+k)\eta}^{(m)}(z_0) \times B_D^{H_2}(0) & = B_{\widetilde{\Omega}}^H((z_0,0)) \geqslant B_W^H((0,0)) \\
		& \geqslant \tfrac{m!(m+1)!}{\pi}c_\beta(z_0)^{2m+2} e^{2(n+k)\eta(z_0)} \times B_D^{H_2}(0).
	\end{align*}
	According to Theorem \ref{Thm:GenSuita}, if $\chi_{z_0}^{m+1}\chi_\eta^{n+k}=1$ (equivalently, there exists an $\hat{g}\in\calO(\Omega)$ such that $\log|\hat{g}|=(m+1)G_\Omega(\cdot,z_0)+(n+k)\eta$), then the above inequalities become equalities.
\end{proof}

\begin{remark}
	It is easy to find $(\Omega,\eta)$ satisfying the requirements of Theorem \ref{Example2}.
	Let $\Omega'$ be an arbitrary bounded domain in $\CC$, we choose a holomorphic function $0\not\equiv f\in\calO(\Omega')$ having at least a zero $z_0\in\Omega'$. Define $\Omega:=(\Omega'\setminus f^{-1}(0)) \cup\{z_0\}$, then $z_0$ is the only zero of $f\in\calO(\Omega)$. Let $m:=\text{ord}_{z_0}(f)-1$ and
	\begin{equation*}
		\eta:=\tfrac{1}{n+k}(\log|f|-(m+1)G_{\Omega}(\cdot,z_0)),
	\end{equation*}
	then $\eta$ is a harmonic function on $\Omega$ and $\chi_{z_0}^{m+1}\chi_\eta^{n+k}=1$.
\end{remark}

Clearly, all balanced domains are contractible. In contrast, the $\widetilde{\Omega}$ defined in Theorem \ref{Example2} has the same homotopy type as $\Omega$, which could be very complicated.

Using the transformation rule under biholomorphism and the product property of generalized Bergman kernels (see \cite{BlockiZwonek2020}), one can construct more and more examples from (1), (2) and (3). It would be interesting to find a full characterization for the equality of \eqref{Eq:BZSuitaH}.\\

\noindent\textbf{Acknowledgements} The authors would like to thank the referees for their helpful comments and suggestions.

%\bibliographystyle{plain}
%\bibliography{XuZhou.bib}

\begin{thebibliography}{10}
	
	\bibitem{AhlforsCA}
	Lars~V. Ahlfors.
	\newblock {\em Complex analysis}.
	\newblock International Series in Pure and Applied Mathematics. McGraw-Hill
	Book Co., New York, third edition, 1978.
	
	\bibitem{AhlforsRS}
	Lars~V. Ahlfors and Leo Sario.
	\newblock {\em Riemann surfaces}.
	\newblock Princeton Mathematical Series, No. 26. Princeton University Press,
	Princeton, N.J., 1960.
	
	\bibitem{Berndtsson2006}
	Bo~Berndtsson.
	\newblock Subharmonicity properties of the {B}ergman kernel and some other
	functions associated to pseudoconvex domains.
	\newblock {\em Ann. Inst. Fourier (Grenoble)}, 56(6):1633--1662, 2006.
	
	\bibitem{BerndtssonLempert2016}
	Bo~Berndtsson and L\'{a}szl\'{o} Lempert.
	\newblock A proof of the {O}hsawa-{T}akegoshi theorem with sharp estimates.
	\newblock {\em J. Math. Soc. Japan}, 68(4):1461--1472, 2016.
	
	\bibitem{Blocki2013}
	Zbigniew Blocki.
	\newblock Suita conjecture and the {O}hsawa-{T}akegoshi extension theorem.
	\newblock {\em Invent. Math.}, 193(1):149--158, 2013.
	
	\bibitem{Blocki2014}
	Zbigniew Blocki.
	\newblock A lower bound for the {B}ergman kernel and the {B}ourgain-{M}ilman
	inequality.
	\newblock In {\em Geometric aspects of functional analysis}, volume 2116 of
	{\em Lecture Notes in Math.}, pages 53--63. Springer, Cham, 2014.
	
	\bibitem{BlockiPflug1998}
	Zbigniew Blocki and Peter Pflug.
	\newblock Hyperconvexity and {B}ergman completeness.
	\newblock {\em Nagoya Math. J.}, 151:221--225, 1998.
	
	\bibitem{BlockiZwonek2015}
	Zbigniew Blocki and Wlodzimierz Zwonek.
	\newblock Estimates for the {B}ergman kernel and the multidimensional {S}uita
	conjecture.
	\newblock {\em New York J. Math.}, 21:151--161, 2015.
	
	\bibitem{BlockiZwonek2018}
	Zbigniew Blocki and Wlodzimierz Zwonek.
	\newblock One dimensional estimates for the {B}ergman kernel and logarithmic
	capacity.
	\newblock {\em Proc. Amer. Math. Soc.}, 146(6):2489--2495, 2018.
	
	\bibitem{BlockiZwonek2020}
	Zbigniew Blocki and Wlodzimierz Zwonek.
	\newblock Generalizations of the higher dimensional {S}uita conjecture and its
	relation with a problem of {W}iegerinck.
	\newblock {\em J. Geom. Anal.}, 30(2):1259--1270, 2020.
	
	\bibitem{Cao2017}
	Junyan Cao.
	\newblock Ohsawa-{T}akegoshi extension theorem for compact {K}\"{a}hler
	manifolds and applications.
	\newblock In {\em Complex and symplectic geometry}, volume~21 of {\em Springer
		INdAM Ser.}, pages 19--38. Springer, Cham, 2017.
	
	\bibitem{Demailly1982}
	Jean-Pierre Demailly.
	\newblock Estimations {$L^2$} pour l'op\'erateur $\bar\partial$ d'un fibr\'e
	vectoriel holomorphe semi-positif au-dessus d'une vari\'et\'e k\"ahl\'erienne
	compl\`ete.
	\newblock {\em Ann. Sci. \'{E}cole Norm. Sup.}, 15(3):457--511, 1982.
	
	\bibitem{Demailly1982Ext}
	Jean-Pierre Demailly.
	\newblock Scindage holomorphe d'un morphisme de fibr\'{e}s vectoriels
	semi-positifs avec estimations {$L^2$}.
	\newblock In {\em Seminar {P}ierre {L}elong-{H}enri {S}koda ({A}nalysis),
		1980/1981}, pages 77--107. Springer, Berlin-New York, 1982.
	
	\bibitem{Demailly1992}
	Jean-Pierre Demailly.
	\newblock Regularization of closed positive currents and intersection theory.
	\newblock {\em J. Algebraic Geom.}, 1(3):361--409, 1992.
	
	\bibitem{DemaillyBook}
	Jean-Pierre Demailly.
	\newblock {\em Analytic methods in algebraic geometry}, volume~1 of {\em
		Surveys of Modern Mathematics}.
	\newblock International Press, Somerville, MA; Higher Education Press, Beijing,
	2012.
	
	\bibitem{DemaillyCADG}
	Jean-Pierre Demailly.
	\newblock {\em Complex analytic and differential geometry}.
	\newblock 2012.
	\newblock Avaliable at
	\url{http://www-fourier.ujf-grenoble.fr/~demailly/manuscripts/agbook.pdf}.
	
	\bibitem{DemaillyNonRed}
	Jean-Pierre Demailly.
	\newblock Extension of holomorphic functions defined on non reduced analytic
	subvarieties.
	\newblock In {\em The legacy of {B}ernhard {R}iemann after one hundred and
		fifty years. {V}ol. {I}}, volume~35 of {\em Adv. Lect. Math. (ALM)}, pages
	191--222. Int. Press, Somerville, MA, 2016.
	
	\bibitem{DPS2001}
	Jean-Pierre Demailly, Thomas Peternell, and Michael Schneider.
	\newblock Pseudo-effective line bundles on compact {K}\"{a}hler manifolds.
	\newblock {\em Internat. J. Math.}, 12(6):689--741, 2001.
	
	\bibitem{DongArxiv}
	Xin Dong.
	\newblock Equality in {S}uita's conjecture and metrics of constant {G}aussian
	curvature.
	\newblock 2022.
	\newblock \url{https://arxiv.org/abs/1807.05537v4}.
	
	\bibitem{GTM71}
	H.~M. Farkas and I.~Kra.
	\newblock {\em Riemann surfaces}, volume~71 of {\em Graduate Texts in
		Mathematics}.
	\newblock Springer-Verlag, New York, second edition, 1992.
	
	\bibitem{Fornaess1976}
	John~Erik Forn{\ae}ss.
	\newblock An increasing sequence of {S}tein manifolds whose limit is not
	{S}tein.
	\newblock {\em Math. Ann.}, 223(3):275--277, 1976.
	
	\bibitem{GrauertRemmertCoherent}
	Hans Grauert and Reinhold Remmert.
	\newblock {\em Coherent analytic sheaves}, volume 265 of {\em Grundlehren der
		mathematischen Wissenschaften}.
	\newblock Springer-Verlag, Berlin, 1984.
	
	\bibitem{Guan2018arxiv}
	Qi'an Guan.
	\newblock General concavity of minimal {$L^2$} integrals related to multiplier
	ideal sheave.
	\newblock 2019.
	\newblock \url{https://arxiv.org/abs/1811.03261v4}.
	
	\bibitem{Guan2019Saitoh}
	Qi'an Guan.
	\newblock A proof of {S}aitoh's conjecture for conjugate {H}ardy {$H^2$}
	kernels.
	\newblock {\em J. Math. Soc. Japan}, 71(4):1173--1179, 2019.
	
	\bibitem{Guan2019}
	Qi'an Guan.
	\newblock A sharp effectiveness result of {D}emailly's strong openness
	conjecture.
	\newblock {\em Adv. Math.}, 348:51--80, 2019.
	
	\bibitem{GuanMi2021}
	Qi'an Guan and Zhitong Mi.
	\newblock Concavity of minimal {$L^2$} integrals related to multiplier ideal
	sheaves.
	\newblock {\em Peking Math. J.}, 6(2):393--457, 2023.
	
	\bibitem{GuanMiYuanII}
	Qi'an Guan, Zhitong Mi, and Zheng Yuan.
	\newblock Concavity property of minimal {$L^2$} integrals with lebesgue
	measurable gain {II}.
	\newblock 2022.
	\newblock \url{https://arxiv.org/abs/2211.00473v1}.
	
	\bibitem{GuanSunYuan2022}
	Qi'an Guan, Xun Sun, and Zheng Yuan.
	\newblock A remark on a weighted version of {S}uita conjecture for higher
	derivatives.
	\newblock {\em Math. Z.}, 307(1):Paper No. 17, 2024.
	
	\bibitem{GuanZhou2012}
	Qi'an Guan and Xiangyu Zhou.
	\newblock Optimal constant problem in the {$L^2$} extension theorem.
	\newblock {\em C. R. Math. Acad. Sci. Paris}, 350(15-16):753--756, 2012.
	
	\bibitem{GuanZhou2015Eff}
	Qi'an Guan and Xiangyu Zhou.
	\newblock Effectiveness of {D}emailly's strong openness conjecture and related
	problems.
	\newblock {\em Invent. Math.}, 202(2):635--676, 2015.
	
	\bibitem{GuanZhou2015CN}
	Qi'an Guan and Xiangyu Zhou.
	\newblock Optimal constant in an {$L^2$} extension problem and a proof of a
	conjecture of {O}hsawa.
	\newblock {\em Sci. China Math.}, 58(1):35--59, 2015.
	
	\bibitem{GuanZhouSOC}
	Qi'an Guan and Xiangyu Zhou.
	\newblock A proof of {D}emailly's strong openness conjecture.
	\newblock {\em Ann. of Math.}, 182(2):605--616, 2015.
	
	\bibitem{GuanZhou2015}
	Qi'an Guan and Xiangyu Zhou.
	\newblock A solution of an {$L^2$} extension problem with an optimal estimate
	and applications.
	\newblock {\em Ann. of Math. (2)}, 181(3):1139--1208, 2015.
	
	\bibitem{HPS}
	Christopher Hacon, Mihnea Popa, and Christian Schnell.
	\newblock Algebraic fiber spaces over abelian varieties: around a recent
	theorem by {C}ao and {P}\u{a}un.
	\newblock In {\em Local and global methods in algebraic geometry}, volume 712
	of {\em Contemp. Math.}, pages 143--195. Amer. Math. Soc., Providence, RI,
	2018.
	
	\bibitem{Herbort1999}
	Gregor Herbort.
	\newblock The {B}ergman metric on hyperconvex domains.
	\newblock {\em Math. Z.}, 232(1):183--196, 1999.
	
	\bibitem{Hosono2019}
	Genki Hosono.
	\newblock On sharper estimates of {O}hsawa-{T}akegoshi {$L^2$}-extension
	theorem.
	\newblock {\em J. Math. Soc. Japan}, 71(3):909--914, 2019.
	
	\bibitem{Hosono2020}
	Genki Hosono.
	\newblock The optimal jet {$L^2$} extension of {O}hsawa-{T}akegoshi type.
	\newblock {\em Nagoya Math. J.}, 239:153--172, 2020.
	
	\bibitem{JarnickiPflugBook}
	Marek Jarnicki and Peter Pflug.
	\newblock {\em Invariant distances and metrics in complex analysis}.
	\newblock Walter de Gruyter GmbH \& Co. KG, Berlin, extended edition, 2013.
	
	\bibitem{Jennane1978}
	B.~Jennane.
	\newblock Extension d'une fonction d\'{e}finie sur une sous-vari\'{e}t\'{e}
	avec contr\^{o}le de la croissance.
	\newblock In {\em S\'{e}minaire {P}ierre {L}elong-{H}enri {S}koda ({A}nalyse),
		{A}nn\'{e}e 1976/77}, pages 126--133. Springer, Berlin, 1978.
	
	\bibitem{LiPHD}
	Zhi Li.
	\newblock {\em {$L^2$} extension theorem and the approximation of Bergman
		spaces}.
	\newblock PhD thesis, Institute of Mathematics, Chinese Academy of Sciences,
	2019.
	
	\bibitem{LiXuZhou}
	Zhi Li, Wang Xu, and Xiangyu Zhou.
	\newblock On {D}emailly's {$L^2$} extension theorem from non-reduced
	subvarieties.
	\newblock {\em Math. Z.}, 305(2):Paper No. 23, 2023.
	
	\bibitem{Ligocka}
	Ewa Ligocka.
	\newblock On the {F}orelli-{R}udin construction and weighted {B}ergman
	projections.
	\newblock {\em Studia Math.}, 94(3):257--272, 1989.
	
	\bibitem{MaitaniYamaguchi2004}
	Fumio Maitani and Hiroshi Yamaguchi.
	\newblock Variation of {B}ergman metrics on {R}iemann surfaces.
	\newblock {\em Math. Ann.}, 330(3):477--489, 2004.
	
	\bibitem{Minda1987}
	C.~David Minda.
	\newblock The capacity metric on {R}iemann surfaces.
	\newblock {\em Ann. Acad. Sci. Fenn. Ser. A I Math.}, 12(1):25--32, 1987.
	
	\bibitem{Nadel1990}
	Alan~Michael Nadel.
	\newblock Multiplier ideal sheaves and {K}\"{a}hler-{E}instein metrics of
	positive scalar curvature.
	\newblock {\em Ann. of Math.}, 132(3):549--596, 1990.
	
	\bibitem{Ohsawa1993Add}
	Takeo Ohsawa.
	\newblock Addendum to: {O}n the {B}ergman kernel of hyperconvex domains.
	\newblock {\em Nagoya Math. J.}, 137:145--148, 1995.
	
	\bibitem{Ohsawa3}
	Takeo Ohsawa.
	\newblock On the extension of {$L^2$} holomorphic functions. {III}.
	{N}egligible weights.
	\newblock {\em Math. Z.}, 219(2):215--225, 1995.
	
	\bibitem{OhsawaExtV}
	Takeo Ohsawa.
	\newblock On the extension of {$L^2$} holomorphic functions. {V}. {E}ffects of
	generalization.
	\newblock {\em Nagoya Math. J.}, 161:1--21, 2001.
	
	\bibitem{OhsawaL2Book}
	Takeo Ohsawa.
	\newblock {\em {$L^2$} approaches in several complex variables, Towards the
		Oka-Cartan theory with precise bounds}.
	\newblock Springer Monographs in Mathematics. Springer, Tokyo, second edition,
	2018.
	
	\bibitem{OhsawaSurvey}
	Takeo Ohsawa.
	\newblock A survey on the {$L^2$} extension theorems.
	\newblock {\em J. Geom. Anal.}, 30(2):1366--1395, 2020.
	
	\bibitem{OhsawaTakesoshi1987}
	Takeo Ohsawa and Kensho Takegoshi.
	\newblock On the extension of {$L^2$} holomorphic functions.
	\newblock {\em Math. Z.}, 195(2):197--204, 1987.
	
	\bibitem{RashkovskiiSigurdsson2005}
	Alexander Rashkovskii and Ragnar Sigurdsson.
	\newblock Green functions with singularities along complex spaces.
	\newblock {\em Internat. J. Math.}, 16(4):333--355, 2005.
	
	\bibitem{Sadullaev}
	A.~Sadullaev.
	\newblock Extension of plurisubharmonic functions from a submanifold
	({R}ussian).
	\newblock {\em Dokl. Akad. Nauk UzSSR}, 5:3--4, 1982.
	
	\bibitem{Sakai1969}
	Makoto Sakai.
	\newblock On constants in extremal problems of analytic functions.
	\newblock {\em Kodai Math. Sem. Rep.}, 21:223--225, 1969.
	
	\bibitem{SiuPlurigenera}
	Yum-Tong Siu.
	\newblock Invariance of plurigenera.
	\newblock {\em Invent. Math.}, 134(3):661--673, 1998.
	
	\bibitem{Suita1972}
	Nobuyuki Suita.
	\newblock Capacities and kernels on {R}iemann surfaces.
	\newblock {\em Arch. Rational Mech. Anal.}, 46:212--217, 1972.
	
	\bibitem{XuPHD}
	Wang Xu.
	\newblock {\em A research on optimal {$L^2$} extension of openness type}.
	\newblock PhD thesis, Institute of Mathematics, Chinese Academy of Sciences,
	2021.
	
	\bibitem{XuZhou2023}
	Wang Xu and Xiangyu Zhou.
	\newblock Optimal {$L^2$} extensions of openness type and related topics.
	\newblock {\em Comptes Rendus Math.}, 361:679--683, 2023.
	
	\bibitem{Yamada1998}
	Akira Yamada.
	\newblock Topics related to reproducing kernels, theta functions and the
	{S}uita conjecture (japanese).
	\newblock {\em S\={u}rikaisekikenky\={u}sho K\={o}ky\={u}roku}, 1067:39--47,
	1998.
	
	\bibitem{Zhou:Abel}
	Xiangyu Zhou.
	\newblock A survey on {$L^2$} extension problem.
	\newblock In {\em Complex geometry and dynamics}, volume~10 of {\em Abel
		Symp.}, pages 291--309. Springer, Cham, 2015.
	
	\bibitem{ZhouZhu2018}
	Xiangyu Zhou and Langfeng Zhu.
	\newblock An optimal {$L^2$} extension theorem on weakly pseudoconvex
	{K}\"{a}hler manifolds.
	\newblock {\em J. Differential Geom.}, 110(1):135--186, 2018.
	
	\bibitem{ZhouZhu}
	Xiangyu Zhou and Langfeng Zhu.
	\newblock Extension of cohomology classes and holomorphic sections defined on
	subvarieties.
	\newblock {\em J. Algebraic Geom.}, 31(1):137--179, 2022.
	
	\bibitem{ZhuGuanZhou2012}
	Langfeng Zhu, Qi'an Guan, and Xiangyu Zhou.
	\newblock On the {O}hsawa-{T}akegoshi {$L^2$} extension theorem and the
	{B}ochner-{K}odaira identity with non-smooth twist factor.
	\newblock {\em J. Math. Pures Appl.}, 97(6):579--601, 2012.
	
	\bibitem{Zwonek2000}
	Wlodzimierz Zwonek.
	\newblock Regularity properties of the {A}zukawa metric.
	\newblock {\em J. Math. Soc. Japan}, 52(4):899--914, 2000.
	
\end{thebibliography}

\end{document}